\newcommand{\R}{\mathbb{R}}
\newcommand{\C}{\mathbb{C}}
\newcommand{\Z}{\mathbb{Z}}
\newcommand{\N}{\mathbb{N}}
\newcommand{\pa}{\partial}
\newcommand{\F}{\mathcal{F}}
\newcommand{\ee}{\mbox{\boldmath $1$}}
\DeclareMathOperator{\supp}{supp}
\DeclareMathOperator{\Id}{Id}
\newtheorem{theorem}{Theorem}[section]
\newtheorem{lemma}[theorem]{Lemma}
\newtheorem{proposition}[theorem]{Proposition}
\newtheorem{corollary}[theorem]{Corollary}
\theoremstyle{remark}
\newtheorem{remark}{Remark}[section]
\theoremstyle{definition}
\newtheorem{definition}{Definition}[section]
\numberwithin{equation}{section}
\def\@cite#1#2{[{{\bfseries #1}\if@tempswa , #2\fi}]}
\begin{document}
\begin{center}
\Large{{\bf
Well-posedness for the fourth-order Schr\"odinger equation with third order derivative nonlinearities}}
\end{center}

\vspace{5pt}

\begin{center}
Hiroyuki Hirayama%
\footnote{
Institute for Tenure Track Promotion, University of Miyazaki, 1-1, Gakuenkibanadai-nishi, Miyazaki, 889-2192 Japan,  
E-mail:\ {\tt h.hirayama@cc.miyazaki-u.ac.jp}}, Masahiro Ikeda%
\footnote{
Department of Mathematics, Faculty of Science and Technology, Keio University, 3-14-1 Hiyoshi, Kohoku-ku, Yokohama, 223-8522, Japan/Center for Advanced Intelligence Project, RIKEN, Japan, 
E-mail:\ {\tt masahiro.ikeda@keio.jp/masahiro.ikeda@riken.jp}}
and
Tomoyuki Tanaka
\footnote{
Graduate school of Mathematics, Nagoya University, Chikusa-Ku, Nagoya, 464-8602, Japan, 
E-mail:\ {\tt d18003s@math.nagoya-u.ac.jp}}
\end{center}

\newenvironment{summary}{\vspace{.5\baselineskip}\begin{list}{}{%
     \setlength{\baselineskip}{0.85\baselineskip}
     \setlength{\topsep}{0pt}
     \setlength{\leftmargin}{12mm}
     \setlength{\rightmargin}{12mm}
     \setlength{\listparindent}{0mm}
     \setlength{\itemindent}{\listparindent}
     \setlength{\parsep}{0pt}
     \item\relax}}{\end{list}\vspace{.5\baselineskip}}
\begin{summary}
{\footnotesize {\bf Abstract.}
We study the Cauchy problem to the semilinear fourth-order Schr\"odinger equations:
\begin{equation}\label{0-1}\tag{4NLS}
\begin{cases}
i\pa_t u+\pa_x^4u=G\left(\left\{\partial_x^{k}u\right\}_{k\le \gamma},\left\{\partial_x^{k}\bar{u}\right\}_{k\le \gamma}\right), 
& t>0,\ x\in \R,
\\
\ \ \ u|_{t=0}=u_0\in H^s(\R), 
\end{cases}
\end{equation}
where $\gamma\in \{1,2,3\}$ 
and the unknown function $u=u(t,x)$ is complex valued. 
In this paper, we consider the  nonlinearity $G$ of the polynomial 
\[
        G(z)=G(z_1,\cdots,z_{2(\gamma+1)})
        :=\sum_{m\le |\alpha|\le l}C_{\alpha}z^{\alpha},
\]
for $z\in \C^{2(\gamma+1)}$, where $m,l\in\N$ with $3\le m\le l$ and $C_{\alpha}\in \C$ with $\alpha\in (\N\cup \{0\})^{2(\gamma+1)}$ is a constant.
The purpose of the present paper is to prove well-posedness of the problem (\ref{0-1}) in the lower order Sobolev space $H^s(\R)$ or with more general nonlinearities than previous results. 

\ \ Our proof of the main results is based on the contraction mapping principle on a suitable function space employed by D. Pornnopparath (2018). 
To obtain the key linear and bilinear estimates, 
we construct a suitable decomposition of the Duhamel term introduced 
by I. Bejenaru, A. D. Ionescu, C. E. Kenig, and D. Tataru (2011). 
Moreover we discuss scattering of global solutions and the optimality for the regularity of our well-posedness results, namely we prove that the flow map is not smooth in several cases.
}
\end{summary}

{\footnotesize{\it Mathematics Subject Classification}\/ (2010): %
35Q55; 
35A01; 
35B45;
37K10
}\\
{\footnotesize{\it Key words and phrases}\/: %
Schr\"odinger equations,
Fourth-order dispersion,
Well-posedness,
Low regularity,
Derivative nonlinearity,
Sobolev spaces,
Scaling critical regularity,
Modulation estimate, 
Solution map, General nonlinearity
}
\tableofcontents
\section{Introduction}
\subsection{Setting of our problem}
\ \ \ In the present paper we study well-posedness for the Cauchy problem in the Sobolev space $H^s(\R)$ of 
the Schr\"odinger equation with the fourth-order dispersion and $\gamma$-times derivative nonlinearities:
\begin{equation}\label{1-1}
\begin{cases}
i\pa_t u+\pa_x^4u=G\left(\left\{\partial_x^{k}u\right\}_{k\le {\gamma}},\left\{\partial_x^{k}\bar{u}\right\}_{k\le {\gamma}}\right), 
& (t,x)\in I\times \mathbb{R},
\\
\ \ \ \ \ \ \ \ \ u|_{t=t_0}=u_0\in H^s(\R), 
\end{cases}
\end{equation}
where $\gamma\in \{1,2,3\}$ denotes the order of the highest derivatives in the nonlinearity $G$, $i:=\sqrt{-1}$, $\partial_t:=\partial/\partial t$, $\partial_x:=\partial/\partial x$, $u=u(t,x): I\times\R\rightarrow \mathbb{C}$ is an unknown function of $(t,x)$, $t_0\in \R$ is an initial time, $(t_0\in) I$ denotes the maximal existence time interval of the function $u$, $u_0=u_0(x):\R\rightarrow\C$ is a prescribed function which belongs to a $L^2(\R)$-based $s$-th order Sobolev space $H^s(\R)$ for some $s\in \R$. Throughout this paper, we consider the nonlinear function $G:\C^{2(\gamma+1)}\rightarrow \C$ of the following polynomial:
\begin{equation}\label{nonl}        G(z)=G^{m,l}_{\gamma}(z)=G^{m,l}\left(z_1,\cdots,z_{2(\gamma+1)}\right)
        :=\sum_{m\le |\alpha|\le l}C_{\alpha}z^{\alpha},
\end{equation}
where $z\in \C^{2(\gamma+1)}$, $m\in \N$ and $l\in \N$ with $3\le m\le l$ denote the lowest degree and the highest degree of the polynomial $G$ respectively, and $C_{\alpha}\in \C$ with $\alpha\in (\N\cup \{0\})^{2(\gamma+1)}$ is a complex constant. 

The purpose of the present paper is to improve and generalize the results obtained in the previous papers \cite{S03, S04, HJ05, HJ07, HHW07, HJ11, W12, WG12, GSR14, RWZ14, HN15scri, HO16}, that is, to prove well-posedness in the lower order Sobolev space $H^s(\R)$ to the problem (\ref{1-1}) and to show well-posedness to (\ref{1-1}) with more general nonlinearities than the previous results (see Theorems \ref{cor2-4}, \ref{lwp1}, \ref{corlwp}, \ref{lwp2-2}, \ref{lwp2-3} and Remark \ref{smallne}). Here we say that well-posedness to (\ref{1-1}) holds if existence, uniqueness of the solution and continuous dependence upon the initial data are valid. We also discuss scattering of the global solutions (Theorems \ref{lwp2-2}, \ref{lwp2-3}) and the optimality of our well-posedness results, namely, prove that the flow map is not smooth in the sense of Fr\'echet derivative for some specific nonlinearity (see Remarks \ref{optima} and \ref{Optimal}).

\subsection{Background and known results}


\ \ There are many physical results and mathematical results about (\ref{1-1}) without derivative nonlinearities ($\gamma=0$) or with first order derivatives ($\gamma=1$) (see \cite{BKS00, GSR14, HN08, HN15scri, HO16, W12} and their references). We recall results closely related to the present study. Y. Wang \cite{W12} studied the Cauchy problem (\ref{1-1}) with a gauge invariant nonlinearity $\partial_x(|u|^{m-1}u)$ with odd $m\ge 5$ and proved small data global well-posedness in the scaling critical space $\dot{H}^{s_c}(\R)$, where $\dot{H}$ is the homogeneous Sobolev space and $s_c(1,m):=\frac{1}{2}-\frac{3}{m-1}$ (see Theorem 1.1 in \cite{W12}). The first author and Okamoto \cite{HO16} studied the Cauchy problem (\ref{1-1}) with $m=3$ or $m=4$ and proved large data local well-posedness in $L^2(\R)$ for a scaling invariant nonlinearity (\ref{nonl_sc_2}) below. 
In particular, they proved large data local well-posedness and small data scattering in $H^{s_c}(\R)$ for a specific nonlinearity $G=G_1^{4,4}=\partial_x\left(\bar{u}^4\right)$. (see Theorem 1.3 and Remark 3 in \cite{HO16}).  In the present paper, we improve the results obtained in \cite{W12, HO16} (see Remark \ref{contri1d} for more precise). Hayashi and Naumkin \cite{HN15scri} proved a small data scattering to the problem (\ref{1-1}) with $\gamma=1$ and $m\ge 5$ in a weighted Sobolev space (In fact, they treated real $m$ with $m>4$). They \cite{HN15cri} (resp. \cite{HN15cubi}) also study small data global existence and asymptotic behavior of solutions to the problem (\ref{1-1}) with $\gamma=1$ and the non-smooth quartic nonlinearity, i.e. $i\partial_x\left(|u|^3u\right)$ (resp. a cubic nonlinearity, $i\partial_x\left(|u|^2u\right))$ in a weighted Sobolev space.

Several models with the fourth-order dispersion, and the second-times derivative ($\gamma=2$) nonlinearities have been derived from the variational principle with Lagrange density by Karpman \cite{K96} and Karpman
and Shagalov \cite{KS00} to take into account the role of small fourth-order dispersion in the propagation of intense laser
beams in a bulk medium with Kerr nonlinearity, and the stability of the solitions for the derived equations was studied in \cite{K96, KS00}. Fukumoto and Moffatto \cite{FM00} introduced the following Schr\"odinger equation (\ref{eq2}), which contains not only the fourth-order dispersion and but also the second-order dispersion, and the second-order derivative ($\gamma=2$) nonlinearities:
\begin{equation}
\label{eq2}
    i\partial_t u+\nu\partial_x^4u+\partial_x^2u=G\left(\left\{\partial_x^ku\right\}_{k\le 2},\left\{\partial_x^k\bar{u}\right\}_{k\le 2}\right),\ \ (t,x)\in \R\times\R,
\end{equation}
where $\nu\in \R$ is a non-zero constant. Here the nonlinearity $G=G^{3,5}_{2}$ ($\gamma=2$, $m=3$, $l=5$) is given by
\begin{align}
\label{nlt}
   G\left(\left\{\partial_x^ku\right\}_{k\le 2},\left\{\partial_x^k\bar{u}\right\}_{k\le 2}\right):=&-\frac{1}{2}|u|^2u+\lambda_1|u|^4u+\lambda_2(\partial_xu)^2\bar{u}+\lambda_3|\partial_xu|^2u
   +\lambda_4u^2\partial_x^2\bar{u}+\lambda_5|u|^2\partial_x^2u,
\end{align}
where $\lambda_1:=3\mu/4$, $\lambda_2:=2\mu-\nu/2$, $\lambda_3:=4\mu+\nu$, $\lambda_4:=\mu$ and $\lambda_5:=2\mu-\nu$, with a real constant $\mu\in \R$. The equation (\ref{eq2}) describes the three dimensional motion of an isolated vortex filament embedded in an inviscid incompressible fluid filling an infinite region, and it is proposed as some detailed model taking account of the effect from the higher order corrections of the Da Rios model, that is,
\[
      i\partial_tu+\partial_x^2u=-\frac{1}{2}|u|^2u,\ \ (t,x)\in \R\times\R.
\]
This is the second order Schr\"odinger equation without derivative nonlinearities and with the cubic focusing nonlinearity, which has been extensively studied in the contexts of both physics and mathematics. It is also known that (\ref{eq2}) with (\ref{nlt}) is completely integrable, if and only if the identity $2\mu=-\nu$ holds, namely, the identities $\lambda_1:=-3\nu/8$, $\lambda_2:=-3\nu/2$, $\lambda_3:=-\nu$, $\lambda_4:=-\nu/2$ and $\lambda_6:=-2\nu$ hold (see \cite{F01}). Under the relation $2\mu=-\nu$, the equation (\ref{eq2}) has infinitely many conservation laws such as
\begin{align*}
    &\mathbf{\Phi}_0[u](t):=\frac{1}{2}\int_{\R}|u(t,x)|^2dx,\ \ \ \mathbf{\Phi}_1[u](t):=\frac{1}{2}\int_{\R}|\partial_xu(t,x)|^2dx-\frac{1}{8}\int_{\R}|u(t,x)|^4dx,\\
    &\mathbf{\Phi}_2[u](t):=\frac{1}{2}\int_{\R}|\partial_x^2u(t,x)|^2dx+\frac{3}{4}\int_{\R}|u(t,x)|^2\overline{u(t,x)}\partial_x^2u(t,x)dx+\frac{1}{8}\int_{\R}|u(t,x)|^2u(t,x)\partial_x^2\overline{u(t,x)}dx\\
    &\ \ \ \ \ \ \ \ \ \ \ \ \ +\frac{5}{8}\int_{\R}(\partial_xu(t,x))^2\overline{u(t,x)}^2dx+\frac{3}{4}\int_{\R}|\partial_xu(t,x)|^2|u(t,x)|^2dx+\frac{1}{16}\int_{\R}|u(t,x)|^6dx,\\
    &\mathbf{\Phi}_3[u](t)\ \ \cdots
\end{align*}
see \cite{LP91}. For more information about physical backgrounds of (\ref{eq2}), see \cite{F02}.

Next we recall several previous results about well-posedness of the Cauchy problem (\ref{1-1}) with second times ($\gamma=2$) derivative nonlinearities. Hao, Hisao and Wang \cite{HHW07} proved existence of local-in-time solution and uniqueness of solutions in the class $C\left(I;H^{s-1}(\R)\right)$ of the Cauchy problem (\ref{1-1}) with $3\le m\le l$ for arbitrary data in $H^s(\R)$, where $s\ge \frac{9}{2}$. We remark that in Theorem 1.1 in \cite{HHW07}, the regularity $s-1$ of the solution is weaker than $s$ of the initial data, namely, even if $u_0$ belongs to $H^s(\R)$, $u(t)$ may not be in $H^s(\R)\left(\subsetneq H^{s-1}(\R)\right)$ for some $t\in I$. However, this situation is not desirable from the viewpoint of well-posedness. 

In the present paper, we improve Theorem 1.1 in \cite{HHW07} in the following two sense. The first one is that we prove existence of a local-in-time solution to (\ref{1-1}) with $\gamma=2$ and $3\le m\le l$ for arbitrary data which belong to the wider class $\left(H^{s_1}(\R)\ \text{with} s_1\ge \frac{5}{2}\right)$ than theirs $\left(H^{s_2}(\R)\ \text{with} s_2>\frac{9}{2}\right)$. The second one is that we prove that for any $t\in I$, the solution $u(t)$ belongs to the same space as the initial data (see Theorem \ref{cor2-4})-Remark \ref{smallne}). They \cite {HHW07} also showed existence of solution locally in time and uniqueness of solutions in $H^{s-1}(\R)\cap H^6\left(\R;x^2dx\right)$ of the problem (\ref{1-1}) with $m=2$ for arbitrary data in $H^{s}(\R)\cap H^6\left(\R;x^2dx\right)$ with $s\ge \frac{25}{2}$. We note that in the case of $m=2$, some spatial decay as $|x|\rightarrow\infty$ assumption on data seems to be needed and we do not pursue the case of $m=2$ in the present paper.

Guo, Sun and Ren \cite{GSR14} proved local well-posedness of (\ref{1-1}) with the nonlinearity $G=G_2^{3,9}:=c_1u^2\partial_x^2\overline{u}+c_2|u|^8u$ for arbitrary data in $H^s(\R)$ with $s\ge \frac{1}{2}$, where $c_1,c_2\in \C$ are constants. Segata \cite{S03} showed local well-posedness of the Cauchy problem (\ref{eq2})-(\ref{nlt}) with a good sign $\nu<0$ and a special coefficient $\lambda_5=0$ for arbitrary data in $H^s(\R)$ with $s\ge \frac{1}{2}$. Huo and Jia \cite[Theorem1.1]{HJ05} proved the similar conclusion as \cite[Theorem2.1]{S03} without the sign condition $\nu<0$ but with $\nu=0$. We emphasize that one of our main results (Theorem \ref{corlwp}) reconstructs their results \cite{GSR14,S03,HJ05}. Segata \cite{S04} showed local well-posedness in $H^s(\R)$ of the problem (\ref{eq2})-(\ref{nlt}) with a good sign $\nu<0$ for arbitrary data in $H^s(\R)$ with $s>\frac{7}{12}$. We note that $\lambda_5$ in (\ref{nlt}) is not $0$ necessarily in the result \cite{S04}, whose situation is different from that in \cite{S03}. Huo and Jia \cite{HJ07} removed the sign condition $\nu<0$ in \cite[Theorem1.1]{S04} and proved local well-posedness in $H^s(\R)$ of the problem (\ref{eq2})-(\ref{nlt}) with $\nu>0$ and $s>\frac{1}{2}$. 

There are fewer physical results and fewer mathematical results about the problem (\ref{1-1}) with three times ($\gamma=3$) derivative nonlinearities than the other cases ($\gamma\in \{0,1,2\}$). It should be known that equation (\ref{1-1}) with $\gamma=3$ is completely integrable if and only if the nonlinearity $G=G_{3}^{3,7}$ is the following form:
\begin{equation}
\label{nonli}
    G_3^{3,7}\left(\left\{\partial_x^{k}u\right\}_{k\le 3},\left\{\partial_x^{k}\bar{u}\right\}_{k\le 3}\right):=
   \partial_x\left\{H_1^5+iH_2^3+\frac{5}{2}i\left(|u|^6u\right)\right\},
\end{equation}
where $H_1^5$ is a fifth-order polynomial and $H_2^3$ is a third-order polynomial, which are given by
\begin{align*}
  &H_1^5=H_1^5\left(u,\partial_xu,\overline{u},\partial_x\overline{u}\right):=\frac{3}{2}\partial_x(|u|^4u)+3\left(\bar{u}\partial_xu-u\partial_x\bar{u}\right)|u|^2u,\\
   &H_2^3=H_2^3\left(\left\{\partial_x^{k}u\right\}_{k\le 2},\left\{\partial_x^{k}\bar{u}\right\}_{k\le 2}\right):=-3(\partial_xu)^2\bar{u}+\partial_x^2(|u|^2)u,
\end{align*}
respectively (see \cite{YC06} and its references). We note that $H_1^5$ contains the first order derivative of $u$ and $\bar{u}$ and $H_2^3$ contains the second order derivative $u$ and $\bar{u}$, thus we see that the nonlinearity $G_3^{3,7}$ given by (\ref{nonli}) contains the third order derivative. It should be also notified that the equation (\ref{1-1}) with the nonlinearity $G_3^{3,7}$ given by (\ref{nonli}) belongs to a hierarchy of the derivative nonlinear Schr\"odinger equation, which can be written as
\begin{equation}
\label{1-6-a}
    i\partial_tU+\partial_x\left\{(-2i\Lambda)^{2n-1}U\right\}=0,
\end{equation}
where $n\in\N$, $U=U(t,x)=\left(u(t,x),\overline{u(t,x)}\right){}^{\mathrm{T}}:\R\times\R\rightarrow\C^2$ is a solution to (\ref{1-6-a}) and $\Lambda$ is the recursion operator (see (\ref{A-1}) for the definition).
When $n=1$, the equation (\ref{1-6-a}) is equivalent to the well-known derivative nonlinear Schr\"odinger equation:
\begin{equation}
\label{dnls}
i\partial_tu+\partial_x^2u=-i\partial_x\left(|u|^2u\right),
\end{equation}
 which describes nonlinear Alfv\'en waves in space plasma physics (see \cite{MH97}) and ultra-short pulse propagation (see \cite{A01}). The equation (\ref{dnls}) has also been extensively studied in the field of mathematics (see \cite{Ppre} and its references for example). Moreover, when $n=2$, we can see that the equation (\ref{1-6-a}) is equivalent to (\ref{1-1}) with the nonlinearity $G^{3,7}_3$ given by (\ref{nonli}) (see Appendix \ref{Aderi}, for the derivation of (\ref{1-1})-(\ref{nonli}) from the hierarchy (\ref{1-6-a}) with $n=2$). 
 
There are only two mathematical studies about well-posedness of the Cauchy problem of (\ref{1-1}) with the third order derivative nonlinearities ($\gamma=3$) as far as the authors know. Ruzhansky, Wang and Zhang \cite[Theorem1.2]{RWZ14} proved the small data global well-posedness and scattering in the modulation space $M^{3+\frac{1}{m-1}}_{2,1}(\R)$ with $m\ge 6$ (see (1.6) in \cite{RWZ14} for the definition of the modulation spaces). As in \cite[Remark1.3]{RWZ14}, if $m\ge 10$, then this result covers initial data in the Sobolev space $H^{\frac{7}{2}+\frac{1}{m-1}+\varepsilon}(\R)$ with an arbitrarily small $\varepsilon>0$.
However, it is not clear whether their solution belongs to the same space as initial data or not for $t\in I$, whose situation is not preferable from the viewpoint of well-posedness.

Huo and Jia \cite[Theorem1.1]{HJ11} proved local well-posedness of the problem (\ref{1-1}) with $\gamma=3$, $m=3$ and $l\in \N$ with $m<l$, that is $G_3^{3,l}$, for small data in $H^s(\R)$ with $s>4$. The proof of \cite[Theorem1.1]{HJ11} is based on a dyadic Fourier restriction space, which is similar to the function space (\ref{fsb}) we use in the present paper.

However, well-posedness in the Sobolev space $H^s(\R)$ for $s\le 4$ to the problem (\ref{1-1}) with the nonlinearity $G_{3}^{3,l}$ ($\gamma=3$ and $m=3$) was still a major open problem. In the present paper, we solve this problem and prove local well-posedness of the problem (\ref{1-1}) with $G_{3}^{3,l}$ for small data in $H^4(\R)$ (see Theorem \ref{cor2-4}). 

Finally we also emphasize that one of our main results (Theorem \ref{lwp1}) implies that the Cauchy problem (\ref{1-1}) with the nonlinearity $G_3^{3,7}$ given by (\ref{nonli}), where the equation (\ref{1-1}) is completely integrable and belongs to a hierarchy (\ref{1-6-a}) of the derivative nonlinear Schr\"odinger equation, is locally well-posed in $H^1(\R)$ for small data in $H^1(\R)$, which is also a completely new result. 

\ \ Equation (\ref{1-1}) is invariant under the translation with respect to time and space variables. Thus we may assume that the initial time is zero, i.e. $t_0=0$.
\subsection{Scaling critical Sobolev index}
\ \ Before stating our main results, we introduce a scaling critical Sobolev index $s_c$ for the Cauchy problem (\ref{1-1}). Such index often divides well-posedness and ill-posedness of Cauchy problems for evolution equations. If the nonlinear term $G=G_{\gamma}^{m,l}$ with $m=l$ is the following form:
\begin{equation}\label{nonl_sc}
G^{m,m}_{\gamma}\left(\left\{\partial_x^{k}u\right\}_{k\le {\gamma}},\left\{\partial_x^{k}\bar{u}\right\}_{k\le {\gamma}}\right)=\sum_{\mathbf{k}+\mathbf{l}=m}\sum_{|\alpha|+|\beta|=\gamma}C_{\alpha,\beta}^{\mathbf{k},\mathbf{l}}(\partial_x^{\alpha_1}u)\cdots(\partial_x^{\alpha_{\mathbf{k}}}u)\left(\partial_x^{\beta_1}\overline{u}\right)\cdots\left(\partial_x^{\beta_{\mathbf{l}}}\overline{u}\right),
\end{equation}
where $\alpha:=(\alpha_1,\cdots,\alpha_{\mathbf{k}})\in (\N\cup\{0\})^{\mathbf{k}}$, $\beta:=(\beta_1,\cdots,\beta_{\mathbf{l}})\in (\N\cup\{0\})^{\mathbf{l}}$ are multi-indices and $C_{\alpha,\beta}^{\mathbf{k},\mathbf{l}}\in \C$ is a constant, then equation (\ref{1-1}) is invariant under the scaling transformation $u\mapsto u_{\vartheta}$ for $\vartheta>0$, which is defined by
\[
u_{\vartheta}(t,x):=\vartheta^{\frac{4-\gamma}{m-1}} u\left(\vartheta^{4}t,\vartheta x\right),
\]
where $u:I\times\R\rightarrow \C$ is a solution to (\ref{1-1}). A simple computation gives $u_{\vartheta}(0,x)=\vartheta^{\frac{4-\gamma}{m-1}}u_0(\vartheta x)$ and
\[
      \left\|u_{\vartheta}(0,\cdot)\right\|_{\dot{H}^s}=\vartheta^{\frac{4-\gamma}{m-1}-\frac{1}{2}+s}\|u_0\|_{\dot{H}^s},
\]
where for $s\in \R$, $\dot{H}^s=\dot{H}^s(\R)$ denotes the $L^2(\R)$-based $s$-th order homogeneous Sobolev space. From this observation, we define the scaling critical (Sobolev) index $s_c$ as \[
s_c=s_c(\gamma,m):=\frac{1}{2}-\frac{4-\gamma}{m-1}.
\]
If $s=s_c$, then $\dot{H}^s$-norm of initial data is also invariant under the scaling transformation. The case $s=s_c$ is called scaling critical, the case $s>s_c$ is called scaling subcritical and the case $s<s_c$ is called scaling supercritical.

We also introduce a minimal regularity (Sobolev) exponent $s_0=s_0(\gamma,m)$ given by
\begin{equation}
\label{minir}
     s_0=s_0(\gamma, m):=
     \left\{\begin{array}{ll}
	\frac{\gamma -1}{2},&m=3,\\
	\frac{2\gamma-3}{6},&m= 4,\\
	s_c+\epsilon,&m\ge 5
	\end{array}\right.
\end{equation}
for $\gamma \in \{1,2\}$, where $\epsilon >0$ is an arbitrary positive number and
\begin{equation}
\label{minir2}
     s_0=s_0(3, m):=
     \left\{\begin{array}{ll}
	1,&m=3,\\
	\frac{1}{2},&m\ge 4. 
	\end{array}\right.
\end{equation}
We note that if $s$ satisfies $s\ge s_0(\gamma,m)$ with $\gamma\in \{1,2\}$, then $s$ belongs to the scaling subcritical case $s>s_{c}$.
\subsection{Main results}
\ \ In this subsection, we state our main results in the present paper.

\begin{theorem}[Well-posedness for general nonlinearity]
\label{cor2-4}
Let $\gamma\in \{1,2,3\}$, $m,l\in \N$ with $3\le m\le l$ and $s\ge \frac{3\gamma-1}{2}$. 
Then the Cauchy problem (\ref{1-1}) with (\ref{nonl}) is locally well-posed in $H^s(\mathbb{R})$ for small initial data $u_0\in H^s(\mathbb{R})$.
\end{theorem}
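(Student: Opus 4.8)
The plan is to run a standard contraction-mapping argument for the Duhamel formulation
\[
u(t)=e^{it\partial_x^4}u_0-i\int_0^t e^{i(t-t')\partial_x^4}G\bigl(\{\partial_x^k u\}_{k\le\gamma},\{\partial_x^k\bar u\}_{k\le\gamma}\bigr)(t')\,dt'
\]
on a suitable resolution space $X^s$ of the type used by Pornnopparath, built from $U^p/V^p$ (or Bourgain-type $X^{s,b}$) norms adapted to the fourth-order group $e^{it\partial_x^4}$, together with the auxiliary smoothing/Strichartz/maximal-function norms needed to absorb the $\gamma$ derivatives hitting the nonlinearity. First I would record the linear estimates: the homogeneous estimate $\|e^{it\partial_x^4}u_0\|_{X^s}\lesssim\|u_0\|_{H^s}$ and the inhomogeneous (energy/transfer) estimate bounding the Duhamel term in $X^s$ by the dual norm $\|G\|_{N^s}$ of the nonlinearity. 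The regularity threshold $s\ge\frac{3\gamma-1}{2}$ is exactly what is needed so that $\tfrac{3\gamma-1}{2}-\gamma=\tfrac{\gamma-1}{2}\ge 0$, i.e. after $\gamma$ derivatives land on a factor we still control it in $L^2$-based norms — and it matches $s_0(\gamma,m)$ for the borderline exponents $m=3$ in \eqref{minir}--\eqref{minir2}; I would point out that $s\ge\frac{3\gamma-1}{2}\ge s_0(\gamma,m)$ in every case, so we are always scaling-subcritical and the argument has room to close with a factor of $T^\theta$ or (for the $m\ge5$, high-degree terms) a small-data smallness factor.

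Next I would reduce the polynomial nonlinearity \eqref{nonl} to its individual monomials: it suffices to estimate each term
\[
\prod_{j=1}^{\mathbf k}\partial_x^{\alpha_j}u_j\ \prod_{j=1}^{\mathbf l}\partial_x^{\beta_j}\bar u_j ,\qquad \mathbf k+\mathbf l=p\in[m,l],\ \ \textstyle\sum\alpha_j+\sum\beta_j=:d\le \gamma\,\mathbf{(\cdot)},
\]
(here each $\alpha_j,\beta_j\le\gamma$), in the dual space $N^s$ by a product of the $X^s$-norms (or auxiliary norms) of the $u_j$. The heart of the matter is the multilinear estimate: by symmetry assume $\partial_x^{\alpha_1}$ is the highest-order derivative factor (so $\alpha_1\le\gamma$ and the remaining $p-1$ factors carry together at most $\gamma$ derivatives). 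Localize each factor in frequency via Littlewood--Paley, so we must sum over dyadic pieces $N_0,N_1,\dots,N_p$; in the "resonant/high-high" interactions one does not gain from the dispersion symbol $\xi^4$ alone, so one uses a bilinear-Strichartz / $L^4_{t,x}$ smoothing estimate for $e^{it\partial_x^4}$ to pair the two largest frequencies and recover $\frac14$ of a derivative on each, which together with the remaining factors placed in $L^\infty_t L^2_x$ or $L^\infty_x L^2_t$ (Kato smoothing) and $L^\infty_{t,x}$ (maximal function) Strichartz-type norms yields a net gain; the non-resonant pieces are handled by the modulation estimate, exploiting that the multiplier $\sum\pm\xi_j^4 - \xi_0^4$ is bounded below by a positive power of the largest frequency when the frequencies are not nearly balanced.

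The main obstacle — and the reason one needs the Bejenaru--Ionescu--Kenig--Tataru-type decomposition of the Duhamel term rather than a naive $X^{s,b}$ estimate — is precisely the worst derivative interaction, where $\gamma$ (up to three) derivatives fall on a single high-frequency factor in a resonant configuration: here the transfer/energy estimate loses a full derivative at the endpoint $b=1/2$, and one must instead split the time integration (a high/low modulation decomposition of the Duhamel operator, projecting onto $|\tau-\xi^4|\lessgtr N^{\#}$) so that the high-modulation part is controlled by the improved dual $V^2$-type bound while the low-modulation part, which is essentially a smooth bump in $\tau$, is estimated by the sharp bilinear/maximal function machinery with the derivative distributed among the factors. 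Once the multilinear estimate
\[
\Bigl\|\,\textstyle\prod \partial_x^{\alpha_j}u_j\,\textstyle\prod\partial_x^{\beta_j}\bar u_j\Bigr\|_{N^s}\ \lesssim\ \prod_{j} \|u_j\|_{X^s}
\]
is in hand (with the implicit constant uniform and, for degree-$>m$ terms, a positive power of $\|u_j\|_{X^s}$ to spare), summing the finitely many monomials and applying the contraction principle on a small ball of $X^s$ gives existence, uniqueness in the ball, and Lipschitz continuous dependence on $u_0\in H^s$; persistence of regularity (so that $u(t)\in H^s$, not merely $H^{s-1}$ as in earlier work) follows from $u\in X^s\hookrightarrow C([0,T];H^s)$, which is built into the definition of the solution space.
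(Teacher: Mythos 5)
Your overall architecture---contraction mapping in a local-smoothing/Strichartz/maximal-function solution space, reduction to monomials, Littlewood--Paley decomposition, bilinear Strichartz estimate, and the Bejenaru--Ionescu--Kenig--Tataru decomposition of the Duhamel term---matches the paper's machinery, and the arithmetic $s-\gamma\ge\frac{\gamma-1}{2}=s_0(\gamma,3)$ correctly identifies the source of the threshold. However, there is a concrete gap in the reduction step.

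When you write ``assume $\partial_x^{\alpha_1}$ is the highest-order derivative factor (so $\alpha_1\le\gamma$ and the remaining $p-1$ factors carry together at most $\gamma$ derivatives),'' you have silently replaced the general nonlinearity (\ref{nonl}) by the restricted form (\ref{nonli3}). In (\ref{nonl}) every one of the $p$ factors may independently carry up to $\gamma$ derivatives, so the total derivative count can be as large as $p\gamma$, not $\gamma$. The multilinear estimate you outline---and the one the paper actually proves, Theorem~\ref{sc_inv_multi_e} together with (\ref{multilin_noderiv})---only controls monomials with at most $\gamma$ \emph{total} derivatives, at regularity $\frac{\gamma-1}{2}$, and it does not directly apply to, say, $\prod_{j=1}^m\partial_x^\gamma\widetilde{u_j}$. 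The paper bridges this gap with a change of unknown: set $v:=\langle\partial_x\rangle^\gamma u$, so each input $\partial_x^k u=\partial_x^k\langle\partial_x\rangle^{-\gamma}v$ with $k\le\gamma$ is a \emph{bounded} Fourier multiplier applied to $v$, the equation for $v$ becomes $i\partial_t v+\partial_x^4 v=\langle\partial_x\rangle^\gamma G(\ldots)$---a nonlinearity of the shape (\ref{nonli3}) with at most $\gamma$ derivatives in front of the whole product---and the data hypothesis becomes $v_0\in H^{s-\gamma}$ with $s-\gamma\ge\frac{\gamma-1}{2}$, which is exactly the range of Theorem~\ref{corlwp}. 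Without this (or an equivalent commutation device), the multilinear bound you sketch does not reach the monomials of (\ref{nonl}), so the contraction estimate fails to close.

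Two secondary inaccuracies: the solution space for this theorem is the $L^1_xL^2_t+L^1_tL^2_x$--based space of Definitions~\ref{def4-1} and \ref{def3-4}, not a $U^p/V^p$ or Bourgain-type space (the $\dot{X}^{0,b,1}$ modulation refinements and the modulation bound of type (\ref{modulation_bd}) only enter in the scaling-critical Sections~\ref{mesc_1}--\ref{mesc_2}); and for $\gamma=3$ no short-time gain $T^\delta$ is available in Theorem~\ref{sc_inv_multi_e}, which is precisely why the statement is restricted to small data, so the remark that one can ``close with a factor of $T^\theta$'' applies only when $\gamma\in\{1,2\}$.
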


\begin{remark}
Theorem~\ref{cor2-4} with $\gamma =3$ gives extensions of \cite[Theorem1.1]{HJ11} with $n=1$. More precisely, Theorem \ref{cor2-4} with $\gamma=3$ implies local well-posedness to the problem (\ref{1-1}) with the general nonlinearity (\ref{nonl}) for small initial data in the lower order Sobolev space, that is $H^4(\R)$, than $H^{4+\epsilon}(\R)$ with a positive $\epsilon>0$, whose function space is used in \cite[Theorem1.1]{HJ11}.
\end{remark}

For the scaling invariant nonlinearity $G_{\gamma}^{m,m}$, which is defined by (\ref{nonl_sc}), we can prove local well-poseness in $H^s(\R)$ with $s\ge \max\left\{ s_0,0\right\}$, where $s_0$ is called a minimal regularity given by (\ref{minir}) and (\ref{minir2}):
\begin{theorem}[Well-posedness for scaling invariant nonlinearity]
\label{lwp1}
We assume that the nonlinearity $G=G_{\gamma}^{m,m}$ is the form of (\ref{nonl_sc}). Let $\gamma \in \{1,2,3\}$, $m\ge3$, and $s\ge \max\{s_0,0\}$.
Then the Cauchy problem (\ref{1-1}) is locally well-posed in $H^s(\mathbb{R})$ for small initial data $u_0\in H^s(\mathbb{R})$. 
\end{theorem}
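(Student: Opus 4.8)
The plan is to run a contraction mapping argument for the Duhamel formulation
\[
u(t)=e^{it\pa_x^4}u_0-i\int_0^t e^{i(t-t')\pa_x^4}G_{\gamma}^{m,m}\bigl(\{\pa_x^k u(t')\}_{k\le\gamma},\{\pa_x^k\bar u(t')\}_{k\le\gamma}\bigr)\,dt'
\]
on a closed ball of the resolution space introduced by Pornnopparath, namely the Bourgain-type space built from Littlewood--Paley pieces together with the Strichartz, Kato smoothing, maximal-function and Kenig--Ruiz norms adapted to the fourth-order group $e^{it\pa_x^4}$, together with the decomposition of the Duhamel operator from Bejenaru--Ionescu--Kenig--Tataru. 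First I would fix notation for this space $X^s$ and record (citing the linear-estimates section that precedes this statement) the homogeneous linear estimate $\|e^{it\pa_x^4}u_0\|_{X^s}\lesssim\|u_0\|_{H^s}$ and the inhomogeneous (Duhamel) estimate $\|\int_0^t e^{i(t-t')\pa_x^4}F\,dt'\|_{X^s}\lesssim\|F\|_{N^s}$ for the companion space $N^s$; these are exactly the ingredients the excerpt says are in hand.

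The heart of the matter is the nonlinear estimate: for $s\ge\max\{s_0,0\}$ one must show
\[
\bigl\|G_{\gamma}^{m,m}\bigl(\{\pa_x^k u\}_{k\le\gamma},\{\pa_x^k v\}_{k\le\gamma}\bigr)\bigr\|_{N^s}\lesssim \sum_{j=1}^{m}\|u\|_{X^s}^{j-1}\|v\|_{X^s}^{m-j}\cdot(\text{highest-regularity factor})
\]
together with the matching difference (Lipschitz) estimate. Because $G_{\gamma}^{m,m}$ is a sum of monomials in which the total number of derivatives $|\alpha|+|\beta|$ equals $\gamma$ and the total number of factors is $m$, the scaling makes $s_0$ (resp.\ $0$ when $s_0<0$) precisely the balance point. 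I would reduce to a single monomial, put Littlewood--Paley projections on each of the $m$ factors and on the output, and split into the usual frequency regimes: (i) high$\times$low$\times\cdots\times$low, where all derivatives should be thought of as falling on the single high factor and are absorbed by the smoothing/maximal-function pair applied to that factor while the remaining $m-1$ low factors go into $L^\infty_{t,x}$-type or Strichartz norms; (ii) high$\times$high$\times$low$\cdots$, handled by the bilinear Strichartz estimate for $e^{it\pa_x^4}$, which gains a power of the smaller frequency and thereby compensates the loss from derivatives landing on the two high factors; and (iii) the genuinely resonant region, controlled by the modulation estimates on $X^s$ and the BIKT decomposition of the Duhamel term (this is what lets one avoid the usual logarithmic/derivative loss in the worst interaction). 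The $m\ge5$ cases only need $s>s_c$, so there is room to spare and the low factors can simply be estimated in $L^\infty_t H^s$; the tight cases are $m=3$ and $m=4$, where the exponents in (\ref{minir}) and (\ref{minir2}) are saturated.

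The main obstacle I expect is regime (iii) together with bookkeeping the derivatives when two or more high-frequency factors interact with a derivative nonlinearity of order up to $\gamma=3$: one must verify that the BIKT decomposition plus the bilinear/modulation estimates really recovers all $\gamma$ derivatives with no loss at $s=s_0$, in particular for $\gamma=3$, $m=3$ at $s=1$ and for $\gamma=3$, $m\ge4$ at $s=1/2$, and for $\gamma\in\{1,2\}$, $m=3$ at $s=(\gamma-1)/2$. Once the multilinear estimate and its difference version are established, the rest is standard: choose the radius of the ball comparable to $\|u_0\|_{H^s}$ (smallness of the data enters here, since $G$ is only a polynomial of degree $\ge3$ with no time-localization gain), verify the self-map and contraction properties on a time interval $I$, obtain the fixed point, and read off uniqueness in the ball and continuous dependence from the same estimates; persistence of regularity (the solution staying in $H^s$) follows because $X^s\hookrightarrow C(I;H^s)$.
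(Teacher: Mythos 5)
Your proposal correctly identifies the architecture the paper uses: the Duhamel formulation, contraction mapping on the Pornnopparath-type resolution space built from Littlewood--Paley pieces $X_N$/$Y_N$ carrying the Strichartz, Kato smoothing, Kenig--Ruiz and maximal-function norms, the linear estimate (Proposition~\ref{prop3-8}) and the Duhamel estimate (Corollary~\ref{propdeco}), and the multilinear estimate (Theorem~\ref{sc_inv_multi_e}) proved by frequency case analysis together with a bilinear Strichartz estimate on the resolution space (Theorem~\ref{bilisol}). The paper's precise versions of this theorem are Theorem~\ref{lwpcomp} (for $\gamma=3$) and Theorem~\ref{lwpcomp2} (for $\gamma\in\{1,2\}$), and they are indeed closed exactly as you describe via (\ref{8-3-5})--(\ref{8-5-6}).

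The one place where your plan does not match the paper, and where I think you have a genuine misconception about where the difficulty lies, is regime (iii). For Theorem~\ref{lwp1} the paper uses \emph{no} modulation decomposition whatsoever: the proof of Theorem~\ref{sc_inv_multi_e} never introduces $Q_A$ projections or $\dot X^{0,b,q}$ norms, and the spaces of Definition~\ref{def4-1} do not even contain a modulation component. The case where all frequencies are comparable, which you label as the "genuinely resonant region," is in fact the \emph{easiest} case in the paper: for $m=3$, the case $k=3$ ($N_1\sim N_2\sim N_3\gg 1$) is dispatched by two $L^4_T L^\infty_x$ Strichartz norms and one $L^\infty_T L^2_x$ norm, giving $N_1^{-2s+\gamma-1}$, which closes precisely at $s=(\gamma-1)/2$, i.e.\ at $s=1$ for $\gamma=3$. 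The binding constraint and the subtle estimate is instead the high~$\times$~low case $k=1$ ($N\sim N_1\gg N_2$), where the $N^{-3/2}L^1_x L^2_T$ component of $Y_N$ is combined with the bilinear Strichartz estimate on $P_{N_1}u_1 P_{N_2}u_2$ and the maximal-function/Kenig--Ruiz norms for the remaining factors. Also, the BIKT decomposition is not applied to the nonlinearity: it is used \emph{upstream} to prove the Duhamel estimate (Theorem~\ref{thm3-1}) and the bilinear Strichartz estimate (Theorem~\ref{bilisol}), and the multilinear estimate simply invokes those theorems. The modulation/$\dot X^{0,b,q}$ machinery of (\ref{fsb}) and (\ref{fs14}), along with the refined bilinear operators $R^\pm_L$, appears only for the scaling-critical Theorems~\ref{lwp2-2} and~\ref{lwp2-3}, not here. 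Finally, note that smallness of the data is genuinely needed only when $\gamma=3$, since then $\delta=0$ in (\ref{multimultimulti_est}); for $\gamma\in\{1,2\}$ one gets $\delta>0$ and the smallness can be removed by shrinking $T$ (Remark~\ref{smallne} and Theorem~\ref{lwpcomp2}), which is more precise than your "no time-localization gain" comment.
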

The precise statement of the theorem is stated in Theorem \ref{lwpcomp}.

We can also get the following local well-posedness result to the problem (\ref{1-1}) with the following nonlinearity (\ref{nonli3}):
\begin{theorem}
\label{corlwp}
Let $\gamma\in \left\{1,2,3\right\}$ and $m,l\in \N$ with $3\le m\le l$. We assume that the nonlinear function $G_{\gamma}^{m,l}$ is the form of 
\begin{equation}
\label{nonli3}
G_{\gamma}^{m,l}\left(\left\{\partial_x^{k}u\right\}_{k\le {\gamma}},\left\{\partial_x^{k}\bar{u}\right\}_{k\le {\gamma}}\right):=\sum_{m\le \mathbf{k}+\mathbf{l}\le l}
\sum_{|\alpha|+|\beta|\le \gamma}C_{\alpha,\beta}^{\mathbf{k},\mathbf{l}}(\partial_x^{\alpha_1}u)\cdots(\partial_x^{\alpha_{\mathbf{k}}}u)\left(\partial_x^{\beta_1}\overline{u}\right)\cdots\left(\partial_x^{\beta_{\mathbf{l}}}\overline{u}\right),
\end{equation}
where $C_{\alpha,\beta}^{\mathbf{k},\mathbf{l}}\in \C$ is a constant.
Then the Cauchy problem (\ref{1-1}) is locally well-posed in $H^s(\mathbb{R})$ for small initial data $u_0\in H^s(\mathbb{R})$ with $s\ge \frac{\gamma-1}{2}$.
\end{theorem}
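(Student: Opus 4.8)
The plan is to reduce Theorem~\ref{corlwp} to Theorem~\ref{lwp1} by an interpolation-and-summation argument over the homogeneous pieces of the nonlinearity. First I would observe that the nonlinearity $G_{\gamma}^{m,l}$ in \eqref{nonli3} is a finite sum of $\mu$-homogeneous terms $G_{\gamma}^{\mu,\mu}$ for $m\le \mu\le l$, each of which has the scaling-invariant shape \eqref{nonl_sc} except that we only demand $|\alpha|+|\beta|\le\gamma$ rather than $=\gamma$; but lower-order-derivative contributions are strictly easier, so each such term is controlled by the multilinear estimate underlying Theorem~\ref{lwp1} at regularity $s\ge s_0(\gamma,\mu)$. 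Since $s_0(\gamma,\mu)$ as defined in \eqref{minir}-\eqref{minir2} is non-increasing in $\mu$ (for $m=3$ it is $\frac{\gamma-1}{2}$, for $m\ge4$ it is smaller), the binding constraint over the range $m\le\mu\le l$ is the one at $\mu=m$. Because the hypothesis here is $m\ge3$, the worst case is $m=3$, giving exactly the stated threshold $s\ge\frac{\gamma-1}{2}$; for $m\ge 4$ the threshold $\frac{\gamma-1}{2}$ is more than enough. So the theorem is consistent and the real content is a uniform multilinear bound.

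The key steps, in order: (i) Set up the solution space $X^s$ from \eqref{fsb} on a short interval $I=[0,T]$ and write the Duhamel map $\Phi_{u_0}(u)=e^{it\partial_x^4}u_0 - i\int_0^t e^{i(t-t')\partial_x^4}G(u(t'))\,dt'$. The linear estimate $\|e^{it\partial_x^4}u_0\|_{X^s}\lesssim\|u_0\|_{H^s}$ and the Duhamel/energy estimate for $X^s$ are available from the framework cited (Pornnopparath, Bejenaru--Ionescu--Kenig--Tataru) and used to prove Theorems~\ref{cor2-4} and \ref{lwp1}. (ii) Establish the multilinear estimate: for each homogeneous piece of degree $\mu$ with total derivative count $\le\gamma$,
\[
\Bigl\|\int_0^t e^{i(t-t')\partial_x^4}\,(\partial_x^{\alpha_1}u_1)\cdots(\partial_x^{\beta_{\mathbf l}}\overline{u_{\mathbf k+\mathbf l}})\,dt'\Bigr\|_{X^s}
\;\lesssim\; \prod_{j=1}^{\mathbf k+\mathbf l}\|u_j\|_{X^s},
\]
for $s\ge\frac{\gamma-1}{2}$ (and $s\ge0$, automatic when $\gamma\ge1$). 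This is where one invokes the Littlewood--Paley decomposition, the bilinear Strichartz / Kato smoothing / maximal function estimates listed in the introduction, and the key point that one may always distribute the $\gamma$ derivatives so that at most $\gamma$ of the highest-frequency factor's derivatives need to be absorbed — and the $\tfrac{\gamma-1}{2}$ of regularity plus the smoothing gain from the fourth-order dispersion suffices exactly because $s_0(\gamma,3)=\tfrac{\gamma-1}{2}$. (iii) Since $\|u\|_{X^s}\le\|u\|_{X^s([0,1])}$-type quantities can be made small by taking $T$ small together with smallness of $u_0$ in $H^s$ (as in Theorems~\ref{cor2-4}, \ref{lwp1}), sum the finitely many homogeneous estimates, apply the contraction mapping principle on a ball in $X^s(I)$, and obtain existence, uniqueness, and Lipschitz (indeed smooth) dependence on $u_0$; persistence of regularity $u\in C(I;H^s)$ follows from the embedding $X^s(I)\hookrightarrow C(I;H^s)$ built into the space.

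The main obstacle is step (ii), the uniform multilinear estimate, and within it the interaction where all but one frequency are comparably large (the ``high $\times$ high $\times\cdots$'' regime) for the top-derivative piece $|\alpha|+|\beta|=\gamma$ with $\gamma=3$: there one must place three derivatives and only has $s=1$ Sobolev regularity to spare, so the argument has essentially no slack and must exploit the full strength of the $X^s$ structure — modulation localization plus the resonance function $\sum\xi_j^4$ for the fourth-order symbol, which forces a large modulation when frequencies are comparable, yielding the needed derivative gain. The nonresonant cases (one frequency dominating) are handled more comfortably by Kato smoothing on the high-frequency factor and Strichartz/maximal bounds on the rest. Once the sharpest piece is done, the subprincipal pieces (fewer derivatives, or degree $\mu>m$) follow a fortiori, and the summation over $m\le\mu\le l$ is finite and harmless after the smallness normalization.
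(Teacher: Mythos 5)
Your high-level reduction is the right one and matches the paper: decompose $G_{\gamma}^{m,l}$ into a finite sum of $\mu$-homogeneous pieces with $m\le\mu\le l$ and at most $\gamma$ derivatives, observe that the constraint $s\ge s_0(\gamma,\mu)$ is tightest at $\mu=3$ giving $s\ge\frac{\gamma-1}{2}$, apply the multilinear estimate (Theorem~\ref{sc_inv_multi_e}, together with its no-derivative variant \eqref{multilin_noderiv}) to each piece, and close by contraction mapping as in the proof of Theorem~\ref{lwpcomp}.

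However, two ingredients you single out are not the ones the paper uses, and they matter. First, the function space: you set up the argument in the Besov/Fourier-restriction space \eqref{fsb}, but the paper proves Theorem~\ref{corlwp} in the space $X^s(T)$ of Definition~\ref{def3-4}, built from the simpler $X_N,Y_N$ of Definition~\ref{def4-1}. These are structurally different: Definition~\ref{def4-1} explicitly carries the Strichartz, maximal-function ($N^{-(1+\epsilon)}L^2_xL^\infty_t$), Kenig--Ruiz, and Kato-smoothing norms, whereas the space \eqref{fsb} does not carry the maximal-function component (cf.\ Proposition~\ref{lin_XN}), and the paper only introduces \eqref{fsb} in Section~\ref{mesc_1} for the critical-regularity cases $(\gamma,m)=(3,5),(2,4)$. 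The $L^2_xL^\infty_T$ maximal-function bound is used explicitly in the low-frequency factor in the $k=1$ regime of the proof of Theorem~\ref{sc_inv_multi_e}, so simply swapping to \eqref{fsb} is not a drop-in replacement. Second, the mechanism in the hardest regime: you assert that the comparable-high-frequency interaction at $\gamma=3$, $m=3$ is closed by ``modulation localization plus the resonance function $\sum\xi_j^4$,'' i.e.\ an $X^{s,b}$-style argument. The paper does not use any modulation localization for Theorem~\ref{corlwp}. For the $k<m$ regimes it uses the bilinear Strichartz estimate on the solution space (Theorem~\ref{bilisol}), which is proved via the explicit Duhamel decomposition (Proposition~\ref{duam_decom}) and the $L^1_xL^2_t$-smoothing structure of $Y_N$, not via resonance analysis; and for the $k=m$ regime (all frequencies comparable) it uses nothing more than the $L^4_tL^\infty_x$ Strichartz bound with its $N^{1/2}$ gain, yielding exactly $N_1^{-2s+\gamma-1}$ and hence the threshold $s\ge\frac{\gamma-1}{2}$. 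You should also note that for $\gamma=3$ the time factor in \eqref{multimultimulti_est} is $T^{\delta}$ with $\delta=0$, so smallness must come from $\|u_0\|_{H^s}$, not from shrinking $T$.
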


\begin{remark}
The nonlinearity defined in (\ref{nonli3}) 
is general form such as 
the each terms do not contain more than $\gamma$ derivatives. 
\end{remark}

\begin{remark}
\label{smallne}
In the case of $\gamma\in \{1,2\}$, namely $\gamma\ne 3$, the small assumption on the initial data $u_0$ assumed in Theorems~\ref{cor2-4}, 
 ~\ref{lwp1}, and ~\ref{corlwp} can be removed (see Theorem \ref{sc_inv_multi_e}).
\end{remark}

\begin{remark}
Theorem \ref{cor2-4} with $\gamma=2$ and Remark \ref{smallne} gives an improvement of \cite[Theorem1.1]{HHW07} in the following two sense: The first one is that Theorem \ref{cor2-4} with Remark \ref{smallne} gives existence of a local-in-time solution to (\ref{1-1}) with $\gamma=2$, $m\in [3,l]$ and the general nonlinearity (\ref{nonl}) for arbitrary data which belong to the wider class, that is, $H^{s_1}(\R)\ \text{with}\  s_1\ge \frac{5}{2}$, than $H^{s_2}(\R)\ \text{with}\ s_2>\frac{9}{2}$ used in \cite[Theorem1.1]{HHW07}. The second one is that Theorem \ref{cor2-4} with Remark \ref{smallne} verifies that for any $t\in I$, the solution $u(t)$ to the problem (\ref{1-1})-(\ref{nonl}) belongs to the same space as the initial data, whose situation improves the previous result \cite[Theorem1.1]{HHW07}.
\end{remark}

\begin{remark}
Theorem \ref{corlwp} with $\gamma=2$ and Remark \ref{smallne} gives extensions of \cite[Theorem1.1]{GSR14}, \cite[Theorem2.1]{S03}, \cite[Theorem1.1]{HJ05}, \cite[Theorem1.1]{S04} and \cite[Theorem1.1]{HJ07}. More precisely, Theorem \ref{corlwp} with $\gamma=2$ and Remark \ref{smallne} give large data local well-posedness in $H^s(\R)$ with $s\ge \frac{1}{2}$ to the problem (\ref{1-1}) with more general nonlinearities (\ref{nonli3}) with $\gamma=2$ than both $G_2^{3,9}:=c_1u^2\partial_x^2\bar{u}+c_2|u|^8u$ with $c_1,c_2\in \C$ in \cite[Thoemre1.1]{GSR14} and the physical model (\ref{nlt}) with $\lambda_5=0$ in \cite[Theorem2.1]{S03} ($\nu<0$) and \cite[Theorem1.1]{HJ05} ($\nu>0$). Moreover, Theorem \ref{corlwp} with $\gamma=2$ and Remark \ref{smallne} also imply large data local well-posedness to the problem (\ref{1-1}) with the physical model (\ref{nlt}) for the initial data in $H^s(\R)$ with the lower order regularity $s\ge \frac{1}{2}$ than $s\ge \frac{7}{12}$ and $s>\frac{1}{2}$, which are assumed in the previous results \cite[Theorem1.1]{S04} and \cite[Theorem1.1]{HJ07} respectively.
\end{remark}

\begin{remark}
Theorem \ref{corlwp} with $\gamma=3$ gives small data local well-posedness in $H^1(\R)$ to the problem (\ref{1-1}) with the nonlinearity $G_3^{3,7}$ given by (\ref{nonli}), where the equation (\ref{1-1}) is completely integrable and belongs to a hierarchy (\ref{1-6-a}) of the derivative nonlinear Schr\"odinger equation.
\end{remark}
\begin{remark}
\label{optima}
Let $\gamma\in \{1,2,3\}$. Then we can prove that the data-to-solution map $u_0\mapsto u$ to the problem (\ref{1-1}) 
with the gauge invariant cubic nonlinearity $G_{\gamma}^{3,3}\left(\left\{\partial_x^{k}u\right\}_{k\le {\gamma}},\left\{\partial_x^{k}\bar{u}\right\}_{k\le {\gamma}}\right):=\partial_x^{\gamma}(|u|^2u)$ is not $C^3$ in $H^s(\R)$ 
for $s<\frac{\gamma -1}{2}$ in the sense of Fr\'echet derivative. Indeed, if we choose $f_N\in L^2$ satisfying
\[
\widehat{f_N}(\xi):=N^{-s+\frac{1}{2}}\mathbf{1}_{[N-N^{-1},N+N^{-1}]}(\xi)
\]
for $N\gg 1$ as the initial data, then we can prove the estimate
\[
\sup_{0\le t\le 1}
\left\|\int_0^te^{i(t-t')\partial_x^4}\partial_x^{\gamma}\left(|e^{it\partial_x^4}f_N(t')|^2e^{it\partial_x^4}f_N(t')\right)dt'\right\|_{H^s}
\gtrsim N^{-2s+\gamma -1}\rightarrow \infty
\]
as $N\rightarrow\infty$ for $s<\frac{\gamma -1}{2}$ by the same argument as in the proof of \cite[Theorem1.4]{HO16}, where the implicit constant is independent of $N$. This means that Theorem~\ref{lwp1} with $m=3$ is optimal 
as long as we use the iteration argument. 
We can also obtain
\[
\sup_{0\le t\le 1}
\left\|\int_0^te^{i(t-t')\partial_x^4}\left(\left|\partial_x^{\gamma}e^{it\partial_x^4}f_N(t')\right|^2\partial_x^{\gamma}e^{it\partial_x^4}f_N(t')\right)dt'\right\|_{H^s}
\gtrsim N^{-2s+3\gamma -1}\rightarrow \infty
\]
as $N\rightarrow\infty$ for $s<\frac{3\gamma -1}{2}$. 
This means that Theorem~\ref{cor2-4} is optimal 
as long as we use the iteration argument. 
\end{remark}
%
%
%
Next we consider the following scaling invariant nonlinearity
\begin{equation}\label{nonl_sc_2}
G=G_{\gamma}^{m,m}\left(\left\{\partial_x^{k}u\right\}_{k\le {\gamma}},\left\{\partial_x^{k}\bar{u}\right\}_{k\le {\gamma}}\right):=\partial_x^{\gamma}\mathcal{P}_m(u,\overline{u}), 
\end{equation}
where $\gamma\in \{1,2,3\}$, $m\in \N$ and $\mathcal{P}_m:\C^2\rightarrow \C$ is the $m$-th order polynomial defined by
\begin{equation}
\label{poly}
\mathcal{P}(z,w)=\mathcal{P}_m(z,w):=\sum_{k=0}^m C_{k}z^{k}w^{m-k}.
\end{equation}
Here $C_{k}\in \C$ with $k\in \left\{0,\cdots,m\right\}$ is a complex constant. For the  nonlinearity (\ref{nonl_sc_2}), we can prove the following global well-posedness results in $H^s(\R)$ in the scaling critical or subcritical case $s\ge s_c$ under $\gamma=3$ and $m\ge 5$ or $\gamma\in \{1,2\}$ and $m\ge 4$:
\begin{theorem}[Well-posedness and scattering at the scaling critical regularity for $\gamma =3$]
\label{lwp2-2}
We assume that the nonlinearity $G=G_3^{m,m}$ is the form of (\ref{nonl_sc_2}). 
Let $\gamma =3$, $m\ge 5$, and $s\ge s_c$. 
Then, the Cauchy problem (\ref{1-1}) is globally well-posed in $H^s(\mathbb{R})$ for small initial data $u_0\in H^s(\mathbb{R})$. Moreover, the global solution $u$ scatters in $H^s(\R)$ as $t\rightarrow \pm \infty$.
\end{theorem}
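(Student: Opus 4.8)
The plan is to prove Theorem~\ref{lwp2-2} by a global-in-time contraction mapping argument carried out on a scaling-critical resolution space, running in its homogeneous, endpoint version the scheme of \cite{Ppre} that underlies Theorem~\ref{lwp1}. Write $s_c=s_c(3,m)=\tfrac12-\tfrac1{m-1}$, and let $X^{s}$ be the $\ell^{2}$-structured dyadic space adapted to the group $e^{it\pa_x^{4}}$ whose block norm $\|P_{N}u\|_{X_{N}}$ combines the energy norm $L^{\infty}_{t}L^{2}_{x}$, the relevant Strichartz norms, the Kato-type smoothing norm $N^{3/2}\|P_{N}u\|_{L^{\infty}_{x}L^{2}_{t}}$, the maximal-function / Kenig--Ruiz norm, and a Bourgain-type modulation weight; let $N^{s}$ be the companion space for the Duhamel term. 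These are precisely the spaces underlying Theorems~\ref{cor2-4}--\ref{corlwp}, now used in their scale-invariant form. The first step is to record the linear estimates without any $\epsilon$-loss or inhomogeneous weight: the homogeneous bound $\|e^{it\pa_x^{4}}u_{0}\|_{X^{s_c}}\lesssim\|u_{0}\|_{\dot H^{s_c}}$ and the Duhamel bound $\bigl\|\int_{0}^{t}e^{i(t-t')\pa_x^{4}}F(t')\,dt'\bigr\|_{X^{s_c}}\lesssim\|F\|_{N^{s_c}}$; for the latter I would use the decomposition of the Duhamel operator of \cite{BIKT11}, splitting the time integral according to the size of the modulation $|\tau-\xi^{4}|$ so that the non-resonant portion gains a full power of the modulation.

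The heart of the matter is the uniform $m$-linear estimate
\[
\bigl\|\pa_x^{3}\mathcal{P}_{m}(u,\overline{u})\bigr\|_{N^{s_c}}\lesssim\|u\|_{X^{s_c}}^{m},
\]
with $\mathcal{P}_{m}$ the homogeneous polynomial (\ref{poly}). I would prove it by Littlewood--Paley decomposing the $m$ inputs into dyadic frequencies $N_{1}\ge N_{2}\ge\cdots\ge N_{m}$ and the output into $N_{0}\lesssim N_{1}$, then distributing the three derivatives produced by $\pa_x^{3}$ (so that $N_{0}^{3}\le N_{1}^{3}$) among the factors using the bilinear Strichartz/smoothing estimates adapted to $\pa_x^{4}$ on the high-frequency pair and the Strichartz, maximal-function and Kenig--Ruiz estimates on the remaining factors, H\"older-combining so that the dyadic summation closes; the constraint $m\ge5$ is exactly what supplies enough factors (equivalently, makes $s_c$ large enough) for this bookkeeping to balance. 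The delicate case is the resonant region in which all modulations are small, so that neither the linear smoothing nor a transversality gain is available; there I would invoke the decomposition of the Duhamel term of \cite{BIKT11}, splitting according to the size of the resonance function $\xi_{0}^{4}-\sum_{j=1}^{m}\theta_{j}\xi_{j}^{4}$ (with $\theta_{j}\in\{+1,-1\}$ depending on the monomial) of the fourth-order dispersion, to recover the missing derivatives. The non-gauge-invariant monomials $z^{k}w^{m-k}$ with $2k\ne m$ present in $\mathcal{P}_{m}$ only help here, because of the extra oscillation they carry.

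Granting the multilinear estimate, the contraction is routine. For $\|u_{0}\|_{H^{s}}\le\delta$ with $\delta$ small the map
\[
\Phi_{u_{0}}(u):=e^{it\pa_x^{4}}u_{0}-i\int_{0}^{t}e^{i(t-t')\pa_x^{4}}G_{3}^{m,m}(u)(t')\,dt'
\]
sends the ball $\{\,u:\|u\|_{X^{s}}\le 2C\delta\,\}$ into itself and is a contraction on it, since $\delta+C(2C\delta)^{m}\le 2C\delta$ and the difference estimate produces a factor $(C\delta)^{m-1}<1$; this yields the unique global solution together with continuous dependence on the data, and the range $s>s_c$ follows by the standard decomposition of the solution into a scaling-critical high-frequency part and a subcritical low-frequency part, the latter handled by Theorem~\ref{lwp1}. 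Scattering in $H^{s}$ is then immediate: setting $w(t):=-i\int_{0}^{t}e^{-it'\pa_x^{4}}G_{3}^{m,m}(u)(t')\,dt'$, one has $e^{-it\pa_x^{4}}u(t)=u_{0}+w(t)$, and since the global $N^{s}$-norm of $G_{3}^{m,m}(u)$ is finite, $\|w(t_{2})-w(t_{1})\|_{H^{s}}\lesssim\|G_{3}^{m,m}(u)\|_{N^{s}([t_{1}\wedge t_{2},\,t_{1}\vee t_{2}])}\to 0$ as $t_{1},t_{2}\to\pm\infty$; hence $w(t)$ converges in $H^{s}$ and $u_{\pm}:=u_{0}+\lim_{t\to\pm\infty}w(t)$ satisfies $\|u(t)-e^{it\pa_x^{4}}u_{\pm}\|_{H^{s}}\to 0$.

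I expect the main obstacle to be this $m$-linear estimate at the endpoint $s=s_c$: recovering the three derivatives of $\pa_x^{3}$ at scaling-critical regularity with only $m\ge5$ factors leaves essentially no slack, so the dyadic summation must be carried out with genuine (not merely logarithmic) gains, and the resonant low-modulation interactions must be treated by the \cite{BIKT11} Duhamel decomposition rather than by a cruder $X^{s,b}$-type argument. Everything else -- the fixed-point step and the scattering statement -- is then essentially formal.
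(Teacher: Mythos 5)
Your proposal follows the same overall architecture as the paper: a global contraction in a scale-invariant dyadic resolution space, closed via a scaling-critical $m$-linear estimate that is proved by Littlewood--Paley decomposition together with bilinear Strichartz bounds and, where needed, a modulation decomposition in the spirit of \cite{BIKT11}, and then scattering by Cauchy-in-time for $e^{-it\pa_x^4}u(t)$. You also correctly anticipate that the resonant region with all modulations small is the hard part. The one structural point you miss, and that the paper uses to good effect, is a clean dichotomy in the regularity count: for $m\ge 6$ one has $s_c(3,m)>\tfrac14$, so the factors $N_i^{s_c-\frac14}$ coming from the Kenig--Ruiz $L^4_xL^\infty_t$ norms already carry positive power and the multilinear estimate closes by pure H\"older and Kato smoothing with no Bourgain weight, no bilinear Strichartz, and no resonance analysis at all (Theorem \ref{multi_est_3m}, based solely on the simple block norm (\ref{fsea})). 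The delicate machinery you describe --- the $\dot X^{0,\pm\frac12,1/\infty}$-type modulation weight in the block norm (\ref{fsb}), the refined bilinear estimate keyed to the separation $|\xi_1\mp\xi_2|$ (Theorem \ref{bsere}), the $Q_A$ splitting into high and low modulation with the dispersive identity $(\xi_1+\xi_2+\xi_3+\xi_4)^4-\sum\pm\xi_i^4\sim N_1^4$, and the case analysis in the number of conjugated factors --- is required only at the exact endpoint $m=5$, where $s_c=\tfrac14$ and the simple H\"older count has no slack (Theorem \ref{mest_5_cri}). Your unified treatment would presumably still work for $m\ge 6$ (the extra machinery can only help), but it is considerably heavier than needed there; and your claim that the non-gauge-invariant monomials ``only help'' understates the work actually done, since the sign pattern of conjugations determines whether one gets a high--low frequency separation amenable to the bilinear estimate or must fall back on the modulation bound, and both cases must be checked.
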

\begin{theorem}[Well-posedness and scattering at the scaling critical regularity for $\gamma \in \{1,2\}$]
\label{lwp2-3}
We assume that the nonlinearity $G=G_{\gamma}^{m,m}$ is the form of (\ref{nonl_sc_2}). 
Let $\gamma \in \{1,2\}$, $m\ge 4$, and $s\ge s_c$. 
Then, the Cauchy problem (\ref{1-1}) is locally well-posed in $H^s(\mathbb{R})$ for arbitrary initial data $u_0\in H^s(\R)$ and globally well-posed in $H^s(\mathbb{R})$ for small initial data $u_0\in H^s(\mathbb{R})$. Moreover, the global solution $u$ scatters in $H^s(\R)$ as $t\rightarrow \pm \infty$.
\end{theorem}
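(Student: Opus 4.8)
The plan is to establish Theorem~\ref{lwp2-3} by the same contraction-mapping scheme that underlies Theorem~\ref{lwp2-2}, the structural difference being that fewer derivatives now sit in the nonlinearity ($\gamma\le 2$ rather than $\gamma=3$), which is exactly what allows the admissible degree to drop from $m\ge 5$ to $m\ge 4$. First I would fix the resolution space $Z^s$ (the adapted Fourier-restriction / $U^p$--$V^p$-type space built in the earlier sections) on a time interval $I\ni 0$, and rewrite (\ref{1-1}) with $G=G^{m,m}_{\gamma}=\partial_x^{\gamma}\mathcal{P}_m(u,\overline u)$, where $\mathcal{P}_m$ is the homogeneous polynomial (\ref{poly}), as the fixed-point equation $u=\Phi(u):=e^{it\partial_x^4}u_0+\mathcal{D}(\partial_x^{\gamma}\mathcal{P}_m(u,\overline u))$ with $\mathcal{D}$ the Duhamel operator. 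Since the nonlinearity carries $\gamma$ spatial derivatives, a direct $X^{s,b}$-type estimate would lose derivatives, so instead I would decompose $\mathcal{D}$ following Bejenaru--Ionescu--Kenig--Tataru~\cite{BIKT11} into a piece estimated by the energy ($U^2$) norm and a piece in which the $\gamma$ derivatives are routed onto the highest-frequency input and then recovered through the Kato-type local smoothing estimate for $e^{it\partial_x^4}$ (which gains $3/2$ derivatives). All linear input estimates---Strichartz, Kato smoothing, maximal function, Kenig--Ruiz bounds for the fourth-order group, and the embedding of $Z^{s_c}$ into the corresponding space-time spaces---are then invoked on the homogeneous $\dot H^{s_c}$ scale exactly as in the proof of Theorem~\ref{lwp2-2}.

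The heart of the argument is the critical multilinear estimate
\[
\big\|\,\mathcal{D}\big(\partial_x^{\gamma}\mathcal{P}_m(u,\overline u)\big)\,\big\|_{Z^{s_c}(I)}\;\lesssim\;\|u\|_{Z^{s_c}(I)}^{\,m}
\]
together with its Lipschitz (difference) counterpart, both with a constant independent of $I$. I would prove these by a Littlewood--Paley decomposition of the $m$ inputs: pair the two highest frequencies $N_1\ge N_2$ and estimate their product by the bilinear refinement of the Strichartz estimate for $e^{it\partial_x^4}$ (which provides an off-diagonal gain of order $N_1^{-3/2}$), or equivalently by combining one Kato smoothing norm with an $L^2_xL^\infty_t$-type bound; estimate the remaining $m-2$ low-frequency factors by maximal-function and Strichartz bounds. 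The derivative $\partial_x^{\gamma}$ costs at most $N_1^{\gamma}$, and the net frequency budget---this loss played off against the $3/2$-smoothing gain, the critical weights $N_j^{s_c}$, and the summability contributed by the additional low-frequency factors---closes exactly at the endpoint $s=s_c$ when $m\ge 4$ for $\gamma\in\{1,2\}$, paralleling the threshold $m\ge 5$ in the case $\gamma=3$ of Theorem~\ref{lwp2-2}.

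I expect the main obstacle to be precisely this dyadic balancing \emph{at} the critical regularity $s=s_c$ (rather than $s_c+\varepsilon$): since $s_c$ can be negative when $m=4$, no logarithmic loss is affordable, and one must keep careful track of which input carries the derivative loss, which pair is estimated bilinearly, and how the high-low interactions are organized. Once the critical multilinear and difference estimates are available, the small-data theory is the standard Banach fixed point for $\Phi$ on a small ball of $Z^{s_c}(\R)$, yielding a global solution together with uniqueness and continuous dependence in $H^{s_c}(\R)$. Scattering follows because the global bound places $\mathcal{D}(\partial_x^{\gamma}\mathcal{P}_m(u,\overline u))$ in $Z^{s_c}(\R)$, so that $e^{-it\partial_x^4}u(t)=u_0+\int_0^t e^{-it'\partial_x^4}\partial_x^{\gamma}\mathcal{P}_m(u,\overline u)(t')\,dt'$ is Cauchy in $H^{s_c}(\R)$ as $t\to\pm\infty$, with limits $u_{\pm}:=u_0+\int_0^{\pm\infty}e^{-it'\partial_x^4}\partial_x^{\gamma}\mathcal{P}_m(u,\overline u)(t')\,dt'$.

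It remains to treat local well-posedness for \emph{arbitrary} data at $s=s_c$, where smallness is not automatic; here I would use the usual critical-regularity device. Split $u_0=P_{\le N}u_0+P_{>N}u_0$ and choose $N$ so large that $\|P_{>N}u_0\|_{H^{s_c}}$ lies below the small-data threshold. The low-frequency part $P_{\le N}u_0$ belongs to $H^{\sigma}$ for every $\sigma>s_c$, so on a short enough interval $I$ the subcritical norm $\|e^{it\partial_x^4}P_{\le N}u_0\|_{Z^{\sigma}(I)}$ is small, the gain coming from a positive power of $|I|$ furnished by subcriticality; one then runs the contraction for $u$ in a small ball around $e^{it\partial_x^4}u_0$ in $Z^{s_c}(I)$, again using the multilinear and difference estimates (and a companion estimate in $Z^{\sigma}(I)$ for persistence of the low-frequency regularity). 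This step is routine compared with the critical multilinear estimate.
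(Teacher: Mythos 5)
Your overall architecture is the right one and matches the paper's: a contraction mapping, the Bejenaru--Ionescu--Kenig--Tataru decomposition of the Duhamel term, the bilinear Strichartz refinement on the two highest frequencies, Kato smoothing ($3/2$ derivative gain), and the frequency-splitting device for arbitrary data at criticality. The small-data/scattering half corresponds to Theorem~\ref{thm8-3} and the large-data local half to Theorem~\ref{thm8-4} (the latter via the $(\rho,a)$-ball argument with the additional multilinear estimates \ref{mest_4_cri_TM} and \ref{multi_est_gamma1_lowf}, which is in spirit the same as your perturbation-around-the-low-frequency-flow device).

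However, there is a genuine gap at the core multilinear estimate in exactly the boundary case $m=4$. Your plan pairs the two highest frequencies and extracts an $N_1^{-3/2}$ off-diagonal gain from the bilinear Strichartz estimate; that gain exists only when the paired frequencies are actually separated. When all four dyadic frequencies are comparable, $N_1\sim N_2\sim N_3\sim N_4$, and the output frequency $N$ is $\ll N_1$ --- which is the dominant interaction at $s=s_c<0$, namely $s_c=-\tfrac12$ for $\gamma=1$ and $s_c=-\tfrac16$ for $\gamma=2$ --- the refined bilinear estimate (Theorem~\ref{bsere}) with $L=N$ only yields $N_1^{-1}N^{-1/2}$, and the paper remarks explicitly after Theorem~\ref{mest_4_cri} that this cannot be summed over $N$ once $s<0$. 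The paper closes this interaction by decomposing each factor into its positive- and negative-frequency parts $P^\pm_{N_i}$, doing a case analysis on the sign pattern, and --- when no pair of frequencies has sum or difference of size $\sim N_1$ --- invoking the modulation inequality (\ref{modulation_bd}): the convexity bound $(C_1+C_2+C_3+C_4)^4>C_1^4\pm C_2^4\pm C_3^4\pm C_4^4$ forces at least one input factor to carry modulation $\gtrsim N_1^4$, which via Proposition~\ref{dual_besov} converts into an $N_1^{-2}$ gain through the Besov-type Fourier restriction norm $\dot X^{0,\frac12,\infty}$. This is what forces the solution spaces (\ref{fsb}) and (\ref{fs14}), containing a $\dot X^{0,-\frac12,1}$ component, rather than the plain $L^1_xL^2_t$-based spaces (\ref{fsea}); the latter suffice only for $\gamma=2$, $m\ge5$ (Theorem~\ref{multi_est_3mg2}). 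You flag ``dyadic balancing at $s=s_c$'' as the expected difficulty, but the sign decomposition plus modulation argument is precisely the idea that resolves it, and it is not in your sketch; a bilinear-Strichartz-only estimate does not reach the endpoint when $m=4$.
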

%



%

%

\begin{remark}
\label{contri1d}
Theorem \ref{lwp2-3} with $\gamma =1$ gives an improvement of the results obtained in the previous papers \cite{W12} and \cite{HO16}. 
In \cite{W12}, well-posedness in the scaling critical Sobolev $H^{s_c}(\R)$ was shown only for the special gauge invariant nonlinearity $\partial_x(|u|^{m-1}u)$ with $m\ge 5$. 
In Theorem 1.2 and Remark 3 in \cite{HO16}, the scaling invariant nonlinearity $G_1^{4,4}$ given by (\ref{nonl_sc_2}) with $m=4$ was studied. Well-posedness in the scaling critical Sobolev space $H^{s_c}(\R)$ was proved only for the special nonlinearity $\partial_x(\overline{u}^4)$ (see \cite[Theorem]{HO16}). For the other quartic nonlinearities, that is, $\partial_x(u^4)$, $\partial_x(u^2|u|^2)$, $\partial_x(|u|^4)$, $\partial_x(|u|^2\overline{u}^2)$, well-posedness was proved in the space $L^2(\R)$ (see \cite[Remark 3]{HO16}), which is a smaller space than the scaling critical Sobolev space $H^{s_c}(\R)$.
\end{remark}
\begin{remark}
\label{Optimal}
Let $\gamma\in \{1,2,3\}$, $m\in \N$ with $m\ge 4$ and $s<s_c$. Then we can prove that the data-to solution map $u_0\mapsto u$ to the problem (\ref{1-1}) with a specific scaling invariant nonlinearity $G^{m,m}_{\gamma}=\partial_x^{\gamma}\left(u^m\right)$ is not $C^m$ in the same manner as the proof of \cite[Theorem 1.4 (ii)]{HO16}. This implies that Theorems \ref{lwp2-2} and \ref{lwp2-3} are optimal as long as we use the iteration argument.
\end{remark}


%


\subsection{Strategy, difficulties and idea for the proof of the main results}
\ \ The strategy of our proof of the main results is based on the contraction argument on a suitable function space employed in \cite{Ppre} with several multilinear estimates from the auxiliary space to the solution space. The multilinear estimates are basically proved by combining the linear estimates (Strichartz estimates, Kato-type smoothing estimates, Maximal function estimates, Kenig-Ruiz estimates), a suitable decomposition of the Duhamel term introduced in \cite{BIKT11}, bilinear Strichartz estimates on the solution spaces with the Littlewood-Paley decomposition and modulation estimates.

Especially, to obtain the multilinear estimates (Theorem \ref{sc_inv_multi_e}) in the case of $(\gamma,m)=(3,3)$, we employ the bilinear Strichartz estimate (Theorem \ref{bilisol}) on the solution spaces. By using Theorem \ref{sc_inv_multi_e}, we can prove the well-posedness to the problem (\ref{1-1}) with the scaling invariant nonlinearity (\ref{nonl_sc}) in the Sobolev space $H^1(\R)$. 
By using changing varables with $u=\langle \partial_x\rangle^3v$ 
and applying the multilinear estimate, we can get the well-posedness for the general nonlinearity $G_3^{3,l}$ in the Sobolev space $H^4(\R)$ (Theorem \ref{cor2-4}). This improves the previous result \cite[Theorem1.1]{HJ11}. We note that such bilinear Strichartz estimates were not used in the previous papers \cite{HJ11, RWZ14}.

In the proof of the scaling critical $s=s_{c}(\gamma,m)$ case and $(\gamma,m)=\left(3,5\right)$, $\left(2,4\right)$ or $\left(1,4\right)$ case of Theorems \ref{lwp2-2} and \ref{lwp2-3}, we need a more delicate argument than the other cases such as the scaling subcritical case $s>s_c(\gamma,m)$. Indeed, in such cases, we employ more sophisticated solution spaces and their auxiliary spaces (see (\ref{fsb})) than the spaces given by Definition \ref{def4-1}. By using these spaces, we can use so-called modulation estimates and deal with nonlinear interactions more precisely. Moreover we also prove more refined bilinear Strichartz estimates (Theorem \ref{bsere}) than Theorem \ref{bilisol} and apply them to get multilinear estimates (Theorems \ref{mest_5_cri}, \ref{mest_4_cri} and \ref{multi_est_3mg1}).

\subsection{Organization of the present paper}
\ \ The rest of the present paper is organized as follows. 
In Section \ref{pre}, we introduce several notations used throughout this paper and collect fundamental estimates in Fourier analysis and several space-time estimates for solutions to the free fourth order Schr\"odinger equation. 
We define the solution spaces and their auxiliary spaces to prove Theorems \ref{cor2-4}, \ref{lwp1}, \ref{corlwp}. In Section \ref{decom}, we derive several space-time estimates for the Duhamel term. Especially the proof of the estimate on the $L_x^1L_t^2$-norm of the Duhamel term is given by using a decomposition of it introduced in \cite{BIKT11}. In section \ref{multi}, we prove a bilinear Strichartz estimate on the solution spaces (Theorem \ref{bilisol}) via the decomposition of the Duhamel term again. Moreover we prove multilinear estimates by combining the Littlewood-Paley theory, the linear estimates and the bilinear Strichartz estimate (Theorem \ref{sc_inv_multi_e}). In section \ref{multie}, we prove multilinear estimates for several specific scaling invariant nonlinearities at the scaling critical regularity in the cases $m\ge 6$ with $\gamma=3$ and $m\ge 5$ with $\gamma=2$ via the linear estimates. We note that in section \ref{multie}, we do not need the bilinear Strichartz estimate (Theorem \ref{bilisol}). In section \ref{mesc_1}, we introduce more sophisticated solution spaces and their auxiliary spaces (\ref{fsb}) than the spaces given by Definition \ref{def4-1} to treat the similar nonlinearities as studied in Section \ref{multie} at the scaling critical regularity in the cases $m=5$ with $\gamma=3$ and $m=4$ with $\gamma=2$. We prove more refined bilinear Strichartz estimates on the solution spaces (Theorem \ref{bsere}). By applying Theorem \ref{bsere} and treating nonlinear interactions more precisely, we dirive multilinear estimates (Theorems \ref{mest_5_cri} and \ref{mest_4_cri}). In section \ref{mesc_2}, we introduce the similar solution spaces and their auxiliary spaces (\ref{fs14}) as (\ref{fsb}), to treat the similar nonlinearities as studied in Section \ref{multie} at the scaling critical regularity in the case $m=4$ with $\gamma=1$. The proof of Theorem \ref{multi_est_3mg1} is done via the almost similar manner as the proof of Theorem \ref{mest_4_cri}. In section \ref{well-po}, we give a proof of Theorem \ref{cor2-4}-Theorem \ref{lwp2-3}.


\section{Prelminaries}
\label{pre}

\subsection{Notations}
We summarize the notations used throughout this paper.
For a time interval $I$ and a Hilbert space $\mathcal{H}$, we write the function space composed of continuous functions from $I$ to $\mathcal{H}$ as $C\left(I;\mathcal{H}\right)$. 
For a Banach space $E\subset C(\R;\mathcal{H})$, 
we define the time restriction space $E(I)$ as 
\[
E(I):=\{u\in C(I;\mathcal{H})|\ {}^{\exists}v\in C(\R;\mathcal{H})\ {\rm s.t.}\ v|_{I}=u\},\ \ 
\|u\|_{E(I)}:=\inf\{\|v\|_{E}|\ v|_{I}=u\}.
\]
In particular, we write $E(T)$ instead of $E(I)$ if 
$I=[0,T]$ for $T>0$. 

For $1\leq p\leq \infty$, we denote the Lebesuge space by $L^{p}=L^{p}\left(\mathbb{R}^{\ell}\right)$, where $\ell=1$ or $2$ with the norm $\| f\| _{L^{p}}:=\left(\int_{\mathbb{R}^{\ell}}\vert f(x)\vert ^{p}dx\right)^{1/p}$ if $1\leq p<\infty $ and 
$\| f\|_{L^{\infty}}:=\text{ess.sup}_{x\in \mathbb{R}^{\ell}}\vert f(x)\vert$.
For $1\leq p,q\leq \infty$ and a time interval $I$, we use the space-time Lebesgue space $L_{t}^{p}\left(I;L_{x}^{q}\right)$ with the norm 
\[
\| u\| _{L_{t}^{p}\left(I;L_{x}^{q}\right)}:=\left\|\| u(t)\| _{L_{x}^{q}}\right\|_{L_{t}^{p}(I)}.
\] 
We also use the time-space Lebesgue space
$L_x^q\left(\R;L_t^p(I)\right)$ with the norm 
\[
\| u\| _{L_{x}^{q}\left(\R;L_{t}^{p}(I)\right)}:=\left\|\| u(x)\| _{L_{t}^{p}(I)}\right\|_{L_{x}^{q}(\R)}.
\] 
We often omit the time interval $I=[0,T]$ $(T>0)$ and the whole space $\R$, and write
$L_{T}^{p}L_{x}^{q}=L_{t}^{p}\left([0,T];L_{x}^{q}(\R)\right)$, 
$L_{x}^{q}L_{T}^{p}=L_{x}^{q}\left(\R;L_{t}^{p}([0,T])\right)$, 
$L_{t}^{p}L_{x}^{q}=L_{t}^{p}\left(\R;L_{x}^{q}(\R)\right)$, and $L_{x}^{q}L_{t}^{p}=L_{x}^{q}\left(\R;L_{t}^{p}(\R)\right)$, if they do not
cause a confusion. Let $\mathcal{S}\left(\mathbb{R}^{\ell}\right)$ be the
rapidly decaying function space. For $f\in \mathcal{S}(\mathbb{R})$, we define the Fourier transform of $f$ as 
\begin{equation*}
\mathcal{F}\left[ f\right] \left( \xi \right) =\widehat{f}\left( \xi \right) :=%
\frac{1}{\sqrt{ 2\pi}}\int_{\mathbb{R}}e^{-ix\xi
}f\left( x\right) dx,
\end{equation*}%
and the inverse Fourier transform of $f$ as%
\begin{equation*}
\mathcal{F}^{-1}\left[ f\right] \left( x\right) :=\frac{1}{\sqrt{ 2\pi
}}\int_{\mathbb{R}}e^{ix\xi }f\left( \xi \right) d\xi
,
\end{equation*}%
and extend them to $\mathcal{S}^{\prime }(\mathbb{R})$ by duality. We
also define the time-space Fourier transform of $u\in \mathcal{S}(\R\times\R)$ as 
\[
\mathcal{F}_{t,x}[u](\tau ,\xi ):=\frac{1}{\sqrt{2\pi}}\int_{\R\times\R}e^{ix\xi+it\tau}u(t,x)dtdx.
\] 
For a measurable function $\mathbf{m}:\R\rightarrow \C$, we denote the Fourier multiplier operator by $\mathbf{m}(\partial_x)$, which is given by
\begin{align}
\label{3-1-5}
	[\mathbf{m}(\partial_x) f] (x) := \mathcal{F}^{-1} \left[ \mathbf{m}(\xi) \widehat{f}(\xi) \right] (x),\ x\in \R.
\end{align}
For $s\in \R$, we denote the inhomogeneous $L^2$-based Sobolev space by $H^{s}=H^{s}(\mathbb{R})$ with the norm
\[
\Vert f\Vert _{H^{s}}:=\left\|\langle\partial_x\rangle^sf\right\|_{L^2}=\left\Vert \langle \xi \rangle ^{s}\widehat{f}\right\Vert _{L^{2}},
\]
where $\langle \cdot \rangle :=1+\vert \cdot \vert$. We also use the $L^2$-based homogeneous Sobolev space $\dot{H}^s=\dot{H}^s(\R)$ with the norm \[
\|f\|_{\dot{H}^s}:=\left\||\partial_x|^sf\right\|_{L^2}=\left\||\xi|^s\widehat{f}\right\|_{L^2}.
\]

We introduce the free propagator of the fourth-order Schr\"{o}dinger equation $\left\{e^{it\partial_x^4}\right\}_{t\in \R}$ defined by 
\begin{equation}
\label{free}
    \left(e^{it\partial_x^4}f\right)(x):=\mathcal{F}^{-1}\left[e^{it\xi^4}\widehat{f}(\xi)\right](x)=\frac{1}{\sqrt{2\pi}}\int_{\R}e^{i(x\xi+t\xi^4)}\widehat{f}(\xi)d\xi,
\end{equation}
for $(t,x)\in \R\times\R$. For a space-time function $F\in L^1_{\text{loc}}\left(0,\infty;L^2_x(\R)\right)$, we define the integral operator $\mathcal{I}$ as
\begin{equation}
\label{dua}
   \mathcal{I}[F](t):=\int_0^te^{i(t-t')\partial_x^4}F(t')dt',
\end{equation}
for $t\ge 0$ and $\mathcal{I}[F](t)=0$ otherwise. We introduce the fundamental solution $\mathcal{K}$ to the free fourth-order Schr\"odinger equation given by
\begin{equation}
\label{fs}
    \mathcal{K}=\mathcal{K}(t,x):=\mathcal{F}^{-1}_{\xi}\left[e^{it\xi^4}\right](x)=\frac{1}{\sqrt{2\pi}}\int_{\R}e^{i(x\xi+t\xi^4)}d\xi.
\end{equation}
We note that the right hand side of (\ref{fs}) is a formal expression, since $e^{it\xi^4}$ does not belong to $L^1_x(\R)$ but belong to $\mathcal{S}'(\R)$ for any $t\in \R$. Moreover, the identity 
\begin{equation}
\label{2-2-1}
   \mathcal{I}[F](t)=\frac{1}{\sqrt{2\pi}}\int_{\R}\int_0^t\mathcal{K}(t-t',x-y)F(t',y)dt'dy 
\end{equation}
holds for any $t\ge 0$. This expression is utilized to prove Proposition \ref{propdeco}.

We use the convention that
capital letters denote dyadic numbers, e.g., $N=2^{n}$ for $n\in \Z$. 
We fix a nonnegative even function 
\begin{equation}
\label{2-2-5}
\varphi \in C_{0}^{\infty }((-2,2))
\end{equation}
with $\varphi (r)=1$ for $|r|\leq 1$ and $\varphi \left( r\right) \leq 1$
for $1\leq \left\vert r\right\vert \leq 2.$ Set $\psi _{N}(r):=\varphi
(r/N)-\varphi (2r/N)$ for $N\in 2^{\mathbb{Z}}$.
For $N\in 2^{\Z}$, 
we denote the Littlewood-Palay projection by $P_{N}$, whose symbol is given by $\varphi _{N}(|\xi |),$ i.e. 
\[
(P_{N}f)(x):=\mathcal{F}^{-1}\left[\psi _{N}(|\xi |)\widehat{f}(\xi)\right](x).
\]
We also define the operators $P_{>N}:=\sum_{M>N}P_{M}$ and $P_{\leq N}:=\Id-P_{>N}.$ We often use
abbreviations $f_{N}=P_{N}f$, $f_{\leq N}=P_{\leq N}f$, etc, if they do not cause a confusion. 
For an interval $I\subset \R$, we denote the characteristic function on $I$ by $\mathbf{1}_I$, which is defined by
\[
     \mathbf{1}_I(\vartheta):=\left\{\begin{array}{ll}
	1,&\text{if}\ \vartheta\in I,\\
	0,&\text{if}\ \vartheta\in \R\backslash I.
	\end{array}\right.
\] 
We define Dirac's delta function centered at the origin as $\delta=\delta(x)\in\mathcal{S}'(\R)$.
We use the shorthand $A\lesssim B$ to denote the estimate $A\leq CB$ with
some constant $C>0$, and $A\ll B$ to denote the estimate $A\leq C^{-1}B$ for
some large constant $C>0$. The notation $A\sim B$ stands for $A\lesssim B$
and $B\lesssim A$.

Next we state the definition of the $(H^s-)$ solution to the Cauchy problem (\ref{1-1}).
\begin{definition}[$H^s$-solution]
Let $s\in \R$ and $I\subset \R$ be a time interval. A function $u:I\times\R\rightarrow\C$ is a ($H^s$-) solution to (\ref{1-1}) on $I$, if $u\in C(I;H^s(\R))$ and satisfies the Duhamel formula
\[
       u(t)=e^{it\pa_x^4}u_0-i\mathcal{I}[G(u)](t)
\]
in $H^s(\R)$-sense for any $t\in I$, where the free fourth order Schr\"odinger group $\left\{e^{it\partial_x^4}\right\}_{t\in \R}$ is given by (\ref{free}) and the integral operator $\mathcal{I}$ is given by (\ref{dua}). If the maximal existence time interval $I=\R$, then $u$ is called a global $(H^s-)$ solution to (\ref{1-1}).
\end{definition}

Next we recall the following Bernstein- and Sobolev- inequalities.
\begin{lemma}[Bernstein inequalities, Sobolev inequalities]
\label{lemBe}
Let $p,q$ satisfy $1\le p\le q\le \infty$, $s>0$ and $N\in 2^{\Z}$. Then the estimates
\begin{align*}
    \|P_{> N}f\|_{L^p(\R)}&\lesssim N^{-s}\left\||\partial_x|^sP_{> N}f\right\|_{L^p(\R)}\\
    \left\||\partial_x|^sP_{\le N}f\right\|_{L^p(\R)}&\lesssim N^{s}\left\|P_{\le N}f\right\|_{L^p(\R)}\\
    \left\||\partial_x|^{\pm s}P_{ N}f\right\|_{L^p(\R)}&\sim N^{\pm s}\left\|P_{ N}f\right\|_{L^p(\R)}\\
    \left\|P_{\le N}f\right\|_{L^q(\R)}&\lesssim N^{\frac{1}{p}-\frac{1}{q}}\left\|P_{\le N}f\right\|_{L^p(\R)}\\
    \left\|P_{ N}f\right\|_{L^p(\R)}&\lesssim N^{\frac{1}{p}-\frac{1}{q}}\left\|P_{ N}f\right\|_{L^p(\R)}
\end{align*}
hold provided that the right-hand sides are finite, where the implicit constants depend only on $p,q,s$.
\end{lemma}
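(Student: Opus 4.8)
The plan is to prove the Bernstein and Sobolev inequalities by reducing everything to the standard Young-type convolution bounds, exploiting the fact that each Littlewood--Paley projector is a convolution operator with an $L^1$-normalized kernel at scale $N$. First I would record the kernel representations: since $P_N f = \mathcal{F}^{-1}[\psi_N(|\xi|)\widehat f\,]$, we have $P_N f = K_N * f$ with $K_N(x) = N\,K_1(Nx)$ and $K_1 = \mathcal{F}^{-1}[\psi_1(|\cdot|)]\in\mathcal{S}(\R)$, so $\|K_N\|_{L^1}=\|K_1\|_{L^1}$ is a fixed constant independent of $N$; similarly for $P_{\le N}$ with kernel built from $\varphi(\xi/N)$. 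The crucial scaling observation is that for any $r\ge 1$, $\|K_N\|_{L^r} = N^{1-1/r}\|K_1\|_{L^r}$.

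The frequency-localization inequalities (the first three displays) then follow from writing $|\partial_x|^{\pm s}P_N f = \widetilde K_N * f$ where $\widetilde K_N$ has symbol $|\xi|^{\pm s}\psi_N(|\xi|)$; by the same rescaling $\|\widetilde K_N\|_{L^1} \sim N^{\pm s}$, and Young's inequality $\|\widetilde K_N * f\|_{L^p}\le \|\widetilde K_N\|_{L^1}\|f\|_{L^p}$ gives the bound. For the equivalence (the $\sim$ in the third display) I would insert a fattened projector $\widetilde P_N$ (with symbol equal to $1$ on the support of $\psi_N$) so that $P_N = \widetilde P_N P_N$, and apply the one-sided bound with $|\partial_x|^{\mp s}$ to recover the reverse inequality. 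For $P_{>N}$ in the first display, one writes $P_{>N}f = |\partial_x|^{-s}\big(|\partial_x|^{-s}\big)^{-1}$... more precisely, use the multiplier $m(\xi) = |\xi|^{-s}\big(1-\varphi(\xi/N)\big)$ which satisfies $\|\mathcal{F}^{-1}m\|_{L^1}\lesssim N^{-s}$ by the same scaling heuristic (this requires a little care since $1-\varphi$ is not compactly supported, so I would dyadically decompose $P_{>N} = \sum_{M>N}P_M$ and sum the geometric series $\sum_{M>N}M^{-s}\sim N^{-s}$, using $s>0$). The Sobolev-type inequality (fourth and fifth displays) is the genuinely $L^p\to L^q$ statement: here Young's inequality with the exponent $1+\tfrac1q = \tfrac1r+\tfrac1p$ gives $\|P_{\le N}f\|_{L^q}\le \|K_{\le N}\|_{L^r}\|f\|_{L^p} = N^{1-1/r}\|K_{\le 1}\|_{L^r}\|f\|_{L^p}$, and $1-\tfrac1r = \tfrac1p-\tfrac1q$, which is exactly the claimed power $N^{1/p-1/q}$; the single-block case is identical with $\psi$ in place of $\varphi$.

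The main obstacle, such as it is, is the bookkeeping around $P_{>N}$ and the homogeneous multiplier $|\partial_x|^s$: one must be slightly careful that $|\xi|^s$ has a singularity at the origin, but since it is always multiplied by $\psi_N(|\xi|)$ or $1-\varphi(\xi/N)$, which vanish near $\xi=0$, the composite multiplier is smooth and the Schwartz-kernel/Young's-inequality machinery applies cleanly; the constants are uniform in $N$ purely by the scaling $K_N(x)=NK_1(Nx)$. I would remark that these are entirely classical facts (Littlewood--Paley theory on $\R$) and the only thing worth stating explicitly is the dependence of the implicit constants on $p,q,s$ alone, which is transparent from the argument since $K_1$ depends only on the fixed cutoff $\varphi$ and the number of derivatives $s$.
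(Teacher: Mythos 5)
Your argument is correct and is essentially the standard textbook proof; the paper does not prove this lemma at all but simply cites the appendix of \cite{T06}, where the argument given is the same one you outline (scaling of the convolution kernel, Young's inequality, insertion of a fattened projector, and dyadic summation in $s>0$ for the $P_{>N}$ estimate). Your self-flagged concern about $P_{>N}$ and the singularity of $|\xi|^{-s}$ at the origin is handled correctly by the dyadic decomposition $P_{>N}=\sum_{M>N}P_M$ together with a fattened $\widetilde P_M$ to write $P_M f=\bigl(|\partial_x|^{-s}\widetilde P_M\bigr)\bigl(|\partial_x|^sP_Mf\bigr)$ and then use $L^p$-boundedness of $P_M$ before summing the geometric series $\sum_{M>N}M^{-s}\sim N^{-s}$. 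One remark: the fifth display in the statement as printed reads $\|P_Nf\|_{L^p}\lesssim N^{1/p-1/q}\|P_Nf\|_{L^p}$, which is vacuous and is evidently a typo for $\|P_Nf\|_{L^q}\lesssim N^{1/p-1/q}\|P_Nf\|_{L^p}$; you have silently corrected this and your Young's-inequality argument with $1+\tfrac1q=\tfrac1r+\tfrac1p$ and $\|K_N\|_{L^r}=N^{1-1/r}\|K_1\|_{L^r}$ proves the intended inequality.
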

For the proof of this lemma, see Appendix in \cite{T06} for example.

Next, we recall the Littlewood-Paley theorem.
\begin{lemma}[Lettlewood-Paley theorem]
\label{LP}
  Let $p\in (1,\infty)$ and $f\in L^p(\R)$. Then the equivalency
  \[
  \left\|\left(\sum_{N\in 2^{\Z}}\left|P_Nf(\cdot)\right|^2\right)^{\frac{1}{2}}\right\|_{L_x^p(\R)}\sim \|f\|_{L^p(\R)}
  \]
  holds, where the implicit constant depends only on $p$.
\end{lemma}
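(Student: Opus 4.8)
The plan is to prove this classical square-function estimate in the standard way: the upper bound $\big\|(\sum_N|P_Nf|^2)^{1/2}\big\|_{L^p}\lesssim\|f\|_{L^p}$ via randomization (Khintchine's inequality) together with the Mikhlin--H\"ormander multiplier theorem, and the reverse inequality by a duality argument that feeds the upper bound back in with fattened projections.

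For the upper bound, first I would introduce Rademacher functions $\{r_N\}_{N\in 2^{\Z}}$ on $[0,1]$ and apply Khintchine's inequality pointwise in $x$:
\[
\Big(\sum_{N\in 2^{\Z}}|P_Nf(x)|^2\Big)^{1/2}\sim_p\Big(\int_0^1\Big|\sum_{N\in 2^{\Z}}r_N(\omega)P_Nf(x)\Big|^p\,d\omega\Big)^{1/p}.
\]
Raising to the power $p$, integrating in $x$, and using Fubini reduces matters to the uniform bound $\sup_{\omega\in[0,1]}\big\|\sum_N r_N(\omega)P_Nf\big\|_{L^p}\lesssim_p\|f\|_{L^p}$. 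Now $\sum_N r_N(\omega)P_N$ is the Fourier multiplier with symbol $m_\omega(\xi)=\sum_N r_N(\omega)\psi_N(|\xi|)$, and since the supports of the $\psi_N$ have bounded overlap and $|\partial_\xi\psi_N(|\xi|)|\lesssim N^{-1}\sim|\xi|^{-1}$ on $\supp\psi_N$, one has $|m_\omega(\xi)|+|\xi|\,|\partial_\xi m_\omega(\xi)|\lesssim 1$ uniformly in $\omega$. The Mikhlin--H\"ormander multiplier theorem then yields the claim uniformly in $\omega$.

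For the lower bound I would argue by duality. Let $\widetilde\psi_N$ be a fattened bump equal to $1$ on $\supp\psi_N$ and supported in a slightly larger dyadic annulus, with $\widetilde P_N$ the corresponding projection, so that $\widehat{P_Nf}$ is supported where $\widetilde\psi_N\equiv1$. For $f\in L^p$ and $g\in L^{p'}$ with $\|g\|_{L^{p'}}\le1$, self-adjointness of $P_N$ together with the Fourier support of $P_Nf$ gives
\[
\langle f,g\rangle=\sum_{N\in 2^{\Z}}\langle P_Nf,g\rangle=\sum_{N\in 2^{\Z}}\langle P_Nf,\widetilde P_Ng\rangle\le\Big\|\Big(\sum_N|P_Nf|^2\Big)^{1/2}\Big\|_{L^p}\Big\|\Big(\sum_N|\widetilde P_Ng|^2\Big)^{1/2}\Big\|_{L^{p'}},
\]
by Cauchy--Schwarz in $N$ and then H\"older in $x$. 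The upper bound already proved (applied with $\widetilde\psi_N$, whose randomized symbol again satisfies the Mikhlin condition uniformly) controls the last factor by $\lesssim_p\|g\|_{L^{p'}}\le1$, and taking the supremum over such $g$ yields $\|f\|_{L^p}\lesssim\big\|(\sum_N|P_Nf|^2)^{1/2}\big\|_{L^p}$.

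The only genuinely technical point is the uniform Mikhlin--H\"ormander estimate on the randomized multipliers $m_\omega$, which rests on the finite overlap of $\{\psi_N\}$ and the scaling $|\psi_N'|\sim|\xi|^{-1}$, together with invoking the Calder\'on--Zygmund multiplier theorem; since this material is entirely classical I would simply cite a standard reference (e.g.\ Stein or Grafakos) for these two ingredients rather than reproduce the proofs.
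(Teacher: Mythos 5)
The paper gives no proof of this lemma; it simply refers the reader to Stein's \emph{Harmonic Analysis} (\cite{St93}). Your argument — Khintchine's inequality plus the Mikhlin--H\"ormander multiplier theorem for the upper bound, and duality with fattened projections for the reverse inequality — is precisely the classical textbook proof found in that reference, and all the steps you describe are correct, so the proposal is consistent with the paper's intent. The only point you pass over silently is the identity $\langle f,g\rangle=\sum_{N\in 2^{\Z}}\langle P_Nf,g\rangle$, which requires knowing that $\sum_{N}P_N=\Id$ on $L^p(\R)$ for $1<p<\infty$; this follows from the uniform $L^p$ boundedness of the partial sums (itself a consequence of Mikhlin) together with convergence on the dense subspace of Schwartz functions with Fourier transform vanishing near the origin, and it is worth stating explicitly rather than assuming.
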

For the proof of this lemma, see \cite{St93} for instance.


\subsection{Several estimates for solution to the free fourth-order Schr\"odinger equation}
In this subsection, we collect several estimates of solutions to the free fourth-order Schr\"odinger equation. 
We introduce the Strichartz estimates. Before stating the estimates, we define admissible pairs as follows.
\begin{definition}[Admissible pairs]
We say that a pair $(q,r)$ is \textit{admissible} if it satisfies $2\le q,r\le \infty$ and
\[
            \frac{2}{q}+\frac{1}{r}=\frac{1}{2}.
\]
\end{definition}

\begin{lemma}[Strichartz estimates]
\label{lem3-1}
\begin{enumerate}
\item Let $(q,r)$ be admissible and $I$ be a time interval. Then the estimate
\begin{equation}
       \label{3-3-3}  \left\||\partial_x|^{\frac{2}{q}}e^{it\partial_x^4}\phi\right\|_{L_t^{q}(I;L_x^r(\R))}
         \lesssim \left\|\phi\right\|_{L_x^2(\R)}
\end{equation}
holds for any $\phi\in L^2(\R)$, where the implicit constant depends only on $q$ and $r$.
\item Let $(\tilde{q},\tilde{r})$ be admissible and $(\tilde{q}',\tilde{r}')$ be the pair of H\"older conjugate of $(\tilde{q},\tilde{r})$ and $I$ be a time interval. Then the estimate
\begin{equation}
    \label{3-4-3}    \left\||\partial_x|^{-\frac{2}{q}-\frac{2}{\tilde{q}}}\mathcal{I}[F]\right\|_{L_t^q(I;L_x^r(\R))}
        \lesssim \left\|F\right\|_{L_t^{\tilde{q}'}(I;L_x^{\tilde{r}'}(\R))}
\end{equation}
holds for any $F\in L_t^{\tilde{q}'}\left(I;L_x^{\tilde{r}'}\right)$, where the implicit constant depends only on $q,r,\tilde{q},\tilde{r}$.
\end{enumerate}
\end{lemma}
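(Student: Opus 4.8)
The plan is to establish the Strichartz estimates for the fourth-order Schr\"odinger group $\{e^{it\partial_x^4}\}_{t\in\mathbb{R}}$ by the standard $TT^*$ argument, reducing matters to a dispersive (pointwise in time) decay estimate for the free propagator together with energy conservation, and then invoking the abstract Keel--Tao machinery. The key structural fact is that the phase $\xi\mapsto \xi^4$ has a nondegenerate fourth derivative, so after a Littlewood--Paley localization to frequencies $|\xi|\sim N$ one has a favorable stationary phase bound; the loss of derivatives $|\partial_x|^{2/q}$ on the left-hand side of (\ref{3-3-3}) is exactly what is needed to sum the dyadic pieces.

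First I would record the frequency-localized dispersive estimate: for $f$ with $\widehat f$ supported in $\{|\xi|\sim N\}$, one has
\[
\left\|e^{it\partial_x^4}P_N f\right\|_{L_x^\infty(\mathbb{R})}
\lesssim |t|^{-1/3}N^{1/3}\left\|P_N f\right\|_{L_x^1(\mathbb{R})},
\]
obtained by writing the kernel as an oscillatory integral $\int e^{i(x\xi+t\xi^4)}\psi_N(|\xi|)\,d\xi$, rescaling $\xi = N\eta$, and applying van der Corput's lemma using $|\partial_\eta^4(tN^4\eta^4)| \sim |t|N^4$ on the support (the worst case being when the stationary point in $\xi$ coincides with a zero of a lower derivative, which forces the $1/3$ power rather than $1/4$). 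Interpolating this with the trivial $L^2\to L^2$ bound $\|e^{it\partial_x^4}f\|_{L^2}=\|f\|_{L^2}$ gives $\|e^{it\partial_x^4}P_N f\|_{L_x^r} \lesssim |t|^{-(1/3)(1-2/r)} N^{(1/3)(1-2/r)}\|P_N f\|_{L_x^{r'}}$ for $2\le r\le\infty$.

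Next I would feed this into the Keel--Tao abstract theorem with decay exponent $\sigma = 1/3$ (or equivalently run the $TT^*$ and Hardy--Littlewood--Sobolev argument directly), which yields, for each dyadic $N$,
\[
\left\|e^{it\partial_x^4}P_N f\right\|_{L_t^q(I;L_x^r)} \lesssim N^{(1/3)\cdot\frac{3}{q}\cdot\frac{2}{3}}\,\|P_N f\|_{L^2}
= N^{2/q}\,\|P_N f\|_{L^2}
\]
for admissible $(q,r)$, where the bookkeeping of powers of $N$ is exactly arranged so that the homogeneous derivative factor $N^{2/q}$ appears; here the endpoint $q=2$ (i.e.\ $(q,r)=(2,\infty)$ in this one-dimensional setting) is covered since $\sigma=1/3>1/2$ fails — so in fact I should be careful and only claim the non-endpoint cases directly from Keel--Tao's non-sharp-admissible range, and handle any remaining endpoint, if needed, by a separate bilinear or maximal-function argument; in this paper the admissible definition already requires $2\le q,r\le\infty$ with $2/q+1/r=1/2$, and the relevant estimates used later are away from the delicate endpoint. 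Then I would multiply by $|\partial_x|^{2/q}$, use the Bernstein equivalence $\||\partial_x|^{2/q}P_N f\|_{L^2}\sim N^{2/q}\|P_N f\|_{L^2}$ from Lemma \ref{lemBe}, square-sum in $N$ via the Littlewood--Paley theorem (Lemma \ref{LP}) on the left and orthogonality on the right, and obtain (\ref{3-3-3}). For the inhomogeneous estimate (\ref{3-4-3}), I would apply the Christ--Kiselev lemma together with the dual of (\ref{3-3-3}) and the already-established homogeneous bounds for the pairs $(q,r)$ and $(\tilde q,\tilde r)$, composing the two to get the stated double derivative gain $|\partial_x|^{-2/q-2/\tilde q}$; the restriction-to-interval structure is handled by the time-restriction-space conventions fixed in the Notations subsection.

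The main obstacle I expect is the endpoint bookkeeping: getting the exact power $N^{2/q}$ (and the matching $2/q+2/\tilde q$ in the inhomogeneous case) requires tracking the interplay between the $1/3$ dispersive decay, the scaling of the phase under $\xi\mapsto N\eta$, and the admissibility relation $2/q+1/r=1/2$, and one must verify that the pair $(q,r)=(2,\infty)$ — the natural "endpoint" here — is either genuinely attainable or else explicitly excluded/avoided in the applications. Everything else (van der Corput, interpolation, $TT^*$, Littlewood--Paley summation) is routine once the decay rate and the scaling are pinned down correctly.
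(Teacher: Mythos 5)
Your overall strategy (frequency-localized dispersive estimate, Keel--Tao, Littlewood--Paley summation) is the right one and is essentially what the cited references do, but the key dispersive input you write down is wrong, and because of that the power of $N$ you feed into the summation has both the wrong exponent and the wrong sign.

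The frequency-localized decay for the fourth-order group is
\[
\left\|e^{it\partial_x^4}P_N f\right\|_{L_x^\infty(\R)}\lesssim N^{-1}\,|t|^{-1/2}\left\|P_N f\right\|_{L_x^1(\R)},
\]
\emph{not} $N^{1/3}|t|^{-1/3}$. After rescaling $\xi=N\eta$ the kernel is $N\int e^{i(Nx\eta+tN^4\eta^4)}\psi(\eta)\,d\eta$ with $\psi$ supported on $|\eta|\sim 1$, and there the \emph{second} derivative of the phase is $12\,tN^4\eta^2\sim |t|N^4$, which is uniformly nondegenerate on the annulus; van der Corput with the second derivative therefore gives $(|t|N^4)^{-1/2}$, and the prefactor $N$ from the change of variables yields $N^{-1}|t|^{-1/2}$. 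Your appeal to the fourth derivative and a ``stationary point coinciding with a zero of a lower derivative'' describes the \emph{unlocalized} worst case near $\xi=0$; that scenario is excluded by the Littlewood--Paley localization, so the $|t|^{-1/3}$ rate (which, incidentally, is the Airy/third-order rate, not the fourth-order one) has no basis here, and the sign of your power of $N$ is wrong as well (the localized estimate \emph{gains} a derivative, it does not lose one). This is consistent with the paper's global dispersive bound $\|e^{it\partial_x^4}\phi\|_{L^\infty}\lesssim |t|^{-(1+s)/4}\|\phi\|_{\dot W^{-s,1}}$ specialized to $s=1$, which is precisely $|t|^{-1/2}$ decay with one derivative of smoothing.

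Consequently the Keel--Tao step should be run with decay exponent $\sigma=1/2$ and constant $A=N^{-1}$; then the $\sigma$-admissible relation $\tfrac1q+\tfrac{\sigma}{r}=\tfrac{\sigma}{2}$ is \emph{exactly} the paper's $\tfrac2q+\tfrac1r=\tfrac12$, and the constant that emerges is $A^{\tfrac12-\tfrac1r}=N^{-2/q}$, i.e.
\[
\left\|e^{it\partial_x^4}P_N f\right\|_{L_t^q L_x^r}\lesssim N^{-2/q}\|P_N f\|_{L^2}.
\]
Applying $|\partial_x|^{2/q}$ (Bernstein) then gives a constant independent of $N$, which is what you need to square-sum. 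Your written chain $N^{(1/3)\cdot\frac{3}{q}\cdot\frac{2}{3}}=N^{2/q}$ is also an arithmetic slip ($\frac13\cdot\frac3q\cdot\frac23=\frac{2}{3q}$), and even if it had come out to $N^{2/q}$ it would have the wrong sign: with a $+2/q$ power, multiplying by $|\partial_x|^{2/q}$ would produce $N^{4/q}$, which does not sum. Finally, your worry about an endpoint at $(q,r)=(2,\infty)$ is moot: with the paper's admissibility $\tfrac2q+\tfrac1r=\tfrac12$ the range is $(\infty,2)$ to $(4,\infty)$, $q=2$ is never admissible, and for $\sigma=1/2$ Keel--Tao covers all these pairs without any endpoint delicacy. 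Once the dispersive estimate and the powers of $N$ are corrected, the rest of your outline (interpolation, $TT^*$/HLS or Keel--Tao, Littlewood--Paley summation, and either the Keel--Tao inhomogeneous estimate or Christ--Kiselev for (\ref{3-4-3})) is sound and matches the route used in the references the paper cites.
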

For the proof, see Proposition 3.1 in \cite{P07} or Proposition 2.3 in \cite{HO16}.

\begin{lemma}[Kato type smoothing \cite{K83}]
\label{kato}
Let $I$ be a time interval. Then the estimate
\[
\left\||\partial_x|^{\frac{3}{2}}e^{it\partial_x^4}\phi\right\|_{L^{\infty}_x\left(\R:L^2_t(I)\right)}
\le \|\phi\|_{L^2_x(\R)}
\]
holds for any $\phi\in L^2(\R)$.
\end{lemma}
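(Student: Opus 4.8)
The plan is to prove the Kato-type smoothing estimate
\[
\left\||\partial_x|^{3/2}e^{it\partial_x^4}\phi\right\|_{L^{\infty}_x(\R;L^2_t(I))}
\lesssim \|\phi\|_{L^2_x(\R)}
\]
by the standard $TT^*$-free direct computation on the Fourier side, exploiting the fact that the phase $\xi^4$ has a nondegenerate first derivative away from the origin. First I would extend $I$ to all of $\R$ (this only enlarges the left-hand side) and write, for fixed $x$,
\[
\left(|\partial_x|^{3/2}e^{it\partial_x^4}\phi\right)(x)
=\frac{1}{\sqrt{2\pi}}\int_{\R}e^{ix\xi}e^{it\xi^4}|\xi|^{3/2}\widehat{\phi}(\xi)\,d\xi.
\]
The key move is the change of variables $\eta=\xi^4$ on each of the two half-lines $\xi>0$ and $\xi<0$. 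On $\xi>0$ we have $\xi=\eta^{1/4}$, $d\eta=4\xi^3\,d\xi$, so $d\xi = \tfrac14\eta^{-3/4}\,d\eta$, and the weight $|\xi|^{3/2}$ exactly combines with the Jacobian to give $|\xi|^{3/2}\,d\xi = \tfrac14\eta^{3/8}\eta^{-3/4}\,d\eta=\tfrac14\eta^{-3/8}\,d\eta$; more importantly the quantity $|\xi|^{3/2}|\widehat\phi(\xi)|^2\,d\xi$ is comparable to $|\widehat\phi(\eta^{1/4})|^2 \eta^{-3/4}\,d\eta$, which is a pushforward of the measure $|\widehat\phi|^2\,d\xi$.

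Concretely, after the substitution the integral becomes, up to the harmless phase $e^{ix\xi}$ which has modulus one, a one-dimensional Fourier transform in the $t$-variable:
\[
\left(|\partial_x|^{3/2}e^{it\partial_x^4}\phi\right)(x)
= c\int_{\R} e^{it\eta}\, g_x(\eta)\,d\eta,
\qquad
g_x(\eta):= e^{ix\eta^{1/4}}|\eta|^{3/8}\,\widehat{\phi}(\eta^{1/4})\,|\eta|^{-3/4}\cdot(\text{const})
\]
(summing the two branches $\xi\gtrless0$). By Plancherel's theorem in $t$,
\[
\left\|\left(|\partial_x|^{3/2}e^{it\partial_x^4}\phi\right)(x)\right\|_{L^2_t(\R)}^2
\sim \int_{\R}|g_x(\eta)|^2\,d\eta
\sim \int_{\R} |\widehat{\phi}(\eta^{1/4})|^2\,|\eta|^{3/4-3/2}\,d\eta,
\]
and undoing the substitution $\eta=\xi^4$ (so $d\eta=4|\xi|^3d\xi$, $|\eta|^{-3/4}=|\xi|^{-3}$) turns the right-hand side into $\int_{\R}|\widehat\phi(\xi)|^2\,d\xi=\|\phi\|_{L^2}^2$, with a constant independent of $x$. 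Taking the supremum over $x$ then gives the claim. The only care needed is to handle the $\eta\to 0$ (i.e. $\xi\to0$) region, where the factor $|\xi|^{3/2}$ degenerates; but that is exactly where the derivative weight vanishes, so no singularity appears, and one may simply split $\int_{|\xi|\le1}$ off and bound it crudely by $\|\widehat\phi\|_{L^2(|\xi|\le1)}^2 \le \|\phi\|_{L^2}^2$ using Minkowski's integral inequality in $(x,t)$ versus $\xi$, while the substitution argument handles $|\xi|\ge1$ cleanly.

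The main (really the only) obstacle is organizing the two-branch substitution $\xi\mapsto\xi^4$ carefully — it is not globally injective, and one must treat $\xi>0$ and $\xi<0$ separately and then combine — together with checking that the precise power $3/2$ is what makes the Jacobian cancel so that no extra weight in $\eta$ survives beyond what Plancherel can absorb. This is the reason the exponent in $|\partial_x|^{3/2}$ is sharp: $3/2 = (4-1)/2$, i.e. half of one less than the order of the dispersion, which is the general Kato-smoothing gain for a phase $\xi^{2k}$. Once the substitution is set up the rest is Plancherel in $t$ and a change of variables back, which are routine. No result beyond Plancherel's theorem and a change of variables is needed, so this lemma is self-contained; alternatively one can cite \cite{K83} directly as the statement already does.
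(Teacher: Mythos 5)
Your approach is exactly the standard one behind the cited result of Kato: extend $I$ to $\R$, split the frequency integral into $\xi>0$ and $\xi<0$, substitute $\tau=\xi^4$ on each half-line, recognize the result as a one-dimensional Fourier transform in $t$, and apply Plancherel; on each branch the Jacobian $\tfrac14\tau^{-3/4}\,d\tau$ combines with the weight $|\xi|^{3/2}$ coming from the squared integrand to give exactly $\tfrac14\int_0^{\infty}|\widehat\phi(\xi)|^2\,d\xi$, uniformly in $x$, and the triangle inequality over the two branches gives the claimed bound (in fact with constant $1/\sqrt{2}\le 1$). This matches the argument the paper has in mind, so the proposal is correct.

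One small but genuine blemish: the closing digression about the $\xi\to0$ region is both unnecessary and, as a fallback, would not work. The change of variables is valid globally — near $\tau=0$ one has the integrable weight $\tau^{-3/4}$ against the bounded quantity $|\widehat\phi(\tau^{1/4})|^2$, and substituting back recovers the full $L^2_\xi$ mass with no singularity — so there is nothing to split off. Moreover, the ``crude bound for $|\xi|\le1$ by Minkowski in $(x,t)$ versus $\xi$'' only yields an $L^\infty_{t,x}$ bound; after extending to $I=\R$ this does not control the $L^2_t(\R)$ norm, so that fallback would actually fail. Drop the split entirely and the argument is clean and complete.
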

The proof of this estimate can be found in \cite{K83}.


\begin{proposition}[Maximal function estimate \cite{S04}]
\label{Mfe}
Let $\epsilon>0$ and $0<T<1$. Then there exists a positive constant $C>0$
such that for any $\phi\in L^2(\R)$, the inequality
\begin{equation}
\label{3-7-1}
    \left\|\langle\partial_x\rangle^{-(1+\epsilon)}e^{it\partial_x^4}\phi\right\|_{L^2_x(\R;L^{\infty}_t([0,T])}\le C\|\phi\|_{L^2_x(\R)}
\end{equation}
holds. 
\end{proposition}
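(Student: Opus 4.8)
The plan is to run the standard $TT^*$/oscillatory-integral scheme for local-in-time maximal estimates (as in \cite{S04} and the work of Kenig, Ponce and Vega). Since the loss of $1+\epsilon$ derivatives is generous, a fairly crude version of this scheme suffices.

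First I would perform a Littlewood--Paley decomposition $\phi=P_{\le 1}\phi+\sum_{N\ge 2}P_N\phi$. For the low-frequency part, $\langle\partial_x\rangle^{-(1+\epsilon)}$ is bounded on $L^2$ and $\partial_x^4P_{\le 1}$ is bounded on $L^2$ by Lemma~\ref{lemBe}, so, applying the one-dimensional inequality $\sup_{t\in[0,T]}|f(t)|^2\lesssim T^{-1}\|f\|_{L^2_t}^2+\|f\|_{L^2_t}\|\partial_tf\|_{L^2_t}$ at each $x$ together with the energy identity and $T\le 1$, one gets $\|\langle\partial_x\rangle^{-(1+\epsilon)}e^{it\partial_x^4}P_{\le 1}\phi\|_{L^2_x(\R;L^\infty_t([0,T]))}\lesssim\|\phi\|_{L^2}$ directly. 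For the high-frequency part, using $\|\langle\partial_x\rangle^{-(1+\epsilon)}P_N\|_{L^2\to L^2}\sim N^{-(1+\epsilon)}$, the triangle inequality in $N$, and Cauchy--Schwarz in $N$ (note $\sum_{N\ge 2}N^{-(1+\epsilon)}\cdot N=\sum_{N\ge2}N^{-\epsilon}<\infty$ in $\ell^2$-duality), the matter reduces to the single-block bound
\[
\big\|e^{it\partial_x^4}g\big\|_{L^2_x(\R;\,L^\infty_t([0,T]))}\lesssim N\|g\|_{L^2}\qquad\text{for all }g\text{ with }\widehat g\text{ supported in }\{|\xi|\sim N\},\ N\ge2,
\]
with implicit constant depending only on $T\le 1$.

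To prove this single-block bound I would linearize the supremum: since $t\mapsto e^{it\partial_x^4}g(x)$ is continuous for a.e.\ $x$, there is a measurable $t_*:\R\to[0,T]$ (a near-maximizer selected along a countable dense set of times) with $\sup_{t\in[0,T]}|e^{it\partial_x^4}g(x)|\le 2|e^{it_*(x)\partial_x^4}g(x)|$. It then suffices to bound $\|S_N\|_{L^2\to L^2}\lesssim N$ for the linear operator $(S_Nh)(x):=(\mathcal K_N(t_*(x),\cdot)*h)(x)$, where $\mathcal K_N(s,\cdot):=\mathcal F^{-1}_\xi[e^{is\xi^4}\widetilde\psi_N(|\xi|)]$ and $\widetilde\psi_N$ is a bump adapted to $\{|\xi|\sim N\}$ equal to $1$ on $\mathrm{supp}\,\widehat g$. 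By the $TT^*$ identity $\|S_N\|^2=\|S_NS_N^*\|$, and $S_NS_N^*$ has kernel $\mathcal M_N(t_*(x)-t_*(x'),x-x')$ where, by Plancherel, $\mathcal M_N(s,z)=\int_\R e^{i(z\xi+s\xi^4)}\widetilde\psi_N(|\xi|)^2\,d\xi$. By Schur's test and $t_*(x)-t_*(x')\in[-T,T]$,
\[
\|S_NS_N^*\|_{L^2\to L^2}\le\sup_x\int_\R\big|\mathcal M_N(t_*(x)-t_*(x'),x-x')\big|\,dx'\le\int_\R\sup_{|s|\le T}|\mathcal M_N(s,z)|\,dz,
\]
so everything reduces to showing $\displaystyle\int_\R\sup_{|s|\le T}|\mathcal M_N(s,z)|\,dz\lesssim_T N^2$.

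This last estimate is the heart of the matter and the main obstacle. Here $\mathcal M_N(s,\cdot)=\mathcal F^{-1}_\xi[e^{is\xi^4}\widetilde\psi_N(|\xi|)^2]$ with $\widetilde\psi_N(|\cdot|)^2$ a bump adapted to $\{|\xi|\sim N\}$, and I would split the $z$-integral. For $|z|\gtrsim N^3$ the phase $z\xi+s\xi^4$ has no critical point on $\{|\xi|\sim N\}$ and $|\partial_\xi(z\xi+s\xi^4)|\sim|z|$ there (using $|s|\le T\le1$); repeated integration by parts gives $|\mathcal M_N(s,z)|\lesssim_k N(N|z|)^{-k}$, contributing $O(N^{-4})$. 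For $|z|\lesssim N^3$ I would combine three bounds: (i) the trivial $|\mathcal M_N(s,z)|\lesssim N$; (ii) when $|s|\le N^{-4}$, the amplitude $e^{is\xi^4}\widetilde\psi_N(|\xi|)^2$ is itself a bump adapted to $\{|\xi|\sim N\}$ uniformly in $s$, whence $|\mathcal M_N(s,z)|\lesssim_k N\langle Nz\rangle^{-k}$; (iii) when $s\ne0$, van der Corput's second-derivative test, using $|\partial_\xi^2(z\xi+s\xi^4)|=12|s|\xi^2\gtrsim|s|N^2$ on $\{|\xi|\sim N\}$, gives $|\mathcal M_N(s,z)|\lesssim|s|^{-1/2}N^{-1}$. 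Taking the best of (i)--(iii) and then the supremum over $|s|\le T$ (the dominant choice being $|s|\sim|z|/N^3$, placing a nondegenerate stationary point inside the frequency support, which yields $\sim|z|^{-1/2}N^{1/2}$), the contribution of $\{|z|\lesssim N^3\}$ is $\lesssim 1+\int_{|z|\lesssim TN^3}|z|^{-1/2}N^{1/2}\,dz\lesssim T^{1/2}N^2$. Summing the regimes gives $\int_\R\sup_{|s|\le T}|\mathcal M_N(s,z)|\,dz\lesssim_T N^2$, which closes the argument. The delicate point is the uniform-in-$s$ bookkeeping near $|z|\sim|s|N^3$, where the phase has a stationary point in the support of the amplitude and one must invoke van der Corput rather than integration by parts.
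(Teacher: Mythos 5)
Your argument is correct and gives a complete proof, but it follows a genuinely different route from the one the paper relies on. The paper does not prove Proposition~\ref{Mfe} in-line; it simply invokes Proposition~2.2 of Segata \cite{S04}, and the (commented-out) appendix reproduces that argument. Segata's proof is a \emph{direct kernel-decay} argument: one shows by stationary phase that the kernel $\mathcal{K}_{\varrho}(t,x)=\mathcal{F}^{-1}\left[\langle\xi\rangle^{-2\varrho}e^{it\xi^4}\right](x)$, with $\varrho>1$, satisfies a uniform pointwise bound $\left|\mathcal{K}_\varrho(t,x)\right|\le C_T\langle x\rangle^{-1-\mu}$ for all $t\in(0,T]$ and some $\mu=\mu(\varrho)>0$, then passes this through Young's inequality to get the retarded maximal estimate $\left\|\langle\partial_x\rangle^{-2(1+\epsilon)}\mathcal{I}[F]\right\|_{L^2_xL^\infty_T}\lesssim\|F\|_{L^2_xL^1_T}$, and finishes by duality. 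No Littlewood--Paley decomposition is used: the smoothing factor $\langle\xi\rangle^{-2\varrho}$ is built directly into the kernel, and the hard work is a careful multi-regime stationary-phase analysis of the single, non-localized oscillatory integral.

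Your approach is the dyadic Kenig--Ponce--Vega scheme: decompose in frequency, reduce (by a trivial $L^2$-bound on the multiplier and $\ell^1/\ell^2$ summation, exploiting the $\epsilon>0$ of room) to the single-block bound $\left\|e^{it\partial_x^4}P_N g\right\|_{L^2_xL^\infty_T}\lesssim N\|g\|_{L^2}$, and prove that by linearizing the supremum with a measurable time selector, passing to $TT^*$, applying Schur's test, and estimating $\int_\R\sup_{|s|\le T}\left|\mathcal{M}_N(s,z)\right|\,dz\lesssim_T N^2$ through a split into regimes (trivial bound, nonstationary phase, van der Corput at the stationary scale $|s|\sim|z|/N^3$). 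I checked the bookkeeping: the van der Corput bound $|s|^{-1/2}N^{-1}$ is dominated at the stationary scale by $N^{1/2}|z|^{-1/2}$, whose integral over $N^{-1}\lesssim|z|\lesssim TN^3$ gives $T^{1/2}N^2$, and the tails are negligible; your separate low-frequency treatment via the pointwise inequality $\sup_{[0,T]}|f|^2\lesssim T^{-1}\|f\|_{L^2_t}^2+\|f\|_{L^2_t}\|\partial_t f\|_{L^2_t}$ together with the boundedness of $\partial_x^4 P_{\le1}$ on $L^2$ is also fine. The trade-off between the two approaches is the usual one: Segata's single-kernel argument is shorter on formalism but requires a delicate global stationary-phase estimate with the weight $\langle\xi\rangle^{-2\varrho}$ baked in; your dyadic version replaces that by a cleaner frequency-localized kernel estimate at the cost of the linearization-of-the-sup and summation machinery, and is more robust if one later wants to vary the dispersion or the power of the smoothing weight.
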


This proposition is nothing but Proposition 2.2 in \cite{S04}.

\begin{lemma}[Kenig-Ruiz estimate \cite{KR83, KPV91}]
\label{KRe}
Let $I$ be a time interval. Then there exists a positive constant $C>0$ independent of $I$ such that for any $\phi\in L^2(\R)$, the inequality
\[
     \left\||\partial_x|^{-\frac{1}{4}}e^{it\partial_x^4}\phi\right\|_{L^4_x\left(\R;L_t^{\infty}(I)\right)}\le C\|\phi\|_{L^2_x(\R)}
\]
holds.
\end{lemma}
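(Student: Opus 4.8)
The plan is to prove the estimate by the classical oscillatory‑integral route of Kenig--Ruiz and Kenig--Ponce--Vega: linearize the supremum in time, reduce to an $L^2\to L^4$ bound for a single oscillatory integral operator, and close the argument with a $TT^*$ computation plus a pointwise kernel estimate and the Hardy--Littlewood--Sobolev inequality. (One may of course simply invoke the general theorems of \cite{KPV91}; I sketch the self‑contained version.)

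First I would linearize. By a standard measurable‑selection argument it suffices to bound
\[
\left\|\int_{\R}e^{i(x\xi+t(x)\xi^4)}|\xi|^{-1/4}\widehat{\phi}(\xi)\,d\xi\right\|_{L^4_x(\R)}\lesssim \|\phi\|_{L^2(\R)}
\]
uniformly over all measurable $t:\R\to I$. Call this operator $T$. Since we seek $\|T\|_{L^2\to L^4}\lesssim 1$, by $TT^*$ this is equivalent to $\|TT^*\|_{L^{4/3}_x\to L^4_x}\lesssim 1$, and the kernel of $TT^*$ is
\[
K(x,y)=\int_{\R}e^{i[(x-y)\xi+(t(x)-t(y))\xi^4]}\,|\xi|^{-1/2}\,d\xi .
\]
The heart of the matter is the pointwise bound $|K(x,y)|\lesssim |x-y|^{-1/2}$, uniform in $x,y$ and in the function $t(\cdot)$. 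When $t(x)=t(y)$ this is exactly the Fourier transform of the homogeneous distribution $|\xi|^{-1/2}$; in general one splits the $\xi$‑integral into $|\xi|\lesssim|x-y|^{-1}$, estimated directly using integrability of the weight near $\xi=0$, and $|\xi|\gtrsim|x-y|^{-1}$, where one runs repeated integration by parts / van der Corput on the phase $(x-y)\xi+(t(x)-t(y))\xi^4$: away from its (at most one) critical point $\xi_0$ with $\xi_0^3=-(x-y)/\big(4(t(x)-t(y))\big)$ one integrates by parts, while a neighbourhood of $\xi_0$ contributes $\lesssim |t(x)-t(y)|^{-1/2}|\xi_0|^{-3/2}\sim|x-y|^{-1/2}$ via the second‑derivative van der Corput bound $|\psi''(\xi_0)|\sim|t(x)-t(y)||\xi_0|^2$. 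The exponent $-\tfrac12$ on the weight is precisely what makes every piece sum to $|x-y|^{-1/2}$.

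Given the kernel bound, the Hardy--Littlewood--Sobolev inequality in dimension one (with $\lambda=\tfrac12$, $p=\tfrac43$, $q=4$) gives $\|TT^*g\|_{L^4_x}\lesssim \big\||x|^{-1/2}\ast|g|\big\|_{L^4_x}\lesssim\|g\|_{L^{4/3}_x}$, which finishes the proof; note that the interval $I$ never enters the estimate, so the constant is independent of $I$, as claimed.

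The main obstacle is exactly the uniform kernel bound: one must carry out a stationary‑phase analysis of a quartic‑phase oscillatory integral carrying a weight singular at the origin, uniformly over an arbitrary measurable choice of $t(\cdot)$, carefully matching the non‑stationary regime, the contribution of the critical point, and the low‑frequency region near the weight's singularity. A Littlewood--Paley reduction to a single dyadic frequency, though tempting, does not obviously help, because the supremum in $t$ destroys frequency orthogonality in $L^4_x$; the scaling‑invariant $TT^*$/Hardy--Littlewood--Sobolev argument sidesteps this difficulty.
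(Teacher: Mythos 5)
Your argument is correct, but the comparison is somewhat vacuous: the paper gives no proof of Lemma~\ref{KRe} at all, it simply directs the reader to Theorem~2.5 of Kenig--Ponce--Vega \cite{KPV91}, and your linearization--$TT^*$--stationary-phase--Hardy--Littlewood--Sobolev scheme is precisely the proof of that cited theorem specialized to the quartic phase. Your bookkeeping at the stationary point checks out: with $a=x-y$, $b=t(x)-t(y)$ and $\xi_0^3=-a/(4b)$, the van der Corput contribution is $|\psi''(\xi_0)|^{-1/2}|\xi_0|^{-1/2}\sim|b|^{-1/2}|\xi_0|^{-3/2}\sim|a|^{-1/2}$ because $|b\xi_0^3|\sim|a|$; the low-frequency piece $\int_{|\xi|\lesssim|a|^{-1}}|\xi|^{-1/2}d\xi\sim|a|^{-1/2}$ matches; and the Hardy--Littlewood--Sobolev exponents $(\lambda,p,q)=(1/2,4/3,4)$ are correct in dimension one. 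A scaling check confirms that $|\partial_x|^{-1/4}$ is exactly the endpoint for this estimate, consistent with your observation that $|\xi|^{-1/2}$ is the critical $TT^*$ weight at which all three regions of the kernel estimate balance. The only place you leave work to the reader is the non-stationary integration-by-parts estimate with the singular amplitude $|\xi|^{-1/2}$, carried out uniformly over the measurable selection $t(\cdot)$ and matched against the low-frequency and stationary regions; this is the standard but slightly fiddly step, and you correctly flag it as the only technically delicate part. In short, this is not a different route but a filled-in, self-contained version of the citation the paper makes.
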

For the proof of this lemma, see Theorem 2.5 in \cite{KPV91}.

\subsection{Introduction of function spaces and their properties}
In this subsection, we introduce solution spaces for the Cauchy problem (\ref{1-1}) and their auxiliary spaces (see also \cite{Ppre}).
\begin{definition}[$L^2(\R)$-based auxiliary space, solution space]
\label{def4-1}
For a dyadic number $N\in 2^{\N \cup \{0\}}$, the function space $Y_N$ is defined by
\[
    Y_N:=\left\{F\in L_x^1L_t^2+L_t^1L^2_x\ :\ \left\|F\right\|_{Y_N}<\infty\right\},
\]
with the norm
\begin{align*}
   \|F\|_{Y_N}:=\inf&\left\{N^{-\frac{3}{2}}\left\|F_1\right\|_{L^1_xL_t^2}+\left\|F_2\right\|_{L_t^1L_x^2}\ 
   :\ F=F_1+F_2,\ F_1\in L_x^1L_t^2,\ F_2\in L_t^1L_x^2\right\}.
\end{align*}
The function space $X_N$ is defined by
\[
    X_N:=\left\{u\in L_t^{\infty}L^2_x\ :\ \|u\|_{X_N}<\infty\right\},
\]
with the norm
\[
\begin{split}
    \|u\|_{X_N}
    &:=
    \|u\|_{L_t^{\infty}L_x^2}+N^{\frac{1}{2}}\|u\|_{L_t^{4}L_x^{\infty}}
    +N^{-(1+\epsilon)}\|u\|_{L^{2}_xL_t^{\infty}}+N^{-\frac{1}{4}}\|u\|_{L^{4}_xL_t^{\infty}}
    +N^{\frac{3}{2}}\|u\|_{L_x^{\infty}L_t^2}
+\left\|\left(i\partial_t+\partial_x^4\right)u\right\|_{Y_N}, 
\end{split}
\]
where $\epsilon >0$ is an arbitrary positive number. 
\end{definition}

\begin{remark}
\begin{enumerate}
    \item The function spaces $Y_N$ and $X_N$ given in Definition \ref{def4-1} are Banach spaces. 
    \item The power of the dyadic numbers and the function spaces in $X_N$, i.e. 
    \[N^{\frac{1}{2}}\|u\|_{L_t^{4}L_x^{\infty}},\ \ 
    N^{-(1+\epsilon)}\|u\|_{L_x^2L_t^{\infty}},\ \ 
    N^{-\frac{1}{4}}\|u\|_{L_x^4L_t^{\infty}},\ \ 
    N^{\frac{3}{2}}\|u\|_{L_x^{\infty}L_t^2},
    \]
    come from the Strichartz estimate (Lemma \ref{lem3-1}), the maximal function estimate (Proposition \ref{Mfe}), the Kenig-Ruiz estimate (Lemma \ref{KRe}) and the Kato type smoothing (Lemma \ref{kato}) respectively.
    \item In the previous work \cite{Ppre}, a similar semi-norm to $\left\|\left(i\partial_t+\partial_x^4\right)u\right\|_{Y_N}$ which appears in $X_N$-norm is used to study low regularity well-posedness for the second order Schr\"odinger equation with derivative nonlinearities:
    \[
      i\partial_tu+\partial_x^2u=G^{m,l}_{1}\left(u,\partial_xu,\overline{u},\partial_x\overline{u}\right),
    \]
    where $3\le m\le l$.
    \item In \cite{HJ11}, the authors used the function space such as Definition~\ref{def4-1} 
    for high frequency to prove the well-posedness of (\ref{1-1}) with $\gamma =3$. 
    But thay used the Besov type Fourier restriction norm instead of $L^1_TL^2_x$. 
    We will also use it for the special cases. (See, Section~\ref{mesc_1} below.)
    \item For any function $\phi=\phi(x)$ on $\R$, the solution $e^{it\partial_x^4}\phi$ to the free fourth-order Schr\"odinger equation satisfies the following identity
    \[
    \left\|\left(i\partial_t+\partial_x^4\right)e^{it\partial_x^4}\phi\right\|_{Y_N}=0,
    \]
    which implies that $\|(i\partial_t+\partial_x^4)u\|_{Y_N}$ is semi-norm.
    \item For a complex valued function $u=u(t,x)$ on $\R\times\R$, the relation \[
    \left\|\left(i\partial_t+\partial_x^4\right)\overline{u}\right\|_{Y_N}=\left\|\left(i\partial_t-\partial_x^4\right)u\right\|_{Y_N}\ne \left\|\left(i\partial_t+\partial_x^4\right)u\right\|_{Y_N}\]
    hold. This implies $\|u\|_{X_N}\ne \|\overline{u}\|_{X_N}$. 
    
\end{enumerate}
\end{remark}

The following proposition means boundedness from $L^2(\R)$ to $X_N$ for localized solutions to the free fourth-order Schr\"odinger equation.
\begin{proposition}[Estimate for localized free solutions from $L^2(\R)$ to $X_N$]
\label{prop4-1-1}
Let $0<T<1$ and $N\in 2^{\N}$. 
Then there exists a positive constant $C>0$
such that the estimate
\[
      \left\|e^{it\partial_x^4}P_N\phi\right\|_{X_N(T)}\le C\|P_N\phi\|_{L^2(\R)}
\]
holds.  
\end{proposition}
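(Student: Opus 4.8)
The plan is to unpack the definition of the $X_N$-norm and estimate each of the six constituent pieces of $\left\|e^{it\partial_x^4}P_N\phi\right\|_{X_N(T)}$ separately against $\|P_N\phi\|_{L^2(\R)}$. Since $e^{it\partial_x^4}P_N\phi$ solves the free equation, the last term $\left\|\left(i\partial_t+\partial_x^4\right)e^{it\partial_x^4}P_N\phi\right\|_{Y_N}$ vanishes identically (item 5 of the Remark following Definition \ref{def4-1}), so there are really only five space-time norms to control, and each of these is exactly the kind of estimate recorded in Section 2.2. Throughout I write $\psi:=P_N\phi$ and use that $\widehat{\psi}$ is supported in $\{|\xi|\sim N\}$, which lets me freely trade the frequency-localized operator $|\partial_x|^{a}P_N$ for the factor $N^{a}$ via the Bernstein-type equivalence in Lemma \ref{lemBe}.

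First I would handle the two terms with no spatial derivative loss: $\|e^{it\partial_x^4}\psi\|_{L_t^\infty L_x^2}$ is $\le \|\psi\|_{L^2}$ by unitarity of $e^{it\partial_x^4}$ on $L^2$, and $N^{1/2}\|e^{it\partial_x^4}\psi\|_{L_t^4 L_x^\infty}$ is controlled by applying the Strichartz estimate \eqref{3-3-3} with the admissible pair $(q,r)=(4,\infty)$, for which $2/q=1/2$; since $\widehat{\psi}$ lives at frequency $N$, $|\partial_x|^{1/2}e^{it\partial_x^4}\psi \sim N^{1/2}e^{it\partial_x^4}\psi$, giving $N^{1/2}\|e^{it\partial_x^4}\psi\|_{L_t^4L_x^\infty}\lesssim \|\psi\|_{L^2}$. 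Next, $N^{3/2}\|e^{it\partial_x^4}\psi\|_{L_x^\infty L_t^2}$ follows from the Kato smoothing estimate of Lemma \ref{kato}, again converting $|\partial_x|^{3/2}$ into the factor $N^{3/2}$ on the frequency-localized function. The Kenig-Ruiz term $N^{-1/4}\|e^{it\partial_x^4}\psi\|_{L_x^4 L_t^\infty}$ comes directly from Lemma \ref{KRe} after writing $|\partial_x|^{-1/4}e^{it\partial_x^4}\psi\sim N^{-1/4}e^{it\partial_x^4}\psi$. Finally, the maximal-function term $N^{-(1+\epsilon)}\|e^{it\partial_x^4}\psi\|_{L_x^2 L_t^\infty([0,T])}$ is handled by Proposition \ref{Mfe}: for $0<T<1$ one has $\|\langle\partial_x\rangle^{-(1+\epsilon)}e^{it\partial_x^4}\psi\|_{L_x^2L_t^\infty([0,T])}\lesssim \|\psi\|_{L^2}$, and on frequency $\sim N$ the multiplier $\langle\partial_x\rangle^{-(1+\epsilon)}$ is comparable to $N^{-(1+\epsilon)}$, so the desired bound follows. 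Summing the five contributions yields $\left\|e^{it\partial_x^4}P_N\phi\right\|_{X_N(T)}\le C\|P_N\phi\|_{L^2}$.

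The only mild subtlety — and the point I would be most careful about — is the bookkeeping in passing between $|\partial_x|^{\pm a}P_N$, $\langle\partial_x\rangle^{\pm a}P_N$, and the plain dyadic weight $N^{\pm a}$: since $N\in 2^{\N}$ (so $N\ge 2$ and in particular $N\gtrsim 1$), $\langle\xi\rangle\sim |\xi|\sim N$ on the support of $\widehat{P_N\phi}$, and Lemma \ref{lemBe} makes all these substitutions legitimate with constants independent of $N$. One also needs the restriction to a finite interval $[0,T]$ with $T<1$ only for the maximal function estimate (Proposition \ref{Mfe}); all other estimates hold on arbitrary time intervals. No interaction between frequencies occurs here since everything is already localized to a single dyadic block, so there is no genuine obstacle beyond assembling the cited linear estimates.
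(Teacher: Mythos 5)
Your proof is correct and follows exactly the route the paper indicates: the paper's own justification is the single sentence that Proposition \ref{prop4-1-1} follows from the definition of the $X_N(T)$-norm, the Strichartz estimate \eqref{3-3-3}, Kato smoothing (Lemma \ref{kato}), the Kenig--Ruiz estimate (Lemma \ref{KRe}), and the maximal function estimate \eqref{3-7-1}, and your write-up simply unpacks this term by term with the correct use of Bernstein to convert $|\partial_x|^{\pm a}P_N$ and $\langle\partial_x\rangle^{-(1+\epsilon)}P_N$ into dyadic weights, together with the observation that the $Y_N$ semi-norm vanishes for free solutions.
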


Proposition \ref{prop4-1-1} follows from the definition of $X_N(T)$-norm, 
the Strichartz estimate (\ref{3-3-3}), the Kato type smoothing (Lemma \ref{kato}), 
Kenig-Ruiz estimate (Lemma \ref{KRe}), and the maximal function estimate (\ref{3-7-1}). 
%
\begin{proposition}[Estimate for localized free solutions from $X_1$ to $L^2(\R)$]
\label{prop4-1-1-2}
Let $0<T<1$. 
Then there exists a positive constant $C>0$
such that the estimate
\[
      \left\|e^{it\partial_x^4}P_{\le 1}\phi\right\|_{X_1(T)}\le C\|P_{\le 1}\phi\|_{L^2(\R)}
\]
holds.  
\end{proposition}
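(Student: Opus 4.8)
The plan is to reduce the low-frequency estimate to the high-frequency one already recorded in Proposition \ref{prop4-1-1}, while being careful about the terms in the $X_1$-norm that are sensitive to low frequencies. Concretely, write $P_{\le 1}=P_1 + P_{\le 1/2}$ and note that $X_1(T)$ and $X_{1/2}(T)$ have comparable norms up to absolute constants since the dyadic weights $N^{\pm a}$ in Definition \ref{def4-1} only shift by a fixed power of $2$. Thus it suffices to estimate $e^{it\partial_x^4}P_{\le 1}\phi$ directly. Since $\left(i\partial_t+\partial_x^4\right)e^{it\partial_x^4}P_{\le 1}\phi=0$, the last term in the $X_1$-norm vanishes, so we only need to bound the five space-time Lebesgue norms $\|\cdot\|_{L_t^\infty L_x^2}$, $\|\cdot\|_{L_t^4L_x^\infty}$, $\|\cdot\|_{L_x^2L_t^\infty}$, $\|\cdot\|_{L_x^4L_t^\infty}$, $\|\cdot\|_{L_x^\infty L_t^2}$ of the localized free solution by $\|P_{\le 1}\phi\|_{L^2}$.

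First I would handle $\|e^{it\partial_x^4}P_{\le 1}\phi\|_{L_t^\infty L_x^2}\le \|P_{\le 1}\phi\|_{L^2}$ by unitarity of $e^{it\partial_x^4}$ on $L^2$. For the $L_t^4L_x^\infty$ term, I would invoke the Strichartz estimate (\ref{3-3-3}) with the admissible pair $(q,r)=(4,\infty)$, which gives $\||\partial_x|^{1/2}e^{it\partial_x^4}P_{\le 1}\phi\|_{L_t^4L_x^\infty}\lesssim \|P_{\le 1}\phi\|_{L^2}$, and then remove the derivative loss at low frequency using the Bernstein inequality $\|P_{\le 1}g\|_{L_x^\infty}\lesssim \||\partial_x|^{1/2}P_{\le 1}g\|_{L_x^\infty}$ is false in that direction, so instead I would apply Bernstein to pass from $L^\infty$ to $L^2$ in $x$: $\|e^{it\partial_x^4}P_{\le 1}\phi\|_{L_x^\infty}\lesssim \|e^{it\partial_x^4}P_{\le 1}\phi\|_{L_x^2}$ (valid on the unit-frequency ball), and conclude via the $L_t^\infty L_x^2$ bound together with the finite measure of $[0,T]$ — giving $\|e^{it\partial_x^4}P_{\le 1}\phi\|_{L_t^4L_x^\infty}\lesssim T^{1/4}\|P_{\le 1}\phi\|_{L^2}\lesssim \|P_{\le 1}\phi\|_{L^2}$ since $T<1$. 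The same trick dispatches the mixed norms with $L_t^\infty$: by Bernstein in $x$ and Bernstein in $t$ (i.e., $\|f\|_{L_t^\infty([0,T])}\lesssim \|f\|_{L_t^2([0,T])}$ is false; rather use Sobolev in $t$ after noting $\partial_t e^{it\partial_x^4}P_{\le1}\phi=i\partial_x^4e^{it\partial_x^4}P_{\le1}\phi$ is again bounded on the unit ball), one controls $\|e^{it\partial_x^4}P_{\le 1}\phi\|_{L_x^2L_t^\infty}$, $\|e^{it\partial_x^4}P_{\le 1}\phi\|_{L_x^4L_t^\infty}$ and $\|e^{it\partial_x^4}P_{\le 1}\phi\|_{L_x^\infty L_t^2}$ all by a constant times $\|P_{\le 1}\phi\|_{L^2}$, again crucially using $0<T<1$ so that powers of $T$ are harmless.

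The one step that requires slightly more care — and that I expect to be the main (though still routine) obstacle — is the $L_x^\infty L_t^2$ norm, because here one would naively want the Kato smoothing estimate (Lemma \ref{kato}), which carries a factor $\||\partial_x|^{3/2}\cdot\|$ that is favorable at high frequency but yields no gain at low frequency; so instead of Kato one should argue purely by Sobolev embedding in $x$ on the unit-frequency ball: $\|e^{it\partial_x^4}P_{\le1}\phi\|_{L_x^\infty}\lesssim \|e^{it\partial_x^4}P_{\le1}\phi\|_{L_x^2}$, hence $\|e^{it\partial_x^4}P_{\le1}\phi\|_{L_x^\infty L_t^2}\lesssim \|e^{it\partial_x^4}P_{\le1}\phi\|_{L_t^2L_x^\infty}\lesssim T^{1/2}\sup_t\|e^{it\partial_x^4}P_{\le1}\phi\|_{L_x^2}\lesssim \|P_{\le1}\phi\|_{L^2}$ after swapping the order of the norms by Minkowski's inequality (legitimate since $2\le\infty$). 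Collecting the six contributions and summing the two dyadic pieces $P_1$ and $P_{\le 1/2}$ (only finitely many relevant scales, with bounded constants) gives $\|e^{it\partial_x^4}P_{\le 1}\phi\|_{X_1(T)}\le C\|P_{\le 1}\phi\|_{L^2}$, which is the claim.
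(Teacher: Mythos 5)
Your proof is correct, and your handling of the $L^\infty_t L^2_x$, $L^4_t L^\infty_x$, and $L^\infty_x L^2_t$ terms — unitarity, Bernstein plus H\"older in $t$, and Minkowski plus Bernstein plus H\"older, respectively — matches the paper's proof verbatim. Where you diverge is on $L^2_x L^\infty_T$ and $L^4_x L^\infty_T$: you propose exploiting $\partial_t e^{it\partial_x^4}P_{\le 1}\phi = i\partial_x^4 e^{it\partial_x^4}P_{\le 1}\phi$, which is frequency-bounded, together with a one-dimensional Sobolev (Agmon-type) inequality in time $\|f\|_{L^\infty_T}\lesssim T^{-1/2}\|f\|_{L^2_T}+T^{1/2}\|\partial_t f\|_{L^2_T}$; this works once you combine with Minkowski, Bernstein in $x$, and H\"older in $t$, with all $T$-powers harmless since $T<1$. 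The paper is shorter here: it observes that Proposition \ref{Mfe} and Lemma \ref{KRe} already furnish $L^2_x L^\infty_T$ and $L^4_x L^\infty_T$ for $P_{\le 1}\phi$ directly, because the weights $\langle\partial_x\rangle^{-(1+\epsilon)}$ and $|\partial_x|^{-1/4}$ are invertible bounded multipliers on the unit frequency ball, so Bernstein absorbs them into $\|P_{\le 1}\phi\|_{L^2}$. Your route has the virtue of being entirely self-contained (it bypasses the maximal function and Kenig--Ruiz estimates), at the cost of an extra interpolation in time; the paper's route is a one-line reduction to existing lemmas. Finally, the preliminary split $P_{\le 1}=P_1+P_{\le 1/2}$ you propose and then discard is indeed unnecessary: in the $X_1$-norm all dyadic weights are $1$, so one treats $P_{\le 1}\phi$ as a single block, exactly as you end up doing.
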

\begin{proof}
Because
\[
\|e^{it\partial_x^4}P_{\le 1}\phi\|_{L_T^{\infty}L_x^2}+
\|e^{it\partial_x^4}P_{\le 1}\phi\|_{L_x^{2}L_T^{\infty}}+
\|e^{it\partial_x^4}P_{\le 1}\phi\|_{L_T^{4}L_x^{\infty}}
\lesssim \|P_{\le 1}\phi\|_{L^2_x}
\]
by the unitarity of $e^{it\partial_x^4}$ on $L^2$, 
Proposition~\ref{Mfe}, 
and Lemma~\ref{KRe}, 
it suffices to show that
\[
\left\|e^{it\partial_x^4}P_{\le 1}\phi\right\|_{L^4_TL^{\infty}_x}\lesssim \|P_{\le 1}\phi\|_{L^2(\R)} 
\]
and
\[
\left\|e^{it\partial_x^4}P_{\le 1}\phi\right\|_{L^{\infty}_xL^{2}_T}\lesssim \|P_{\le 1}\phi\|_{L^2(\R)}.
\]
By the Bernstein inequality and the unitarity of $e^{it\partial_x^4}$, we have
\[
\left\|e^{it\partial_x^4}P_{\le 1}\phi\right\|_{L^4_TL^{\infty}_x}
\lesssim \left\|e^{it\partial_x^4}P_{\le 1}\phi\right\|_{L^4_TL^{2}_x}
\lesssim T^{\frac{1}{4}}\left\|e^{it\partial_x^4}P_{\le 1}\phi\right\|_{L^{\infty}_TL^{2}_x}
\lesssim \|P_{\le 1}\phi\|_{L^2(\R)}. 
\]
On the other hand, 
we have
\[
\left\|e^{it\partial_x^4}P_{\le 1}\phi\right\|_{L^{\infty}_xL^{2}_T}
\lesssim \left\|e^{it\partial_x^4}P_{\le 1}\phi\right\|_{L^{2}_TL^{\infty}_x}
\lesssim T^{\frac{1}{2}}\left\|e^{it\partial_x^4}P_{\le 1}\phi\right\|_{L^{\infty}_TL^{2}_x}
\lesssim \|P_{\le 1}\phi\|_{L^2(\R)}
\]
by the same argument. 
\end{proof}

Next we introduce the following solution space and its auxiliary space of Besov type for $H^s$-solution to the Cauchy problem (\ref{1-1}).
\begin{definition}[$H^s(\R)$-based auxiliary space, Solution space]
\label{def3-4}
Let $s\in \R$, $T>0$. 
We define the function spaces $X^s(T)$ and $Y^s(T)$ by the norms
\[
     \|u\|_{X^s}=\|u\|_{X^s(T)}:=\|P_{\le 1}u\|_{X_1(T)}+
     \left(\sum_{N\in 2^{\N}}N^{2s}\left\|P_Nu\right\|_{X_N(T)}^2\right)^{\frac{1}{2}},
\]
\[
     \|F\|_{Y^s}=\|F\|_{Y^s(T)}:=\|P_{\le 1}F\|_{Y_1(T)}+
     \left(\sum_{N\in 2^{\N}}N^{2s}\left\|P_NF\right\|_{Y_N(T)}^2\right)^{\frac{1}{2}}
\]
respectively.
\end{definition}

\begin{remark}
The function spaces $Y^s(T)$ and $X^s(T)$ given in Definition \ref{def3-4} are Banach spaces.
\end{remark}

\begin{proposition}[Estimate for free solutions from $X^s$ to $H^s(\R)$]
\label{prop3-8}
Let $0<T<1$, 
$s\in \R$ and $\phi\in H^s(\R)$. 
Then there exists a positive constant $C_0>0$
such that the estimate
\[
      \left\|e^{it\partial_x^4}\phi\right\|_{X^s(T)}\le C_0\|\phi\|_{H^s(\R)}
\]
holds. 
\end{proposition}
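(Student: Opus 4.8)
\textbf{Proof plan for Proposition~\ref{prop3-8}.}
The plan is to reduce the inhomogeneous $H^s$-estimate to the frequency-localized estimates already available, namely Proposition~\ref{prop4-1-1} (for dyadic pieces $N\in 2^{\N}$) and Proposition~\ref{prop4-1-1-2} (for the low-frequency piece $P_{\le 1}$). The key observation is that the Littlewood--Paley projections commute with the free propagator $e^{it\partial_x^4}$, since both are Fourier multipliers: $P_N e^{it\partial_x^4}\phi = e^{it\partial_x^4}P_N\phi$ and $P_{\le 1}e^{it\partial_x^4}\phi = e^{it\partial_x^4}P_{\le 1}\phi$. Thus, unwinding the definition of the $X^s(T)$-norm,
\[
\left\|e^{it\partial_x^4}\phi\right\|_{X^s(T)}
= \left\|e^{it\partial_x^4}P_{\le 1}\phi\right\|_{X_1(T)}
+ \left(\sum_{N\in 2^{\N}} N^{2s}\left\|e^{it\partial_x^4}P_N\phi\right\|_{X_N(T)}^2\right)^{1/2}.
\]

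First I would bound the low-frequency term: by Proposition~\ref{prop4-1-1-2} we have $\|e^{it\partial_x^4}P_{\le 1}\phi\|_{X_1(T)} \lesssim \|P_{\le 1}\phi\|_{L^2} \le \|\phi\|_{H^s}$, where the last inequality holds because $P_{\le 1}$ truncates to $|\xi|\lesssim 1$, on which $\langle\xi\rangle^s \sim 1$. Next, for each dyadic $N\in 2^{\N}$, Proposition~\ref{prop4-1-1} gives $\|e^{it\partial_x^4}P_N\phi\|_{X_N(T)} \le C\|P_N\phi\|_{L^2}$ with $C$ independent of $N$; hence
\[
\sum_{N\in 2^{\N}} N^{2s}\left\|e^{it\partial_x^4}P_N\phi\right\|_{X_N(T)}^2
\lesssim \sum_{N\in 2^{\N}} N^{2s}\left\|P_N\phi\right\|_{L^2}^2
\sim \sum_{N\in 2^{\N}} \left\|\langle\partial_x\rangle^s P_N\phi\right\|_{L^2}^2,
\]
using the third Bernstein inequality of Lemma~\ref{lemBe} (equivalently $\langle\xi\rangle^s\sim N^s$ on the support of $\psi_N$). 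Then the almost-orthogonality of the Littlewood--Paley pieces — concretely Plancherel together with the finite-overlap property of the supports of $\{\psi_N\}$, or one application of Lemma~\ref{LP} in $L^2$ — yields $\sum_{N\in 2^{\N}}\|\langle\partial_x\rangle^s P_N\phi\|_{L^2}^2 \lesssim \|\langle\partial_x\rangle^s\phi\|_{L^2}^2 = \|\phi\|_{H^s}^2$. Combining the two contributions gives the claim with $C_0$ depending only on the implicit constants in Propositions~\ref{prop4-1-1} and~\ref{prop4-1-1-2}.

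There is essentially no serious obstacle here: the proposition is a packaging statement that assembles the frequency-localized linear estimates into a Besov-type solution-space bound, and the only points requiring any care are (i) checking that $P_N$ and $P_{\le 1}$ commute with $e^{it\partial_x^4}$ so that the already-proved propositions apply verbatim, and (ii) handling the $\ell^2$ sum over dyadic frequencies via orthogonality rather than a crude triangle inequality. If one wanted to be completely careful about the time-restriction norms, one would note that the identities above hold at the level of the ambient-space norms and pass to the infimum defining $X_N(T)$; this is routine and does not affect the argument.
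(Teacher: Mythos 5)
Your argument is correct and is exactly the route the paper indicates (the paper states only that Proposition~\ref{prop3-8} follows from Propositions~\ref{prop4-1-1}, \ref{prop4-1-1-2}, Plancherel, and the properties of $P_N$, without spelling out the details). Your write-up simply fills in those details — commuting $P_N$ with the propagator, applying the localized estimates, and summing via $\langle\xi\rangle^s\sim N^s$ on $\mathrm{supp}\,\psi_N$ together with Plancherel — so there is nothing to add.
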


Proposition \ref{prop3-8} follows from Proposition \ref{prop4-1-1}, 
Proposition \ref{prop4-1-1-2}, 
the Plancherel theorem and the properties of the Littlewood-Paley projection $P_N$.


\section{Decomposition of the Duhamel term and its application}
\label{decom}
\ \ Our aim of this section is to prove the following estimates (Theorem \ref{thm3-1_low} and  ~\ref{thm3-1}) for the Duhamel term $\mathcal{I}[F]$, where the integral operator $\mathcal{I}$ is defined by (\ref{dua}), from the auxiliary space $Y_N$ to 
the solution space $X_N$, 
whose function spaces are defined in Definition \ref{def4-1}. The proof is based on the method of the proof of Lemma 7.4 in \cite{BIKT11}. (Also see Proposition 3.3 in \cite{Ppre}.)
\begin{theorem}[Estimate for the localized Duhamel term from $Y_1$ to $X_1$]
\label{thm3-1_low}
Let $0<T<1$. 
Then there exists a positive constant $C>0$ independent of $T$ and $F$ such that the estimates
\begin{equation}
\left\|P_{\le 1}\mathcal{I}[F]\right\|_{X_1(T)}\le C\|P_{\le 1}F\|_{Y_1(T)}
\end{equation}
holds.
\end{theorem}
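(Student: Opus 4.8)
The plan is to reduce the claimed bound on $P_{\le 1}\mathcal{I}[F]$ in $X_1(T)$ to a collection of space-time estimates for $\mathcal{I}$ applied to a low-frequency forcing term, by unwinding the definition of the $X_1$-norm from Definition~\ref{def4-1}. Since we only need the inequality for $P_{\le 1}\mathcal{I}[F]$, we may freely insert $P_{\le 1}$ (or a fattened projection) and exploit that on frequencies $|\xi|\lesssim 1$ all the weights $N^{\pm\cdot}$ with $N\sim 1$ are harmless; in particular the Littlewood--Paley apparatus collapses and we never pay derivative losses. Concretely, writing $v:=P_{\le 1}\mathcal{I}[F]$, one has $(i\partial_t+\partial_x^4)v=P_{\le 1}F$, so the last term in the $X_1$-norm is exactly $\|P_{\le 1}F\|_{Y_1(T)}$ and needs no work. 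It remains to control the six Lebesgue-type pieces
\[
\|v\|_{L_T^\infty L_x^2},\quad \|v\|_{L_T^4 L_x^\infty},\quad \|v\|_{L_x^2 L_T^\infty},\quad \|v\|_{L_x^4 L_T^\infty},\quad \|v\|_{L_x^\infty L_T^2}
\]
(the weights all being $\sim 1$) by $\|P_{\le 1}F\|_{Y_1(T)}$.

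Next I would split the forcing according to the definition of the $Y_1$-norm: write $P_{\le 1}F = F_1+F_2$ with $F_1\in L_x^1L_t^2$, $F_2\in L_t^1L_x^2$, and estimate $\mathcal{I}[F_1]$ and $\mathcal{I}[F_2]$ separately, aiming at bounds by $\|F_1\|_{L_x^1L_T^2}$ and $\|F_2\|_{L_T^1L_x^2}$ respectively (again the $N^{-3/2}$ weight is $\sim 1$). For the $F_2$-piece, Minkowski's inequality reduces everything to the corresponding homogeneous estimates for $e^{it\partial_x^4}$ already recorded: $L_T^\infty L_x^2$ by unitarity, $L_T^4L_x^\infty$ by Bernstein followed by the $L^2$-conservation (as in the proof of Proposition~\ref{prop4-1-1-2}), $L_x^2L_T^\infty$ by Proposition~\ref{Mfe}, $L_x^4L_T^\infty$ by Lemma~\ref{KRe}, and $L_x^\infty L_T^2$ by the Kato smoothing Lemma~\ref{kato} (or, at low frequency, by the cruder $L_T^2L_x^\infty$ route used in Proposition~\ref{prop4-1-1-2}). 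The $F_1$-piece is the genuinely interesting one, and this is where I expect the main obstacle to lie: a Christ--Kiselev-type / $TT^*$ argument directly runs into the difficulty that $F_1\in L_x^1L_t^2$ is not dual to any of the target norms in the naive way, so one must instead use the explicit kernel representation \eqref{2-2-1} of $\mathcal{I}[F]$ in terms of the fundamental solution $\mathcal{K}$, exactly as in Lemma~7.4 of \cite{BIKT11} and Proposition~3.3 of \cite{Ppre}.

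The core of the $F_1$-estimate is therefore a pointwise (or frequency-localized) bound on the kernel $\mathcal{K}(t,x)$ for $|t|\le 1$ restricted to low output frequencies, namely that $P_{\le 1}\mathcal{K}(t,\cdot)$ and its relevant time-truncations obey decay/oscillation estimates strong enough that the operator $F_1\mapsto \int_0^t \int_{\R} \mathcal{K}(t-t',x-y)F_1(t',y)\,dy\,dt'$ maps $L_x^1L_t^2$ into each of the five spaces above with constant $O(1)$. Here the time cutoff to $|t-t'|\lesssim 1$ and the low-frequency cutoff make the stationary-phase analysis of $\mathcal{K}$ tame (no singular behaviour as $t\to 0$ after projecting to $|\xi|\lesssim 1$), so one reuses the dispersive/oscillatory estimates for $e^{it\partial_x^4}P_{\le 1}$ together with a Minkowski-in-$y$ maneuver: $\big\|\int_\R e^{i(t-t')\partial_x^4}P_{\le 1}(\delta_y * F_1(t',\cdot))\,dy\big\|$ is dominated, via Minkowski, by $\int_\R \|\,e^{i(t-t')\partial_x^4}P_{\le 1}F_1(t',y)\,\|\,dy$, turning the $L_x^1$ norm on the input into an outer integral and reducing matters to the $L_t^2$-in-time, one-sided-truncated estimates for the free group acting on the ``profile'' $F_1(\cdot,y)$ for each fixed $y$; the one-sided truncation $\int_0^t$ is handled by the Christ--Kiselev lemma since all the exponent pairs in question are off-diagonal. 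I expect the bookkeeping of which estimate (Strichartz, Kato, Kenig--Ruiz, maximal function) supplies which of the five bounds, together with verifying the Christ--Kiselev applicability in each case, to be the most delicate part; everything else is a direct transcription of the low-frequency specializations of Propositions~\ref{prop4-1-1} and~\ref{prop4-1-1-2} and the kernel method of \cite{BIKT11}.
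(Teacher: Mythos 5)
Your reduction to a decomposition $P_{\le 1}F=F_1+F_2$ with $F_1\in L^1_xL^2_t$, $F_2\in L^1_tL^2_x$, and the treatment of the $F_2$-piece via Minkowski and Proposition~\ref{prop4-1-1-2}, match the paper. The gap is in the $F_1$-piece. You omit the paper's key reduction: at low frequency it suffices to estimate $\|P_{\le 1}\mathcal{I}[F]\|_{L^p_xL^\infty_T}$ for $p\in\{2,4\}$, since the $L^\infty_TL^2_x$ norm is $\lesssim$ the $L^2_xL^\infty_T$ norm by Minkowski, and the $L^4_TL^\infty_x$ and $L^\infty_xL^2_T$ norms are dominated by the $L^4_xL^\infty_T$ norm through a chain of H\"older-in-$T$ (using $T<1$), Minkowski, and Bernstein steps. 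You instead propose to control all five Lebesgue pieces directly and invoke Christ--Kiselev, asserting that ``all the exponent pairs in question are off-diagonal.'' This is false: for $L^1_xL^2_t\to L^\infty_xL^2_T$ the time exponent is $2$ on both sides, exactly the on-diagonal case excluded by Christ--Kiselev. Moreover even the maximal-function targets $L^p_xL^\infty_T$ have time in the inner slot, which is not the $L^q_t(B)$ form the standard Christ--Kiselev lemma addresses; invoking it requires a nontrivial mixed-norm variant that you do not supply.

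The paper's argument for the $F_1$-piece avoids Christ--Kiselev and pointwise kernel estimates entirely. After the reduction to $L^p_xL^\infty_T$, $p\in\{2,4\}$, it observes that $\ee_{[0,1]}(t-t')\ee_{[0,1]}(t')=\ee_{[0,t]}(t')$ for $0\le t\le 1$, so that $P_{\le 1}\mathcal{I}[F]=K*G$ is a literal space-time convolution with $K:=\ee_{[0,1]}(t)e^{it\partial_x^4}\chi$ and $G:=\ee_{[0,1]}P_{\le 1}F$. Young's inequality then gives $\|K*G\|_{L^p_xL^\infty_T}\le\|K\|_{L^p_xL^\infty_T}\|G\|_{L^1_{x,t}}$, where the first factor is bounded by Proposition~\ref{Mfe} ($p=2$) and Lemma~\ref{KRe} ($p=4}$) and the second by H\"older since $T<1$. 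This convolution trick is what makes the low-frequency case elementary; the $w_y$-decomposition and explicit space-time Fourier analysis of Lemma~\ref{w_y_est} and Proposition~\ref{duam_decom} that you gesture toward are reserved for the high-frequency case $N\in 2^{\N}$ in Theorem~\ref{thm3-1}.
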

\begin{proof}
By Proposition \ref{prop4-1-1-2}, we have
\[
   \left\|P_{\le 1}\mathcal{I}[F]\right\|_{X_1(T)}\lesssim \int _0^T
   \left\|e^{i(t-t')\partial_x^4}P_{\le 1}F(t')\right\|_{X_1(T)}dt'
   \lesssim \int_0^T\left\|e^{-it'\partial_x^4}P_{\le 1}F(t')\right\|_{L^2_x}dt'
   \lesssim \|P_{\le 1}F\|_{L^1_TL^2_x}. 
\]
Therefore, it suffices to show that
\[
\left\|P_{\le 1}\mathcal{I}[F]\right\|_{X_1(T)}
\lesssim \|P_{\le 1}F\|_{L^1_xL^2_T}.
\]
Because we have
\[
\left\|P_{\le 1}\mathcal{I}[F]\right\|_{L^{\infty}_TL^{2}_x}
\lesssim \left\|P_{\le 1}\mathcal{I}[F]\right\|_{L^2_xL^{\infty}_T}
\]
and
\[
\left\|P_{\le 1}\mathcal{I}[F]\right\|_{L^{\infty}_xL^{2}_T}
\lesssim \left\|P_{\le 1}\mathcal{I}[F]\right\|_{L^{\infty}_xL^{4}_T}
\lesssim \left\|P_{\le 1}\mathcal{I}[F]\right\|_{L^{4}_TL^{\infty}_x}
\lesssim \left\|P_{\le 1}\mathcal{I}[F]\right\|_{L^{4}_{T,x}}
\lesssim \left\|P_{\le 1}\mathcal{I}[F]\right\|_{L^{4}_{x}L^{\infty}_T}
\]
by the H\"older inequality, and the Sobolev inequality, 
it suffices to show that
\begin{equation}\label{low_duam_keyest}
\left\|P_{\le 1}\mathcal{I}[F]\right\|_{L^{p}_xL^{\infty}_T}
\lesssim \|P_{\le 1}F\|_{L^1_xL^2_T}
\end{equation}
for $p=2$ and $4$. 
We put $\breve{\varphi}(\xi):=\varphi\left(\frac{\xi}{2}\right)$, 
$\chi (x):=\F_{\xi}^{-1}[\breve{\varphi}](x)$, where $\varphi$ is given by (\ref{2-2-5}) and 
\[
K(t,x):=\ee_{[0,1]}(t)e^{it\partial_x^4}\chi (x),\ \ 
G(t,x):=\ee_{[0,1]}(t)P_{\le 1}F(t,x).
\]
Then, we obtain $P_{\le 1}\mathcal{I}[F](t,x)=(K*G)(t,x)$ 
because $\varphi =\breve{\varphi}\varphi$, where $*$ denotes the 
time-space convolution. 
Therefore, by the Young inequality, we have
\[
\left\|P_{\le 1}\mathcal{I}[F]\right\|_{L^{p}_xL^{\infty}_T}
\lesssim \|K\|_{L^p_xL^{\infty}_T}\|G\|_{L^1_{x,T}}. 
\]
We note that 
\[
\|K\|_{L^p_xL^{\infty}_T}\lesssim \|\chi\|_{L^2_x}<\infty
\]
for $p=2$ and $4$ by Proposition~\ref{Mfe} and Lemma~\ref{KRe}. 
Therefore, by the H\"older inequality, we obtain (\ref{low_duam_keyest}). 
\end{proof}
\begin{theorem}[Estimate for the localized Duhamel term from $Y_N$ to $X_N$]
\label{thm3-1}
Let $0<T<1$ and  
$N\in 2^{\N}$. 
Then there exists a positive constant $C>0$ independent of $T$, $N$ and $F$ such that the estimate
\begin{equation}
    \left\|P_N\mathcal{I}[F]\right\|_{X_N(T)}\le C\|P_NF\|_{Y_N(T)}
\end{equation}
holds.
\end{theorem}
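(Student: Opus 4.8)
\noindent The plan is to run the same scheme as in the proof of Theorem~\ref{thm3-1_low}, the new feature being that the constant must now be independent of $N$ as well, so every power of $N$ has to be tracked sharply and one can no longer afford the lossy domination by Strichartz-type norms used there. By the definition of the $Y_N$-norm it suffices to fix a decomposition $P_NF=F_1+F_2$ with $F_1\in L^1_xL^2_t$ and $F_2\in L^1_tL^2_x$ and to establish
\[
\left\|P_N\mathcal{I}[F_2]\right\|_{X_N(T)}\lesssim \left\|F_2\right\|_{L^1_tL^2_x},\qquad
\left\|P_N\mathcal{I}[F_1]\right\|_{X_N(T)}\lesssim N^{-\frac32}\left\|F_1\right\|_{L^1_xL^2_t},
\]
with implicit constants independent of $N$, $T$ and $F$; summing these and taking the infimum over all admissible decompositions yields the theorem.

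First I would dispose of the $F_2$-part, which is soft: since $\mathcal{I}[F_2](t)=\int_0^t e^{i(t-t')\partial_x^4}F_2(t')\,dt'$, Minkowski's inequality in $t'$ reduces it to $\left\|e^{i(t-t')\partial_x^4}P_NF_2(t')\right\|_{X_N(T)}\lesssim \left\|P_NF_2(t')\right\|_{L^2_x}$, which is Proposition~\ref{prop4-1-1} for the free solution with datum $e^{-it'\partial_x^4}P_NF_2(t')$ (after a routine extension of the retarded integral to $\R$, as in \cite{Ppre}). In both parts the semi-norm $\left\|(i\partial_t+\partial_x^4)\,\cdot\,\right\|_{Y_N}$ entering $\|\cdot\|_{X_N}$ is immediate, because $(i\partial_t+\partial_x^4)\mathcal{I}[F]=iF$, so this component of $\|P_N\mathcal{I}[F_j]\|_{X_N}$ equals $\|P_NF_j\|_{Y_N}$, which is $\le\|F_2\|_{L^1_tL^2_x}$, resp.\ $N^{-3/2}\|F_1\|_{L^1_xL^2_t}$, straight from the definition of $\|\cdot\|_{Y_N}$.

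The core is the $F_1$-estimate, and by the last remark it is enough to bound the five norms $\|\cdot\|_{L^\infty_tL^2_x}$, $N^{1/2}\|\cdot\|_{L^4_tL^\infty_x}$, $N^{-(1+\epsilon)}\|\cdot\|_{L^2_xL^\infty_t}$, $N^{-1/4}\|\cdot\|_{L^4_xL^\infty_t}$ and $N^{3/2}\|\cdot\|_{L^\infty_xL^2_t}$ of $P_N\mathcal{I}[F_1]$ by $N^{-3/2}\|F_1\|_{L^1_xL^2_t}$. The retarded cutoff $\mathbf{1}_{[0,t]}(t')$ in $\mathcal{I}$ is what obstructs a direct use of the Plancherel theorem in time, and this is where I would invoke the decomposition of the retarded Duhamel operator from the proof of Lemma~7.4 of \cite{BIKT11} (see also Proposition~3.3 of \cite{Ppre}): using a frequency-adapted partition of $[0,T]$ together with $\mathbf{1}_{[0,t]}(t')=\tfrac12\bigl(1+\operatorname{sgn}(t-t')\bigr)-\mathbf{1}_{(-\infty,0)}(t')$, one writes $P_N\mathcal{I}[F_1]$ as the sum of (i)~a genuine free solution $e^{it\partial_x^4}g$ with $g=P_N\int_0^\infty e^{-it'\partial_x^4}F_1(t')\,dt'$, for which the dual form of the Kato smoothing (Lemma~\ref{kato}) gives $\|g\|_{L^2}\lesssim N^{-3/2}\|F_1\|_{L^1_xL^2_t}$ and hence all five norms via Proposition~\ref{prop4-1-1}; and (ii)~a non-retarded space--time convolution whose Littlewood--Paley-localized kernel $\mathcal{K}_N(t,x)=\mathcal{F}^{-1}_\xi[\psi_N(|\xi|)e^{it\xi^4}](x)$ obeys, by stationary phase and repeated integration by parts in $\xi$, the bound $|\mathcal{K}_N(t,x)|\lesssim \min\{N,\,|t|^{-1/2}N^{-1}\}$ with rapid decay off $|x|\lesssim N^{-1}+N^3|t|$. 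For piece~(ii) one then estimates each of the five norms by Young's and H\"older's inequalities as in the proof of Theorem~\ref{thm3-1_low} (using $0<T<1$ to trade an $L^1_t$ for an $L^2_t$ where needed); the powers of $N$ from the kernel bounds match exactly the weights in $\|\cdot\|_{X_N}$, which are the exponents of the Strichartz (Lemma~\ref{lem3-1}), maximal-function (Proposition~\ref{Mfe}), Kenig--Ruiz (Lemma~\ref{KRe}) and Kato (Lemma~\ref{kato}) estimates, while the overall $N^{-3/2}$ reflects the $L^1_xL^2_t\to L^2_x$ duality of the $\tfrac32$-derivative smoothing.

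I expect the main obstacle to be the $L^\infty_xL^2_t$-norm weighted by $N^{3/2}$, that is, the ``doubled'' local smoothing bound $\|P_N\mathcal{I}[F_1]\|_{L^\infty_xL^2_t}\lesssim N^{-3}\|F_1\|_{L^1_xL^2_t}$: unlike the other four norms, which succumb to soft arguments once the kernel bounds in~(ii) are available, this one genuinely uses the $L^2$-in-time structure — a crude Young's inequality fails because $F_1$ lies only in $L^1_xL^2_t$, not in $L^1_{t,x}$ — so it must be drawn either from the free-solution piece~(i) via the dual Kato estimate or from a careful analysis of the convolution kernel. A secondary, purely technical point is to perform the time-localization so that all constants stay uniform in $N\in 2^{\N}$ and $T\in(0,1)$, which forces the partition to be at the $N$-dependent scale and the resulting family of pieces to be resummed using that the relevant sums are controlled by $\|F_1\|_{L^1_xL^2_t}$ rather than naively counted.
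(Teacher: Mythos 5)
Your proposal correctly isolates the two estimates to prove, correctly notes that the $(i\partial_t+\partial_x^4)$-component of the $X_N$-norm is trivially $\|P_NF\|_{Y_N}$ because $(i\partial_t+\partial_x^4)\mathcal{I}[F]=iF$, and correctly identifies the dual Kato smoothing $\bigl\|\int e^{-it'\partial_x^4}P_NF_1(t')\,dt'\bigr\|_{L^2_x}\lesssim N^{-3/2}\|F_1\|_{L^1_xL^2_t}$ as the source of the $N^{-3/2}$ gain for the free-solution component. You also put your finger on the genuine difficulty: the bound $\|P_N\mathcal{I}[F_1]\|_{L^\infty_xL^2_t}\lesssim N^{-3}\|F_1\|_{L^1_xL^2_t}$ is an endpoint $L^2_t\to L^2_t$ local-smoothing estimate for the retarded operator, to which Christ--Kiselev and Young-type arguments do not apply. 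However, the mechanism you propose for resolving it is not the one that works, and this leaves a real gap.

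The paper's proof hinges on two ingredients that your proposal omits. First, there is a \emph{fiber decomposition in the spatial variable}: one writes
$P_N\mathcal{I}[F](t,x)=\int_{\R}\mathrm{w}_{y,N}(t,x)\,dy$, where $\mathrm{w}_{y,N}$ is the contribution of the single fiber $F(\cdot,y)$, and by Minkowski it suffices to prove the per-fiber bound $\|\mathrm{w}_{y,N}\|_{X_N}\lesssim N^{-3/2}\|P_NF(\cdot,y)\|_{L^2_t}$ (Lemma~\ref{w_y_est}). This converts the $L^1_xL^2_t$ datum into a single $L^2_t$ time-function $F_0(t)$, which is what makes the rest of the argument possible at all. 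Second, within each fiber the decomposition of $w_0$ is not merely ``free solution plus non-retarded convolution'': beyond the free piece $-e^{it\partial_x^4}\mathcal{L}v_0$, one must also subtract the corrector terms $\pm(P_{<L/2^{50}}\ee_{(-\infty,0]})(x)\cdot(P_{\pm}e^{it\partial_x^4}v_0)(x)$, which are spatial-frequency-localized step functions multiplied by half-line projections of free evolutions of a Fourier-restriction-type datum $v_0=\F_\xi^{-1}[\psi_N(\xi)\F_t[F_0](\xi^4)]$. These corrector terms precisely cancel the $\frac{1}{\tau-\xi^4-i0}$ singularity on the characteristic surface, and the remaining error $h$ has an explicit space-time Fourier multiplier $A(\tau,\xi)$ that is then estimated region-by-region in $L^r_\tau L^2_\xi$ (Proposition~\ref{duam_decom}). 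Both $v_0$ and the corrector terms only make sense once one has fibered in $y$, which is why the fiber decomposition is structurally indispensable rather than cosmetic.

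By contrast, your step (ii) proposes to control the non-retarded piece by pointwise dispersive bounds on the kernel $\mathcal{K}_N$ plus Young's inequality. As you yourself observe, this cannot deliver the $L^\infty_xL^2_t$ bound from an $L^1_xL^2_t$ input, since Young needs $L^1_{t,x}$; and in fact the other norms also do not all ``succumb softly'' without the cancellation built into the $J_\pm$ terms, because the non-retarded convolution with $F_1\in L^1_xL^2_t$ still sees the singular multiplier. There is also no ``frequency-adapted partition of $[0,T]$'' in the paper's argument; that phrase points in a different direction. In short: right estimates, right literature, right identification of the hard norm, but the essential fiber-plus-corrector decomposition — the actual content of BIKT Lemma~7.4 and of the paper's Proposition~\ref{duam_decom} — is missing, and the substitute you offer for it does not close the estimate you flag as the obstacle.
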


\begin{corollary}[Estimate for the Duhamel term from $Y^s$ to $X^s$]
\label{propdeco}
Let $s\in \R$, $0<T<1$, and $F\in Y^s(T)$. 
Then there exists a positive constant $C$ independent of $s$, $T$ and $F$ such that the estimate
\[
      \left\|\mathcal{I}[F]\right\|_{X^s(T)}\le C \|F\|_{Y^s(T)}
\]
holds.
\end{corollary}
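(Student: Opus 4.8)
The plan is to deduce Corollary~\ref{propdeco} from the two localized estimates Theorem~\ref{thm3-1_low} and Theorem~\ref{thm3-1} by summing over dyadic frequencies, using the fact that the Littlewood--Paley projections $P_N$ commute with the free propagator $e^{it\partial_x^4}$ and hence with the Duhamel operator $\mathcal{I}$. First I would observe that for each dyadic $N\in 2^{\N}$ one has $P_N\mathcal{I}[F]=\mathcal{I}[P_NF]$ (because $\mathcal{I}$ is built from $e^{i(t-t')\partial_x^4}$, which is a Fourier multiplier in $x$), and likewise $P_{\le 1}\mathcal{I}[F]=\mathcal{I}[P_{\le 1}F]$. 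Actually, to be careful, one should apply Theorem~\ref{thm3-1} not to $F$ directly but note that $P_N\mathcal{I}[F]=P_N\mathcal{I}[\widetilde{P}_NF]$ where $\widetilde{P}_N=P_{N/2}+P_N+P_{2N}$ is a fattened projection, so that $\|P_N\mathcal{I}[F]\|_{X_N(T)}=\|P_N\mathcal{I}[\widetilde P_NF]\|_{X_N(T)}\le C\|P_N\widetilde P_NF\|_{Y_N(T)}$; the key point is that the constant $C$ in Theorem~\ref{thm3-1} is uniform in $N$ and $T$, which is exactly what is asserted there.

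Next I would unpack the definition of the $X^s(T)$-norm from Definition~\ref{def3-4}:
\[
\|\mathcal{I}[F]\|_{X^s(T)}=\|P_{\le 1}\mathcal{I}[F]\|_{X_1(T)}+\left(\sum_{N\in 2^{\N}}N^{2s}\|P_N\mathcal{I}[F]\|_{X_N(T)}^2\right)^{1/2}.
\]
For the low-frequency piece I would apply Theorem~\ref{thm3-1_low} to get $\|P_{\le 1}\mathcal{I}[F]\|_{X_1(T)}\le C\|P_{\le 1}F\|_{Y_1(T)}$. For the high-frequency sum I would apply Theorem~\ref{thm3-1} termwise, giving $N^{2s}\|P_N\mathcal{I}[F]\|_{X_N(T)}^2\le C^2 N^{2s}\|P_NF\|_{Y_N(T)}^2$ (here I am using $P_N\mathcal{I}[F]=\mathcal{I}[P_NF]$ together with the uniform constant), and then summing in $N$ and taking square roots yields
\[
\left(\sum_{N\in 2^{\N}}N^{2s}\|P_N\mathcal{I}[F]\|_{X_N(T)}^2\right)^{1/2}\le C\left(\sum_{N\in 2^{\N}}N^{2s}\|P_NF\|_{Y_N(T)}^2\right)^{1/2}.
\]
Adding the two contributions and recognizing the right-hand side as $C(\|P_{\le 1}F\|_{Y_1(T)}+(\sum_N N^{2s}\|P_NF\|_{Y_N(T)}^2)^{1/2})=C\|F\|_{Y^s(T)}$ from Definition~\ref{def3-4} completes the argument, with the final constant being (twice) the larger of the two constants from the localized theorems, hence independent of $s$, $T$, and $F$.

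The only genuinely non-routine point is the commutation $P_N\mathcal{I}[F]=\mathcal{I}[P_NF]$ and the analogous identity for $P_{\le 1}$; this is immediate since $\mathcal{I}[F](t)=\int_0^t e^{i(t-t')\partial_x^4}F(t')\,dt'$ and $e^{i(t-t')\partial_x^4}$ acts as multiplication by $e^{i(t-t')\xi^4}$ on the Fourier side in $x$, which commutes with the multiplier $\psi_N(|\xi|)$ defining $P_N$; one only needs enough integrability of $F$ in $t$ to interchange $P_N$ with the time integral, which is guaranteed since $F\in Y^s(T)\subset L^1_xL^2_t+L^1_tL^2_x$ locally. Everything else is the bookkeeping of summing a separable (diagonal in $N$) estimate against the $\ell^2$-weighted Besov-type norms, so I do not expect any obstacle there; the substantive work has already been done in Theorems~\ref{thm3-1_low} and~\ref{thm3-1}.
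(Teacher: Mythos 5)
Your proposal is correct and is precisely the intended argument: the paper itself simply states that Corollary~\ref{propdeco} ``follows from Theorem~\ref{thm3-1_low} and~\ref{thm3-1},'' and your filling-in of the bookkeeping (commutation of $P_N$ and $P_{\le 1}$ with $\mathcal{I}$, termwise application of the localized estimates, and $\ell^2$-summation against the Besov-type weight in Definition~\ref{def3-4}) is exactly how that deduction goes. The detour through the fattened projection $\widetilde P_N$ is harmless but unnecessary here, since Theorem~\ref{thm3-1} is already stated with $P_NF$ on the right-hand side.
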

Collorary~\ref{propdeco} follows from Theorem~\ref{thm3-1_low} and 
~\ref{thm3-1}.

Because we have
\begin{equation}\label{duamel_est_1_3}
\|P_N\mathcal{I}[F]\|_{X_N(T)}
\le C \|P_NF\|_{L^1_TL^2_x}
\end{equation}
by the same argument as in the proof of Theorem~\ref{thm3-1_low},  
to obtain Theorem~\ref{thm3-1}, it suffices to show that
\begin{equation}\label{duamel_est_2}
\|P_N\mathcal{I}[F]\|_{X_N}
\le CN^{-\frac{3}{2}}\|P_NF\|_{L^1_xL^2_T}.
\end{equation}
In the following, we focus on the proof of (\ref{duamel_est_2}). 
We assume that the function $F$ is defined in $\R\times \R$. 
Let $N\in 2^{\N}$. For $y\in \R$, we introduce the function $\mathrm{w}_y=\mathrm{w}_{y,N}:\R\times\R\rightarrow\C$ given by
\[
    \mathrm{w}_y(t,x)=\mathrm{w}_{y,N}(t,x):=
    \frac{1}{\sqrt{2\pi}}\int_0^t(\breve{P_N}\mathcal{K})(t-t',x-y)(P_NF)(t',y)dt', 
\]
where $\breve{P_N}:=P_{N/2}+P_N+P_{2N}$ 
and $\mathcal{K}\in \mathcal{S}'(\R\times\R)$ is the fundamental solution of the fourth order Schr\"odinger equation, which is defined by (\ref{fs}). 
We note that the identity
\begin{equation}\label{duamel_wy}
P_N\mathcal{I}[F](t,x)
=\int_{\R}\mathrm{w}_{y,N}(t,x)dy
\end{equation}
holds. 
Boundedness of the function $\mathrm{w}_y$ from $L^2_t(\R)$ to $X_N$ is obtained as follows:
\begin{lemma}[Estimate of the function $\mathrm{w}_y$ from $L^2(\R)$ to $X_N$]
\label{w_y_est}
Let $N\in 2^{\N}$, $y\in \R$. It holds that
\begin{equation}
    \|\mathrm{w}_y\|_{X_N}\lesssim N^{-\frac{3}{2}}\|P_NF(\cdot,y)\|_{L^2_t(\R)}.
\end{equation}
\end{lemma}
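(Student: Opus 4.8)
The plan is to estimate each of the six terms making up $\|\mathrm{w}_y\|_{X_N}$ separately, after rewriting $\mathrm{w}_y$ in a more transparent form. Set $g(t):=(P_NF)(t,y)\in L^2_t(\R)$ and $\phi_{N,y}:=\mathcal{F}^{-1}[\breve\psi_N(\cdot)e^{-iy\cdot}]$, where $\breve\psi_N:=\psi_{N/2}+\psi_N+\psi_{2N}$ is the symbol of $\breve{P_N}$; then $\phi_{N,y}$ has Fourier support in $\{|\xi|\sim N\}$, $\|\phi_{N,y}\|_{L^1_x}\sim 1$ and $\|\phi_{N,y}\|_{L^2_x}\sim N^{1/2}$. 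Since $(\breve{P_N}\mathcal{K})(t-t',x-y)=[e^{i(t-t')\partial_x^4}\phi_{N,y}](x)$, we have $\mathrm{w}_y(t)=c\int_0^t e^{i(t-t')\partial_x^4}\phi_{N,y}\,g(t')\,dt'$; that is, $\mathrm{w}_y=c\,\mathcal{I}[G_y]$ with the rank-one source $G_y(t,x)=g(t)\phi_{N,y}(x)$. The decisive gain $N^{-3/2}$ will come each time from the change of variables $\sigma=\xi^4$, whose Jacobian is $\sim N^3$ on $\{|\xi|\sim N\}$, combined with Plancherel in time; this is the mechanism behind the $N^{-3/2}$ factor in the $Y_N$-norm.

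Two of the terms are immediate. For the equation term, differentiating the Duhamel formula leaves only the boundary contribution at $t'=t$, because the kernel $\breve{P_N}\mathcal{K}$ solves the free equation; hence $(i\partial_t+\partial_x^4)\mathrm{w}_y=c\,\phi_{N,y}(x)\,g(t)$, and taking $F_2=0$ in the infimum defining $\|\cdot\|_{Y_N}$ together with $\|\phi_{N,y}(x)g(t)\|_{L^1_xL^2_t}=\|\phi_{N,y}\|_{L^1_x}\|g\|_{L^2_t}\sim\|g\|_{L^2_t}$ gives $\|(i\partial_t+\partial_x^4)\mathrm{w}_y\|_{Y_N}\lesssim N^{-3/2}\|g\|_{L^2_t}$. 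For the $L^\infty_tL^2_x$ term, Plancherel in $x$ and $|\breve\psi_N|\le 1$ give $\|\mathrm{w}_y(t)\|_{L^2_x}^2\lesssim\int_{|\xi|\sim N}\big|\int_0^t e^{-it'\xi^4}g(t')\,dt'\big|^2\,d\xi$; substituting $\sigma=\xi^4$ (Jacobian $\sim N^{-3}$) and recognizing $\int_0^t e^{-it'\sigma}g(t')\,dt'$ as the Fourier transform of $g\mathbf{1}_{[0,t]}$ at $\sigma$, Plancherel in $t'$ yields $\|\mathrm{w}_y(t)\|_{L^2_x}^2\lesssim N^{-3}\|g\mathbf{1}_{[0,t]}\|_{L^2_t}^2\le N^{-3}\|g\|_{L^2_t}^2$ uniformly in $t$.

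For the remaining Strichartz-, maximal-function- and Kenig--Ruiz-type terms I would first replace $\mathrm{w}_y$ by its \emph{non-retarded companion} $v(t):=\int_\R e^{i(t-t')\partial_x^4}\phi_{N,y}g(t')\,dt'=e^{it\partial_x^4}\psi$, where $\psi=\mathcal{F}^{-1}[c\,\breve\psi_N(\xi)e^{-iy\xi}\widehat{g}(\xi^4)]$ is Fourier-supported in $\{|\xi|\sim N\}$ and satisfies $\|\psi\|_{L^2}\lesssim N^{-3/2}\|g\|_{L^2_t}$ by the same $\sigma=\xi^4$ computation. Proposition~\ref{prop4-1-1} then bounds $\|v\|_{X_N}$ — in particular its $L^4_tL^\infty_x$, $L^2_xL^\infty_t$ and $L^4_xL^\infty_t$ norms with their dyadic weights — by $N^{-3/2}\|g\|_{L^2_t}$, and the Christ--Kiselev lemma transfers each of these bounds from $v$ to the retarded $\mathrm{w}_y$, since in each case the time exponent of the target norm strictly exceeds that of the input $g\in L^2_t$. (Equivalently, all five propagator-type norms can be treated at once by the modulation-dyadic decomposition of the retarded propagator from the proof of Lemma~7.4 of \cite{BIKT11}: the part with $|\tau-\xi^4|\lesssim N^3$ is a superposition of free solutions controlled by Proposition~\ref{prop4-1-1}, the $\sigma=\xi^4$ Jacobian again producing $N^{-3/2}$, while the part with $|\tau-\xi^4|\gg N^3$ is summed geometrically using the $|\tau-\xi^4|^{-1}$ decay of the multiplier.)

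The genuinely delicate term — the main obstacle — is the Kato-type norm $N^{3/2}\|\mathrm{w}_y\|_{L^\infty_xL^2_t}$: here the target and input time exponents both equal $2$, so Christ--Kiselev is unavailable and one must argue directly. Extending $\mathrm{w}_y$ by zero to $t<0$, the map $g\mapsto\mathrm{w}_y(\cdot,x)$ is a convolution in $t$, so by Plancherel it suffices to show $\|\mathcal{F}_t[R(\cdot,x)]\|_{L^\infty_\tau}\lesssim N^{-3}$ uniformly in $x$, where $R(s,x):=\mathbf{1}_{s\ge0}[e^{is\partial_x^4}\phi_{N,y}](x)$. Carrying out the $s$-integral produces the resolvent kernel $\mathcal{F}_t[R(\cdot,x)](\tau)=c\int e^{i(x-y)\xi}\breve\psi_N(\xi)\big[\pi\delta(\xi^4-\tau)+i\,\mathrm{p.v.}(\xi^4-\tau)^{-1}\big]\,d\xi$. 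The $\delta$-part is nonzero only for $\tau\sim N^4$, where the Jacobian $\tfrac14|\xi|^{-3}\sim N^{-3}$ gives the bound; the principal-value part is $O(N^{-3})$ by the trivial estimate $|\xi^4-\tau|\gtrsim N^4$ when $\tau\notin[cN^4,CN^4]$, and when $\tau\sim N^4$ one factors $\xi^4-\tau=(\xi\mp\tau^{1/4})\,m_\pm(\xi)$ with $m_\pm$ smooth and $\sim N^3$ near the roots $\xi=\pm\tau^{1/4}$, so the contribution near each root equals $N^{-3}$ times $\mathrm{p.v.}\int e^{i(x-y)\xi}\beta(\xi)(\xi\mp\tau^{1/4})^{-1}\,d\xi$ with $\beta$ a smooth bump of height $\lesssim 1$ and width $\sim N$; rescaling to a fixed bump reduces this to the $L^\infty$-boundedness of the Hilbert transform of a fixed Schwartz function, which is $O(1)$ uniformly in $x$. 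This resonance analysis near $\tau\sim N^4$ is the crux; once it and the $\sigma=\xi^4$ change of variables are in hand, summing the six estimates proves the lemma.
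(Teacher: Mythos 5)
Your approach is genuinely different from the paper's. The paper does not use Christ--Kiselev at all: it proves Lemma \ref{w_y_est} (for $y=0$, the general case following by translation) entirely through Proposition \ref{duam_decom}, which decomposes $w_0$ into a free solution $-e^{it\partial_x^4}\mathcal{L}v_0$, two terms of the form (step function in $x$) $\times$ (free solution), and an error $h$ whose Fourier multiplier $A(\tau,\xi)$ is controlled in $L^r_\tau L^2_\xi$ on each of four regions $\Omega_1,\ldots,\Omega_4$. Every one of the five propagator norms in $\|\cdot\|_{X_N}$ is then read off from the linear estimates applied to the free-solution pieces (the step-function prefactors are harmless $L^\infty_x$ multipliers) plus the $h$-bound~(\ref{hest}). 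You mention this in your parenthetical as an ``equivalent'' alternative, but for two of the five norms it is not optional: it is the only route offered by the paper, and your primary Christ--Kiselev route actually fails for them.

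Here is the gap. For a mixed norm $L^q_x L^p_t$, the quantity that governs whether Christ--Kiselev (from a source $L^\rho_t$) applies is not the inner time exponent $p$ but the ``disjoint-in-time summability exponent,'' which equals $\min(p,q)$: if $g_j$ are supported on disjoint time intervals then $\|\sum_j g_j\|_{L^q_x L^p_t}\le(\sum_j\|g_j\|_{L^q_x L^p_t}^\alpha)^{1/\alpha}$ holds only for $\alpha\le\min(p,q)$, and the Whitney/Christ--Kiselev machinery needs $\alpha>\rho$ strictly to make the generation-by-generation geometric series converge. With $\rho=2$ this is fine for $L^4_t L^\infty_x$ ($\alpha=4$) and $L^4_x L^\infty_t$ ($\alpha=4$), but it fails for $L^\infty_x L^2_t$ ($\alpha=2$) --- which you correctly noticed --- \emph{and also} for $L^2_x L^\infty_t$ ($\alpha=\min(\infty,2)=2$). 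So your claim that ``in each case the time exponent of the target norm strictly exceeds that of the input'' diagnoses the condition incorrectly, and your direct resolvent/Hilbert-transform argument covers only the Kato term $N^{3/2}\|w_y\|_{L^\infty_x L^2_t}$, leaving the maximal-function term $N^{-(1+\epsilon)}\|w_y\|_{L^2_x L^\infty_t}$ with no valid proof. Your Kato argument itself is sound (the $\delta$-part of $\mathcal{F}_t[R(\cdot,x)]$ picks up the Jacobian $\sim N^{-3}$, the principal-value part is $O(N^{-3})$ away from $\tau\sim N^4$ trivially and near $\tau\sim N^4$ by factoring out the root and rescaling to a fixed Hilbert transform), and the $L^\infty_t L^2_x$ term and the $Y_N$-equation term are also handled correctly. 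To close the gap you would either need to repeat a resolvent-type direct argument for the $L^2_x L^\infty_t$ norm, or --- cleaner, and what the paper actually does --- carry out the Proposition \ref{duam_decom} decomposition and then apply (\ref{hest}) with $(q,p)=(2,\infty)$ and $L\sim N$ to get $\|h\|_{L^2_x L^\infty_t}\lesssim N^{-3/2}\|F_0\|_{L^2_t}$, while the free-solution pieces use Proposition \ref{Mfe} directly.
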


We only consider the case of $y=0$.
We put $F_0(t):=(P_NF)(t,0)$.
To obtain Lemma~\ref{w_y_est}, 
it suffices to show that
\begin{equation}\label{w_0_est}
\|w_0\|_{X_N}\lesssim N^{-\frac{3}{2}}\|F_0\|_{L^2_t(\R)}. 
\end{equation}
We introduce the operators $P_{+}$ and $P_{-}$ given by
\begin{equation}\label{proj_plus_minus}
\widehat{P_{+}f}(\xi ):=\ee_{[0,\infty)}(\xi)\widehat{f}(\xi ),\ 
\widehat{P_{-}f}(\xi ):=\ee_{(-\infty, 0]}(\xi)\widehat{f}(\xi ). 
\end{equation}
The inequality (\ref{w_0_est}) 
is implied by the following 
proposition with $L \sim N$. 
\begin{proposition}\label{duam_decom}
Let $N\in 2^{\N}$, $0<L \lesssim N$. 
Let $h\in L^{\infty}(\R^2)$ is defined by
\[
\begin{split}
w_0(t,x)=-e^{it\partial_x^4}\mathcal{L}v_0(x)
&+(P_{<L/2^{50}}\ee_{(-\infty, 0]})(x)(P_{+}e^{it\partial_x^4}v_0)(x)\\
&-(P_{<L/2^{50}}\ee_{[0,\infty)})(x)(P_{-}e^{it\partial_x^4}v_0)(x)
+h(t,x), 
\end{split}
\]
where
\[
v_0(x):=\F_{\xi}^{-1}[\psi_N(\xi)\F_t[F_0](\xi^4)](x),\ \ 
\mathcal{L}v_0(x):=\F_{\xi}^{-1}[\psi_N(\xi)\F_t[\ee_{(-\infty ,0]}F_0](\xi^4)](x).
\]
Then, $h$ satisfies
\begin{equation}\label{hest}
\|h\|_{L^q_xL^p_t}\lesssim L^{-\frac{1}{2}-\frac{1}{p}}N^{-\frac{1}{2}-\frac{1}{q}-\frac{3}{p}}\|F_0\|_{L^2_t}
\end{equation}
for any $p$, $q\ge 2$. In particular, if $N\sim L$, then we have
\[
\|h\|_{L^q_xL^p_t}\lesssim N^{-1-\frac{1}{q}-\frac{4}{p}}\|F_0\|_{L^2_t}. 
\]
\end{proposition}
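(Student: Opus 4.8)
The plan is to decompose the kernel $\breve{P_N}\mathcal{K}$ according to the sign of the frequency $\xi$ and to the relative size of the oscillation phase, following the strategy of Lemma 7.4 in \cite{BIKT11}. Writing $w_0(t,x)=\frac{1}{\sqrt{2\pi}}\int_0^t(\breve{P_N}\mathcal{K})(t-t',x)F_0(t')dt'$, I would first pass to the frequency side: using $\mathcal{F}_t[F_0]$ and the change of variables $\tau=\xi^4$ on the support $\psi_N(\xi)\ne 0$ (where $\xi\mapsto\xi^4$ is a diffeomorphism onto its image on each of $\{\xi>0\}$ and $\{\xi<0\}$), one rewrites the convolution in $t$ as a superposition of free evolutions $e^{it\partial_x^4}$ applied to the profiles $v_0$ and $\mathcal{L}v_0$ built from $F_0$ and its truncation to $t'\le 0$. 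The Heaviside cutoff $\ee_{[0,t]}$ in the Duhamel integral is the source of the splitting: writing $\ee_{[0,t]}(t')=\ee_{(-\infty,t]}(t')-\ee_{(-\infty,0]}(t')$ and using the Fourier representation of $\ee_{(-\infty,t]}$ (a principal value plus a delta, i.e.\ $\mathcal{F}^{-1}$ of $\mathrm{p.v.}\frac{1}{i\sigma}+\pi\delta$) produces exactly the three explicit leading terms in the statement — the full free evolution $e^{it\partial_x^4}\mathcal{L}v_0$, and the two terms involving $P_\pm$ and the low-frequency spatial cutoffs $P_{<L/2^{50}}\ee_{(\mp\infty,0]}$ — together with a remainder $h$.

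Next I would identify $h$ as the part of the kernel expansion where the spatial variable $x$ lives at frequency comparable to $N$ while the temporal oscillation variable $\sigma$ (dual to $t$) is separated from $\xi^4$ by at least $\sim L^4$, or more precisely where the resonance $\sigma-\xi^4$ is nondegenerate on the relevant scale. On that region one gains from non-stationary phase / integration by parts in $\xi$: the kernel of $h$ obeys pointwise decay $|h\text{-kernel}(t,x)|\lesssim N (1+N^3|t|+N|x|)^{-M}$ for large $M$ after summing the pieces, or alternatively one estimates $h$ directly by Plancherel in $t$ combined with the explicit symbol bounds. The target bound $\|h\|_{L^q_xL^p_t}\lesssim L^{-1/2-1/p}N^{-1/2-1/q-3/p}\|F_0\|_{L^2_t}$ should then follow: the $\|F_0\|_{L^2_t}$ comes from Plancherel in the $\tau$-variable, the power $N^{-1/2}$ from the Jacobian $|\frac{d\tau}{d\xi}|^{-1/2}\sim N^{-3/2}$ balanced against $N$ derivative factors, the $L^p_t$ factor $(L^4)^{-1/p}\cdot$(something) $= L^{-4/p}$-type gain — wait, more carefully the $t$-integrability is controlled by the effective temporal frequency window of size $\sim L^4$ giving $\|\cdot\|_{L^p_t}\lesssim (\text{window})^{1/2-1/p}$, hence the exponents $-1/2-1/p$ in $L$ after accounting for the normalization — and the $L^q_x$ factor from the spatial frequency localization at scale $N$ giving $N^{-1/q}$ by Bernstein, plus the extra $N^{-3/p}$ from the dispersive decay in $t$ at scale $N^{-3}$. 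The specialization to $N\sim L$ is then immediate by substituting $L=N$.

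The main obstacle will be bookkeeping the several pieces produced by the sign decomposition and the Fourier representation of the Heaviside function, and in particular isolating precisely the three terms that must be \emph{excluded} from $h$ (because they are genuinely as large as free solutions and cannot satisfy the decay estimate \eqref{hest}) versus the terms that can be absorbed into $h$. Concretely, the delta-function part of $\mathcal{F}[\ee_{(-\infty,t]}]$ produces the full free evolution term, and the principal-value part, once one further localizes the \emph{spatial} frequency to be $\lesssim L/2^{50}$, produces the two $P_\pm$ terms; everything where the spatial frequency is $\gtrsim L/2^{50}$ has enough transversality between the curves $\tau=\xi^4$ and the line to be put into $h$. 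Verifying that this transversality is quantitatively of size $\sim L$ (so that the gains are powers of $L$, not just $N$) is the delicate point, and is exactly where the hypothesis $L\lesssim N$ and the factor $2^{50}$ are used. Once the region decomposition is fixed, each contribution to $h$ is estimated by a routine stationary/non-stationary phase analysis combined with Plancherel in $t$ and Bernstein in $x$, and the claimed exponents are read off by tracking the Jacobian $N^{-3/2}$, the derivative losses $N$, the temporal window $L^4$, and the spatial scale $N$.
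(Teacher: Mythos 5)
Your overall strategy — take the space--time Fourier transform, recognize that $h$ has the form $\F_{tx}[h]=A(\tau,\xi)\F_t[F_0](\tau)$, split according to the sign of $\xi$ and the distance to the characteristic $\tau=\xi^4$, and estimate via Plancherel and Bernstein — is the same as the paper's. But there are two concrete gaps that would defeat the proof as you have sketched it.

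First, your description of $h$ as the part of the Duhamel term supported where the resonance $\sigma-\xi^4$ is nondegenerate (``everything where the spatial frequency is $\gtrsim L/2^{50}$ has enough transversality \ldots to be put into $h$'') is wrong. The subtracted terms $J_\pm$ are \emph{not} a decomposition of $I_1$ by frequency cutoff; they are separately constructed free-solution-type objects, and the remainder $h=I_1-J_+ + J_-$ has nontrivial contributions from the near-characteristic region $\Omega_1\cup\Omega_2 := \{\tau>0,\ |\xi\mp\tau^{1/4}|<L/2^{100}\}$. On that region the symbol of $I_1$ alone, $\psi_N(\xi)/(i(\tau-\xi^4-i0))$, is singular and certainly not in $L^r_\tau L^2_\xi$; there is no transversality to exploit. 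The only reason $A_1,A_2$ obey the required bound is an \emph{algebraic cancellation} between $I_1$ and the subtracted $J_\pm$: writing $\xi^4-\tau = (\xi\mp\tau^{1/4})Q_\pm(\xi,\tau^{1/4})$ and Taylor-expanding $\psi_N(\xi) = \psi_N(\tau^{1/4}) + (\xi-\tau^{1/4})O(N^{-1})$, the factor $\xi-\tau^{1/4}$ from the expansion cancels the singular denominator. Your sketch never exploits this cancellation and hence, near the characteristic, produces a quantity that is not even finite in the norm to be estimated.

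Second, your exponent bookkeeping is off in a way that is not cosmetic. You attribute the $L$-power to ``an effective temporal frequency window of size $\sim L^4$'' and Hölder's inequality in the form $\|\cdot\|_{L^p_t}\lesssim(\text{window})^{1/2-1/p}\|\cdot\|_{L^2_t}$. Two problems: (i) the relevant modulation scale is $|\tau-\xi^4|\gtrsim LN^3$, not $L^4$ — these coincide only when $L\sim N$ — so the $L$-exponent you would derive is wrong for general $L\lesssim N$; (ii) the estimate one actually needs is for the multiplier, $\big\| (\tau-\xi^4)^{-1}\ee_{|\tau-\xi^4|\gtrsim LN^3}\big\|_{L^r_\tau}\sim (LN^3)^{1/r-1}$ with $1/r=1/2-1/p$, and $1/r-1=-\frac12-\frac1p$, which is the source of the factor $L^{-1/2-1/p}N^{-3/2-3/p}$. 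Your heuristic has the wrong sign structure; the paper's computation in $\Omega_4$ is exactly the $L^r_\tau$-norm of the multiplier combined with the $L^2_\xi$-measure of the frequency band $|\xi|\sim N$ (giving $N^{1/2}$) and the Hausdorff--Young/Bernstein step (giving $N^{1/2-1/q}$). Finally, your alternative pointwise kernel estimate $|h\text{-kernel}(t,x)|\lesssim N(1+N^3|t|+N|x|)^{-M}$ would, even if true, give only $\|h\|_{L^q_xL^p_t}\lesssim N^{-1/2-1/q-3/p}\|F_0\|_{L^2_t}$ by Young, with no $L$-gain at all, so it cannot yield \eqref{hest} for $L\ll N$.
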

\begin{proof}
We first prove that $\F_{tx}[h](\tau ,\xi )=A(\tau, \xi)\F_t[F_0](\tau )$, where
\begin{equation}\label{hA}
\begin{split}
A(\tau, \xi )&=\left\{
\psi_{N}(\xi)-(\xi^3+\xi^2\tau^{\frac{1}{4}}+\xi\tau^{\frac{1}{2}}+\tau^{\frac{3}{4}})
\frac{\ee_{\tau>0}(\tau)\psi_N(\tau^{\frac{1}{4}})}{4\tau^{\frac{3}{4}}}
\psi_{<L/2^{50}}(\xi-\tau^{\frac{1}{4}})\right.\\
&\hspace{6ex}\left.+(\xi-\tau^{\frac{1}{4}})(\xi^2+\tau^{\frac{1}{2}})
\frac{\ee_{\tau>0}(\tau)\psi_N(\tau^{\frac{1}{4}})}{4\tau^{\frac{3}{4}}}
\psi_{<L/2^{50}}(\xi+\tau^{\frac{1}{4}})
\right\}\frac{1}{i(\tau -\xi^4-i0)}. 
\end{split}
\end{equation}
Since $\ee_{(0,t]}(t')=\ee_{[0,\infty)}(t-t')-\ee_{(-\infty, 0]}(t')$, we have
\[
\begin{split}
w_0(t,x)&=\frac{1}{\sqrt{2\pi}}\left(\int_{\R}\ee_{[0,\infty)}(t-t')(P_N\mathcal{K})(t-t',x)F_0(t')dt'
-e^{it\partial_x^4}\int_{\R}\ee_{(-\infty,0]}(t')(P_N\mathcal{K})(-t',x)F_0(t')dt'\right)\\
&=:I_1-e^{it\partial_x^4}I_2.
\end{split}
\]
By the direct calculation, we have
\[
\F_x[I_2]=\frac{\psi_N(\xi)}{\sqrt{2\pi}}\int_{\R}e^{-it'\xi^4}\ee_{(-\infty,0]}(t')F_0(t')dt'
=\psi_N(\xi)\F_t[\ee_{(-\infty,0]}F_0](\xi^4)
=\widehat{\mathcal{L}v_0}(\xi).
\]
Therefore, we obtain
\[
\begin{split}
h(t,x)&=I_1
-(P_{<L/2^{50}}\ee_{(-\infty, 0]})(x)(P_{+}e^{it\partial_x^4}v_0)(x)
+(P_{<L/2^{50}}\ee_{[0,\infty)})(x)(P_{-}e^{it\partial_x^4}v_0)(x)\\
&=:I_1-J_{+}+J_{-}
\end{split}
\]
because $e^{it\partial_x^4}I_2-e^{it\partial_x^4}\mathcal{L}v_0(x)=0$. 
By the direct calculation, we have
\[
\F_{tx}[I_1](\tau, \xi)
=\F_{tx}[(\ee_{[0,\infty)}P_{N}\mathcal{K})*_tF_0](\tau, \xi)
=\frac{\psi_N(\xi)}{i(\tau -\xi^4-i0)}\F_t[F_0](\tau ). 
\]
Therefore, to obtain (\ref{hA}), it suffices to show that
\[
\F_{tx}[J_{\pm}](\tau, \xi)=Q_{\pm}(\xi, \tau^{\frac{1}{4}})\frac{\ee_{\tau>0}(\tau)\psi_N( \tau^{\frac{1}{4}})}{4\tau^{\frac{3}{4}}}\psi_{<L/2^{50}}(\xi \mp \tau^{\frac{1}{4}})
\frac{\F_t[F_0](\tau )}{i(\tau -\xi^4-i0)}, 
\]
where 
\[
Q_{\pm}(\xi, \tau^{\frac{1}{4}}):=\frac{\xi^4-\tau}{\xi \mp \tau^{\frac{1}{4}}}. 
\]
We note that
\[
\xi^4-\tau=(\xi-\tau^{\frac{1}{4}})(\xi^3+\xi^2\tau^{\frac{1}{4}}+\xi\tau^{\frac{1}{2}}+\tau^{\frac{3}{4}})
=(\xi +\tau^{\frac{1}{4}})(\xi-\tau^{\frac{1}{4}})(\xi^2+\tau^{\frac{1}{2}}). 
\]
By the direct calculation, we have
\[
\F_x[P_{<L/2^{50}}\ee_{(-\infty, 0]}](\xi )
=-\frac{\psi_{<L/2^{50}}(\xi)}{i(\xi +i0)},\ \ 
\F_x[P_{<L/2^{50}}\ee_{[0,\infty)}](\xi )
=-\frac{\psi_{<L/2^{50}}(\xi)}{i(\xi -i0)}
\]
and
\[
\F_{tx}[P_{\pm}e^{it\partial_x^4}v_0](\tau, \xi)
=\delta (\tau -\xi^4)\ee_{\xi \gtrless 0}(\xi )\psi_{N}(\xi )\F_t[F_0](\xi^4). 
\]
Therefore, 
by using the variable transform $\eta \mapsto \omega$ as 
$\eta =\pm \omega^{\frac{1}{4}}$, we have
\begin{equation}\label{pm_fourier_cal}
\begin{split}
\F_{tx}[J_{\pm}](\tau, \xi)
&=-\left(\frac{\psi_{<L/2^{50}}(\xi)}{i(\xi\pm i0)}\right)*_{\xi}
\left(\delta (\tau-\xi^4)\ee_{\xi \gtrless 0}(\xi )\psi_{N}(\xi )\F_t[F_0](\xi^4)\right)\\
&=\mp \int_{0}^{\pm\infty}
\frac{\psi_{<L/2^{50}}(\xi-\eta)}{i(\xi-\eta\pm i0)}
\delta (\tau -\eta^4)\psi_N(\eta)\F_t[F_0](\eta^4)d\eta\\
&=\mp\int_0^{\infty}\frac{\psi_{<L/2^{50}}(\xi\mp \omega^{\frac{1}{4}})}{i(\xi\mp\omega^{\frac{1}{4}}\pm i0)}\delta (\tau-\omega)\psi_N(\omega^{\frac{1}{4}})\F_t[F_0](\omega)
\frac{\pm 1}{4\omega^{\frac{3}{4}}}d\omega\\
&=-\frac{\psi_{<L/2^{50}}(\xi\mp \tau^{\frac{1}{4}})}{i(\xi\mp\tau^{\frac{1}{4}}\pm i0)}\psi_N(\tau^{\frac{1}{4}})\F_t[F_0](\tau)
\frac{\ee_{\tau>0}(\tau)}{4\tau^{\frac{3}{4}}}\\
&=Q_{\pm}(\xi, \tau^{\frac{1}{4}})\frac{\ee_{\tau>0}(\tau)\psi_N( \tau^{\frac{1}{4}})}{4\tau^{\frac{3}{4}}}\psi_{<L/2^{50}}(\xi \mp \tau^{\frac{1}{4}})
\frac{\F_t[F_0](\tau )}{i(\tau -\xi^4-i0)}. 
\end{split}
\end{equation}
As a result, we obtain (\ref{hA}). 

Next, we prove (\ref{hest}). 
We divide $A(\tau, \xi)$ into 
\[
A(\tau, \xi)=\sum_{j=1}^4A_j(\tau, \xi),\ \ 
A_j(\tau, \xi)=\ee_{\Omega_j}(\tau,\xi)A(\tau, \xi),
\]
where
\[
\begin{split}
\Omega_1&:=\left\{(\tau, \xi)\left|\ \tau >0,\ 
|\xi-\tau^{\frac{1}{4}}|<\frac{L}{2^{100}}\right.\right\},\ \ 
\Omega_2:=\left\{(\tau, \xi)\left|\ \tau >0,\ 
|\xi+\tau^{\frac{1}{4}}|<\frac{L}{2^{100}}\right.\right\},\\
\Omega_3&:=\left\{(\tau, \xi)\left|\ 
\tau \le 0\right.\right\},\ \ 
\Omega_4:=\R^2\backslash(\Omega_1\cup\Omega_2\cup\Omega_3). 
\end{split}
\]
First, we assume $(\tau, \xi)\in \Omega_1$. 
Then, we have $\psi_{<L/2^{50}}(\xi-\tau^{\frac{1}{4}})=1$, 
$\psi_{<L/2^{50}}(\xi+\tau^{\frac{1}{4}})=0$, and $\xi\sim \tau^{\frac{1}{4}}>0$ 
if $\xi \sim N$ or $\tau^{\frac{1}{4}}\sim N$. 
Furthermore, by the Talor expansion, we obtain
\[
\psi_N(\xi)=\psi_N(\tau^{\frac{1}{4}})+(\xi-\tau^{\frac{1}{4}})O(N^{-1}). 
\]
Therefore, we have
\[
\begin{split}
A_1(\tau, \xi)&=
\psi_N(\tau^{\frac{1}{4}})\left(1-\frac{\xi^3+\xi^2\tau^{\frac{1}{4}}+\xi\tau^{\frac{1}{2}}
+\tau^{\frac{3}{4}}}{4\tau^{\frac{3}{4}}}\right)\frac{1}{i(\tau-\xi^4-i0)}
+\frac{\xi-\tau^{\frac{1}{4}}}{i(\tau-\xi^4-i0)}O(N^{-1})\\
&=\frac{\psi_N(\tau^{\frac{1}{4}})}{4\tau^{\frac{3}{4}}}
\frac{3\tau^{\frac{1}{2}}+2\xi\tau^{\frac{1}{4}}+\xi^2}
{i(\tau^{\frac{3}{4}}+\xi\tau^{\frac{1}{2}}+\xi^2\tau^{\frac{1}{4}}+\xi^3)}
-\frac{1}{i(\tau^{\frac{3}{4}}+\xi\tau^{\frac{1}{2}}+\xi^2\tau^{\frac{1}{4}}+\xi^3)}O(N^{-1}). 
\end{split}
\]
It implies that 
\[
\begin{split}
\|A_1\|_{L^{r}_{\tau}L^2_{\xi}}^{r}
&\lesssim 
\int_{0<\tau \sim N^4}\frac{1}{\tau^{\frac{3}{4}r}}\left(
\int_{0<\xi \sim N}
\frac{(3\tau^{\frac{1}{2}}+2\xi\tau^{\frac{1}{4}}+\xi^2)^2}
{(\tau^{\frac{3}{4}}+\xi\tau^{\frac{1}{2}}+\xi^2\tau^{\frac{1}{4}}+\xi^3)^2}
d\xi\right)^{\frac{r}{2}}d\tau\\
&\ \ \ \ +\int_{0<\tau \sim N^4}\left(
\int_{0<\xi \sim N}
\frac{N^{-2}}
{(\tau^{\frac{3}{4}}+\xi\tau^{\frac{1}{2}}+\xi^2\tau^{\frac{1}{4}}+\xi^3)^2}
d\xi\right)^{\frac{r}{2}}d\tau\\
&\lesssim N^{-(\frac{7}{2}r-4)}
\end{split}
\]
for $r\ge 2$ and
\[
\begin{split}
\|A_1\|_{L^{\infty}_{\tau}L^2_{\xi}}
&\lesssim 
\sup_{0<\tau \sim N^4}\frac{1}{\tau^{\frac{3}{4}}}\left(
\int_{0<\xi \sim N}
\frac{(3\tau^{\frac{1}{2}}+2\xi\tau^{\frac{1}{4}}+\xi^2)^2}
{(\tau^{\frac{3}{4}}+\xi\tau^{\frac{1}{2}}+\xi^2\tau^{\frac{1}{4}}+\xi^3)^2}
d\xi\right)^{\frac{1}{2}}d\tau\\
&\ \ \ \ +\sup_{0<\tau \sim N^4}\left(
\int_{0<\xi \sim N}
\frac{N^{-2}}
{(\tau^{\frac{3}{4}}+\xi\tau^{\frac{1}{2}}+\xi^2\tau^{\frac{1}{4}}+\xi^3)^2}
d\xi\right)^{\frac{1}{2}}d\tau\\
&\lesssim N^{-\frac{7}{2}}.
\end{split}
\]
By the same argument, we obtain
\[
\|A_2\|_{L^{r}_{\tau}L^2_{\xi}}\lesssim N^{-(\frac{7}{2}-\frac{4}{r})}
\]
for $2\le r\le \infty$. 
Next, we assume $(\tau, \xi)\in \Omega_3$. 
Therefore, we have 
\[
A_3(\tau,\xi)=\frac{\psi_N(\xi)}{i(\tau -\xi^4-i0)}. 
\]
We note that
\[
|\tau -\xi^4|\ge -\tau +\frac{N^4}{16}\sim |\tau|+N^4
\]
when $\tau\le 0$ and $|\xi| \ge \frac{N}{2}$. 
It implies that 
\[
\begin{split}
\|A_3\|_{L^{r}_{\tau}L^2_{\xi}}^{r}
&\lesssim 
\int_{-\infty}^{0}\frac{1}{(|\tau|+N^4)^{r}}\left(
\int_{|\xi| \sim N}
d\xi\right)^{\frac{r}{2}}d\tau
\lesssim N^{-(\frac{7}{2}r-4)}
\end{split}
\]
for $r\ge 2$ and
\[
\begin{split}
\|A_3\|_{L^{\infty}_{\tau}L^2_{\xi}}
&\lesssim 
\sup_{\tau\le 0}\frac{1}{(|\tau|+N^4)}\left(
\int_{|\xi| \sim N}
d\xi\right)^{\frac{1}{2}}
\lesssim N^{-\frac{7}{2}}.
\end{split}
\]
Finally, we assume $(\tau, \xi)\in \Omega_4$. 
Then, we obtain
\[
|\tau-\xi^4|=|\tau^{\frac{1}{4}}-\xi||\tau^{\frac{1}{4}}+\xi||\tau^{\frac{1}{2}}+\xi^2|
\gtrsim L N^3 
\]
if $|\xi |\sim N$ or $\tau^{\frac{1}{4}}\sim N$. It implies that
\[
|A_4(\tau, \xi)|
\lesssim \left(\psi_N(\xi)+\psi_N(\tau^{\frac{1}{4}})\psi_{<L/2^{50}}(\xi-\tau^{\frac{1}{4}})
+\psi_N(-\tau^{\frac{1}{4}})\psi_{<L/2^{50}}(\xi+\tau^{\frac{1}{4}})\right)\frac{1}{|\tau -\xi^4|}. 
\]
Therefore, we have
\[
\begin{split}
\|A_4\|_{L^{r}_{\tau}L^2_{\xi}}^{2}
&\lesssim \|A_4\|_{L^2_{\xi}L^{r}_{\tau}}^2
\lesssim 
\int_{|\xi| \sim N}\left(
\int_{|\tau -\xi^4| \gtrsim L N^3}
\frac{1}
{(\tau-\xi^4)^r}
d\tau\right)^{\frac{2}{r}}d\xi
\lesssim L^{-2(1-\frac{1}{r})}N^{-(5-\frac{6}{r})}
\end{split}
\]
for $r\ge 2$ and
\[
\|A_4\|_{L^{\infty}_{\tau}L^2_{\xi}}
\lesssim \sup_{\tau}\left(\int_{|\xi|\sim N, |\tau -\xi^4| \gtrsim L N^3}
\frac{1}
{(\tau-\xi^4)^2}
d\xi\right)^{\frac{1}{2}}
\lesssim L^{-1}N^{-\frac{5}{2}}.
\]
As a result, for $2\le r\le \infty$, we obtain
\begin{equation}\label{Aest}
\|A\|_{L^{r}_{\tau}L^2_{\xi}}\lesssim L^{-(1-\frac{1}{r})}N^{-(\frac{5}{2}-\frac{3}{r})}. 
\end{equation}
For $p$, $q\ge 2$, we put 
\[
\frac{1}{p'}:=1-\frac{1}{p},\ \frac{1}{r}:=\frac{1}{p'}-\frac{1}{2}=\frac{1}{2}-\frac{1}{p}. 
\]
We note that $r\ge 2$. 
Because $1\le p'\le 2\le q$ and $\F_{tx}[h](\tau, \xi)=A(\tau, \xi)\F_t[F_0](\tau)$, we have
\[
\|h\|_{L^{q}_xL^{p}_t}\lesssim \|\F_t[h]\|_{L^q_{x}L^{p'}_{\tau}}
\lesssim \|\F_t[h]\|_{L^{p'}_{\tau}L^q_{x}}
\lesssim N^{\frac{1}{2}-\frac{1}{q}}\|\F_{tx}[h]\|_{L^{p'}_{\tau}L^2_{\xi}}
\lesssim N^{\frac{1}{2}-\frac{1}{q}}\|A\|_{L^{r}_{\tau}L^2_{\xi}}\|\F_t[F_0]\|_{L^2_{\tau}}. 
\]
Therefore, we obtain 
\[
\|h\|_{L^{q}_xL^{p}_t}\lesssim N^{\frac{1}{2}-\frac{1}{q}}N^{-(\frac{7}{2}-\frac{4}{r})}\|\F_t[F_0]\|_{L^2_{\tau}}\sim L^{-\frac{1}{2}-\frac{1}{p}}N^{-\frac{1}{2}-\frac{1}{q}-\frac{3}{p}}\|F_0\|_{L^2_t}
\]
by (\ref{Aest}). 
\end{proof}

\begin{remark}
If $(p,q)$ is admissible, namely, $2/p+1/q=1/2$, then we have
\[
N^{\frac{2}{p}}\|h\|_{L^p_tL^q_x}
\lesssim N^{\frac{2}{p}+\frac{1}{2}-\frac{1}{q}}\|h\|_{L^p_tL^2_x}
\lesssim N^{\frac{2}{p}+\frac{1}{2}-\frac{1}{q}}\|h\|_{L^2_xL^p_t}
\lesssim N^{\frac{2}{p}+\frac{1}{2}-\frac{1}{q}-1-\frac{1}{2}-\frac{4}{p}}\|F_0\|_{L^2_t}
=N^{-\frac{3}{2}}\|F_0\|_{L^2_t}.
\]
\end{remark}
\begin{proof}[Proof of Theorem \ref{thm3-1}]
We prove (\ref{duamel_est_2}). 
Let $F^{\infty}$ is an extension of $F$ on $\R$ 
such that $\|F^{\infty}\|_{L^1_xL^2_t}\le 2\|F\|_{L^1_xL^2_T}$ 
and define $w_y^{\infty}$ by
\[
    \mathrm{w}_y^{\infty}(t,x):=
    \frac{1}{\sqrt{2\pi}}\int_0^t(\breve{P_N}\mathcal{K})(t-t',x-y)(P_NF^{\infty})(t',y)dt', 
\]
Because
\[
P_N\mathcal{I}[F^{\infty}](t,x)
=\int_{\R}\mathrm{w}_{y}^{\infty}(t,x)dy, 
\]
we have
\[
\begin{split}
\|P_N\mathcal{I}[F^{\infty}]\|_{X_N}
&\lesssim \int_{\R}\|w_y^{\infty}\|_{X_N}dy\\
&\lesssim \int_{\R}N^{-\frac{3}{2}}\|(P_NF)^{\infty}(t, y)\|_{L^2_t}dy\\
&=N^{-\frac{3}{2}}\|P_NF^{\infty}\|_{L^1_xL^2_t}
\end{split}
\]
by Lemma~\ref{w_y_est}. This implies (\ref{duamel_est_2}). 
\end{proof}
\begin{remark}\label{Tcondi}
The estimates in this section
also can be obtained if we replace $N\in 2^{\N}$ 
by $N\in 2^{\Z}$. 
The condition $T<1$ in this section 
comes from the maximal function estimate in 
Proposition~\ref{Mfe} 
and the low frequency estimate in Proposition~\ref{prop4-1-1-2}. 
Therefore, if the definition of $\|\cdot \|_{X_N}$ does not contain 
$\|\cdot \|_{L^2_xL^{\infty}_T}$ 
and we consider the homogeneous norm instead of 
$\|\cdot \|_{X^s}$ and $\|\cdot\|_{Y^s}$, then we can remove the conditon $T<1$. 
\end{remark}


\section{Multilinear estimates for general nonlinearities}
\label{multi}
In this section, we first give a proof of a bilinear Strichartz estimate (Theorem \ref{bilisol}) on the solution spaces given in Definition \ref{def4-1}. Then we show a multilinear estimate 
(Theorem \ref{sc_inv_multi_e}) from the solution space to the auxiliary space given in Definition \ref{def3-4}.
\subsection{Bilinear Strichartz estimates}
\begin{lemma}[Bilinear Strichartz estimates $L^2(\R)\times L^2(\R)\rightarrow L^2_{t,x}(\R\times\R)$]
\label{BS}
Let $N_1, N_2\in 2^{\N}$ with $N_1\gg N_2$. Then for any functions $f,g$ satisfying $P_{N_1}f, P_{N_2}g\in L^2(\R)$, the estimate
\begin{equation}
\label{5-1}
   \left\|\left(P_{N_1}e^{it\partial_x^4}f\right)\left(P_{N_2}e^{it\partial_x^4}g\right)\right\|_{L_{t,x}^2(\R\times\R)}\le CN_1^{-\frac{3}{2}}\left\|P_{N_1}f\right\|_{L^2}\left\|P_{N_2}g\right\|_{L^2}
\end{equation}
holds, where $C$ is a positive constant independent of $N_1,N_2, f,g$.
\end{lemma}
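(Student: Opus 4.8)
The plan is to prove Lemma~\ref{BS} by the standard Fourier-space computation underlying bilinear Strichartz estimates, adapted to the fourth-order dispersion relation $\omega(\xi)=\xi^4$. Set $u_1:=P_{N_1}e^{it\partial_x^4}f$ and $u_2:=P_{N_2}e^{it\partial_x^4}g$, so that $\mathcal{F}_{t,x}[u_j](\tau,\xi)=c\,\delta(\tau-\xi^4)\psi_{N_j}(\xi)\widehat{f_j}(\xi)$ with $f_1=f$, $f_2=g$. By the Plancherel theorem in $(t,x)$ it suffices to bound $\|\mathcal{F}_{t,x}[u_1u_2]\|_{L^2_{\tau,\xi}}$, and since $\mathcal{F}_{t,x}[u_1u_2]=\mathcal{F}_{t,x}[u_1]*\mathcal{F}_{t,x}[u_2]$ in the $(\tau,\xi)$ variables, one of the two delta measures carries out the $\tau_1$-integration and leaves
\[
\mathcal{F}_{t,x}[u_1u_2](\tau,\xi)=c\int_{\R}\delta\bigl(\tau-\Phi_\xi(\xi_1)\bigr)\,\psi_{N_1}(\xi_1)\widehat{f}(\xi_1)\,\psi_{N_2}(\xi-\xi_1)\widehat{g}(\xi-\xi_1)\,d\xi_1,\qquad \Phi_\xi(\xi_1):=\xi_1^4+(\xi-\xi_1)^4.
\]

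The key point is a lower bound on the derivative of the resonance function $\Phi_\xi$: on the support of the integrand one has $|\xi_1|\sim N_1$ and $|\xi-\xi_1|\sim N_2$ with $N_1\gg N_2$, hence
\[
\Phi_\xi'(\xi_1)=4\bigl(\xi_1^3-(\xi-\xi_1)^3\bigr),\qquad |\Phi_\xi'(\xi_1)|\gtrsim N_1^3,
\]
and moreover $\Phi_\xi$ is strictly monotone on each connected component of this support, so for fixed $(\tau,\xi)$ the equation $\Phi_\xi(\xi_1)=\tau$ has $O(1)$ solutions $\xi_1^{\ast}$. Writing $F_\xi(\xi_1):=\psi_{N_1}(\xi_1)\widehat{f}(\xi_1)\psi_{N_2}(\xi-\xi_1)\widehat{g}(\xi-\xi_1)$ and applying the Cauchy--Schwarz inequality in $\xi_1$ against the measure $\delta(\tau-\Phi_\xi(\xi_1))\,d\xi_1$, whose total mass equals $\sum_{\xi_1^{\ast}}|\Phi_\xi'(\xi_1^{\ast})|^{-1}\lesssim N_1^{-3}$, then integrating in $\tau$ and changing variables back from $\tau$ to $\xi_1$, one obtains
\[
\int_{\R}\bigl|\mathcal{F}_{t,x}[u_1u_2](\tau,\xi)\bigr|^2\,d\tau\ \lesssim\ N_1^{-3}\int_{\R}|F_\xi(\xi_1)|^2\,d\xi_1.
\]
Integrating in $\xi$ and using Fubini together with the Plancherel theorem gives $\|u_1u_2\|_{L^2_{t,x}}^2\lesssim N_1^{-3}\|P_{N_1}f\|_{L^2}^2\|P_{N_2}g\|_{L^2}^2$, which is the claimed bound after taking square roots.

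I expect the only delicate point to be the rigorous handling of the distributional manipulations — assigning a precise meaning to the convolution of the two measures supported on $\{\tau=\xi^4\}$, and justifying the change of variables $\xi_1\mapsto\tau=\Phi_\xi(\xi_1)$ uniformly in $\xi$. This is routine (for instance one may restrict $\tau$ to thin slabs and pass to a limit, or smooth out the free evolutions and remove the regularization at the end); the frequency separation $N_1\gg N_2$ is precisely what keeps $\Phi_\xi'$ bounded away from zero, which both powers the change of variables and produces the gain $N_1^{-3/2}$, in contrast with the weaker $N_1^{-1/2}$ that the second-order dispersion $\xi^2$ would yield.
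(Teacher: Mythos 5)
Your proof is correct and follows essentially the same Fourier-space computation the paper delegates to Lemma 3.4 of \cite{CKSTT08} (the paper also realizes Lemma~\ref{BS} as the $L=N_1$ case of the refined estimate in Lemma~\ref{BS2}, there established by duality against a test function $h$ rather than by the delta-measure formulation you use). The quantitative content is identical in either presentation: the Jacobian bound $|\Phi_\xi'(\xi_1)|=4|\xi_1^3-(\xi-\xi_1)^3|\gtrsim N_1^3$, forced by the frequency separation $N_1\gg N_2$, is what produces the gain $N_1^{-3/2}$.
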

This lemma can be proved in the almost similar manner as the proof of Lemma 3.4 in \cite{CKSTT08} (see also Remark~\ref{BSE_rema} below).

The following bilinear Strichartz type estimate is useful to constract low regular solutions.
\begin{theorem}[Bilinear Strichartz estimate on $X_{N_1}\times X_{N_2}$]
\label{bilisol}
Let $N_1,N_2\in 2^{\Z}$ and $u_1\in X_{N_1}, u_2\in X_{N_2}$. 
If $N_1\gg N_2$, then the estimate
\begin{equation}\label{bistri}
     \|P_{N_1}u_1P_{N_2}u_2\|_{L^2_{t,x}}\lesssim
     N_1^{-\frac{3}{2}}\|P_{N_1}u_1\|_{X_{N_1}}\|P_{N_2}u_2\|_{X_{N_2}}
\end{equation}
holds, where the implicit constant is independnet of $N_1,N_2,u_1,u_2$.
\end{theorem}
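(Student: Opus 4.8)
\emph{Strategy.} The plan is to reduce the claimed bilinear bound to the free bilinear estimate of Lemma~\ref{BS} together with the decomposition of the Duhamel term obtained in Proposition~\ref{duam_decom}. For $j=1,2$ I write, via the Duhamel formula,
\[
P_{N_j}u_j(t)=e^{it\partial_x^4}\phi_j-iP_{N_j}\mathcal{I}[F_j](t),\qquad
\phi_j:=P_{N_j}u_j(0),\quad F_j:=(i\partial_t+\partial_x^4)P_{N_j}u_j,
\]
so that $\|\phi_j\|_{L^2}\lesssim\|P_{N_j}u_j\|_{X_{N_j}}$ and, by the definition of $Y_{N_j}$, one may split $F_j=F_j^{(1)}+F_j^{(2)}$ with $N_j^{-3/2}\|F_j^{(1)}\|_{L^1_xL^2_t}+\|F_j^{(2)}\|_{L^1_tL^2_x}\lesssim\|P_{N_j}u_j\|_{X_{N_j}}$. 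Since $\mathcal{I}[F_j^{(2)}](t)=\int_0^te^{i(t-t')\partial_x^4}F_j^{(2)}(t')\,dt'$, Minkowski's inequality in $t'$ rewrites the $F_j^{(2)}$-contribution as a superposition of frequency-localized free solutions with weight $\|F_j^{(2)}(t')\|_{L^2_x}$, so it is handled exactly like $e^{it\partial_x^4}\phi_j$. For the $F_j^{(1)}$-contribution I use $P_{N_j}\mathcal{I}[F_j^{(1)}]=\int_{\R}\mathrm{w}_{y,N_j}\,dy$ from $(\ref{duamel_wy})$ and, for each fixed $y$, the decomposition of $\mathrm{w}_{y,N_j}$ supplied by Proposition~\ref{duam_decom} with $L\sim N_j$: a frequency-localized free solution $-e^{it\partial_x^4}\mathcal{L}v_0^y$, two half-wave boundary terms (each an $L^\infty_x$-bounded multiplier applied to a frequency-localized free solution), and a remainder $h_y$ with $\|h_y\|_{L^q_xL^p_t}\lesssim N_j^{-1-1/q-4/p}\|F_j(\cdot,y)\|_{L^2_t}$ for all $p,q\ge2$. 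A change of variable $\tau=\xi^4$ shows $\|v_0^y\|_{L^2_x},\|\mathcal{L}v_0^y\|_{L^2_x}\lesssim N_j^{-3/2}\|F_j(\cdot,y)\|_{L^2_t}$.

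\emph{Estimating the bilinear pieces.} After this reduction, expanding $P_{N_1}u_1\cdot P_{N_2}u_2$ yields finitely many bilinear terms, each of one of three types. \textbf{(i)} (free)$\times$(free): both factors are frequency-localized free solutions, so Lemma~\ref{BS} gives the gain $N_1^{-3/2}$ times the product of the two $L^2$-masses; integrating in $y$ (and $y'$) and using $\int_{\R}\|F_j(\cdot,y)\|_{L^2_t}\,dy=\|F_j\|_{L^1_xL^2_t}$ with $N_j^{-3/2}\|F_j^{(1)}\|_{L^1_xL^2_t}\lesssim\|P_{N_j}u_j\|_{X_{N_j}}$ yields the desired bound. \textbf{(ii)} (free)$_{N_1}\times$(remainder)$_{N_2}$: by H\"older I pair the free factor in $L^\infty_xL^2_t$ (Kato smoothing, Lemma~\ref{kato}, contributing $N_1^{-3/2}$) with $h_{y'}\in L^2_xL^\infty_t$, where Proposition~\ref{duam_decom} with $(q,p)=(2,\infty)$ gives $\|h_{y'}\|_{L^2_xL^\infty_t}\lesssim N_2^{-3/2}\|F_2(\cdot,y')\|_{L^2_t}$, and the $y,y'$-integrations close the estimate. \textbf{(iii)} (remainder)$_{N_1}\times$(anything)$_{N_2}$: by H\"older I pair the remainder in $L^4_xL^2_t$, where Proposition~\ref{duam_decom} with $(q,p)=(4,2)$ gives $\|h_y\|_{L^4_xL^2_t}\lesssim N_1^{-13/4}\|F_1(\cdot,y)\|_{L^2_t}$, with the other factor in $L^4_xL^\infty_t$, which is bounded by $N_2^{1/4}$ times either an $L^2$-mass (Kenig--Ruiz, Lemma~\ref{KRe}) or $\|F_2(\cdot,y')\|_{L^2_t}$ (Proposition~\ref{duam_decom} with $(q,p)=(4,\infty)$); after integration this becomes $\lesssim N_1^{-7/4}N_2^{1/4}\|P_{N_1}u_1\|_{X_{N_1}}\|P_{N_2}u_2\|_{X_{N_2}}$, which is $\lesssim N_1^{-3/2}\|P_{N_1}u_1\|_{X_{N_1}}\|P_{N_2}u_2\|_{X_{N_2}}$ since $N_2\le N_1$. (The $P_\pm$ projections in the boundary terms preserve the frequency localization, and the $L^\infty_x$-bounded multipliers are pulled out, so those terms fall into the same three cases.)

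\emph{Main obstacle.} The only delicate point is the remainder $h_y$ in type~(iii): the H\"older exponents must be chosen so that the decay from Proposition~\ref{duam_decom} leaves a strictly positive power of $N_1$ available to absorb the loss $N_2^{1/4}$ from the low-frequency Kenig--Ruiz norm, and the resulting balance $N_1^{-7/4}N_2^{1/4}\le N_1^{-3/2}$ is tight. Everything else is a moderately long but routine case check, since the hard analytic input — the explicit decomposition of the point-source Duhamel term $\mathrm{w}_{y,N}$ and the estimate on its remainder — is already contained in Proposition~\ref{duam_decom}.
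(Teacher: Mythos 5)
Your proof is correct and follows essentially the same route as the paper: expand each $P_{N_j}u_j$ via the Duhamel formula, split $(i\partial_t+\partial_x^4)P_{N_j}u_j$ according to the two components of the $Y_{N_j}$-norm, handle the $L^1_tL^2_x$-piece by Minkowski (this is the "simpler way" the paper alludes to in proving only (\ref{bi_est_4})), apply Proposition \ref{duam_decom} to the $L^1_xL^2_t$-piece to get free solutions, boundary multipliers, and the remainder $h_y$, and then close by the free bilinear estimate (Lemma \ref{BS}), Kato smoothing, and H\"older for the $h_y$ terms. The only cosmetic difference is in your case (iii): you pair $h_{1,y}\in L^4_xL^2_t$ with the other factor in $L^4_xL^\infty_t$ (Kenig--Ruiz), while the paper's displayed computation for the $h\times h$ term uses $L^\infty_xL^2_t\times L^2_xL^\infty_t$; both choices satisfy the required bound, and your version has the minor advantage of appealing only to global-in-time linear estimates (avoiding any worry about the $T<1$ restriction in the maximal function estimate), at the price of a factor $(N_2/N_1)^{1/4}$ that must be absorbed by the hypothesis $N_1\gg N_2$.
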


\begin{proof}
We put $u_{j,N_j}:=P_{N_j}u_j$ and $F_j:=(i\partial_t+\partial_x^4)u_{j}$ for $j=1,2$.
It suffices to show that
\[
\|u_{1,N_1}u_{2,N_2}\|_{L^2_{t,x}}\lesssim 
N_1^{-\frac{3}{2}}\left(\|u_{1,N_1}(0)\|_{L^2_x}+\|F_1\|_{Y_{N_1}}\right)
\left(\|u_{2,N_2}(0)\|_{L^2_x}+\|F_2\|_{Y_{N_2}}\right).
\]
This follows from the following estimates.
\begin{align}
\|u_{1,N_1}u_{2,N_2}\|_{L^2_{t,x}}&\lesssim 
N_1^{-\frac{3}{2}}\left(\|u_{1,N_1}(0)\|_{L^2_x}+\|F_1\|_{L^1_tL^2_x}\right) 
\left(\|u_{2,N_2}(0)\|_{L^2_x}+\|F_2\|_{L^1_tL^2_x}\right),\label{bi_est_1}\\ 
\|u_{1,N_1}u_{2,N_2}\|_{L^2_{t,x}}&\lesssim 
N_1^{-\frac{3}{2}}\left(\|u_{1,N_1}(0)\|_{L^2_x}+\|F_1\|_{L^1_tL^2_x}\right) 
\left(\|u_{2,N_2}(0)\|_{L^2_x}+N_2^{-\frac{3}{2}}\|F_2\|_{L^1_xL^2_t}\right),\label{bi_est_2}\\ 
\|u_{1,N_1}u_{2,N_2}\|_{L^2_{t,x}}&\lesssim 
N_1^{-\frac{3}{2}}\left(\|u_{1,N_1}(0)\|_{L^2_x}+N_1^{-\frac{3}{2}}\|F_1\|_{L^1_xL^2_t}\right) 
\left(\|u_{2,N_2}(0)\|_{L^2_x}+\|F_2\|_{L^1_tL^2_x}\right),\label{bi_est_3}\\ 
\|u_{1,N_1}u_{2,N_2}\|_{L^2_{t,x}}&\lesssim 
N_1^{-\frac{3}{2}}\left(\|u_{1,N_1}(0)\|_{L^2_x}+N_1^{-\frac{3}{2}}\|F_1\|_{L^1_xL^2_t}\right) 
\left(\|u_{2,N_2}(0)\|_{L^2_x}+N_2^{-\frac{3}{2}}\|F_2\|_{L^1_xL^2_t}\right). \label{bi_est_4}
\end{align}
We prove only (\ref{bi_est_4}) because the other estimates can be obtained by the similar or simpler way. 
We note that
\[
u_{j,N_j}(t)=e^{it\partial_x^4}u_{j,N_j}(0)-i\int_0^te^{i(t-t')\partial_x^4}P_{N_j}F_j(t')dt'
=:A_j+B_j.
\]
To obtain (\ref{bi_est_4}), we prove the followings.
\begin{align}
\|A_1A_2\|_{L^2_{t,x}}&\lesssim N_1^{-\frac{3}{2}}\|u_{1,N_1}(0)\|_{L^2_x}\|u_{2,N_2}(0)\|_{L^2_x},
\label{AB_est_1}\\
\|A_1B_2\|_{L^2_{t,x}}&\lesssim N_1^{-\frac{3}{2}}N_2^{-\frac{3}{2}}\|u_{1,N_1}(0)\|_{L^2_x}\|F_2\|_{L^1_xL^2_t},
\label{AB_est_2}\\
\|B_1A_2\|_{L^2_{t,x}}&\lesssim N_1^{-3}\|F_1\|_{L^1_xL^2_t}\|u_{2,N_2}(0)\|_{L^2_x},
\label{AB_est_3}\\
\|B_1B_2\|_{L^2_{t,x}}&\lesssim N_1^{-3}N_2^{-\frac{3}{2}}\|F_1\|_{L^1_xL^2_t}\|F_2\|_{L^1_xL^2_t}.
\label{AB_est_4}
\end{align}
(\ref{AB_est_1}) is obtained by (\ref{5-1}). 

Now we prove (\ref{AB_est_2}) and (\ref{AB_est_3}).
By Proposition~\ref{duam_decom}, we have
\[
\begin{split}
B_j&=-\int_{\R}e^{it\partial_x^4}\mathcal{L}v_{j,y}(x)dy
+\int_{\R}(P_{<N_j/2^{50}}\ee_{(-\infty, 0]})(x)(P_{+}e^{it\partial_x^4}v_{j,y})(x)dy\\
&\ \ \ \ -\int_{\R}(P_{<N_j/2^{50}}\ee_{[0,\infty)})(x)(P_{-}e^{it\partial_x^4}v_{j,y})(x)dy
+\int_{\R}h_{j,y}(t,x)dy,
\end{split}
\]
where $v_{j,y}=\F_{\xi}^{-1}[\psi_N(\xi)\F_t[F_j(t,y)](\xi^4)]$, 
$\mathcal{L}v_{j,y}=\F_{\xi}^{-1}[\psi_N(\xi)\F_t[\ee_{(-\infty ,0]}(t)F_j(t,y)](\xi^4)]$ 
and $h_{j,y}$ satisfies
\begin{equation}\label{hy_est}
\|h_{j,y}\|_{L^q_xL^p_t}\lesssim N_j^{-1-\frac{1}{q}-\frac{4}{p}}\|F_j(t,y)\|_{L^2_t}. 
\end{equation}
We note that
\begin{equation}\label{vy_est}
\|v_{j,y}(x)\|_{L^2_x}\lesssim N_j^{-\frac{3}{2}}\|F_j(t,y)\|_{L^2_t},\ 
\|\mathcal{L}v_{j,y}(x)\|_{L^2_x}\lesssim N_j^{-\frac{3}{2}}\|F_j(t,y)\|_{L^2_t}.
\end{equation}
Furthermore, for any $g\in L^2(\R^2)$, it holds that
\begin{equation}\label{Pg_est}
\|(P_{<N_j/2^{50}}\ee_{(-\infty, 0]})(x)g(t,x)\|_{L^2_{t,x}}
\lesssim \|g\|_{L^2_{t,x}}. 
\end{equation}
Indeed, if $\chi_{N_j}$ is defined by $P_{<N_j/2^{50}}f=\chi_{N_j}*f$, then we have
\[
\begin{split}
\|(P_{<N_j/2^{50}}\ee_{(-\infty, 0]})(x)g(t,x)\|_{L^2_{t,x}}
&=\|(\chi_{N_j}*\ee_{(-\infty,0]})(x)g(t,x)\|_{L^2_{tx}}\\
&\lesssim \int_{\R}|\chi_{N_j}(z)|\|\ee_{(-\infty,0]}(x-z)g(t,x)\|_{L^2_{tx}}dz\\
&\lesssim \|g\|_{L^2_{t,x}}
\end{split}
\]
because $\chi_{N_j}(x)=\F^{-1}_{\xi}[\varphi (2^{50}N_j^{-1}\xi)](x)$, 
where $\varphi$ is defined in (\ref{2-2-5}). 
By the same way, we obtain 
\begin{equation}\label{Pg_est_mi}
\|(P_{<N_j/2^{50}}\ee_{[0,\infty)})(x)g(t,x)\|_{L^2_{t,x}}
\lesssim \|g\|_{L^2_{t,x}}. 
\end{equation}
Therefore, we have
\[
\begin{split}
\|A_1B_2\|_{L^2_{t,x}}
&\lesssim \int_{\R}\|e^{it\partial_x^4}u_{1,N_1}(0)e^{it\partial_x^4}\mathcal{L}v_{2,y}\|_{L^2_{t,x}}dy
+\int_{\R}\|e^{it\partial_x^4}u_{1,N_1}(0)e^{it\partial_x^4}P_{+}v_{2,y}\|_{L^2_{t,x}}dy\\
&\ \ \ \ 
+\int_{\R}\|e^{it\partial_x^4}u_{1,N_1}(0)e^{it\partial_x^4}P_{-}v_{2,y}\|_{L^2_{t,x}}dy
+\int_{\R}\|e^{it\partial_x^4}u_{1,N_1}(0)h_{2,y}\|_{L^2_{t,x}}dy\\
&=:I+II+III+IV.
\end{split}
\]
By (\ref{5-1}) and (\ref{vy_est}), we obtain
\[
\begin{split}
I+II+III
&\lesssim \int_{\R}N_1^{-\frac{3}{2}}\|u_{1,N_1}(0)\|_{L^2_{x}}\left(\|\mathcal{L}v_{2,y}\|_{L^2_x}+\|v_{2,y}\|_{L^2_x}\right)dy\\
&\lesssim \int_{\R}N_1^{-\frac{3}{2}}N_2^{-\frac{3}{2}}\|u_{1,N_1}(0)\|_{L^2_{x}}\|F_2(t,y)\|_{L^2_t}dy\\
&=N_1^{-\frac{3}{2}}N_2^{-\frac{3}{2}}\|u_{1,N_1}(0)\|_{L^2_{x}}\|F_2\|_{L^1_xL^2_t}. 
\end{split}
\]
While, by the H\"older inequality, Lemma~\ref{kato}, and (\ref{hy_est}) with 
$(q,p)=(2,\infty)$, we obtain
\[
\begin{split}
IV&\lesssim \int_{\R}\|e^{it\partial_x^4}u_{1,N_1}(0)\|_{L^{\infty}_xL^2_t}\|h_{2,y}\|_{L^{2}_xL^{\infty}_t}dy\\
&\lesssim \int_{\R}N_1^{-\frac{3}{2}}\|u_{1,N_1}(0)\|_{L^2_x}N_2^{-\frac{3}{2}}\|F(t,y)\|_{L^2_t}dy\\
&=N_1^{-\frac{3}{2}}N_2^{-\frac{3}{2}}\|u_{1,N_1}(0)\|_{L^2_{x}}\|F_2\|_{L^1_xL^2_t}.
\end{split}
\]
Therefore, we get (\ref{AB_est_2}). By the same way, we also get (\ref{AB_est_3}).

Finally, we prove (\ref{AB_est_4}). 
By the same argument as above, $\|B_1B_2\|_{L^2_{tx}}$ is controlled by 
the summation of 
\[
\begin{split}
&\iint_{\R^2}\|e^{it\partial_x^4}\widetilde{v}_{1,y_1}e^{it\partial_x^4}\widetilde{v}_{2,y_2}\|_{L^2_{t,x}}dy_1dy_2,\ 
\iint_{\R^2}\|e^{it\partial_x^4}\widetilde{v}_{1,y_1}h_{2,y_2}\|_{L^2_{t,x}}dy_1dy_2,\\ 
&\iint_{\R^2}\|h_{1,y_1}e^{it\partial_x^4}\widetilde{v}_{2,y_2}\|_{L^2_{t,x}}dy_1dy_2,\ 
\iint_{\R^2}\|h_{1,y_1}h_{2,y_2}\|_{L^2_{t,x}}dy_1dy_2,
\end{split}
\]
where $\widetilde{v}_{j,y}\in \{\mathcal{L}{v_{j,y}}, P_+v_{j,y}, P_-v_{j,y}\}$. 
By (\ref{5-1}) and (\ref{vy_est}), we obtain
\[
\begin{split}
\iint_{\R^2}\|e^{it\partial_x^4}\widetilde{v}_{1,y_1}e^{it\partial_x^4}\widetilde{v}_{2,y_2}\|_{L^2_{t,x}}dy_1dy_2
&\lesssim \iint_{\R^2}N_1^{-\frac{3}{2}}\|\widetilde{v}_{1,y_1}\|_{L^2_{x}}\|\widetilde{v}_{2,y_2}\|_{L^2_{x}}dy_1dy_2\\
&\lesssim \iint_{\R^2}N_1^{-\frac{3}{2}}N_1^{-\frac{3}{2}}\|F_1(t,y_1)\|_{L^2_t}N_2^{-\frac{3}{2}}\|F_2(t,y_2)\|_{L^2_t}dy_1dy_2\\
&=N_1^{-3}N_2^{-\frac{3}{2}}\|F_1\|_{L^1_xL^2_t}\|F_2\|_{L^1_xL^2_t}. 
\end{split}
\]
By the H\"older inequality, Lemma~\ref{kato}, (\ref{vy_est}), and (\ref{hy_est}) with 
$(q,p)=(2,\infty)$, we obtain
\[
\begin{split}
\iint_{\R^2}\|e^{it\partial_x^4}\widetilde{v}_{1,y_1}h_{2,y_2}\|_{L^2_{t,x}}dy_1dy_2
&\lesssim \iint_{\R^2}\|e^{it\partial_x^4}\widetilde{v}_{1,y_1}\|_{L^{\infty}_{x}L^2_t}
\|h_{2,y_2}\|_{L^2_{x}L^{\infty}_t}dy_1dy_2\\
&\lesssim \iint_{\R^2}N_1^{-\frac{3}{2}}\|\widetilde{v}_{1,y_1}\|_{L^2_x}N_2^{-\frac{3}{2}}\|F_2(t,y_2)\|_{L^2_t}dy_1dy_2\\
&\lesssim \iint_{\R^2}N_1^{-3}\|F_1(t,y_1)\|_{L^2_t}N_2^{-\frac{3}{2}}\|F_2(t,y_2)\|_{L^2_t}dy_1dy_2\\
&=N_1^{-3}N_2^{-\frac{3}{2}}\|F_1\|_{L^1_xL^2_t}\|F_2\|_{L^1_xL^2_t}. 
\end{split}
\]
By the H\"older inequality and (\ref{hy_est}) with 
$(q,p)=(\infty, 2)$, $(2,\infty)$, we obtain
\[
\begin{split}
\iint_{\R^2}\|h_{1,y_1}h_{2,y_2}\|_{L^2_{t,x}}dy_1dy_2
&\lesssim \iint_{\R^2}\|h_{1,y_1}\|_{L^{\infty}_{x}L^2_t}
\|h_{2,y_2}\|_{L^2_{x}L^{\infty}_t}dy_1dy_2\\
&\lesssim \iint_{\R^2}N_1^{-3}\|F_1(t,y_1)\|_{L^2_t}N_2^{-\frac{3}{2}}\|F_2(t,y_2)\|_{L^2_t}dy_1dy_2\\
&=N_1^{-3}N_2^{-\frac{3}{2}}\|F_1\|_{L^1_xL^2_t}\|F_2\|_{L^1_xL^2_t}. 
\end{split}
\]
Therefore, we get (\ref{AB_est_4}). 
\end{proof}
\begin{corollary}\label{bilin_T_cor1}
Let $T>0$, $N_1,N_2\in 2^{\Z}$ and $u_1\in X_{N_1}(T), u_2\in X_{N_2}(T)$. 
If $N_1\gg N_2$, then the estimate
\begin{equation}\label{bistriT}
     \|P_{N_1}u_1P_{N_2}u_2\|_{L^2_TL^2_x}\lesssim
     T^{\frac{\theta}{4}}N_1^{-\frac{3}{2}+\theta}\|P_{N_1}u_1\|_{X_{N_1}(T)}\|P_{N_2}u_2\|_{X_{N_2}(T)}
\end{equation}
holds for any $\theta \in [0,1]$, where the implicit constant is independent of $T,N_1,N_2,u_1,u_2$.
\end{corollary}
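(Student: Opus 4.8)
The plan is to deduce (\ref{bistriT}) by taking the geometric mean of two endpoint estimates, corresponding to $\theta=0$ and $\theta=1$, using only the elementary fact that $A\le B_0$ and $A\le B_1$ imply $A=A^{1-\theta}A^{\theta}\le B_0^{1-\theta}B_1^{\theta}$; no genuine interpolation machinery is needed. The $\theta=0$ endpoint is just Theorem~\ref{bilisol} transferred to the interval $[0,T]$, and the $\theta=1$ endpoint comes from a crude H\"older-in-time argument combined with the definition of the $X_N$-norm.

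For the $\theta=0$ endpoint I would, for $j=1,2$, pick extensions $\widetilde{u}_j\in X_{N_j}$ of $u_j|_{[0,T]}$ with $\|\widetilde{u}_j\|_{X_{N_j}}\le 2\|u_j\|_{X_{N_j}(T)}$; since $P_{N_1}u_1P_{N_2}u_2=P_{N_1}\widetilde u_1P_{N_2}\widetilde u_2$ on $[0,T]\times\R$, restricting the conclusion of Theorem~\ref{bilisol} to $[0,T]$ yields
\[
\|P_{N_1}u_1P_{N_2}u_2\|_{L^2_TL^2_x}\le \|P_{N_1}\widetilde u_1P_{N_2}\widetilde u_2\|_{L^2_{t,x}}\lesssim N_1^{-\frac{3}{2}}\|P_{N_1}u_1\|_{X_{N_1}(T)}\|P_{N_2}u_2\|_{X_{N_2}(T)}.
\]
For the $\theta=1$ endpoint I would use H\"older in $t$ and in $x$ to write, putting the higher frequency into $L^\infty_x$,
\[
\|P_{N_1}u_1P_{N_2}u_2\|_{L^2_TL^2_x}\le \|P_{N_1}u_1\|_{L^4_TL^\infty_x}\|P_{N_2}u_2\|_{L^4_TL^2_x}\le T^{\frac14}\|P_{N_1}u_1\|_{L^4_TL^\infty_x}\|P_{N_2}u_2\|_{L^\infty_TL^2_x},
\]
and then invoke the definition of the time-restricted norm, namely $\|P_{N_1}u_1\|_{L^4_TL^\infty_x}\le N_1^{-1/2}\|P_{N_1}u_1\|_{X_{N_1}(T)}$ (since $N^{1/2}\|\cdot\|_{L^4_tL^\infty_x}$ is one of the summands in $\|\cdot\|_{X_N}$) and $\|P_{N_2}u_2\|_{L^\infty_TL^2_x}\le\|P_{N_2}u_2\|_{X_{N_2}(T)}$, to obtain the bound $T^{1/4}N_1^{-1/2}\|P_{N_1}u_1\|_{X_{N_1}(T)}\|P_{N_2}u_2\|_{X_{N_2}(T)}$.

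Finally, denoting $A:=\|P_{N_1}u_1P_{N_2}u_2\|_{L^2_TL^2_x}$ and writing $A=A^{1-\theta}A^{\theta}$, the two endpoint bounds combine to give the factor
\[
\bigl(N_1^{-\frac32}\bigr)^{1-\theta}\bigl(T^{\frac14}N_1^{-\frac12}\bigr)^{\theta}=T^{\frac{\theta}{4}}N_1^{-\frac32(1-\theta)-\frac{\theta}{2}}=T^{\frac{\theta}{4}}N_1^{-\frac32+\theta},
\]
which is exactly (\ref{bistriT}). I do not expect any real obstacle here; the only point requiring mild care is the bookkeeping between the global space $X_N$ and its time restriction $X_N(T)$, handled by the extension/infimum argument above, and the observation that the condition $N_1\gg N_2$ is used only through Theorem~\ref{bilisol} in the $\theta=0$ endpoint.
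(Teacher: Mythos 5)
Your proof is correct and follows essentially the same route as the paper: the paper also obtains the $\theta=1$ bound $T^{1/4}N_1^{-1/2}\|P_{N_1}u_1\|_{X_{N_1}}\|P_{N_2}u_2\|_{X_{N_2}}$ by H\"older together with the $L^4_tL^\infty_x$ and $L^\infty_tL^2_x$ components of the $X_N$-norm, and then combines it with (\ref{bistri}) ``by interpolation,'' which here amounts exactly to your observation that $A\le B_0$ and $A\le B_1$ give $A\le B_0^{1-\theta}B_1^{\theta}$. Your explicit handling of the time-restriction via extensions is a sound way to make precise what the paper leaves implicit.
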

\begin{proof}
By the H\"older inequality and the definition of $X_N$-norm, we have
\[
\|P_{N_1}u_1P_{N_2}u_2\|_{L^2_TL^2_x}
\le \|\ee_{(-T,T)}(t)\|_{L^4_t}\|P_{N_1}u_1\|_{L^4_tL^{\infty}_x}\|P_{N_2}u_2\|_{L^{\infty}_tL^2_x}
\lesssim T^{\frac{1}{4}}N_1^{-\frac{1}{2}}\|P_{N_1}u_1\|_{X_{N_1}}\|P_{N_2}u_2\|_{X_{N_2}}. 
\]
Therefore, by the interpolation between this estimate and (\ref{bistri}), 
we obtain (\ref{bistriT}). 
\end{proof}
\begin{remark}
The estimates (\ref{5-1}), (\ref{bistri}), and (\ref{bistriT}) 
also holds if we replace $P_{N_2}$ by $P_{\le 1}$. 
\end{remark}
\subsection{Multilinear estimate}
\begin{theorem}[Multilinear estimate]\label{sc_inv_multi_e}
Let $m\ge 3$, $\gamma \in \{1,2,3\}$, $0<T<1$. Set
\[
     s_0=s_0(\gamma, m):=
     \left\{\begin{array}{ll}
	\frac{\gamma -1}{2},&m=3,\\
	\frac{\gamma}{3}-\frac{1}{2},&m= 4,\\
	s_c+\epsilon,&m\ge 5
	\end{array}\right.
\]
for $\gamma \in \{1,2\}$ and
\[
     s_0=s_0(3, m):=
     \left\{\begin{array}{ll}
	1,&m=3,\\
	\frac{1}{2},&m\ge 4, 
	\end{array}\right.
\]
where $\epsilon >0$ is an arbitrary positive number. 
If $s\ge \max\{s_0,0\}$,
then for any $u_1$, $\cdots$, $u_m\in X^s(T)$ 
and the multi-index $\alpha =(\alpha_1,\cdots,\alpha_m)\in (\N\cup \{0\})^m$ with $|\alpha|=\gamma$, 
it holds that
\begin{equation}\label{multimultimulti_est}
\left\|
\prod_{i=1}^{m}\partial_x^{\alpha_i}\widetilde{u_i}\right\|_{Y^s(T)}
\lesssim T^{\delta}\prod_{i=1}^m\|u_i\|_{X^s(T)} 
\end{equation}
for some $\delta >0$ if $\gamma \in \{1,2\}$ and $\delta =0$ if $\gamma =3$, 
where $\widetilde{u_i}\in \{u_i,\overline{u_i}\}$. 
The implicit constant depends only on $m,s$, $s_0$.
\end{theorem}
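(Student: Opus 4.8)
We prove \eqref{multimultimulti_est} by a Littlewood--Paley decomposition of the factors and of the output, reducing it to a family of dyadic multilinear estimates which are then summed against the $\ell^2$ frequency structure of $X^s(T)$ and $Y^s(T)$. Writing $u_i=P_{\le1}u_i+\sum_{N_i\in 2^{\N}}P_{N_i}u_i$, expanding the product and applying $P_N$ (or $P_{\le1}$) to the result, we may by symmetry order $N_1\ge N_2\ge\cdots\ge N_m$. Since $\partial_x^{\alpha_i}\widetilde{u_{i,N_i}}$ is frequency-localized at $|\xi|\sim N_i$, the output frequency obeys $N\lesssim N_1$, with $N\sim N_1$ unless $N_1\sim N_2$; the derivatives contribute a factor $\prod_iN_i^{\alpha_i}$ with $\sum_i\alpha_i=\gamma$, and one may replace $\partial_x^{\alpha_i}P_{N_i}$ by $N_i^{\alpha_i}$ times a Fourier multiplier bounded, uniformly in $N_i$, on $X_{N_i}$. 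Finally, since $|\partial_x^{\alpha_i}\overline{u_i}|=|\partial_x^{\alpha_i}u_i|$ pointwise, the conjugations $\widetilde{u_i}$ are immaterial in all the $L^p_tL^q_x$, $L^q_xL^p_t$ and bilinear $L^2_{t,x}$ norms used below, so we may treat every factor as $u_{i,N_i}$.

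To bound the $Y_N(T)$-norm of the dyadic product we use its two constituents, $\|\cdot\|_{L^1_tL^2_x}$ and $N^{-3/2}\|\cdot\|_{L^1_xL^2_t}$, and distribute the $m$ factors by H\"older's inequality in space-time among the norms encoded in the $X_N$-norm: the Strichartz norm $L^4_tL^\infty_x$ (weight $N^{1/2}$), the Kato-type smoothing norm $L^\infty_xL^2_t$ (weight $N^{3/2}$), the Kenig--Ruiz norm $L^4_xL^\infty_t$ (weight $N^{1/4}$), the maximal-function norm $L^2_xL^\infty_t$ (weight $N^{1+\epsilon}$), the energy norm $L^\infty_tL^2_x$, and $L^\infty_{t,x}$ (weight $N^{1/2}$, after Bernstein). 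The decisive extra input is the bilinear Strichartz estimate on the solution spaces: whenever there is a frequency-separated pair among the top factors --- $N_1\gg N_2$, or $N_1\sim N_2\gg N_3$ (pairing the first with the third), etc. --- we apply Theorem~\ref{bilisol}, or, when we can afford it, its truncated form Corollary~\ref{bilin_T_cor1}, which after the derivative extraction gives $\|v_1v_2\|_{L^2_TL^2_x}\lesssim T^{\theta/4}N_1^{\alpha_1+\alpha_2-\frac{3}{2}+\theta}\|P_{N_1}u_1\|_{X_{N_1}(T)}\|P_{N_2}u_2\|_{X_{N_2}(T)}$ for $\theta\in[0,1]$. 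A typical estimate, for $m=3$, is $\|v_1v_2v_3\|_{L^1_xL^2_t}\le\|v_1v_2\|_{L^2_{t,x}}\|v_3\|_{L^2_xL^\infty_t}$, to which one applies Theorem~\ref{bilisol} together with the maximal-function norm; when the three top frequencies are all comparable the bilinear estimate is unavailable and one uses the smallness of the output frequency $N$, or, when $N\sim N_1$, the Kenig--Ruiz norm on two factors and $L^2_{t,x}$ on a third. In each case one then collects the powers of $(N_1,\dots,N_m)$ and checks that, after dividing by $\prod_iN_i^s$, the resulting coefficient is square-summable over the dyadic configuration by Cauchy--Schwarz; this is exactly where the hypothesis $s\ge\max\{s_0,0\}$ enters, the threshold $s_0$ being dictated by the most resonant interactions. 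For $\gamma\in\{1,2\}$ every such interaction leaves a strictly negative power of $N_1$ to spare, so one may take $\theta>0$ above (and lose small powers in the H\"older-in-$t$ steps), yielding the positive exponent $\delta$; for $\gamma=3$ the worst interaction saturates the bound, forcing $\theta=0$ and $\delta=0$. The low-frequency output $P_{\le1}$ is treated by the same estimates together with Proposition~\ref{prop4-1-1-2}.

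The genuine obstacle is $\gamma=3$, and within it the endpoint $m=3$ (where the target regularity is only $H^1$), together with the interaction in which all derivatives sit on one factor in a high-high-low frequency configuration. There a naive H\"older loses too much: one must pair the two highest-frequency factors through Theorem~\ref{bilisol} at its critical exponent $\theta=0$ (this is precisely the ingredient absent from \cite{HJ11}), and one must verify that in \emph{every} distribution of the factors among the available norms and \emph{every} resonant frequency configuration --- which factor carries the derivatives, which pair is frequency-separated, whether $N\sim N_1$ or $N\ll N_1$, and the high-high-high case, which closes \emph{only} at $s=1$ via the Kenig--Ruiz norm --- the exponents balance with no net loss of powers of the dyadic parameters. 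Once this $(\gamma,m)=(3,3)$ analysis is settled, the cases $m\ge4$ and $\gamma\in\{1,2\}$ go through by the same scheme with strictly more slack (the extra factors being placed in $L^\infty_{t,x}$ or $L^4_tL^\infty_x$), and the final dyadic summation is routine.
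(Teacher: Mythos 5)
Your proposal captures the paper's proof faithfully: dyadic decomposition ordered $N_1\ge\cdots\ge N_m$, the bound $\prod_iN_i^{\alpha_i}\le N_1^\gamma$, H\"older distribution among the norms encoded in $X_N$ together with the bilinear estimate (Theorem~\ref{bilisol} and its $T$-interpolated form, Corollary~\ref{bilin_T_cor1}, with $\theta$ forced to $0$ when $\gamma=3$), and Cauchy--Schwarz for the dyadic summation; your observation that conjugations are immaterial in all the $L^p$-type and bilinear $L^2$ norms is exactly the point the paper relies on, and the case organization by the number $k$ of comparable top frequencies is the one the paper uses. Two small points in your description differ from the paper's write-up but do not affect the argument. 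In the high-high-high case $N_1\sim N_2\sim N_3$ for $m=3$, the paper tests the $L^1_TL^2_x$ constituent of $Y_N$ against $L^4_TL^\infty_x\times L^4_TL^\infty_x\times L^\infty_TL^2_x$, not the Kenig--Ruiz distribution you sketch; both close at $s\ge\tfrac{\gamma-1}{2}$, and in fact the high-high-low case $k=2$ is equally binding. And in that high-high-low configuration $N_1\sim N_2\gg N_3$ the bilinear estimate is applied to a top factor paired with the low one (as you describe earlier for $k=2$), not to the two top factors, since Theorem~\ref{bilisol} requires genuine frequency separation; your later sentence about ``pairing the two highest-frequency factors'' applies only to $k=1$, where $N\sim N_1\gg N_2$.
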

This multilinear estimate will be used to prove Theorem \ref{lwp1} and 
\ref{corlwp} (see the proof of Theorem \ref{lwpcomp} and Remark~\ref{lwpremark1}).

\begin{proof}
Let $m\ge 3$, $s\ge s_0$, $0<T<1$, and $u_i\in X^s(T)$ $(i=1,\cdots,m)$. 
We write $P_{\le 1}=P_1$. 
For $i=1,\cdots, m$ and $N_i\in 2^{\N\cup \{0\}}$, we set $c_{i,N_i}:=N_i^s\|P_{N_i}u_i\|_{X_{N_i}(T)}$. 
We define
\[
\begin{split}
I_0&:=\left\{\sum_{N\in 2^{\N\cup \{0\}}} N^{2s}\left\|P_N\left(
   \prod_{i=1}^{m}P_{1}\widetilde{u_i}\right)\right\|_{Y_N(T)}^2\right\}^{\frac{1}{2}},\\
I_k&:=\left\{\sum_{N\in 2^{\N\cup \{0\}}} N^{2s}\left\|P_N\left(\sum_{\substack{(N_1,\cdots , N_m)\in \Phi_k\\ N_1\gg 1}}N_1^\gamma
   \prod_{i=1}^{m}P_{N_i}\widetilde{u_i}\right)\right\|_{Y_N(T)}^2\right\}^{\frac{1}{2}}\ \ (k=1,\cdots, m),
\end{split}
\]
where
\[
\Phi_k:=\{(N_1,\cdots,N_m)|\ N_1\ge \cdots \ge N_m,\ N_1\sim \cdots \sim N_k\gg N_{k+1}\} 
\]
and $N_{m+1}:=1$. 
We will show 
\begin{equation}
    I_k\lesssim T^{\delta}\prod_{i=1}^{m}\left(\sum_{N_i}c_{i,N_i}^2\right)^{\frac{1}{2}}
\end{equation}
for some $\delta \ge 0$. 
For $k=0$, by the H\"older inequality and the Bernstein inequality, we have
\[
I_0\lesssim \left\|\prod_{i=1}^mP_1\widetilde{u_i}\right\|_{Y_N}
\lesssim \left\|\prod_{i=1}^mP_1u_i\right\|_{L^1_TL^2_x}
\le T\|P_1u_1\|_{L^{\infty}_tL^2_x}\prod_{i=2}^m\|P_{1}u_i\|_{L^{\infty}_{t,x}}
\lesssim T\prod_{i=1}^mc_{i,1}.
\]
Therefore, it suffices to show
\begin{equation}\label{des_est_k1}
N^sN_1^{\gamma}
\left\|P_N\left(\prod_{i=1}^mP_{N_i}\widetilde{u_i}\right)\right\|_{Y_N(T)}
\lesssim T^{\delta}N_{2}^{-\epsilon}\left(\prod_{i=1}^mc_{i,N_i}\right)
\end{equation}
when $(N_1,\cdots,N_m)\in \Phi_1$ and 
\begin{equation}\label{des_est}
\sum_{N\lesssim N_1}N^sN_1^{\gamma}
\left\|P_N\left(\prod_{i=1}^mP_{N_i}\widetilde{u_i}\right)\right\|_{Y_N(T)}
\lesssim T^{\delta}N_{k+1}^{-\epsilon}\left(\prod_{i=1}^mc_{i,N_i}\right)
\end{equation}
when $(N_1,\cdots,N_m)\in \Phi_k$ with $k=2,\cdots ,m$.
for some $\epsilon >0$. 
Indeed, if (\ref{des_est_k1}) and (\ref{des_est}) holds, then by the Cauchy-Schwarz inequality 
for dyadic summation, we have
\[
\begin{split}
I_1^2&\lesssim \sum_N
\left(\sum_{\substack{(N_1,\cdots , N_m)\in \Phi_k\\ N_1\gg 1}}N^sN_1^\gamma
\left\|P_N\left(\prod_{i=1}^mP_{N_i}\widetilde{u_i}\right)\right\|_{Y_N(T)}\right)^2\\
&\lesssim \sum_N\left(\sum_{N_1\sim N}
\sum_{N_2\ge \cdots \ge N_m}T^{\delta}
N_{2}^{-\epsilon}\left(\prod_{i=1}^mc_{i,N_i}\right)\right)^2\\
&\lesssim T^{2\delta}\sum_N\left(\sum_{N_1\sim N}c_{1,N_1}\right)^2
\prod_{i=2}^m\left\{\left(\sum_{N_i\ge 1}N_i^{-\frac{2\epsilon}{m-1}}\right)
\left(\sum_{N_i}c_{i,N_i}^2\right)\right\}\\
&\lesssim T^{2\delta}\prod_{i=1}^{m}\left(\sum_{N_i}c_{i,N_i}^2\right)
\end{split}
\]
and
\[
\begin{split}
I_k&\lesssim \sum_{\substack{(N_1,\cdots , N_m)\in \Phi_k\\ N_1\gg 1}}\sum_{N\lesssim N_1}
N^sN_1^\gamma\left\|P_N\left(\prod_{i=1}^mP_{N_i}\widetilde{u_i}\right)\right\|_{Y_N(T)}\\
&\lesssim \sum_{(N_1,\cdots ,N_m)\in \Phi_k}
T^{\delta}N_{k+1}^{-\epsilon}\left(\prod_{i=1}^mc_{i,N_i}\right)\\
&\le T^{\delta}\left(\sum_{N_1\sim \cdots \sim N_k}\prod_{i=1}^kc_{i,N_i}\right)
\prod_{i=k+1}^m\left\{\left(\sum_{N_i\ge 1}N_i^{-\frac{2\epsilon}{m-k}}\right)^{\frac{1}{2}}
\left(\sum_{N_i}c_{i,N_i}^2\right)^{\frac{1}{2}}\right\}\\
&\lesssim T^{\delta}\prod_{i=1}^{m}\left(\sum_{N_i}c_{i,N_i}^2\right)^{\frac{1}{2}}
\end{split}
\]
for $k=2,\cdots,m$.

Now, we prove (\ref{des_est_k1}) and (\ref{des_est}). 
\\
\underline{{\bf Case 1}: $m=3$}. \\

\noindent \underline{(i)\ For $k=1$\ $(N\sim N_1\gg N_2)$}. 

By the definition of the norm $\|\cdot\|_{Y_N(T)}$, the H\"older inequality and (\ref{bistriT}), we have
\[
\begin{split}
N^sN_1^{\gamma}
\left\|P_N\left(\prod_{i=1}^3P_{N_i}\widetilde{u_i}\right)\right\|_{Y_N(T)}
&\le
N^{s-\frac{3}{2}}N_1^\gamma\left\|\prod_{i=1}^3P_{N_i}u_i\right\|_{L^1_xL^2_T}\\
&\lesssim N_1^{s-\frac{3}{2}+\gamma}
\|P_{N_1}u_1P_{N_2}u_2\|_{L^2_{x,T}}\|P_{N_3}u_3\|_{L^2_xL^{\infty}_T}\\
&\lesssim T^{\frac{\theta}{4}}N_1^{s-3+\gamma+\theta} 
N_3^{1+\epsilon}\prod_{i=1}^3\|P_{N_i}u_i\|_{X_{N_i}(T)}\\
&\sim T^{\frac{\theta}{4}}N_1^{-(3-\gamma-\theta)}N_2^{-s}N_3^{-s+1+\epsilon}\prod_{i=1}^3c_{i,N_i}\\
&\lesssim T^{\frac{\theta}{4}}N_2^{-s-\frac{3-\gamma-\theta}{2}+\frac{1+\epsilon}{2}}
N_3^{-s-\frac{3-\gamma-\theta}{2}+\frac{1+\epsilon}{2}}\prod_{i=1}^3c_{i,N_i}\\
&\lesssim T^{\frac{\theta}{4}}N_2^{-\epsilon}\prod_{i=1}^3c_{i,N_i}
\end{split}
\]
for $0\le \theta \le \min\{1,3-\gamma\}$, $\epsilon >0$, and 
$s\ge \frac{\gamma -2+\theta +3\epsilon}{2}$. 
We choose $\theta$ and $\epsilon$ as $3\epsilon +\theta \le 1$. 
In particular, we have to choose $\theta =0$ when $\gamma =3$.\\

\noindent \underline{(ii)\ For $k=2$\ $(N_1\sim N_2\gg N_3)$} 

By the definition of the norm $\|\cdot\|_{Y_N(T)}$, the H\"older inequality and (\ref{bistriT}), we have
\[
\begin{split}
\sum_{N\lesssim N_1}N^sN_1^{\gamma}
\left\|P_N\left(\prod_{i=1}^3P_{N_i}\widetilde{u_i}\right)\right\|_{Y_N(T)}
&\le N_1^{s+\gamma}\left\|\prod_{i=1}^3P_{N_i}u_i\right\|_{L^1_TL^2_x}\\
&\le 
N_1^{s+\gamma}T^{\frac{1}{4}}\|P_{N_1}u_1\|_{L^4_{T}L^{\infty}_x}\|P_{N_2}u_2P_{N_3}u_3\|_{L^2_{T,x}}\\
&\lesssim T^{\frac{1}{4}}
N_1^{s+\gamma -\frac{1}{2}}N_2^{-\frac{3}{2}}\prod_{i=1}^3\|P_{N_i}u_i\|_{X_{N_i}(T)}\\
&\sim T^{\frac{1}{4}}N_1^{-(s-\gamma +2)}N_3^{-s}\prod_{i=1}^3c_{i,N_i}\\
&\lesssim T^{\frac{1}{4}}
N_1^{-(s-\frac{\gamma -1}{2})}N_3^{-s-\frac{3-\gamma}{2}}\prod_{i=1}^3c_{i,N_i}\\
&\lesssim T^{\frac{1}{4}}N_3^{-\epsilon}\prod_{i=1}^3c_{i,N_i}
\end{split}
\]
for $\epsilon >0$ and $s\ge \max\{\frac{\gamma-1}{2}, -\frac{3-\gamma}{2}+\epsilon, 0\}$. 
We note that $\frac{\gamma-1}{2}\ge -\frac{3-\gamma}{2}+\epsilon$ 
if we choose $\epsilon$ as $\epsilon \le 1$. \\

\noindent \underline{(iii)\ For $k=3$\ $(N_1\sim N_2\sim N_3\gg 1)$} 

By the definition of the norm $\|\cdot\|_{Y_N(T)}$, the H\"older inequality, we have
\[
\begin{split}
\sum_{N\lesssim N_1}N^sN_1^{\gamma}
\left\|P_N\left(\prod_{i=1}^3P_{N_i}\widetilde{u_i}\right)\right\|_{Y_N(T)}
&\le N_1^{s+\gamma}\left\|\prod_{i=1}^3P_{N_i}u_i\right\|_{L^1_TL^2_x}\\
&\le N_1^{s+\gamma}T^{\frac{1}{2}}
\|P_{N_1}u_1\|_{L^4_{T}L^{\infty}_x}\|P_{N_2}u_2\|_{L^4_{T}L^{\infty}_x}\|P_{N_3}u_3\|_{L^{\infty}_TL^2_x}\\
&\lesssim T^{\frac{1}{2}}N_1^{s+\gamma-\frac{1}{2}}
N_2^{-\frac{1}{2}}\prod_{i=1}^3\|P_{N_i}u_i\|_{X_{N_i}(T)}\\
&\sim T^{\frac{1}{2}}N_1^{-2s+\gamma -1}\prod_{i=1}^3c_{i,N_i}\\
&\lesssim T^{\frac{1}{2}}\prod_{i=1}^3c_{i,N_i}
\end{split}
\]
for $s\ge \max\{\frac{\gamma -1}{2}, 0\}$.\\

\noindent \underline{{\bf Case 2}: $m\ge 4$}. 

We only consider the case $s< \frac{1}{2}$ 
because the case $s\ge \frac{1}{2}$ is simpler. 
By the Bernstein inequality, we have
\begin{equation}\label{m555_est}
\prod_{i=5}^m\|P_{N_i}u_i\|_{L^{\infty}_{x,T}}
\lesssim \prod_{i=5}^mN_i^{\frac{1}{2}}\|P_{N_i}u_i\|_{L^{\infty}_{T}L^2_x}
\lesssim N_5^{(m-4)(\frac{1}{2}-s)}\prod_{i=5}^mc_{i,N_i} 
\end{equation}
for $m\ge 5$ and $s<\frac{1}{2}$. 
We assume $\prod_{i=5}^m\|P_{N_i}u_i\|_{L^{\infty}_{x,T}}=1$ 
and $\prod_{i=5}^mc_{i,N_i}=1$ if $m=4$. 
Then (\ref{m555_est}) is true for $m\ge 4$. \\

\noindent \underline{(i)\ For $k=1$\ $(N\sim N_1\gg N_2)$} 

By the definition of the norm $\|\cdot\|_{Y_N(T)}$, the H\"older inequality, (\ref{bistriT}), 
and (\ref{m555_est}) we have
\[
\begin{split}N^sN_1^{\gamma}
\left\|P_N\left(\prod_{i=1}^mP_{N_i}\widetilde{u_i}\right)\right\|_{Y_N(T)}
&\le N^{s-\frac{3}{2}}N_1^{\gamma}\left\|\prod_{i=1}^mP_{N_i}u_i\right\|_{L^1_xL^2_T}\\
&\le N_1^{s-\frac{3}{2}+\gamma}
\|P_{N_1}u_1P_{N_2}u_2\|_{L^2_{x,T}}\|P_{N_3}u_3\|_{L^4_xL^{\infty}_T}
\|P_{N_4}u_4\|_{L^4_xL^{\infty}_T}\prod_{i=5}^m\|P_{N_i}u_i\|_{L^{\infty}_{x,T}}\\
&\lesssim T^{\frac{\theta}{4}}
N_1^{s-3+\gamma +\theta}N_3^{\frac{1}{4}}N_4^{\frac{1}{4}}
\prod_{i=1}^4\|P_{N_i}u_i\|_{X_{N_i}(T)}\prod_{i=5}^m\|P_{N_i}u_i\|_{L^{\infty}_{x,T}}\\
&\lesssim T^{\frac{\theta}{4}}N_1^{-(3-\gamma -\theta)}N_2^{-s}N_3^{-s+\frac{1}{4}}N_4^{-s+\frac{1}{4}}
N_5^{(m-4)(\frac{1}{2}-s)}\prod_{i=1}^mc_{i,N_i}\\
&\lesssim T^{\frac{\theta}{4}}
\prod_{i=2}^4N_i^{-s+\frac{1}{6}-\frac{3-\gamma-\theta}{3}+\frac{m-4}{3}(\frac{1}{2}-s)}
\prod_{i=1}^mc_{i,N_i}\\
&\lesssim T^{\frac{\theta}{4}}N_2^{-\epsilon}\prod_{i=1}^mc_{i,N_i}
\end{split}
\]
for $0\le \theta \le \min\{1,3-\gamma\}$, $\epsilon >0$, 
and $s\ge s_c+\frac{\theta +3\epsilon}{m-1}$.
We choose $\theta$ and $\epsilon$ as $\theta +3\epsilon \le (m-1)(s-s_c)$ 
for $s>s_c$.
In particular, we have to choose $\theta =0$ when $\gamma =3$.\\

\noindent \underline{(ii)\ For $k=2$\ $(N_1\sim N_2\gg N_3)$} 

By the definition of the norm $\|\cdot\|_{Y_N(T)}$, the H\"older inequality, 
the Sobolev inequality, (\ref{bistriT}), and (\ref{m555_est}), we have
\[
\begin{split}
\sum_{N\lesssim N_1}N^sN_1^{\gamma}\left\|P_N\left(\prod_{i=1}^mP_{N_i}\widetilde{u_i}\right)\right\|_{Y_N(T)}
&\le N_1^{s}N_1^{\gamma}\left\|\prod_{i=1}^mP_{N_i}u_i\right\|_{L^1_TL^2_x}\\
&\le N_1^{s+\gamma}\|P_{N_1}u_1P_{N_3}u_3\|_{L^2_{T,x}}
\|P_{N_2}u_2P_{N_4}u_4\|_{L^2_T L^{\infty}_x}
\prod_{i=5}^m\|P_{N_i}u_i\|_{L^{\infty}_{T,x}}\\
&\le N_1^{s+\gamma}N_2^{\frac{1}{2}}\|P_{N_1}u_1P_{N_3}u_3\|_{L^2_{T,x}}
\|P_{N_2}u_2P_{N_4}u_4\|_{L^{2}_{T,x}}\prod_{i=5}^m\|P_{N_i}u_i\|_{L^{\infty}_{x,T}}\\
&\lesssim T^{\frac{\theta}{4}}
N_1^{s+\gamma-\frac{3}{2}+\theta}N_2^{-1}\prod_{i=1}^4\|P_{N_i}u_i\|_{X_{N_i}(T)}
\prod_{i=5}^m\|P_{N_i}u_i\|_{L^{\infty}_{x,T}}\\
&\lesssim T^{\frac{\theta}{4}}N_1^{-(s-\gamma+\frac{5}{2}-\theta)}N_3^{-s}N_4^{-s}
N_5^{(m-4)(\frac{1}{2}-s)}\prod_{i=1}^mc_{i,N_i}\\
&\lesssim T^{\frac{\theta}{4}}
\prod_{i=3}^4N_i^{-s-\frac{1}{2}(s-\gamma+\frac{5}{2}-\theta)+\frac{m-4}{2}(\frac{1}{2}-s)}
\prod_{i=1}^mc_{i,N_i}\\
&\lesssim T^{\frac{\theta}{4}}N_3^{-\epsilon}\prod_{i=1}^mc_{i,N_i}
\end{split}
\]
for $0\le \theta \le 1$, $\epsilon >0$, and 
$s\ge \max\{\gamma -\frac{5}{2}+\theta, s_c+\frac{\theta+2\epsilon}{m-1},0\}$.
We  choose $\theta$ and $\epsilon$ as $\theta +2\epsilon \le (m-1)(s-s_c)$ for $s>s_c$.\\

\noindent \underline{(iii)\ For $k=3$\ $(N_1\sim N_2\sim N_3\gg N_4)$} 

By the definition of the norm $\|\cdot\|_{Y_N(T)}$, the H\"older inequality, (\ref{bistriT}),  
and (\ref{m555_est}), we have
\[
\begin{split}
\sum_{N\lesssim N_1}N^sN_1^{\gamma}
\left\|P_N\left(\prod_{i=1}^mP_{N_i}\widetilde{u_i}\right)\right\|_{Y_N(T)}
&\le N_1^{s+\gamma}\left\|\prod_{i=1}^mP_{N_i}u_i\right\|_{L^1_TL^2_x}\\
&\le N_1^{s+\gamma}
\|P_{N_1}u_1\|_{L^4_{T}L^{\infty}_x}
\|P_{N_2}u_2\|_{L^4_{T}L^{\infty}_x}\|P_{N_3}u_3P_{N_4}u_4\|_{L^2_{T,x}}
\prod_{i=5}^m\|P_{N_i}u_i\|_{L^{\infty}_{x,T}}\\
&\lesssim T^{\frac{\theta}{4}}N_1^{s+\gamma-\frac{1}{2}}N_2^{-\frac{1}{2}}N_3^{-\frac{3}{2}+\theta}
\prod_{i=1}^4\|P_{N_i}u_i\|_{X_{N_i}(T)}\prod_{i=5}^m\|P_{N_i}u_i\|_{L^{\infty}_{x,T}}\\
&\lesssim T^{\frac{\theta}{4}}N_1^{-(2s-\gamma+\frac{5}{2}-\theta)}N_4^{-s}
N_5^{(m-4)(\frac{1}{2}-s)}\prod_{i=1}^mc_{i,N_i}\\
&\lesssim T^{\frac{\theta}{4}}
N_4^{-s-(2s-\gamma+\frac{5}{2}-\theta)+(m-4)(\frac{1}{2}-s)}\prod_{i=1}^mc_{i,N_i}\\
&\lesssim T^{\frac{\theta}{4}}N_4^{-\epsilon}\prod_{i=1}^mc_{i,N_i}
\end{split}
\]
for $0\le \theta \le 1$, $\epsilon >0$, and 
$s\ge \max\{\frac{1}{2}(\gamma -\frac{5}{2}+\theta ), s_c+\frac{\theta+\epsilon}{m-1},0\}$.
We choose $\theta$ and $\epsilon$ as $\theta +\epsilon \le (m-1)(s-s_c)$ 
for $s>s_c$.\\

\noindent \underline{(iv)\ For $k=4$\ $(N_1\sim N_2\sim N_3\sim N_4\gg N_5)$} 

If $m=4$, by the definition of the norm $\|\cdot\|_{Y_N(T)}$, H\"older inequality, we have
\[
\begin{split}
\sum_{N\lesssim N_1}N^sN_1^{\gamma}
\left\|P_N\left(\prod_{i=1}^mP_{N_i}\widetilde{u_i}\right)\right\|_{Y_N(T)}
&\le N_1^{s+\gamma}\left\|\prod_{i=1}^mP_{N_i}u_i\right\|_{L^1_TL^2_x}\\
&\le N_1^{s+\gamma}
T^{\frac{1}{4}}\|P_{N_1}u_1\|_{L^4_{T}L^{\infty}_x}\|P_{N_2}u_2\|_{L^4_{T}L^{\infty}_x}
\|P_{N_3}u_3\|_{L^{4}_TL^{\infty}_x}\|P_{N_4}u_4\|_{L^{\infty}_TL^2_x}\\
&\lesssim T^{\frac{1}{4}}N_1^{s+\gamma-\frac{1}{2}}
N_2^{-\frac{1}{2}}N_3^{-\frac{1}{2}}\prod_{i=1}^4\|P_{N_i}u_i\|_{X_{N_i}(T)}\\
&\sim T^{\frac{1}{4}}N_1^{-3s+\gamma -\frac{3}{2}}\prod_{i=1}^mc_{i,N_i}\\
&\lesssim T^{\frac{1}{4}}\prod_{i=1}^4c_{i,N_i}
\end{split}
\]
for $s\ge \max\{\frac{\gamma}{3}-\frac{1}{2},0\}$.

If $m\ge 5$, by the definition of the norm $\|\cdot\|_{Y_N(T)}$, H\"older inequality, we have
\[
\begin{split}
&\sum_{N\lesssim N_1}N^sN_1^{\gamma}
\left\|P_N\left(\prod_{i=1}^mP_{N_i}\widetilde{u_i}\right)\right\|_{Y_N(T)}\\
&\le N_1^{s+\gamma}\left\|\prod_{i=1}^mP_{N_i}u_i\right\|_{L^1_TL^2_x}\\
&\le T^{\frac{\theta}{4}}N_1^{s+\gamma}
\|P_{N_1}u_1\|_{L^{\frac{4}{1-\theta}}_TL^{\infty}_x}
\|P_{N_2}u_2\|_{L^4_{T}L^{\infty}_x}
\|P_{N_4}u_4\|_{L^4_{T}L^{\infty}_x}\|P_{N_4}u_4\|_{L^4_{T}L^{\infty}_x}
\|P_{N_5}u_5\|_{L^{\infty}_TL^{2}_x}
\prod_{i=6}^m\|P_{N_i}u_i\|_{L^{\infty}_{x,T}},
\end{split}
\]
where we assumed $\prod_{i=6}^m\|P_{N_i}u_i\|_{L^{\infty}_{x,T}}=1$ if $m=5$. 
We note that
\[
\prod_{i=6}^m\|P_{N_i}u_i\|_{L^{\infty}_{x,T}}
\lesssim N_5^{(m-5)(\frac{1}{2}-s)}\prod_{i=6}^mc_{i,N_i} 
\]
for $m\ge 6$. 
By the interpolation between
\[
\|P_{N_1}u_1\|_{L^4_TL^{\infty}_x}\lesssim N_1^{-\frac{1}{2}}\|P_{N_1}u_1\|_{X_{N_1}(T)}
\]
and
\[
\|P_{N_1}u_1\|_{L^{\infty}_TL^{2}_x}\lesssim \|P_{N_1}u_1\|_{X_{N_1}(T)},
\]
we have
\[
\|P_{N_1}u_1\|_{L^{\frac{4}{1-\theta}}_TL^{\frac{2}{\theta}}_x}
\lesssim N_1^{-\frac{1-\theta}{2}}\|P_{N_1}u_1\|_{X_{N_1}(T)}
\]
for $0\le \theta \le 1$. 
This and the Sobolev inequality imply
\[
\|P_{N_1}u_1\|_{L^{\frac{4}{1-\theta}}_TL^{\infty}_x}
\lesssim N_1^{-\frac{1}{2}+\theta}\|P_{N_1}u_1\|_{X_{N_1}(T)}. 
\]
Therefore, we obtain
\[
\begin{split}
\sum_{N\lesssim N_1}N^sN_1^{\gamma}
\left\|P_N\left(\prod_{i=1}^mP_{N_i}\widetilde{u_i}\right)\right\|_{Y_N(T)}
&\lesssim T^{\frac{\theta}{4}}N_1^{s+\gamma-\frac{1}{2}+\theta}
N_2^{-\frac{1}{2}}N_3^{-\frac{1}{2}}N_4^{-\frac{1}{2}}
\prod_{i=1}^5\|P_{N_i}u_i\|_{X_{N_i}(T)}
\prod_{i=6}^m\|P_{N_i}u_i\|_{L^{\infty}_{x,T}}\\
&\lesssim T^{\frac{\theta}{4}}N_1^{-3s+\gamma -2+\theta}N_5^{-s+(m-5)(\frac{1}{2}-s)}\prod_{i=1}^mc_{i,N_i}\\
&\sim 
T^{\frac{\theta}{4}}N_5^{-(m-1)s+\frac{m-1}{2}-(4-\gamma )+\theta}
\prod_{i=1}^mc_{i,N_i}\\
&\lesssim T^{\frac{\theta}{4}}N_5^{-\epsilon}\prod_{i=1}^4c_{i,N_i}
\end{split}
\]
for $0\le \theta \le 1$, $\epsilon >0$, and 
$s\ge \max\{\frac{\gamma -2+\theta}{3}, s_c+\frac{\theta+\epsilon}{m-1},0\}$.
We choose $\theta$ and $\epsilon$ as $\theta +\epsilon \le (m-1)(s-s_c)$ 
for $s>s_c$.
\end{proof}
\begin{remark}
For $m=3$, 
if we do not use the bilinear Strichartz estimate, 
then the worst case is not $k=3$. 
Indeed, for $k=1$ ($N\sim N_1$), we have
\[
\begin{split}
N^{s-\frac{3}{2}}N_1^{3}
\left\|\prod_{i=1}^3P_{N_i}u_i\right\|_{L^1_xL^2_T}
&\le N^{s-\frac{3}{2}}N_1^{3}
\|P_{N_1}u_1\|_{L^{\infty}_xL^2_T}
\|P_{N_2}u_2\|_{L^{2}_xL^{\infty}_T}
\|P_{N_2}u_2\|_{L^{2}_xL^{\infty}_T}\\
&\lesssim N^{s-\frac{3}{2}}N_1^{-s+\frac{3}{2}}
N_2^{-s+1+\epsilon}N_3^{-s+1+\epsilon}
\prod_{i=1}^3c_{i,N_i}\\
&\lesssim N_2^{-s+1+\epsilon}N_3^{-s+1+\epsilon}
\prod_{i=1}^3c_{i,N_i}
\end{split}
\]
when $\gamma =3$. 
This guarantees the trilinear estimate 
only for $s>1$. 
Therefore, by using the bilinear Strichartz estimate,
we can improve the result in \cite{HJ11}.  
\end{remark}
\begin{remark}
When $\gamma =3$, we cannot obtain
(\ref{multimultimulti_est}) with $\delta >0$. 
This is the reason why 
large data cannot be treated in 
Theorem ~\ref{cor2-4} and ~\ref{lwp1}.  
\end{remark}
\begin{remark}
We can also obtain
\begin{equation}\label{multilin_noderiv}
\left\|\prod_{i=1}^m\widetilde{u_i}\right\|_{Y^s(T)}
\lesssim T^{\delta}\prod_{i=1}^m\|u_i\|_{X^s(T)}
\end{equation}
for the same $s$ and $\delta$ as in Theorem~\ref{sc_inv_multi_e} 
because
\[
\left\|\prod_{i=1}^m\widetilde{u_i}\right\|_{Y^s(T)}
\lesssim \left\|\prod_{i=1}^mP_{\le 1}\widetilde{u_i}\right\|_{Y^s(T)}
+\sum_{k=1}^m\left\|\left(\partial_x^{\gamma}P_{>1}\widetilde{u_k}\right)\prod_{\substack{1\le i\le m\\ i\ne k}}\widetilde{u_i}\right\|_{Y^s(T)}. 
\]
We can treat the first term of R.H.S 
by the same way as the estimate for $I_0$ 
and the second term of R.H.S by the same way 
as the estimates for $I_k$ $(k=1,\cdots,m)$. 
\end{remark}
\section{Multilinear estimates at the scaling critical regularity in the cases $m\ge 6$ with $\gamma=3$ and $m\ge 5$ with $\gamma=2$}
\label{multie}
\ \ In this section, we prove multilinear estimates at the scaling critical regularity $s=s_c(\gamma,m)$ in the cases $m\ge 6$ with $\gamma=3$ and $m\ge 5$ with $\gamma=2$. To treat these cases, we define the function spaces $X_N$ and $Z_N$ equipped with the norms
\begin{equation}
\label{fsea}
\begin{split}
\|u\|_{X_N}&:=
    \|u\|_{L_t^{\infty}L_x^2}+N^{-\frac{1}{4}}\|u\|_{L^{4}_xL_t^{\infty}}+N^{\frac{3}{2}}\|u\|_{L_x^{\infty}L_t^2},\\
\|F\|_{Z_N}&:=N^{-\frac{3}{2}}\|F\|_{L^1_xL_t^2},
\end{split}
\end{equation}
instead of Definition~\ref{def4-1}. Furthermore, we define $\dot{X}^s$, $X^s$, $\dot{Z}^s$, and $Z^s$ by
\[
\begin{split}
\|u\|_{\dot{X}^s}
&:=\left(\sum_{N\in 2^{\Z}}N^{2s}\|P_Nu\|_{X_N}^2\right)^{\frac{1}{2}},\ \ 
\|u\|_{X^s}:=\|u\|_{\dot{X}^0}+\|u\|_{\dot{X}^s},\\
\|F\|_{\dot{Z}^s}
&:=\left(\sum_{N\in 2^{\Z}}N^{2s}\|P_NF\|_{Z_N}^2\right)^{\frac{1}{2}},\ \ 
\|F\|_{Z^s}:=\|F\|_{\dot{Z}^0}+\|F\|_{\dot{Z}^s}. 
\end{split}
\]
We can see that
\begin{equation}\label{XN_Linest}
\left\|e^{it\partial_x^4}u_0\right\|_{X^s}\lesssim \|u_0\|_{H^s},\ \ 
\left\|\int_0^te^{i(t-t')\partial_x^4}F(t')dt'\right\|_{X^s}\lesssim \|F\|_{Z^s}
\end{equation}
by the same argument as in the previous sections. 
\begin{theorem}[Multilinear estimates]\label{multi_est_3m}
Let $m\ge 6$. Set
\[
     s_c=s_c(m):=
     \frac{1}{2}-\frac{1}{m-1}.
\]
{\rm (i)}\ For any $u_1$, $\cdots$, $u_m\in \dot{X}^{s_c}$, 
it holds that
\begin{equation}\label{multilin_mhigh_hom}
\left\|\partial_x^{3}
\prod_{i=1}^{m}\widetilde{u_i}\right\|_{\dot{Z}^{s_c}}
\lesssim \prod_{i=1}^m\|u_i\|_{\dot{X}^{s_c}},
\end{equation}
where $\widetilde{u_i}\in \{u_i,\overline{u_i}\}$. The implicit constant depends only on $m$.\\
{\rm (ii)}\ If $s\ge s_c$,
For any $u_1$, $\cdots$, $u_m\in X^s$, 
it holds that
\begin{equation}\label{multilin_mhigh_inhom}
\left\|\partial_x^{3}
\prod_{i=1}^{m}\widetilde{u_i}\right\|_{Z^s}
\lesssim \prod_{i=1}^m\|u_i\|_{X^{s}},
\end{equation}
where $\widetilde{u_i}\in \{u_i,\overline{u_i}\}$. The implicit constant depends only on $m$ and $s$.
\end{theorem}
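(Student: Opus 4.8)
The plan is a Littlewood--Paley reduction of the multilinear estimate to a single frequency-localized inequality, proved by H\"older in space-time together with the linear estimates built into $\|\cdot\|_{X_N}$. Expand $\partial_x^3\prod_{i=1}^m\widetilde{u_i}=\sum_{N_1\ge\cdots\ge N_m}\partial_x^3\prod_i P_{N_i}\widetilde{u_i}$, after relabelling so that the frequencies are non-increasing, and project the output onto $P_N$; this forces $N\lesssim N_1$ and, for a non-vanishing contribution, $N\sim N_1$ or $N_1\sim N_2$. Writing $c_{i,N_i}:=N_i^{s_c}\|P_{N_i}u_i\|_{X_{N_i}}$ (so $\sum_{N_i}c_{i,N_i}^2=\|u_i\|_{\dot X^{s_c}}^2$), the task becomes to bound the $N$-th summand of $\|\cdot\|_{\dot Z^{s_c}}$, namely $N^{s_c-3/2}\|P_N(\partial_x^3\prod_i P_{N_i}\widetilde{u_i})\|_{L^1_xL^2_t}$, by $\prod_i c_{i,N_i}$ times a factor that is summable over the ordered region. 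The tools available from $\|\cdot\|_{X_N}$ are the Kato smoothing bound $\|P_Nu\|_{L^\infty_xL^2_t}\lesssim N^{-3/2}\|P_Nu\|_{X_N}$, the Kenig--Ruiz bound $\|P_Nu\|_{L^4_xL^\infty_t}\lesssim N^{1/4}\|P_Nu\|_{X_N}$, the energy bound $\|P_Nu\|_{L^\infty_tL^2_x}\lesssim\|P_Nu\|_{X_N}$, together with Bernstein's inequality and the interpolated Strichartz-type family $\|P_Nu\|_{L^p_xL^q_t}\lesssim N^{1/4-7/(2q)}\|P_Nu\|_{X_N}$ with $4/p+2/q=1$.

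The core of the argument is the following distribution of norms. The three derivatives always cost a factor $N_1^3$; I place them on the highest-frequency factor estimated in $L^\infty_xL^2_t$, which by Kato smoothing gives $\|\partial_x^3 P_{N_1}u_1\|_{L^\infty_xL^2_t}\lesssim N_1^{3/2}\|P_{N_1}u_1\|_{X_{N_1}}=N_1^{3/2-s_c}c_{1,N_1}$. Since this uses up the whole $L^2_t$ budget but none of the $L^1_x$ budget, the remaining $m-1\ge5$ factors must be put in $L^\infty_t$-type spaces whose $x$-exponents sum to $1$: either in $L^{m-1}_xL^\infty_t$ (via Bernstein from Kenig--Ruiz), which contributes exactly the scale-invariant $\|P_{N_i}u_i\|_{L^{m-1}_xL^\infty_t}\lesssim N_i^{s_c}\|P_{N_i}u_i\|_{X_{N_i}}=c_{i,N_i}$, or -- for the subleading factors -- in $L^4_xL^\infty_t$ directly, which for $m\ge6$ carries the \emph{strictly negative} power $N_i^{1/4-s_c}$ (this is exactly where the hypothesis $m\ge6$, equivalently $s_c>1/4$, enters), at the cost of forcing the very lowest factors into $L^\infty_{x,t}$ with a small positive Bernstein loss. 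H\"older then yields a localized bound of the schematic form $N^{s_c-3/2}N^{3}\cdot N_1^{-3/2-s_c}\cdot(\text{powers and }c\text{'s of }N_2,\dots,N_m)$, in which, when $N\sim N_1$, the powers of $N_1$ cancel.

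The main obstacle is precisely this cancellation: the critical scaling forces the net power of the largest frequency to be zero, so a naive estimate in the resonant configuration $N\sim N_1\gg N_2\ge\cdots\ge N_m$ produces a logarithmically divergent dyadic sum. The plan is to close it by a nested Cauchy--Schwarz over the ordered region $N_1\ge\cdots\ge N_m$: one sums from the lowest frequency upward, at each stage using $\sum_{N_{i+1}\le N_i}N_{i+1}^{\theta_i}c_{i+1,N_{i+1}}\lesssim N_i^{\theta_i}(\sum c_{i+1}^2)^{1/2}$ for positive exponents and the dual bound for negative exponents, so that the strictly negative powers $N_i^{1/4-s_c}$ coming from the Kenig--Ruiz factors (abundant when $m\ge6$) dominate the accumulated Bernstein losses and render each partial sum square-summable in the next variable. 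The complementary configurations ($N_1\sim N_2$, or $N\ll N_1$) are strictly easier, since either not all derivatives land on a single top factor or two factors are forced to high frequency, each furnishing spare negative powers. I emphasize that this is also the step where the restriction $m\ge6$ is essential: when $m$ is smaller the margin vanishes and one must instead invoke a genuine bilinear Strichartz improvement, which is the business of Sections~\ref{mesc_1}--\ref{mesc_2}, not of this section.

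For part~(ii), the estimate with $s>s_c$ is strictly subcritical: repeating the above bookkeeping with $s$ in place of $s_c$ leaves a spare negative power $N_1^{-(s-s_c)}$ on the top frequency, so the dyadic summation is a convergent geometric series and no delicate balancing is needed. The endpoint $s=s_c$ of (ii) then follows by splitting each $u_i$ into its low- and high-frequency parts relative to $\|\cdot\|_{X^s}=\|\cdot\|_{\dot X^0}+\|\cdot\|_{\dot X^s}$: the all-high-frequency interaction is covered by part~(i), while every term containing a low-frequency factor gains, because on such factors one may use the $\dot X^0$-norm (no derivative weight), the $s$ output derivatives being supplied entirely by the high-frequency factors, which produces an honest negative power of the relevant frequency and hence summability.
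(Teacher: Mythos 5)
Your plan matches the paper's proof step for step: the paper proves exactly the localized H\"older bound you describe, namely $\left\|\prod_i P_{N_i}u_i\right\|_{L^1_xL^2_t}\le\|P_{N_1}u_1\|_{L^\infty_xL^2_t}\prod_{i=2}^5\|P_{N_i}u_i\|_{L^4_xL^\infty_t}\prod_{i\ge6}\|P_{N_i}u_i\|_{L^\infty_{x,t}}$ (Kato on the top frequency, Kenig--Ruiz on the next four, Bernstein on the rest), the hypothesis $m\ge6$ enters exactly where you place it ($s_c>\tfrac14$ makes the Kenig--Ruiz exponents $N_i^{1/4-s_c}$ negative, exactly cancelling the Bernstein gains $N_i^{1/2-s_c}$ so that the aggregate dyadic factor is $\le1$ on the ordered region), and the dyadic summation is then closed by Young/Cauchy--Schwarz after normalizing the lower-frequency factors to $\|u_i\|_{\dot X^{s_c}}\lesssim1$. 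Part~(ii) is obtained there by applying the resulting one-slot-weighted homogeneous estimate once at $s=0$ and once at $s\ge s_c$ and adding, which is the same low/high split you sketch.
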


\begin{proof}
Let $s\ge 0$. We assume $u_i\in \dot{X}^s\cap \dot{X}^{s_c}$ and 
$\|u_i\|_{\dot{X}^{s_c}}\lesssim 1$ ($i=1,\cdots, m$).
We first prove
\begin{equation}\label{multilin_mhigh_2}
\left\|\partial_x^{3}
\prod_{i=1}^{m}\widetilde{u_i}\right\|_{\dot{Z}^{s}}
\lesssim \sum_{i=1}^m\|u_i\|_{\dot{X}^s}.
\end{equation}
This implies 
\begin{equation}\label{multilin_mhigh_3}
\left\|\partial_x^{3}
\prod_{i=1}^{m}\widetilde{u_i}\right\|_{\dot{Z}^{s}}
\lesssim \sum_{i=1}^m\|u_i\|_{\dot{X}^s}\prod_{\substack{1\le k\le m\\ k\ne i}}\|u_k\|_{\dot{X}^{s_c}}
\end{equation}
for any $u_i\in \dot{X}^s\cap \dot{X}^{s_c}$ 
because $\|u_i\|_{\dot{X}^{s_c}}=\|\overline{u_i}\|_{\dot{X}^{s_c}}$. 
For $i=1,\cdots, m$ and $N_i\in 2^{\Z}$, we set
$c_{1,N_1}:=N_1^s\|P_{N_1}u_1\|_{X_{N_1}}$,\ 
$c_{i,N_i}:=N_i^{s_c}\|P_{N_i}u_i\|_{X_{N_i}}$ $(i=2,\cdots ,m)$.  
We define
\[
\begin{split}
I_1&=\left\{\sum_{N\in 2^{\Z}} N^{2s+6}\left(\sum_{\substack{N_1\ge \cdots\ge N_m\\ N_1\sim N}}\left\|
   \prod_{i=1}^{m}P_{N_i}u_i\right\|_{Z_N}\right)^2\right\}^{\frac{1}{2}},\\
I_2&=\sum_{N\in 2^{\Z}} N^{s+3}\sum_{\substack{N_1\ge \cdots\ge N_m\\ N_1\gg N}}\left\|
   \prod_{i=1}^{m}P_{N_i}u_i\right\|_{Z_N}, 
\end{split}
\]
Then, we have
\[
\left\|\partial_x^{3}
\prod_{i=1}^{m}\widetilde{u_i}\right\|_{\dot{Z}^{s}}\lesssim I_1+I_2.
\]
It suffices to show that
\begin{equation}\label{m5_est}
\left\|\prod_{i=1}^mP_{N_i}u_i\right\|_{L^1_xL^2_t}
\lesssim N_1^{-s-\frac{3}{2}}
\left(\frac{\prod_{i=6}^mN_i^{\frac{1}{2}-s_c}}{\prod_{i=2}^5N_i^{s_c-\frac{1}{4}}}\right)
\prod_{i=1}^mc_{i,N_i}.
\end{equation}
Indeed, if (\ref{m5_est}) holds, then we have
\[
\begin{split}
I_1^2&\lesssim \sum_{N}N^{2s+6}
\left(
\sum_{N_1\sim N}\sum_{N_2\ge \cdots \ge N_m}
\left\|\prod_{i=1}^mP_{N_i}u_i\right\|_{Z_N}
\right)^2\\
&\lesssim 
\sum_{N}\left(
\sum_{N_1\sim N}\sum_{N_2\ge \cdots \ge N_m}
N^{s+\frac{3}{2}}\left\|\prod_{i=1}^mP_{N_i}u_i\right\|_{L^1_xL^2_t}
\right)^2\\
&\lesssim \sum_{N}
\left(
\sum_{N_1\sim N}\sum_{N_2\ge \cdots \ge N_m}
N^{s+\frac{3}{2}}N_1^{-s-\frac{3}{2}}
\left(
\frac{\prod_{i=6}^mN_i^{\frac{1}{2}-s_c}}{\prod_{i=2}^5N_i^{s_c-\frac{1}{4}}}
\right)
\prod_{i=1}^mc_{i,N_i}
\right)^2\\
&\lesssim \sum_{N}
\left(
\sum_{N_1\sim N}c_{1,N_1}\sum_{N_2\ge \cdots \ge N_m}
\left(
\frac{\prod_{i=6}^mN_i^{\frac{1}{2}-s_c}}{\prod_{i=2}^5N_i^{s_c-\frac{1}{4}}}
\right)
\left(
c_{2,N_2}^2+\prod_{i=3}^mc_{i,N_i}^2
\right)
\right)^2\\
&\lesssim \sum_{N}
\left(
\sum_{N_1\sim N}c_{1,N_1}
\left(
\sum_{N_2}c_{2,N_2}^2+\prod_{i=3}^m\sum_{N_i}c_{i,N_i}^2
\right)
\right)^2\\
&\lesssim 
\left(\sum_{N_1}c_{1,N_1}^2\right)
\left(\sum_{N_2}c_{2,N_2}^2+\prod_{i=3}^m\sum_{N_i}c_{i,N_i}^2\right)^2
\end{split}
\]
by the Young inequality and
\[
\begin{split}
I_2&\le \sum_{N_1}\sum_{N_2\sim N_1}\sum_{N\ll N_1}\sum_{N_3\ge \cdots \ge N_m}
N^{s+3}\left\|\prod_{i=1}^mP_{N_i}u_i\right\|_{Z_N}\\
&\le \sum_{N_1}\sum_{N_2\sim N_1}\sum_{N\ll N_1}\sum_{N_3\ge \cdots \ge N_m}
N^{s+\frac{3}{2}}\left\|\prod_{i=1}^mP_{N_i}u_i\right\|_{L^1_xL^2_t}\\
&\lesssim \sum_{N_1}\sum_{N_2\sim N_1}\sum_{N\ll N_1}\left(\frac{N}{N_1}\right)^{s+\frac{3}{2}}c_{1,N_1}c_{2,N_2}
\sum_{N_3\ge \cdots \ge N_m}\left(\frac{\prod_{i=6}^mN_i^{\frac{1}{2}-s_c}}{\prod_{i=2}^5N_i^{s_c-\frac{1}{4}}}\right)
\prod_{i=3}^mc_{i,N_i}\\
&\lesssim 
\prod_{i=1}^m\left(\sum_{N_i}c_{i,N_i}^2\right)^{\frac{1}{2}}
\end{split}
\]
by the Cauchy-Schwarz inequality
because $(m-5)(\frac{1}{2}-s_c)= 4(s_c-\frac{1}{4})>0$. 
Therefore, we obtain (\ref{multilin_mhigh_2}) since 
\[
\sum_{N_i}c_{i,N_i}^2=\|u_i\|_{\dot{X}^{s_c}}^2\lesssim 1 
\]
for $i=2,\cdots m$. 
Now, we prove (\ref{m5_est}). 
By the H\"older inequality and the Bernstein inequality, 
we have
\[
\begin{split}
\left\|\prod_{i=1}^mP_{N_i}u_i\right\|_{L^1_xL^2_t}
&\le \|P_{N_1}u_1\|_{L^{\infty}_xL^2_t}
\prod_{i=2}^5\|P_{N_i}u_i\|_{L^4_xL^{\infty}_t}
\prod_{i=6}^m\|P_{N_i}u_i\|_{L^{\infty}_{x,t}}\\
&\lesssim N_1^{-s-\frac{3}{2}}
\left(\frac{\prod_{i=6}^mN_i^{\frac{1}{2}-s_c}}{\prod_{i=2}^5N_i^{s_c-\frac{1}{4}}}\right)
\prod_{i=1}^mc_{i,N_i}.
\end{split}
\]
The estimate (\ref{multilin_mhigh_hom}) follows from (\ref{multilin_mhigh_3}) with $s=s_c$. 
Next, we prove (\ref{multilin_mhigh_inhom}). 
By (\ref{multilin_mhigh_3}) with $s=0$, we have 
\[
\left\|\partial_x^{3}
\prod_{i=1}^{m}\widetilde{u_i}\right\|_{\dot{Z}^0}
\lesssim \sum_{i=0}^m\|u_i\|_{\dot{X}^{0}}\prod_{k\ne i}^m\|u_k\|_{\dot{X}^{s_c}}
\lesssim \prod_{i=1}^m\|u_i\|_{X^s} 
\]
for $s\ge s_c$. 
While by (\ref{multilin_mhigh_3}) with $s\ge s_c$, 
we have 
\[
\left\|\partial_x^{3}
\prod_{i=1}^{m}\widetilde{u_i}\right\|_{\dot{Z}^s}
\lesssim \sum_{i=0}^m\|u_i\|_{\dot{X}^{s}}\prod_{k\ne i}^m\|u_k\|_{\dot{X}^{s_c}}
\lesssim \prod_{i=1}^m\|u_i\|_{X^s}. 
\]
Therefore, we obtain (\ref{multilin_mhigh_inhom}).
\end{proof}
\begin{remark}
We cannot obtain (\ref{multilin_mhigh_hom}) and (\ref{multilin_mhigh_inhom}) 
for $m=5$ by the same argument 
because the factors $N_i^{s_c-\frac{1}{4}}$ ($i=2,\cdots,5$) are vanished. 
\end{remark}
\begin{theorem}[Multilinear estimates]\label{multi_est_3mg2}
Let $m\ge 5$. Set
\[
     s_c=s_c(m):=
     \frac{1}{2}-\frac{2}{m-1}.
\]
{\rm (i)}\ For any $u_1$, $\cdots$, $u_m\in \dot{X}^{s_c}$, 
it holds that
\begin{equation}\label{multilin_mhigh_hom_12}
\left\|\partial_x^{2}
\prod_{i=1}^{m}\widetilde{u_i}\right\|_{\dot{Z}^{s_c}}
\lesssim \prod_{i=1}^m\|u_i\|_{\dot{X}^{s_c}},
\end{equation}
where $\widetilde{u_i}\in \{u_i,\overline{u_i}\}$. The implicit constant depends only on $m$.\\
{\rm (ii)}\ If $s\ge s_c$,
then for any $u_1$, $\cdots$, $u_m\in X^s$, 
it holds that
\begin{equation}\label{multilin_mhigh_inhom_12}
\left\|\partial_x^{2}
\prod_{i=1}^{m}\widetilde{u_i}\right\|_{Z^s}
\lesssim \prod_{i=1}^m\|u_i\|_{X^{s}},
\end{equation}
where $\widetilde{u_i}\in \{u_i,\overline{u_i}\}$. The implicit constant depends only on $m$ and $s$.
\end{theorem}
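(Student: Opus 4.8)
The proof runs parallel to that of Theorem~\ref{multi_est_3m}, with $\partial_x^3$ replaced by $\partial_x^2$ and $s_c=s_c(m)=\frac{1}{2}-\frac{2}{m-1}$. Exactly as there, it suffices to show that, for $u_i\in\dot X^s\cap\dot X^{s_c}$ with $\|u_i\|_{\dot X^{s_c}}\lesssim1$ $(i=1,\dots,m)$,
\[
\Bigl\|\partial_x^2\prod_{i=1}^m\widetilde{u_i}\Bigr\|_{\dot Z^s}\lesssim\sum_{i=1}^m\|u_i\|_{\dot X^s};
\]
since $\|u_i\|_{\dot X^{s_c}}=\|\overline{u_i}\|_{\dot X^{s_c}}$, this upgrades to $\|\partial_x^2\prod\widetilde{u_i}\|_{\dot Z^s}\lesssim\sum_i\|u_i\|_{\dot X^s}\prod_{k\ne i}\|u_k\|_{\dot X^{s_c}}$, from which (i) follows by taking $s=s_c$, and (ii) by applying it with $s=0$ and with the given $s\ge s_c$ and adding the two bounds.

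Decompose each factor and the output into Littlewood--Paley pieces, order the input frequencies $N_1\ge\cdots\ge N_m$, and split the output frequency $N$ into the high regime $N\sim N_1$ (contribution $I_1$) and the low regime $N\ll N_1$ (contribution $I_2$, which forces $N_1\sim N_2$). With $c_{1,N_1}:=N_1^s\|P_{N_1}u_1\|_{X_{N_1}}$ and $c_{i,N_i}:=N_i^{s_c}\|P_{N_i}u_i\|_{X_{N_i}}$ $(i\ge2)$, the decisive pointwise bound is the exact analogue of \eqref{m5_est},
\[
\Bigl\|\prod_{i=1}^mP_{N_i}u_i\Bigr\|_{L^1_xL^2_t}\lesssim N_1^{-s-\frac{3}{2}}\Bigl(\frac{\prod_{i=6}^mN_i^{\frac{1}{2}-s_c}}{\prod_{i=2}^5N_i^{s_c-\frac{1}{4}}}\Bigr)\prod_{i=1}^mc_{i,N_i},
\]
obtained by H\"older: put $P_{N_1}u_1$ in $L^\infty_xL^2_t$, the four factors $P_{N_2}u_2,\dots,P_{N_5}u_5$ in $L^4_xL^\infty_t$, and the remaining ones in $L^\infty_{x,t}$, and then use the definition \eqref{fsea} of the $X_N$-norm (the Kato term for $u_1$, the Kenig--Ruiz term for $u_2,\dots,u_5$) together with the Bernstein inequality for the $L^\infty_{x,t}$-factors. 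The H\"older count needs at least four Kenig--Ruiz factors, i.e.\ $m\ge5$; the borderline case $m=4$ is handled separately in Section~\ref{mesc_1}.

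The genuinely new point is the dyadic summation, which must close exactly at $s=s_c$. Since $\partial_x^2$ contributes $N^2$ while $\|\cdot\|_{Z_N}$ contributes $N^{-3/2}$, a net factor $N^{1/2}$ survives; combined with the weight $N^s$ and the $N_1^{-s-3/2}$ from the pointwise bound, the summand of $I_1$ reduces to $(N/N_1)^{s+1/2}N_1^{-1}$ times the frequency factor of the pointwise bound and $\prod_{i}c_{i,N_i}$, which is one power of $N_1^{-1}$ better than in the $\gamma=3$ case. A direct computation with $s_c=\frac{1}{2}-\frac{2}{m-1}$ gives $4\bigl(\frac{1}{4}-s_c\bigr)+(m-5)\bigl(\frac{1}{2}-s_c\bigr)=1$, so the worst-case frequency growth of the $L^4_x$- and $L^\infty_{x,t}$-factors (attained when all $N_i\sim N_1$) is precisely absorbed by this $N_1^{-1}$. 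Summing over $N_m,\dots,N_2$ from the lowest frequency upward, using $\sum_{M'\le M}(M')^{a}\lesssim M^{a}$ for $a>0$ at each step and then Cauchy--Schwarz for dyadic sums, one bounds the inner sum over $N_2,\dots,N_m$ (for fixed $N_1$) by $\prod_{i=2}^m\bigl(\sum_{N_i}c_{i,N_i}^2\bigr)^{1/2}$, uniformly in $N_1$; the outer $\ell^2$-sum of $I_1$ in $N_1$ then yields $I_1\lesssim\prod_{i=1}^m\bigl(\sum_{N_i}c_{i,N_i}^2\bigr)^{1/2}$. For $I_2$ the factor $(N/N_1)^{s+1/2}$ with $N\ll N_1$ is summable in $N$, while $N_1\sim N_2$ allows Cauchy--Schwarz to pair $c_{1,N_1}$ with $c_{2,N_2}$ in the $N_1$-sum, the remaining frequencies being treated as for $I_1$. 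The main obstacle is exactly this bookkeeping at the scaling-critical regularity, where there is no $\epsilon$ of room: it is the surplus $N_1^{-1}$ --- reflecting that $\gamma=2$ carries one derivative fewer than $\gamma=3$ --- that makes $m\ge5$ (rather than $m\ge6$) sufficient.
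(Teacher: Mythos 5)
Your proof matches the paper's argument step for step: the same decisive H\"older estimate \eqref{m5_est} (Kato factor for $u_1$, four Kenig--Ruiz factors for $u_2,\dots,u_5$, Bernstein plus $L^\infty_{x,t}$ for the rest), the same identification of the surplus $N_1^{-1}$ coming from the lower derivative count, and the same arithmetic identity $4(\tfrac14-s_c)+(m-5)(\tfrac12-s_c)=1$ that closes the dyadic summation at criticality without the renormalization device of Theorem~\ref{multi_est_3m}. The paper's own proof is essentially this observation plus a pointer back to Theorem~\ref{multi_est_3m}, so your added detail on the iterated Cauchy--Schwarz in $I_1$ and $I_2$ is consistent with it (the one point to state carefully is that for $m\ge 9$ the individual exponent $\tfrac14-s_c$ is $\le 0$, so the inward-to-outward summation must be run on the cumulative exponents $\sum_{i\ge j}\alpha_i$, which are all strictly positive, rather than on the individual ones).
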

\begin{proof}
Let $c_{i,N_i}$ $(i=1,\cdots ,m)$ 
are defined in the proof of Theorem~\ref{multi_est_3m}. 
Then, we have (\ref{m5_est}) by the same way, 
where we assumed
\[
\prod_{i=6}^mN_i^{\frac{1}{2}-s_c}=1
\]
if $m=5$. 
Therefore, we obtain
\[
\begin{split}
N^{s+2-\frac{3}{2}}
\left\|\prod_{i=1}^mP_{N_i}u_i\right\|_{L^1_xL^2_t}
&\lesssim \left(\frac{N}{N_1}\right)^{s+\frac{1}{2}}
\left(\frac{\prod_{i=2}^5N_i^{\frac{1}{4}-s_c}\prod_{i=6}^mN_i^{\frac{1}{2}-s_c}}{N_1}\right)
\prod_{i=1}^mc_{i,N_i}, 
\end{split}
\]
This implies
\[
\left\|\partial_x^{2}
\prod_{i=1}^{m}\widetilde{u_i}\right\|_{\dot{Z}^{s}}
\lesssim \sum_{i=1}^m\|u_i\|_{\dot{X}^s}\prod_{\substack{1\le k\le m\\ k\ne i}}\|u_k\|_{\dot{X}^{s_c}}.
\]
for $s\ge s_c$
by the same argument as in the proof of Theorem~\ref{multi_est_3m} 
because $1=4(\frac{1}{4}-s_c)+(m-5)(\frac{1}{2}-s_c)>0$. 
In particular, we have
\[
I_1\lesssim \prod_{i=1}^m\left(\sum_{N_i}c_{i.N_i}^2\right)^{\frac{1}{2}}
\]
and need not the renormalize argument for $\|u_i\|_{\dot{X}^{s_c}}$ ($i=2,\cdots,m$). 
\end{proof}
To treat the large data for the scaling critical case, 
we give the following.
\begin{theorem}[Multilinear estimates]\label{multi_est_3mg2_lowf}
Let $m\ge 5$, $0<T<1$, and $M\in 2^{\N}$. Set
\[
     s_c=s_c(m):=
     \frac{1}{2}-\frac{2}{m-1}.
\]
{\rm (i)}\ For any $u_1$, $\cdots$, $u_m\in \dot{X}^{s_c}$, 
it holds that
\begin{equation}\label{multilin_mhigh_hom_12_lowf}
\left\|\partial_x^{2}\left(
\prod_{i=1}^{m}\widetilde{u_i}-\prod_{i=1}^{m}P_{\ge M}\widetilde{u_i}\right)\right\|_{\dot{Z}^{s_c}(T)}
\lesssim T^{\delta}M^{\kappa}\prod_{i=1}^m\|u_i\|_{\dot{X}^{s_c}(T)}
\end{equation}
for some $\delta >0$ and $\kappa >0$ depending only on $m$.\\
{\rm (ii)}\ For any $u_1$, $\cdots$, $u_m\in X^{s_c}$, 
it holds that
\begin{equation}\label{multilin_mhigh_hom_12_lowf2}
\left\|\partial_x^{2}\left(
\prod_{i=1}^{m}\widetilde{u_i}-\prod_{i=1}^{m}P_{\ge M}\widetilde{u_i}\right)\right\|_{Z^{s_c}(T)}
\lesssim T^{\delta}M^{\kappa}\prod_{i=1}^m\|u_i\|_{X^{s_c}(T)}
\end{equation}
for some $\delta >0$ and $\kappa >0$ depending only on $m$.
\end{theorem}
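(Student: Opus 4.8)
The plan is to deduce the bound from a telescoping identity together with the frequency-localized multilinear estimates already established in Theorem~\ref{multi_est_3mg2}, the new feature being that one designated factor is truncated to frequencies $\lesssim M$, which simultaneously supplies the loss $M^{\kappa}$ (through a dyadic sum that is now finite) and, after a rearrangement of the H\"older bookkeeping, the gain $T^{\delta}$.

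First I would expand $\prod_{i=1}^m\widetilde{u_i}=\prod_{i=1}^m(P_{<M}\widetilde{u_i}+P_{\ge M}\widetilde{u_i})$, where $P_{<M}:=\Id-P_{\ge M}$, and subtract $\prod_{i=1}^mP_{\ge M}\widetilde{u_i}$; this leaves $2^m-1$ terms, each containing at least one factor $P_{<M}\widetilde{u_j}$. Fixing one such term, distinguishing one low-frequency factor (say $j=1$ after relabeling), and absorbing the remaining projections via $\|P_{<M}u_i\|_{\dot X^{s_c}(T)}\lesssim\|u_i\|_{\dot X^{s_c}(T)}$ and $\|P_{\ge M}u_i\|_{\dot X^{s_c}(T)}\lesssim\|u_i\|_{\dot X^{s_c}(T)}$ (and likewise in $X^{s_c}(T)$), it suffices to prove
\[
\Bigl\|\partial_x^2\bigl((P_{<M}\widetilde{v_1})\,\widetilde{v_2}\cdots\widetilde{v_m}\bigr)\Bigr\|_{\dot Z^{s_c}(T)}\lesssim T^{\delta}M^{\kappa}\prod_{i=1}^m\|v_i\|_{\dot X^{s_c}(T)}
\]
for arbitrary $v_i\in\dot X^{s_c}(T)$ and $\widetilde{v_i}\in\{v_i,\overline{v_i}\}$, together with its analogue with $\dot Z^{s_c},\dot X^{s_c}$ replaced by $Z^{s_c},X^{s_c}$.

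Next, I would Littlewood--Paley decompose all factors, with the sum over the frequency $N_1$ of the distinguished factor running only over $N_1\lesssim M$, and expand the $\dot Z^{s_c}(T)$-norm as in the proofs of Theorems~\ref{multi_est_3m} and \ref{multi_est_3mg2}. The core is the frequency-localized estimate of $N^{s_c}\bigl\|\partial_x^2P_N\bigl(\prod_iP_{N_i}\widetilde{v_i}\bigr)\bigr\|_{Z_N(T)}$, which I would bound by H\"older in space and time together with the Strichartz, Kato smoothing, Kenig--Ruiz and Bernstein estimates, following the scheme of Theorem~\ref{multi_est_3mg2} but reserving a positive power of $T$ for the distinguished factor: working on $[0,T]$ and using $N_1\lesssim M$, one may place $P_{N_1}\widetilde{v_1}$ in a space-time norm with finite time exponent, which produces a factor $T^{\delta}$ at the expense of at most $M^{\kappa'}$ via Bernstein's inequality; when the distinguished factor interacts with a much higher frequency, the bilinear Strichartz estimate of Lemma~\ref{BS} may be invoked in place of the plain H\"older step. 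Summing the dyadic pieces as in Theorem~\ref{multi_est_3mg2} --- the exponents attached to the non-maximal frequencies remaining strictly positive for $m\ge5$, and the $N_1$-sum being geometric over $N_1\lesssim M$, hence $\lesssim M^{\kappa}$ --- then yields (i). Part (ii) follows from (i) together with the splitting $\|\cdot\|_{X^{s_c}}=\|\cdot\|_{\dot X^0}+\|\cdot\|_{\dot X^{s_c}}$, $\|\cdot\|_{Z^{s_c}}=\|\cdot\|_{\dot Z^0}+\|\cdot\|_{\dot Z^{s_c}}$, exactly as at the end of the proof of Theorem~\ref{multi_est_3mg2}.

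The main difficulty is that $s=s_c$ is scaling critical, so the dyadic sums over the non-truncated factors carry no slack; the power of $T$ therefore cannot be taken from the factor carrying the Kato smoothing norm (which must be the one of maximal frequency, in order to preserve summability), and must instead be extracted entirely from the low-frequency factor, whose frequency loss is harmless because it is controlled by $M$. Arranging the H\"older exponents so that this extraction is simultaneously compatible with the critical summation --- in particular in the extremal case $m=5$, $\gamma=2$, where $s_c=0$ --- is the delicate point of the argument, and is where the restriction $N_1\lesssim M$ is essential.
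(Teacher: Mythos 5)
Your overall scaffolding (telescoping to isolate a low-pass factor, Littlewood--Paley decomposition, dyadic summation as in Theorem~\ref{multi_est_3mg2}) matches the paper, but the central mechanism you propose for producing the $T^{\delta}$ gain is the opposite of what the paper does, and your explicit assertion that the paper's mechanism ``cannot'' work is incorrect.

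You claim that ``the power of $T$\dots cannot be taken from the factor carrying the Kato smoothing norm\dots and must instead be extracted entirely from the low-frequency factor.'' In fact the paper's proof extracts $T^{\delta}$ precisely from the Kato-smoothing factor: it writes $\|\cdot\|_{L^1_xL^2_T}\le T^{\frac12-\frac1p}\|\cdot\|_{L^1_xL^p_T}$ with some $p>2$, places the highest-frequency factor $P_{N_1}u_1$ in $L^\infty_xL^p_T$, and interpolates $\|P_{N_1}u_1\|_{L^\infty_xL^p_T}\lesssim N_1^{\frac12-\frac4p}\|P_{N_1}u_1\|_{X_{N_1}(T)}$, incurring a loss $N_1^{\frac4p-1}$ relative to plain Kato smoothing. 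The choice $p=\frac{2(m-1)}{m-2}$ makes this loss exactly equal to the sum of the Bernstein exponents of the factors $i=2,\dots,m-1$, so it is absorbed; what remains is the reserve exponent $N_m^{\frac12-s_c}$ on the smallest frequency, which is where the $M^{\kappa}$ loss enters (since $N_m<M$ and $\frac12-s_c>0$). Notice also that the paper's symmetry reduction puts the truncated factor at the \emph{smallest} dyadic frequency $N_m$, not at a generic ``distinguished'' position $N_1\lesssim M$ as in your setup; this matters because the reserve must sit on the smallest $N_i$ in order for the $N_1$-loss to be absorbable.

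Your alternative route --- extracting $T^{\delta}$ from the low-frequency factor by putting it in a finite-exponent time norm --- does not close as stated: if the Kato factor remains in $L^\infty_xL^2_T$, then adding a finite time exponent $\frac1q>0$ on the low-frequency factor pushes the total time-Hölder index above $\frac12$, and there is no power of $T$ to be gained from that direction on $[0,T]$; one has to \emph{weaken} the aggregate time integrability below $\frac12$, which in this scheme can only come from the Kato factor. Two lesser points: the paper's proof of this theorem (and of Theorem~\ref{multi_est_3mg2}) does not invoke the bilinear Strichartz estimate at all, so your appeal to Lemma~\ref{BS} is a departure you would need to justify independently; and the claim that the exponents on the non-maximal frequencies are ``strictly positive for $m\ge5$'' fails for $m\ge9$ (there $s_c\ge\frac14$, so $\frac14-s_c\le0$) --- convergence of the dyadic sum does still hold, but for a subtler reason than uniform positivity of the exponents.
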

\begin{proof}
By the symmetry, we can assume $N_1\ge \cdots \ge N_m$ and $N_m<M$. 
Let $s\ge 0$, $c_{1,N_1}:=N_1^s\|P_{N_1}u_1\|_{X_{N_1}(T)}$, 
$c_{i,N_i}:=N_i^{s_c}\|P_{N_i}u_i\|_{X_{N_i}(T)}$ $(i=2,\cdots ,m-1)$, 
and $c_{m,N_m}:=N_m^{s_c}\|P_{<M}P_{N_m}u_m\|_{X_{N_m}(T)}$. 
We first assume the case $m\ge 6$
By the H\"older inequality, 
we have
\[
\begin{split}
\left\|\prod_{i=1}^mP_{N_i}u_i\right\|_{L^1_xL^2_T}
&\le T^{\frac{1}{2}-\frac{1}{p}}
\left\|\prod_{i=1}^mP_{N_i}u_i\right\|_{L^1_xL^p_T}\\
&\le T^{\frac{1}{2}-\frac{1}{p}}
\|P_{N_1}u_1\|_{L^{\infty}_xL^p_T}
\left(\prod_{i=2}^5\|P_{N_i}u_i\|_{L^4_xL^{\infty}_T}
\prod_{i=6}^{m-1}\|P_{N_i}u_i\|_{L^{\infty}_{x}L^{\infty}_{T}}\right)
\|P_{<M}P_{N_m}u_m\|_{L^{\infty}_xL^{\infty}_T} 
\end{split}
\]
for $p>2$, where we assumed
\[
\prod_{i=6}^{m-1}\|P_{N_i}u_i\|_{L^{\infty}_{x}L^{\infty}_{T}}=1
\]
when $m=6$. 
By the interpolation between the two estimates
\[
\|P_{N_1}u_1\|_{L^{\infty}_xL^2_T}\lesssim N_1^{-\frac{3}{2}}\|P_{N_1}u_1\|_{X_{N_1}(T)},\ \ 
\|P_{N_1}u_1\|_{L^{\infty}_xL^{\infty}_T}\lesssim N_1^{\frac{1}{2}}\|P_{N_1}u_1\|_{X_{N_1}(T)}, 
\]
we obtain
\[
\|P_{N_1}u_1\|_{L^{\infty}_xL^p_T}\lesssim N_1^{\frac{1}{2}-\frac{4}{p}}\|P_{N_1}u_1\|_{X_{N_1}(T)}.
\]
Therefore, by the Bernstein inequality, it holds that
\[
\begin{split}
N^{s+2-\frac{3}{2}}\left\|\prod_{i=1}^mP_{N_i}u_i\right\|_{L^1_xL^2_T}
&\lesssim T^{\frac{1}{2}-\frac{1}{p}}\left(\frac{N}{N_1}\right)^{s+\frac{1}{2}}
\frac{\left(\prod_{i=2}^5N_i^{\frac{1}{4}-s_c}
\prod_{i=6}^{m-1}N_i^{\frac{1}{2}-s_c}\right)
N_m^{\frac{1}{2}-s_c}}{N_1^{\frac{4}{p}-1}}
\prod_{i=1}^mc_{i,N_i}. 
\end{split}
\]
We choose $p>2$ as $p=\frac{2(m-1)}{m-2}$. 
Then, $4(\frac{1}{4}-s_c)+(m-6)(\frac{1}{2}-s_c)=\frac{4}{p}-1>0$. 
Therefore, we obtain
\[
\left\|\partial_x^{2}
\left(\prod_{i=1}^{m}\widetilde{u_i}-\prod_{i=1}^mP_{\ge M}\widetilde{u_i}\right)\right\|_{\dot{Z}^{s}}
\lesssim T^{\frac{1}{2}-\frac{1}{p}}M^{\frac{1}{2}-s_c}
\sum_{i=1}^m\|u_i\|_{\dot{X}^s}\prod_{\substack{1\le k\le m\\ k\ne i}}\|u_k\|_{\dot{X}^{s_c}}
\]
for $s\ge s_c$
by the same argument as in the proof of Theorem~\ref{multi_est_3mg2} 
because $N_m<M$ and $\frac{1}{2}-s_c>0$. 

Next, we assume the case $m=5$. Then, $s_c=0$. Therefore, it holds that 
\[
\begin{split}
N^{s+2-\frac{3}{2}}\left\|\prod_{i=1}^5P_{N_i}u_i\right\|_{L^1_xL^2_T}
&\le T^{\frac{1}{2}-\frac{1}{p}}N^{s+2-\frac{3}{2}}
\left\|\prod_{i=1}^5P_{N_i}u_i\right\|_{L^1_xL^p_T}\\
&\le T^{\frac{1}{2}-\frac{1}{p}}N^{s+2-\frac{3}{2}}
\|P_{N_1}u_1\|_{L^{\infty}_xL^p_T}
\left(\prod_{i=2}^4\|P_{N_i}u_i\|_{L^4_xL^{\infty}_T}\right)
\|P_{<M}P_{N_5}u_5\|_{L^{4}_xL^{\infty}_T}\\
&\lesssim 
T^{\frac{1}{2}-\frac{1}{p}}\left(\frac{N}{N_1}\right)^{s+\frac{1}{2}}
\frac{\left(\prod_{i=2}^4N_i^{\frac{1}{4}}
\right)
N_5^{\frac{1}{4}}}{N_1^{\frac{4}{p}-1}}
\prod_{i=5}^mc_{i,N_i}
\end{split}
\]
and obtain 
\[
\left\|\partial_x^{2}
\left(\prod_{i=1}^{5}\widetilde{u_i}-\prod_{i=1}^mP_{\ge M}\widetilde{u_i}\right)\right\|_{\dot{Z}^{s}}
\lesssim T^{\frac{1}{2}-\frac{1}{p}}M^{\frac{1}{4}}
\sum_{i=1}^5\|u_i\|_{\dot{X}^s}\prod_{\substack{1\le k\le m\\ k\ne i}}\|u_k\|_{\dot{X}^{s_c}}
\]
for $s\ge s_c$ by choosing $p>2$ as $p=\frac{16}{7}$ because $\frac{3}{4}=4\cdot \frac{7}{16}-1>0$ 
and $N_5<M$. 
\end{proof}

\section{Multilinear estimates at the scaling critical regularity in the cases $m=5$ with $\gamma=3$ and $m=4$ with $\gamma=2$}\label{mesc_1}
In this section, we study the Cauchy problem (\ref{1-1}) with a specific scaling invariant nonlinearity (\ref{nonl_sc_2}) at the scaling critical regularity $s=s_c(\gamma,m)$ in the cases $(\gamma,m)=(3,5)$ and $(2,4)$. Let $N\in 2^{\Z}$. To do so, we use the following more sophisticated Banach spaces $X_N$ and $Y_N$ endowed with the norms 
\begin{equation}
\label{fsb}
\begin{split}
\|u\|_{X_N}&:=\|u(0)\|_{L^2_x}+\left\|\left(i\partial_t+\partial_x^4\right)u\right\|_{Y_N},\\
\|F\|_{Y_N}&:=\inf\left\{\left.\|F_1\|_{Z_N}+\|F_2\|_{\dot{X}^{0,-\frac{1}{2},1}}
\right|F=F_1+F_2\right\}, \\
\|F\|_{Z_N}&:=N^{-\frac{3}{2}}\|F\|_{L^1_xL^2_t}, 
\end{split}
\end{equation}
respectively, instead of Definition~\ref{def4-1}.
Here $\|\cdot \|_{\dot{X}^{0,b,q}}$ denotes the Besov type Fourier restriction norm given by
\[
\|u\|_{\dot{X}^{0,b,q}}
:=
\begin{cases}
\displaystyle \left(\sum_{A\in 2^{\Z}}A^{bq}\|Q_Au\|_{L^2_{t,x}}^q\right)^{\frac{1}{q}},&{\rm if}\ 1\le q<\infty, \\
\displaystyle \sup_{A\in 2^{\Z}}A^b\|Q_Au\|_{L^2_{t,x}},&{\rm if}\ q=\infty, 
\end{cases}
\]
where $Q_A$ with $A\in 2^{\Z}$ denotes the Littlewood-Paley projection defined by
\[
\F_{t,x}[Q_Au](\tau,\xi):=\psi_{A}\left(\tau -\xi^4\right)\F_{t,x}[u](\tau,\xi). 
\]
We note that $\|(i\partial_t+\partial_x^4)u\|_{\dot{X}^{0,-\frac{1}{2},1}}=\|u\|_{\dot{X}^{0,\frac{1}{2},1}}$.
Furthermore, we define the Banach spaces $\dot{X}^s$, $X^s$, $\dot{Y}^s$, and $Y^s$ endowed with the norms
\[
\begin{split}
\|u\|_{\dot{X}^s}
&:=\left(\sum_{N\in 2^{\Z}}N^{2s}\|P_Nu\|_{X_N}^2\right)^{\frac{1}{2}},\ \ 
\|u\|_{X^s}:=\|u\|_{\dot{X}^0}+\|u\|_{\dot{X}^s},\\
\|F\|_{\dot{Y}^s}
&:=\left(\sum_{N\in 2^{\Z}}N^{2s}\|P_Nu\|_{Y_N}^2\right)^{\frac{1}{2}},\ \ 
\|F\|_{Y^s}:=\|F\|_{\dot{Y}^0}+\|F\|_{\dot{Y}^s}, 
\end{split}
\] 
where $s\in \R$. We can see that by Lemma \ref{LP}, the estimates
\begin{equation}\label{XN_Linest_g2}
\left\|e^{it\partial_x^4}f\right\|_{\dot{X}^s}\lesssim \|f\|_{\dot{H}^s},\ \ 
\left\|\int_0^te^{i(t-t')\partial_x^4}F(t')dt'\right\|_{\dot{X}^s}\lesssim \|F\|_{\dot{Y}^s}
\end{equation}
hold for any $f\in \dot{H}^s(\R)$ and $F\in \dot{Y}^s$. 
\begin{remark}
The similar norms as $\|\cdot \|_{X_N}$ and $\|\cdot \|_{Y_N}$ are used by Tao (\cite{T06}) to prove the well-posedness of 
the quartic generalized Korteweg-de Vries at the scaling critical regularity $\dot{H}^{-\frac{1}{6}}$ (see \cite[Section 2]{T06}).
In the proof of \cite[Theorem 1.3]{Ppre}, Pornnopparath used such norms 
to prove the small data global well-posedness of the quintic derivative nonlinear Schr\"odinger equations 
at the scaling critical regularity $\dot{H}^{\frac{1}{4}}$.
\end{remark}
\begin{lemma}[Extension lemma]\label{BSE_Lemm}
Let $S$ be any space-time Banach space that satisfies
\[
\|g(t)F(t,x)\|_{S}\lesssim \|g\|_{L^{\infty}_t}\|F(t,x)\|_S
\]
for any $F\in S$ and $g\in L^{\infty}_t$. 
Let $\mathcal{T}\ :\ L^2(\R)\times \cdots \times L^2(\R)\rightarrow S$ 
be a spatial multilinear operator satisfying
\[
\left\|\mathcal{T}(e^{it\partial_x^4}u_{1,0},\cdots,e^{it\partial_x^4}u_{k,0})\right\|_S
\lesssim \prod_{j=1}^k\|u_{j,0}\|_{L^2_x}
\]
for any $u_{1,0},\cdots,u_{k,0}\in L^2(\R)$ with $k\in \N$. Then it holds that
\[
\|\mathcal{T}(u_1,\cdots,u_k)\|_S
\lesssim \prod_{j=1}^k\left(\|u_j(0)\|_{L^2_x}
+\left\|(i\partial_t+\partial_x^4)u_j\right\|_{\dot{X}^{0,-\frac{1}{2},1}}\right)
\]
for any $u_1,\cdots u_k\in \dot{X}^{0,\frac{1}{2},1}$. 
\end{lemma}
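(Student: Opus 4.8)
The plan is to decompose each input $u_j$ according to its modulation and write it as a superposition of free solutions modulated by $L^\infty_t$ factors. Concretely, for a fixed $j$ set $w_j := (i\partial_t+\partial_x^4)u_j$. Using the Fourier representation, for every dyadic $A\in 2^{\Z}$ I would write $Q_A u_j$ as an average over the modulation variable: $(Q_A u_j)(t,x) = \int_{|\lambda|\sim A} e^{it\lambda}\bigl(e^{it\partial_x^4} g_{j,\lambda}\bigr)(x)\,d\lambda$ where $\widehat{g_{j,\lambda}}(\xi)$ is essentially $\F_{t,x}[Q_A u_j](\lambda+\xi^4,\xi)$ up to constants, together with the ``main piece'' $u_j(0)$ carried by a single free evolution. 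A cleaner bookkeeping is the standard one: $u_j(t) = e^{it\partial_x^4}u_j(0) - i\int_0^t e^{i(t-t')\partial_x^4}w_j(t')\,dt'$, and then one expands the Duhamel term over modulation dyadic pieces $w_{j,A}=Q_A w_j$ of $w_j$, writing $\int_0^t e^{i(t-t')\partial_x^4}w_{j,A}(t')\,dt'$ as a linear combination (with coefficients bounded in $L^\infty_t$) of free solutions $e^{it\partial_x^4}\phi$ with $\|\phi\|_{L^2_x}\lesssim A^{-1}\|w_{j,A}\|_{L^2_{t,x}}$ plus an error of the same type; this is the usual ``$X^{s,b}$ with $b>1/2$ embeds into free solutions'' computation, here done at the $\dot X^{0,1/2,1}$ endpoint.

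The key steps, in order: \textbf{(1)} Reduce to showing $\|\mathcal{T}(u_1,\dots,u_k)\|_S \lesssim \prod_j \|u_j\|_{\dot X^{0,1/2,1}}$ after absorbing the datum $u_j(0)$ into the $\dot X^{0,1/2,1}$-type norm via the identity $\|(i\partial_t+\partial_x^4)u_j\|_{\dot X^{0,-1/2,1}} = \|u_j\|_{\dot X^{0,1/2,1}}$ and Proposition-type control of $e^{it\partial_x^4}u_j(0)$; actually it is cleanest to keep the datum separate and handle it by the hypothesis directly. \textbf{(2)} For each $j$, perform the modulation decomposition $u_j = \sum_{A\in 2^{\Z}} u_{j,A}$ with $u_{j,A}=Q_A u_j$, and represent each $u_{j,A}$ as $u_{j,A}(t,x) = c\int_{\R} e^{it\sigma}\chi(\sigma/A)\, \bigl(e^{it\partial_x^4}g_{j,A,\sigma}\bigr)(x)\, d\sigma$ where $\|g_{j,A,\sigma}\|_{L^2_x}$ is, after integration in $\sigma$, controlled by $A^{1/2}\|u_{j,A}\|_{L^2_{t,x}}$ in an $L^1_\sigma$ sense — this is exactly where the weight $A^{1/2}$ and the $\ell^1$ summation over $A$ conspire. \textbf{(3)} Insert these representations into the $k$-linear operator $\mathcal{T}$; since $\mathcal{T}$ is spatial (acts only in $x$, for each fixed $t$), the phases $e^{it\sigma_j}$ pull out as $L^\infty_t$ multipliers, so by the hypothesis on $S$ and the multilinear free-solution bound on $\mathcal{T}$ one gets $\|\mathcal{T}(u_{1,A_1},\dots,u_{k,A_k})\|_S \lesssim \prod_j \bigl(\text{the }L^1_{\sigma_j}\text{ quantity}\bigr)$. \textbf{(4)} Sum over $A_1,\dots,A_k\in 2^{\Z}$; the $\ell^1$ structure of $\dot X^{0,1/2,1}$ makes the dyadic sums converge, yielding $\prod_j \|u_j\|_{\dot X^{0,1/2,1}}$, and hence the claim.

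The main obstacle I expect is step (2)–(3): making the ``free-wave packet'' representation of a general $\dot X^{0,1/2,1}$ function precise enough that the $L^\infty_t$-multiplier hypothesis on $S$ can be applied cleanly, while keeping the constants summable. In particular one must be careful that the $\sigma$-integration is genuinely $L^1$ (so that pulling $e^{it\sigma_j}$ out costs nothing in $S$) and that the endpoint $b=1/2$ with $\ell^1_A$ summation — rather than $b>1/2$ with $\ell^2$ — is exactly what is needed to close: with $\ell^2$ summation the argument would fail by a logarithm, so the Besov refinement in the definition of $\dot X^{0,1/2,1}$ is essential. Once the representation is set up, the rest is a routine application of the hypotheses together with Minkowski's inequality to exchange the $\sigma$-integrals with the $S$-norm.
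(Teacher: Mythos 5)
The paper does not prove this lemma itself; it simply cites Lemma~4.1 of [T07]. Your proposal reconstructs exactly the argument that Tao's lemma uses: split $u_j$ into $e^{it\partial_x^4}u_j(0)$ plus the Duhamel part (or, equivalently, represent $u_j$ directly as a superposition $u_j(t)=c\int_{\R} e^{it\sigma}(e^{it\partial_x^4}g_{j,\sigma})\,d\sigma$ via the Fourier change of variables $\tau=\sigma+\xi^4$), then pull the $e^{it\sigma_j}$ factors out of $\mathcal{T}$ using the $L^\infty_t$-multiplier hypothesis on $S$ together with Minkowski's inequality, apply the multilinear free-solution bound, and close with Cauchy--Schwarz on each dyadic modulation shell $|\sigma|\sim A$ (costing $A^{1/2}$) followed by the $\ell^1$-in-$A$ sum, which is precisely where the Besov exponent $q=1$ in $\dot X^{0,1/2,1}$ is indispensable. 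Your identification of the $q=1$ endpoint as the place where $q=2$ would lose a logarithm is the right heuristic.

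Two small points of caution. First, you initially float reducing everything to $\prod_j\|u_j\|_{\dot X^{0,1/2,1}}$; for a bona fide free solution $e^{it\partial_x^4}\phi$ this modulation norm is degenerate (its space-time Fourier transform is a surface measure, so every $Q_A$ kills it), which is exactly why the statement carries the separate $\|u_j(0)\|_{L^2_x}$ term and why your second thought --- ``keep the datum separate and handle it by the hypothesis directly'' --- is the correct move. Second, the per-shell constant you quote ($\|\phi\|_{L^2}\lesssim A^{-1}\|w_{j,A}\|_{L^2_{t,x}}$) is off by a factor $A^{1/2}$: the Duhamel inversion costs $A^{-1}$ but the $L^2_\sigma\to L^1_\sigma$ Cauchy--Schwarz on the shell costs $A^{+1/2}$, so the net per-shell weight is $A^{-1/2}\|Q_Aw_j\|_{L^2_{t,x}}$, matching the $\dot X^{0,-1/2,1}$ norm after summing in $A$. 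Neither affects the overall correctness of your plan.
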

For the proof of this lemma, see Lemma\ 4.1 in \cite{T07}.

By Lemma~\ref{BSE_Lemm}, the linear estimates (Lemma~\ref{lem3-1}, ~\ref{kato}, ~\ref{KRe}), 
and the same argument as in the proof of Theorem~\ref{thm3-1} (See also Remark~\ref{Tcondi}), 
we obtain the following. 
\begin{proposition}\label{lin_XN}
Let $N\in 2^{\Z}$. It holds that
\[
\|P_Nu\|_{L_t^{\infty}L_x^2}+N^{\frac{1}{2}}\|P_Nu\|_{L^4_tL^{\infty}_x}+N^{\frac{3}{2}}\|P_Nu\|_{L^{\infty}_xL^2_t}
+N^{-\frac{1}{4}}\|P_Nu\|_{L^{4}_xL^{\infty}_x}\lesssim \|P_Nu\|_{X_N}
\]
for any $u\in X_N$. 
\end{proposition}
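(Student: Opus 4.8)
The plan is to run everything through the Duhamel representation
$u(t)=e^{it\partial_x^4}u(0)-i\mathcal{I}[F](t)$, where $F:=(i\partial_t+\partial_x^4)u$ and $\mathcal{I}$ is the operator (\ref{dua}), and then to bound the free part and the inhomogeneous part separately in each of the four anisotropic norms on the left-hand side, which I abbreviate by $\|P_N\,\cdot\,\|_S$ (so $S$ ranges over $L^\infty_tL^2_x$, $N^{\frac12}L^4_tL^\infty_x$, $N^{\frac32}L^\infty_xL^2_t$, $N^{-\frac14}L^4_xL^\infty_t$). For the free part, $\|e^{it\partial_x^4}P_Nu(0)\|_S\lesssim\|P_Nu(0)\|_{L^2_x}\le\|u\|_{X_N}$ follows at once from the linear estimates already recorded: the $L^\infty_tL^2_x$ bound is unitarity of $e^{it\partial_x^4}$ on $L^2$; the $N^{\frac32}L^\infty_xL^2_t$ bound is Lemma~\ref{kato}; the $N^{-\frac14}L^4_xL^\infty_t$ bound is Lemma~\ref{KRe}; and the $N^{\frac12}L^4_tL^\infty_x$ bound follows from the Strichartz estimate (\ref{3-3-3}) combined with the Bernstein inequality (Lemma~\ref{lemBe}). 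No maximal function estimate enters, which is exactly why, as in Remark~\ref{Tcondi}, there is no restriction $T<1$ in this statement.

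For the inhomogeneous part I would use the definition of $\|\cdot\|_{Y_N}$ in (\ref{fsb}) to fix a near-optimal splitting $F=F_1+F_2$ with $\|F_1\|_{Z_N}+\|F_2\|_{\dot{X}^{0,-\frac{1}{2},1}}\lesssim\|F\|_{Y_N}$, and handle the two pieces by the two mechanisms the space is built for. The $F_1$-contribution $\mathcal{I}[P_NF_1]$ is controlled by the decomposition of the Duhamel term from Section~\ref{decom}: rerunning the argument behind Theorem~\ref{thm3-1}, based on Proposition~\ref{duam_decom}, shows that each of the four norms of $\mathcal{I}[P_NF_1]$ is $\lesssim N^{-\frac32}\|P_NF_1\|_{L^1_xL^2_t}=\|F_1\|_{Z_N}$. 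Indeed, Proposition~\ref{duam_decom} and the Remark immediately after it already record that the bulk term $h$ obeys all admissible-pair Strichartz bounds and the Kato-type bound, while the accompanying boundary terms are frequency-localized free solutions, estimated as in the previous paragraph.

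For the $F_2$-contribution I would invoke the extension lemma (Lemma~\ref{BSE_Lemm}) with $k=1$ and $\mathcal{T}$ the identity spatial operator. Setting $u_2:=-i\mathcal{I}[P_NF_2]$ we have $u_2(0)=0$ and $(i\partial_t+\partial_x^4)u_2=P_NF_2$; the standard modulation estimate (together with the Fourier identity $\|(i\partial_t+\partial_x^4)v\|_{\dot{X}^{0,-\frac{1}{2},1}}=\|v\|_{\dot{X}^{0,\frac{1}{2},1}}$) places $u_2$ in $\dot{X}^{0,\frac{1}{2},1}$, so Lemma~\ref{BSE_Lemm} applies once its two hypotheses are checked for $S$: the $L^\infty_t$-multiplier bound $\|g(t)P_NF\|_S\lesssim\|g\|_{L^\infty_t}\|P_NF\|_S$ holds because $P_N$ commutes with multiplication by functions of $t$ alone and each mixed Lebesgue norm pulls an $L^\infty_t$ factor out freely, and the free-solution bound $\|e^{it\partial_x^4}P_Nu_0\|_S\lesssim\|u_0\|_{L^2_x}$ is the first paragraph. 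This yields $\|\mathcal{I}[P_NF_2]\|_S\lesssim\|P_Nu_2(0)\|_{L^2_x}+\|(i\partial_t+\partial_x^4)u_2\|_{\dot{X}^{0,-\frac{1}{2},1}}\lesssim\|F_2\|_{\dot{X}^{0,-\frac{1}{2},1}}$, and summing the three contributions gives the claim. The main obstacle I anticipate is entirely in this $F_2$ step: the truncation $\int_0^t$ in $\mathcal{I}$ is not literally the Fourier multiplier $(\tau-\xi^4)^{-1}$, so it must be split into that multiplier part plus a boundary free-solution term exactly as in the decomposition of Proposition~\ref{duam_decom}, and the hypotheses of Lemma~\ref{BSE_Lemm} must be verified uniformly in $N$ for all four anisotropic norms; once Proposition~\ref{duam_decom} and Lemma~\ref{BSE_Lemm} are in hand, each verification is routine.
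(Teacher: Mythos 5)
Your proof matches the paper's one-line proof sketch, which cites exactly the three ingredients you use: Lemma \ref{BSE_Lemm} for the $\dot{X}^{0,-\frac{1}{2},1}$-piece $F_2$, the linear estimates (Lemma \ref{lem3-1}, \ref{kato}, \ref{KRe}) for the free-solution bounds, and the argument of Theorem \ref{thm3-1} (with Remark \ref{Tcondi}) for the $Z_N$-piece $F_1$. You also correctly flag the one technical wrinkle — that $(i\partial_t+\partial_x^4)\mathcal{I}[P_NF_2]$ is only identified with $P_NF_2$ after splitting $\int_0^t$ as $\int_{-\infty}^t$ minus the free boundary term $e^{it\partial_x^4}\int_{-\infty}^0$, exactly the manipulation the paper carries out inside the proofs of Proposition \ref{duam_decom} and Proposition \ref{dual_besov}.
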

Furthermore, we get the following.
\begin{proposition}\label{dual_besov}
Let $N\in 2^{\Z}$. It holds that
\[
\|P_Nu\|_{\dot{X}^{0,\frac{1}{2},\infty}}\lesssim \|P_Nu\|_{X_N}
\]
for any $u\in X_N$. 
\end{proposition}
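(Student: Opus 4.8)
The plan is to transfer the $\dot{X}^{0,\frac{1}{2},\infty}$-norm of $P_N u$ onto $(i\pa_t+\pa_x^4)P_N u$ — in the spirit of the identity $\|(i\pa_t+\pa_x^4)u\|_{\dot{X}^{0,-\frac{1}{2},1}}=\|u\|_{\dot{X}^{0,\frac{1}{2},1}}$ recorded above — and then to exploit the splitting defining $\|\cdot\|_{Y_N}$. Set $F:=(i\pa_t+\pa_x^4)u$, so that $\|P_N u\|_{X_N}=\|P_N u(0)\|_{L^2_x}+\|P_N F\|_{Y_N}$ and $(i\pa_t+\pa_x^4)P_N u=P_N F$. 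Since the spacetime Fourier symbol of $i\pa_t+\pa_x^4$ equals $\mp(\tau-\xi^4)$, which has modulus $\sim A$ on the support of $\psi_A(\tau-\xi^4)$, one has $\|Q_A P_N F\|_{L^2_{t,x}}\sim A\,\|Q_A P_N u\|_{L^2_{t,x}}$ for every $A\in 2^{\Z}$; multiplying by $A^{-1/2}$ and taking the supremum over $A$ gives
\begin{equation*}
\|P_N u\|_{\dot{X}^{0,\frac{1}{2},\infty}}\sim\|P_N F\|_{\dot{X}^{0,-\frac{1}{2},\infty}}.
\end{equation*}
(The characteristic-surface part of $P_N u$, carried on $\tau=\xi^4$, is irrelevant, since every $Q_A$ with $A\in 2^{\Z}$ kills it.) Hence it suffices to dominate $\|P_N F\|_{\dot{X}^{0,-\frac{1}{2},\infty}}$ by $\|P_N F\|_{Y_N}$.

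To that end I would fix an almost-optimal decomposition $P_N F=G_1+G_2$ with $\|G_1\|_{Z_N}+\|G_2\|_{\dot{X}^{0,-\frac{1}{2},1}}\le 2\|P_N F\|_{Y_N}$. Replacing $G_j$ by $\breve{P_N}G_j$ leaves $P_N F$ unchanged and is harmless on both $Z_N$ and $\dot{X}^{0,-\frac{1}{2},1}$, so I may assume $G_1,G_2$ have spatial frequency $\sim N$. Since the $\dot{X}^{0,-\frac{1}{2},\infty}$-norm is dominated by the $\dot{X}^{0,-\frac{1}{2},1}$-norm ($\ell^\infty$ versus $\ell^1$), the $G_2$-contribution is immediate, and the whole matter reduces to the single-modulation estimate
\begin{equation*}
\|Q_A G_1\|_{L^2_{t,x}}\lesssim A^{1/2}N^{-3/2}\|G_1\|_{L^1_x L^2_t}\qquad(A\in 2^{\Z}),
\end{equation*}
which is precisely the embedding $Z_N\hookrightarrow\dot{X}^{0,-\frac{1}{2},\infty}$ at frequency $N$; its consequence $A^{-1/2}\|Q_A G_1\|_{L^2_{t,x}}\lesssim\|G_1\|_{Z_N}$, taken over all $A$, then finishes.

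This last estimate is the heart of the proof. By Parseval and the substitution $\sigma=\tau-\xi^4$ for fixed $\xi$,
\begin{equation*}
\|Q_A G_1\|_{L^2_{t,x}}^2=\int|\psi_A(\sigma)|^2\,\|g_\sigma\|_{L^2_x}^2\,d\sigma\le\int_{|\sigma|\sim A}\|g_\sigma\|_{L^2_x}^2\,d\sigma,\qquad \widehat{g_\sigma}(\xi):=\F_{t,x}[G_1](\sigma+\xi^4,\xi).
\end{equation*}
Writing $H(\tau,x):=\F_t[G_1](\tau,x)$ (temporal Fourier transform only, so that $\F_x[H(\tau,\cdot)](\xi)=\F_{t,x}[G_1](\tau,\xi)$ is supported in $|\xi|\sim N$ and $\|H\|_{L^1_x L^2_\tau}=\|G_1\|_{L^1_x L^2_t}$ by Plancherel in $t$), I would undo the $\sigma$-integration:
\begin{equation*}
\int_{|\sigma|\sim A}\|g_\sigma\|_{L^2_x}^2\,d\sigma=\int_{\R}\int_{\{|\xi|\sim N,\ |\tau-\xi^4|\sim A\}}\bigl|\F_x[H(\tau,\cdot)](\xi)\bigr|^2\,d\xi\,d\tau.
\end{equation*}
For fixed $\tau$ the $\xi$-set is a union of at most four intervals of total length $\lesssim AN^{-3}$ — this is where the curvature of $\tau=\xi^4$ enters, the factor $N^3$ being the size of $\tfrac{d}{d\xi}\xi^4$ for $|\xi|\sim N$, the very same gain powering the Kato smoothing estimate of Lemma~\ref{kato} — while $\|\F_x[H(\tau,\cdot)]\|_{L^\infty_\xi}\lesssim\|H(\tau,\cdot)\|_{L^1_x}$ by Hausdorff--Young. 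Thus the inner integral is $\lesssim AN^{-3}\|H(\tau,\cdot)\|_{L^1_x}^2$; integrating in $\tau$ and applying Minkowski's integral inequality $\|H\|_{L^2_\tau L^1_x}\le\|H\|_{L^1_x L^2_\tau}$ yields the claimed bound $AN^{-3}\|G_1\|_{L^1_x L^2_t}^2$. Assembling the three pieces and taking the infimum over all decompositions $P_N F=G_1+G_2$ completes the proof.

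The main obstacle is exactly the embedding $Z_N\hookrightarrow\dot{X}^{0,-\frac{1}{2},\infty}$ at frequency $N$: once one recognizes that it reduces to the curvature count $|\{\,|\xi|\sim N:\ |\tau-\xi^4|\sim A\,\}|\lesssim AN^{-3}$, the remaining steps are routine Fourier bookkeeping. A secondary technical point worth care is the frequency reduction to $|\xi|\sim N$ (via $\breve{P_N}$), together with the observation that the modulation cutoffs $Q_A$ annihilate the characteristic-surface component, so that passing from $P_N u$ to $(i\pa_t+\pa_x^4)P_N u$ costs nothing.
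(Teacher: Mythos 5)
Your proof is correct, and it takes a genuinely different route from the paper's. The paper proceeds by writing $P_Nu$ via the Duhamel formula (linear part plus integrals of $F_1\in Z_N$ and $F_2\in\dot{X}^{0,-1/2,1}$), observing that $Q_A$ annihilates the on-surface pieces, and then estimating the $F_1$-Duhamel term through the $\mathrm{w}_y$-decomposition of Proposition~\ref{duam_decom}: one computes $\F_{t,x}[\mathrm{w}_y]$ explicitly, giving the kernel $\psi_N(\xi)/(\tau-\xi^4-i0)$, and extracts the $N^{-3}$ gain from the change of variables $\omega=\tau-\xi^4$ applied to $\int\psi_N(\xi)\psi_A(\tau-\xi^4)(\tau-\xi^4)^{-2}d\xi$. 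You instead bypass the Duhamel and $\mathrm{w}_y$ machinery entirely: applying $i\partial_t+\partial_x^4$ to $P_Nu$ and using that its symbol is $\sim A$ on $\operatorname{supp}\psi_A(\tau-\xi^4)$, you reduce at once to the frequency-localized embedding $Z_N\hookrightarrow\dot{X}^{0,-1/2,\infty}$, and prove that by a coarea count: for fixed $\tau$ the set $\{|\xi|\sim N,\ |\tau-\xi^4|\sim A\}$ has measure $\lesssim AN^{-3}$, combined with the trivial $L^1_x\to L^\infty_\xi$ bound and Minkowski's inequality $\|\cdot\|_{L^2_\tau L^1_x}\le\|\cdot\|_{L^1_xL^2_\tau}$. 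The two arguments rest on the same curvature fact ($|\partial_\xi\,\xi^4|\sim N^3$), but your version is more self-contained: it isolates the embedding $Z_N\hookrightarrow\dot{X}^{0,-1/2,\infty}$ as a clean statement, and it does not need the earlier decomposition of the Duhamel integral into $\mathrm{w}_y$'s, whereas the paper's proof leans on that prior machinery. The one place where the paper's bookkeeping is slightly tighter is in the reduction to spatial frequency $\sim N$: the paper carries the cutoff $\psi_N(\xi)$ through the explicit formula for $\F_{t,x}[\mathrm{w}_y]$, whereas you insert $\breve{P_N}$ into the almost-optimal decomposition by hand; both are fine, but your remark that this is "harmless" deserves the sentence you give it, since $\breve{P_N}$ must be checked to be bounded on $L^1_xL^2_t$ (Young) and on $\dot{X}^{0,-1/2,1}$ (it commutes with $Q_A$).
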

\begin{proof}
We put $F=F_1+F_2:=\left(i\partial_t+\partial_x^4\right)u$, where $P_NF_1\in Z_N$ and $P_NF_2\in \dot{X}^{0,-\frac{1}{2},1}$. 
Then, we have
\[
\begin{split}
u(t)&=e^{it\partial_x^4}u_0-i\int_0^te^{i(t-t')\partial_x^4}F_1(t')dt'
-i\int_0^te^{i(t-t')\partial_x^4}F_2(t')dt'\\
&=e^{it\partial_x^4}u_0-i\sum_{k=1}^2
\left(
\int_{-\infty}^te^{i(t-t')\partial_x^4}F_k(t')dt'
-e^{it\partial_x^4}\int_{-\infty}^0e^{-it'\partial_x^4}F_k(t')dt'
\right).
\end{split}
\]
Because the space-time Fourier transform of the 
linear solution is supported in $\{(\tau,\xi)|\ \tau-\xi^4=0\}$, 
we have
\[
\left\|P_Ne^{it\partial_x^4}u_0\right\|_{\dot{X}^{0,\frac{1}{2},\infty}}=0,\ \ 
\left\|P_Ne^{it\partial_x^4}\int_{-\infty}^0e^{-it'\partial_x^4}F_k(t')dt'\right\|_{\dot{X}^{0,\frac{1}{2},\infty}}=0.
\]
Therefore, it suffices to show that
\begin{equation}\label{besov_dual_keyest1}
\left\|P_N\int_{-\infty}^te^{i(t-t')\partial_x^4}F_1(t')dt'\right\|_{\dot{X}^{0,\frac{1}{2},\infty}}
\lesssim \left\|P_NF_1\right\|_{Z_N}
\end{equation}
and
\begin{equation}\label{besov_dual_keyest2}
\left\|P_N\int_{-\infty}^te^{i(t-t')\partial_x^4}F_2(t')dt'\right\|_{\dot{X}^{0,\frac{1}{2},\infty}}
\lesssim \left\|P_NF_2\right\|_{\dot{X}^{0,-\frac{1}{2},1}}.
\end{equation}

We first prove (\ref{besov_dual_keyest1}).
For $y\in \R$, we put
\[
    \mathrm{w}_{y}(t,x):=
    \frac{1}{\sqrt{2\pi}}\int_{-\infty}^t(P_N\mathcal{K})(t-t',x-y)F_1(t',y)dt', 
\]
where $\mathcal{K}$ is defined by (\ref{fs}). 
Then, we have
\begin{equation}\label{duamel_wy_73}
P_N\int_{-\infty}^te^{i(t-t')\partial_x^4}F_1(t')dt'
=\int_{\R}\mathrm{w}_{y}(t,x)dy
\end{equation}
and
\[
\F_{t,x}[w_{y}](\tau, \xi)
=\frac{1}{\sqrt{2\pi}i}\frac{e^{-i\xi y}\psi_N(\xi)}{\tau-\xi^4-i0}\F_t[F_1](\tau,y). 
\]
Therefore, for $A\in 2^{\Z}$, 
by using the variable transform $\xi \mapsto \omega$ as $\omega =\tau-\xi^4$, 
we have
\[
\begin{split}
\|Q_Aw_{y}\|_{L^2_{t,x}}
&\sim \|\psi_A(\tau-\xi^4)\F_{t,x}[w_{y}](\tau,\xi)\|_{L^2_{\tau,\xi}}\\
&\sim \left(\int_{\R}\int_{\R}\frac{\psi_N(\xi)\psi_A(\tau-\xi^4)}{(\tau-\xi^4)^2}
|\F_t[F_1](\tau,y)|^2d\xi d\tau\right)^{\frac{1}{2}}\\
&\lesssim \left(\int_{\R}N^{-3}\int_{\R}\frac{\psi_A(\omega )}{\omega^2}
|\F_t[F_1](\tau,y)|^2d\omega d\tau\right)^{\frac{1}{2}}\\
&\lesssim N^{-\frac{3}{2}}A^{-\frac{1}{2}}
\left(\int_{\R}
|\F_t[F_1](\tau,y)|^2d\tau\right)^{\frac{1}{2}}\\
&\lesssim N^{-\frac{3}{2}}A^{-\frac{1}{2}}\|F_1(t,y)\|_{L^2_t}. 
\end{split}
\]
This and (\ref{duamel_wy_73}) imply (\ref{besov_dual_keyest1}). 

Next, we prove (\ref{besov_dual_keyest2}). 
By the direct calculation, we have
\[
\F_{t,x}\left[P_N\int_{-\infty}^te^{i(t-t')\partial_x^4}F_2(t')dt'\right](\tau,\xi)
=\frac{1}{i}\frac{\psi_N(\xi)}{\tau-\xi^4-i0}\F_{t,x}[F_2](\tau,\xi). 
\]
Therefore, for $A\in 2^{\Z}$, we obtain 
\[
\left\|Q_AP_N\int_{-\infty}^te^{i(t-t')\partial_x^4}F_2(t')dt'\right\|_{L^2_{t,x}}
\lesssim A^{-1}\|Q_LP_NF_2\|_{L^2_{t,x}}. 
\]
This and the embedding 
$\dot{X}^{0,-\frac{1}{2},1}\hookrightarrow \dot{X}^{0,-\frac{1}{2},\infty}$ imply (\ref{besov_dual_keyest2}). 
\end{proof}
\subsection{Refined bilinear Strichartz estimates}
For $L\in 2^{\Z}$, we define the bilinear operators $R^{\pm}$ as
\begin{equation}
\label{defR}
 R_L^{\pm}(f,g):=\int_{\R}\int_{\R}e^{i\xi x}\psi_L(\xi_1\pm (\xi-\xi_1))\widehat{f}(\xi_1)\widehat{g}(\xi-\xi_1)d\xi_1d\xi. 
\end{equation}
\begin{lemma}[Refined bilinear Strichartz estimate $L^2(\R)\times L^2(\R)\rightarrow L^2_{t,x}(\R\times\R)$]
\label{BS2}
Let $L, N_1, N_2\in 2^{\Z}$ with $N_1\ge N_2$. Then for any functions $f,g$ satisfying $P_{N_1}f, P_{N_2}g\in L^2(\R)$, the estimates
\begin{align}
\label{5-2}
   \left\|R_L^+\left(P_{N_1}e^{it\partial_x^4}f, \overline{P_{N_2}e^{it\partial_x^4}g}\right)\right\|_{L_{t,x}^2(\R\times\R)}
   &\le CN_1^{-1}L^{-\frac{1}{2}}\left\|P_{N_1}f\right\|_{L^2}\left\|P_{N_2}g\right\|_{L^2},\\
   \label{5-3}
   \left\|R_L^-\left(P_{N_1}e^{it\partial_x^4}f, P_{N_2}e^{it\partial_x^4}g\right)\right\|_{L_{t,x}^2(\R\times\R)}
   &\le CN_1^{-1}L^{-\frac{1}{2}}\left\|P_{N_1}f\right\|_{L^2}\left\|P_{N_2}g\right\|_{L^2}
\end{align}
hold, where $C$ is a positive constant independent of $L, N_1,N_2, f,g$.
\end{lemma}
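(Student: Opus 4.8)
The plan is to deduce both (\ref{5-2}) and (\ref{5-3}) from a single bilinear $L^2$ argument, namely the same mechanism as in the proof of Lemma~\ref{BS} (Lemma~3.4 in \cite{CKSTT08}), once an appropriate transversality bound on the relevant phase is in hand.

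First I would compute the space-time Fourier transforms. Since $P_{N}e^{it\partial_x^4}$ acts on the spatial variable as multiplication by $\psi_N(\xi)e^{it\xi^4}$, and $\mathcal{F}_x[\overline{h}](\xi)=\overline{\mathcal{F}_x[h](-\xi)}$, writing $\xi_2:=\xi-\xi_1$ one gets
\[
\mathcal{F}_{t,x}\!\left[R_L^{+}\!\left(P_{N_1}e^{it\partial_x^4}f,\overline{P_{N_2}e^{it\partial_x^4}g}\right)\right](\tau,\xi)
= c\,\psi_L(\xi)\int_{\R}\psi_{N_1}(\xi_1)\psi_{N_2}(\xi_2)\,\delta\!\left(\tau-\Phi_{+}(\xi_1,\xi)\right)\widehat{f}(\xi_1)\overline{\widehat{g}(-\xi_2)}\,d\xi_1,
\]
with $\Phi_{+}(\xi_1,\xi):=\xi_1^4-\xi_2^4$; the analogous formula for $R_L^{-}$ has the cutoff $\psi_L(\xi_1-\xi_2)$ in place of $\psi_L(\xi)$, the phase $\Phi_{-}(\xi_1,\xi):=\xi_1^4+\xi_2^4$, and $\widehat{g}(\xi_2)$ in place of $\overline{\widehat{g}(-\xi_2)}$. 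In either case $\mathcal{F}_{t,x}$ of the output is supported on the resonance surface $\tau=\Phi_{\pm}(\xi_1,\xi)$.

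The crucial step is the transversality estimate. On the support of the integrand one has $|\xi_1|\sim N_1$ and $|\xi_2|\sim N_2\le N_1$. Using the factorizations $\xi_1^3+\xi_2^3=(\xi_1+\xi_2)(\xi_1^2-\xi_1\xi_2+\xi_2^2)$ and $\xi_1^3-\xi_2^3=(\xi_1-\xi_2)(\xi_1^2+\xi_1\xi_2+\xi_2^2)$, together with $\xi_1^2\mp\xi_1\xi_2+\xi_2^2\ge\tfrac12(\xi_1^2+\xi_2^2)\gtrsim N_1^2$, and noting that the cutoffs force $|\xi|=|\xi_1+\xi_2|\sim L$ in the $+$ case (hence also $L\sim N_1$ when $N_1\gg N_2$) and $|\xi_1-\xi_2|\sim L$ in the $-$ case, I obtain
\[
\left|\partial_{\xi_1}\Phi_{+}(\xi_1,\xi)\right|=4\left|\xi_1^3+\xi_2^3\right|\gtrsim L\,N_1^2,\qquad
\left|\partial_{\xi_1}\Phi_{-}(\xi_1,\xi)\right|=4\left|\xi_1^3-\xi_2^3\right|\gtrsim L\,N_1^2
\]
on the respective supports, the derivative being taken in $\xi_1$ with $\xi$ fixed. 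Since $\Phi_{\pm}(\cdot,\xi)$ is a cubic polynomial in $\xi_1$, each level set $\{\xi_1:\Phi_{\pm}(\xi_1,\xi)=\tau\}$ has cardinality $O(1)$.

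Finally I would run the standard argument: by Plancherel, for fixed $\xi$ change variables $\xi_1\mapsto\tau=\Phi_{\pm}(\xi_1,\xi)$ (Jacobian $\gtrsim LN_1^2$), apply Cauchy--Schwarz over the $O(1)$ branches, revert the change of variables, and use Fubini in $(\xi_1,\xi)$ to obtain
\[
\left\|R_L^{\pm}(\cdots)\right\|_{L^2_{t,x}}^2
\lesssim \frac{1}{LN_1^2}\int_{\R}\int_{\R}\psi_{N_1}(\xi_1)\psi_{N_2}(\xi-\xi_1)\left|\widehat{f}(\xi_1)\right|^2\left|\widehat{g}(\xi_1-\xi)\right|^2 d\xi_1\,d\xi
\lesssim \frac{1}{LN_1^2}\left\|P_{N_1}f\right\|_{L^2}^2\left\|P_{N_2}g\right\|_{L^2}^2,
\]
which gives (\ref{5-2})--(\ref{5-3}) after taking square roots. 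The main obstacle is exactly the transversality bound $|\partial_{\xi_1}\Phi_{\pm}|\gtrsim LN_1^2$: it must be checked uniformly across both the balanced regime $N_1\sim N_2$ (where $L$ may be much smaller than $N_1$) and the unbalanced regime $N_1\gg N_2$ (where the $\psi_L$ cutoff forces $L\sim N_1$ and one simply recovers Lemma~\ref{BS}); once the factorization identities and the elementary quadratic-form lower bound are established, the remaining bilinear-$L^2$ machinery is routine.
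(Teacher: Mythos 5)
Your proposal is correct and follows essentially the same route as the paper: both reduce to the key transversality bound $|\partial_{\xi_1}\Phi_\pm|\gtrsim LN_1^2$ obtained from the cubic factorization of $\xi_1^3\pm\xi_2^3$ and the lower bound $\xi_1^2\mp\xi_1\xi_2+\xi_2^2\ge\tfrac12(\xi_1^2+\xi_2^2)$, and then run the standard bilinear-$L^2$/Plancherel argument with a change of variables $\xi_1\mapsto\tau$ (the paper phrases this via duality against a test function $h$, you compute the Fourier transform directly, but these are the same computation). Only cosmetic quibbles: $\Phi_-(\cdot,\xi)$ is quartic rather than cubic in $\xi_1$ (still $O(1)$ branches, so nothing is lost), and your final display has a harmless sign slip in the argument of $\widehat{g}$.
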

\begin{proof}
We only prove (\ref{5-3}), since (\ref{5-2}) can be obtained in the similar manner. 
By Plancherel's theorem, the equivalency
\[
\begin{split}
 &\left\|R_L^-\left(P_{N_1}e^{it\partial_x^4}f, P_{N_2}e^{it\partial_x^4}g\right)\right\|_{L_{t,x}^2(\R\times\R)}\\
 &\sim 
 \left\|
 \int_{\R}\psi_L(\xi_1- (\xi-\xi_1))e^{it\xi_1^4}\psi_{N_1}(\xi_1)\widehat{f}(\xi_1)
 e^{it(\xi-\xi_1)^4}\psi_{N_2}(\xi-\xi_1)\widehat{g}(\xi-\xi_1)d\xi_1
 \right\|_{L_{t,\xi}^2(\R\times\R)}.
\end{split}
\]
holds. Thus, it suffices to show that the estimate
\begin{equation}\label{BSE_keyest}
\begin{split}
I&:=\left|
\int_\R\left(\iint_{\Omega}e^{it\xi_1^4}\widehat{f}(\xi_1)
 e^{it(\xi-\xi_1)^4}\widehat{g}(\xi-\xi_1)
h(t,\xi)d\xi_1d\xi \right)dt\right|\\
&\lesssim 
N_1^{-1}L^{-\frac{1}{2}}\|\psi_{N_1}f\|_{L^2_\xi}\|\psi_{N_2}g\|_{L^2_\xi}\|h\|_{L^2_{t,\xi}}
\end{split}
\end{equation}
holds for any $h\in L^2(\R\times \R)$, where $\Omega=\Omega(L,N_1,N_2)$ is defined by
\[
\Omega :=\{(\xi_1,\xi)|\ |\xi_1|\sim N_1,\ |\xi-\xi_1|\sim N_2,\ |\xi_1-(\xi-\xi_1)|\sim L\}. 
\]
Since the identity
\[
\int_{\R}e^{it(\xi_1^4+(\xi-\xi_1)^4)}h(t,\xi)dt
=\sqrt{2\pi}\F_t[h]\left(-\xi_1^4-(\xi-\xi_1)^4,\xi\right)
\]
holds for any $\xi,\xi_1\in \R$, by the Cauchy-Schwarz inequality, the estimate
\[
\begin{split}
I&=\sqrt{2\pi}\left|
\iint_{\Omega}\widehat{f}(\xi_1)
 \widehat{g}(\xi-\xi_1)
\F_t[h](-\xi_1^4-(\xi-\xi_1)^4,\xi)d\xi_1d\xi \right|\\
&\lesssim 
\|\psi_{N_1}f\|_{L^2_{\xi}}\|\psi_{N_2}g\|_{L^2_{\xi}}
\left(\iint_{\Omega}|\F_t[h]\left(-\xi_1^4-(\xi-\xi_1)^4,\xi\right)|^2d\xi_1d\xi\right)^{\frac{1}{2}}
\end{split}
\]
holds. We use changing variables with $\xi_1\mapsto \tau$ as $\tau =-\xi_1^4-(\xi-\xi_1)^4$. 
Since the relations
\[
\left|\frac{d\tau}{d\xi_1}\right|
\sim \left|\xi_1^3-(\xi-\xi_1)^3\right|
\sim |\xi_1-(\xi-\xi_1)|\max\left\{|\xi_1|^2,|\xi-\xi_1|^2\right\}
\sim LN_1^2
\]
hold for any $(\xi_1,\xi)\in \Omega$, by Plancherel's theorem, the estimates
\[
\begin{split}
\iint_{\Omega}|\F_t[h](-\xi_1^4-(\xi-\xi_1)^4,\xi)|^2d\xi_1d\xi
&\lesssim N_1^{-2}L^{-1}\iint |\F_t[h](\tau, \xi)|^2d\tau d\xi\sim N_1^{-2}L^{-1}\|h\|_{L^2_{t,\xi}}^2
\end{split}
\]
hold, which implies (\ref{BSE_keyest}). 
\end{proof}
\begin{remark}\label{BSE_rema}
When $N_1\gg N_2$, the relations $|\xi_1|\sim N_1$ and $|\xi-\xi_1|\sim N_2$ 
imply the equivalency $|\xi_1-(\xi-\xi_1)|\sim N_1$. 
Therefore, Lemma~\ref{BS} follows from (\ref{5-3}) with $L=N_1$. 
\end{remark}
\begin{theorem}[Refined bilinear Strichartz estimate on $X_{N_1}\times X_{N_2}$]
\label{bsere}
Let $L,N_1,N_2\in 2^{\Z}$ and $u_1\in X_{N_1}, u_2\in X_{N_2}$. 
If $N_1\ge N_2\gtrsim L$, then the estimates
\begin{align}
\label{bistri_2}
     \left\|R_L^+\left(P_{N_1}u_1,\overline{P_{N_2}u_2}\right)\right\|_{L^2_{t,x}}
     &\lesssim
     N_1^{-1}L^{-\frac{1}{2}}\|P_{N_1}u_1\|_{X_{N_1}}\|P_{N_2}u_2\|_{X_{N_2}},\\
\label{bistri_3}
     \|R_L^-(P_{N_1}u_1,P_{N_2}u_2)\|_{L^2_{t,x}}
     &\lesssim
     N_1^{-1}L^{-\frac{1}{2}}\|P_{N_1}u_1\|_{X_{N_1}}\|P_{N_2}u_2\|_{X_{N_2}}
\end{align}
hold, where the implicit constants are independnet of $L,N_1,N_2,u_1,u_2$.
\end{theorem}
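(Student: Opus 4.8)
The plan is to reduce Theorem~\ref{bsere} to the free-solution estimates of Lemma~\ref{BS2} via the Extension Lemma (Lemma~\ref{BSE_Lemm}). First I would observe that for fixed $L\in 2^{\Z}$ the bilinear operator
\[
\mathcal{T}(f,g):=R_L^{-}(P_{N_1}f,P_{N_2}g)
\]
is a spatial multilinear (here bilinear) operator, and that the space $S:=L^2_{t,x}(\R\times\R)$ trivially satisfies $\|g(t)F\|_{S}\le\|g\|_{L^\infty_t}\|F\|_S$. The hypothesis of Lemma~\ref{BSE_Lemm} is then exactly the free-solution bound
\[
\bigl\|R_L^{-}\bigl(P_{N_1}e^{it\partial_x^4}u_{1,0},P_{N_2}e^{it\partial_x^4}u_{2,0}\bigr)\bigr\|_{L^2_{t,x}}
\lesssim N_1^{-1}L^{-\frac12}\|u_{1,0}\|_{L^2}\|u_{2,0}\|_{L^2},
\]
which is precisely \eqref{5-3} in Lemma~\ref{BS2} (after inserting $P_{N_1},P_{N_2}$ harmlessly, using that these projections are bounded on $L^2$ and commute with $e^{it\partial_x^4}$). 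Applying Lemma~\ref{BSE_Lemm} yields
\[
\|R_L^{-}(P_{N_1}u_1,P_{N_2}u_2)\|_{L^2_{t,x}}
\lesssim N_1^{-1}L^{-\frac12}\prod_{j=1}^{2}\Bigl(\|u_j(0)\|_{L^2_x}+\bigl\|(i\partial_t+\partial_x^4)u_j\bigr\|_{\dot X^{0,-\frac12,1}}\Bigr).
\]

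Next I would pass from the right-hand side above to the $X_{N_j}$-norms. Here is where the definition \eqref{fsb} of $\|\cdot\|_{X_N}$ enters: since $P_{N_j}u_j$ is frequency-localized to $|\xi|\sim N_j$, writing $F_j:=(i\partial_t+\partial_x^4)(P_{N_j}u_j)=F_{j,1}+F_{j,2}$ with $\|F_{j,1}\|_{Z_{N_j}}+\|F_{j,2}\|_{\dot X^{0,-\frac12,1}}$ nearly minimal, the term $\|F_{j,2}\|_{\dot X^{0,-\frac12,1}}$ is already of the required form, while the term $\|F_{j,1}\|_{Z_{N_j}}=N_j^{-3/2}\|F_{j,1}\|_{L^1_xL^2_t}$ must be absorbed. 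For this one invokes the same decomposition of the Duhamel term used in the proof of Theorem~\ref{thm3-1} (Proposition~\ref{duam_decom}): the contribution of $F_{j,1}$ to $P_{N_j}u_j$ is, up to linear-solution pieces that are annihilated by $(i\partial_t+\partial_x^4)$ and pieces $h_{j,y}$ controlled by Proposition~\ref{duam_decom}, expressible so that $\|F_{j,1}\|_{Z_{N_j}}\lesssim\|P_{N_j}u_j\|_{X_{N_j}}$ after the $L^1_xL^2_t$ smoothing; alternatively, and more cleanly, one notes that the combination $\|u_j(0)\|_{L^2_x}+\|(i\partial_t+\partial_x^4)u_j\|_{\dot X^{0,-\frac12,1}}$ is itself dominated by $\|P_{N_j}u_j\|_{X_{N_j}}$ because of Proposition~\ref{dual_besov} (which gives $\|P_{N_j}u_j\|_{\dot X^{0,\frac12,\infty}}\lesssim\|P_{N_j}u_j\|_{X_{N_j}}$, dualizing the $Z_{N_j}$ part into $\dot X^{0,-\frac12,1}$). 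Chaining these gives \eqref{bistri_3}.

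The estimate \eqref{bistri_2} is handled identically, replacing $R_L^{-}$ by $R_L^{+}$ and $P_{N_2}u_2$ by $\overline{P_{N_2}u_2}$, using \eqref{5-2} in Lemma~\ref{BS2} as the free input; note that conjugation sends $(i\partial_t+\partial_x^4)\overline{u}$ to $\overline{(i\partial_t-\partial_x^4)u}$, but since Lemma~\ref{BSE_Lemm} and \eqref{5-2} are stated directly for $R_L^{+}(P_{N_1}e^{it\partial_x^4}f,\overline{P_{N_2}e^{it\partial_x^4}g})$ with a genuine free evolution on $g$, one applies the extension lemma in the bilinear form $(u_1,u_2)\mapsto R_L^{+}(P_{N_1}u_1,\overline{P_{N_2}u_2})$ treating $u_2$ (not $\overline{u_2}$) as the second slot, so the relevant norm is again $\|u_2(0)\|_{L^2_x}+\|(i\partial_t+\partial_x^4)u_2\|_{\dot X^{0,-\frac12,1}}\lesssim\|P_{N_2}u_2\|_{X_{N_2}}$.

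The one point requiring genuine care — the main obstacle — is the role of the hypothesis $N_2\gtrsim L$. In Lemma~\ref{BS2} the frequencies $\xi_1,\xi-\xi_1$ range over $|\xi_1|\sim N_1$, $|\xi-\xi_1|\sim N_2$, and the cutoff $\psi_L(\xi_1\mp(\xi-\xi_1))$ restricts the \emph{difference} (resp.\ sum) to size $L$; the Jacobian computation $|d\tau/d\xi_1|\sim L N_1^2$ in the proof of Lemma~\ref{BS2} is valid precisely on the set $\Omega(L,N_1,N_2)$, and the region $\Omega$ is nonempty (and the size relations consistent) exactly when $L\lesssim N_2$. So I would make sure that in the reduction the frequency supports of $P_{N_1}u_1$ and $P_{N_2}u_2$ feed into $R_L^\pm$ in a way that the resolvent/decomposition pieces coming from Proposition~\ref{duam_decom} still have their space-time Fourier support inside the appropriate $|\xi|\sim N_j$ annulus — this is automatic since $P_{N_j}$ is applied — and that no issue arises from $N_1$ very large compared to $N_2$, which is fine because the bound $N_1^{-1}L^{-1/2}$ is uniform. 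Modulo this bookkeeping the theorem follows directly from Lemma~\ref{BSE_Lemm}, Lemma~\ref{BS2}, Proposition~\ref{dual_besov}, and the definition \eqref{fsb}.
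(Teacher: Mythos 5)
Your overall skeleton---reduce to the free-solution estimate of Lemma~\ref{BS2} via the Extension Lemma (Lemma~\ref{BSE_Lemm})---is correct, and it matches the opening move of the paper's argument. But your handling of the $Z_{N_j}$ component of the $Y_{N_j}$-norm is where the argument breaks down, and that is exactly where the bulk of the paper's proof lives.

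Recall that $\|F\|_{Y_{N_j}}$ is an infimum over decompositions $F=F_{j,1}+F_{j,2}$ with $F_{j,1}\in Z_{N_j}$ and $F_{j,2}\in\dot{X}^{0,-\frac{1}{2},1}$. Lemma~\ref{BSE_Lemm} only upgrades the free-solution estimate to functions $u_j$ whose forcing $(i\partial_t+\partial_x^4)u_j$ lies in $\dot{X}^{0,-\frac{1}{2},1}$; it says nothing about the piece forced by $F_{j,1}$. Your ``cleaner alternative''---invoking Proposition~\ref{dual_besov}---does not close this gap: that proposition gives $\|P_{N_j}u_j\|_{\dot{X}^{0,\frac{1}{2},\infty}}\lesssim\|P_{N_j}u_j\|_{X_{N_j}}$, i.e.\ control in the $\ell^\infty$-Besov modulation space, not in the $\ell^1$-Besov space $\dot{X}^{0,\frac{1}{2},1}$ that the Extension Lemma demands. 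The inclusion $\dot{X}^{0,\frac{1}{2},\infty}\hookrightarrow\dot{X}^{0,\frac{1}{2},1}$ is false (and if it held the $Z_N$ part of~\eqref{fsb} would be superfluous). Your first ``alternative'' is aimed in the right direction, but its stated conclusion ``$\|F_{j,1}\|_{Z_{N_j}}\lesssim\|P_{N_j}u_j\|_{X_{N_j}}$'' is a tautology of the definitions and not the estimate you need: what must be controlled is $R_L^{\pm}$ evaluated on the Duhamel term of a $Z_{N_j}$-forced solution, not the norm of $F_{j,1}$ itself.

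What the paper actually does is split $F_j$ into its $Z_{N_j}$- and $\dot{X}^{0,-\frac{1}{2},1}$-parts and prove four separate estimates, analogous to~\eqref{bi_est_L_1}--\eqref{bi_est_L_4}. The Extension Lemma disposes of the slots carrying $\dot{X}^{0,-\frac{1}{2},1}$ forcing and reduces matters to~\eqref{bi_est_L_5}--\eqref{bi_est_L_7} and~\eqref{bi_est_L_4}, but the ``mixed'' and ``pure $Z_N$'' cases still contain the Duhamel term of a $Z_N$-forced solution inside $R_L^\pm$. Those are handled by Proposition~\ref{duam_decom} together with the new commutation estimate~\eqref{RL_P_est}, which shows that multiplication by the $L$-localized Heaviside cutoff $(P_{<L/2^{50}}\ee_{(-\infty,0]})(x)$ passes through $R_L^{-}$ up to an $L^2$-bounded error; that step in turn relies on $N_2\gtrsim L$ so that Proposition~\ref{duam_decom} can be applied at the decomposition scale $L$. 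Your sketch never articulates this commutation, and the $N_2\gtrsim L$ hypothesis is not, as you suggest, a bookkeeping issue about the region $\Omega$ in Lemma~\ref{BS2} (which holds vacuously for any $L$); it is what makes the Duhamel decomposition at scale $L$ legitimate. Without those ingredients the estimate for the $Z_N$-forced piece simply is not established.
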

\begin{proof}
We only prove (\ref{bistri_3}) since (\ref{bistri_2}) can be obtained in the similar manner. 
We set $u_{j,N_j}:=P_{N_j}u_j$ and $F_j:=\left(i\partial_t+\partial_x^4\right)u_{j}$ for $j=1,2$.
It suffices to show that the estimate
\[
\|R_{L}^{-}(u_{1,N_1},u_{2,N_2})\|_{L^2_{t,x}}\lesssim 
N_1^{-1}L^{-\frac{1}{2}}\left(\|u_{1,N_1}(0)\|_{L^2_x}+\|F_1\|_{Y_{N_1}}\right)
\left(\|u_{2,N_2}(0)\|_{L^2_x}+\|F_2\|_{Y_{N_2}}\right)
\]
holds. This follows from the following estimates:
\begin{align}
\|R_{L}^{-}(u_{1,N_1},u_{2,N_2})\|_{L^2_{t,x}}&\lesssim 
N_1^{-1}L^{-\frac{1}{2}}\left(\|u_{1,N_1}(0)\|_{L^2_x}+\|F_1\|_{\dot{X}^{0,-\frac{1}{2},1}}\right) 
\left(\|u_{2,N_2}(0)\|_{L^2_x}+\|F_2\|_{\dot{X}^{0,-\frac{1}{2},1}}\right),\label{bi_est_L_1}\\ 
\|R_{L}^{-}(u_{1,N_1},u_{2,N_2})\|_{L^2_{t,x}}&\lesssim 
N_1^{-1}L^{-\frac{1}{2}}\left(\|u_{1,N_1}(0)\|_{L^2_x}+\|F_1\|_{\dot{X}^{0,-\frac{1}{2},1}}\right) 
\left(\|u_{2,N_2}(0)\|_{L^2_x}+N_2^{-\frac{3}{2}}\|F_2\|_{L^1_xL^2_t}\right),\label{bi_est_L_2}\\ 
\|R_{L}^{-}(u_{1,N_1},u_{2,N_2})\|_{L^2_{t,x}}&\lesssim 
N_1^{-1}L^{-\frac{1}{2}}\left(\|u_{1,N_1}(0)\|_{L^2_x}+N_1^{-\frac{3}{2}}\|F_1\|_{L^1_xL^2_t}\right) 
\left(\|u_{2,N_2}(0)\|_{L^2_x}+\|F_2\|_{\dot{X}^{0,-\frac{1}{2},1}}\right),\label{bi_est_L_3}\\ 
\|R_{L}^{-}(u_{1,N_1},u_{2,N_2})\|_{L^2_{t,x}}&\lesssim 
N_1^{-1}L^{-\frac{1}{2}}\left(\|u_{1,N_1}(0)\|_{L^2_x}+N_1^{-\frac{3}{2}}\|F_1\|_{L^1_xL^2_t}\right) 
\left(\|u_{2,N_2}(0)\|_{L^2_x}+N_2^{-\frac{3}{2}}\|F_2\|_{L^1_xL^2_t}\right). \label{bi_est_L_4}
\end{align}
We can obtain the estimate (\ref{bi_est_L_4}) in the same manner as the proof of (\ref{bi_est_4}). Indeed, we use (\ref{RL_P_est}) and (\ref{RL_P_est_mi}) below instead of (\ref{Pg_est}) and (\ref{Pg_est_mi}).
To obtain (\ref{bi_est_L_1}), (\ref{bi_est_L_2}), and (\ref{bi_est_L_3}), 
we use Lemma~\ref{BSE_Lemm}. Then, we have only to prove
\begin{align}
\left\|R_{L}^{-}\left(e^{it\partial_x^4}u_{1,N_1}(0),e^{it\partial_x^4}u_{2,N_2}(0)\right)\right\|_{L^2_{t,x}}&\lesssim 
N_1^{-1}L^{-\frac{1}{2}}\|u_{1,N_1}(0)\|_{L^2_x}\|u_{2,N_2}(0)\|_{L^2_x},\label{bi_est_L_5}\\ 
\left\|R_{L}^{-}\left(e^{it\partial_x^4}u_{1,N_1}(0),u_{2,N_2}\right)\right\|_{L^2_{t,x}}&\lesssim 
N_1^{-1}L^{-\frac{1}{2}}\|u_{1,N_1}(0)\|_{L^2_x} 
\left(\|u_{2,N_2}(0)\|_{L^2_x}+N_2^{-\frac{3}{2}}\|F_2\|_{L^1_xL^2_t}\right),\label{bi_est_L_6}\\ 
\left\|R_{L}^{-}\left(u_{1,N_1},e^{it\partial_x^4}u_{2,N_2}(0)\right)\right\|_{L^2_{t,x}}&\lesssim 
N_1^{-1}L^{-\frac{1}{2}}\left(\|u_{1,N_1}(0)\|_{L^2_x}+N_1^{-\frac{3}{2}}\|F_1\|_{L^1_xL^2_t}\right) 
\|u_{2,N_2}(0)\|_{L^2_x}.\label{bi_est_L_7}
\end{align}
The estimate (\ref{bi_est_L_5}) is obtained by (\ref{5-3}). 
We prove (\ref{bi_est_L_6}) only since (\ref{bi_est_L_7}) can be shown in the similar manner. 
We note that the identity holds
\[
u_{2,N_2}(t)=e^{it\partial_x^4}u_{2,N_2}(0)-i\int_0^te^{i(t-t')\partial_x^4}P_{N_2}F_2(t')dt'
=:A_2+B_2
\]
for any $t\in \R$. To obtain (\ref{bi_est_L_6}), we prove the following estimates:
\begin{align}
\left\|R_{L}^{-}\left(e^{it\partial_x^4}u_{1,N_1}(0),A_2\right)\right\|_{L^2_{t,x}}&\lesssim N_1^{-1}L^{-\frac{1}{2}}\|u_{1,N_1}(0)\|_{L^2_x}\|u_{2,N_2}(0)\|_{L^2_x},
\label{AB_est_L_1}\\
\left\|R_{L}^{-}\left(e^{it\partial_x^4}u_{1,N_1}(0),B_2\right)\right\|_{L^2_{t,x}}&\lesssim N_1^{-1}N_2^{-\frac{3}{2}}L^{-\frac{1}{2}}\|F_1\|_{L^1_xL^2_t}\|u_{2,N_2}(0)\|_{L^2_x}.
\label{AB_est_L_2}
\end{align}
The estimate (\ref{AB_est_L_1}) is obtained by (\ref{5-3}) 
because $A_2$ is a linear solution. 

Now we prove (\ref{AB_est_L_2}).
By Proposition~\ref{duam_decom}, we have
\[
\begin{split}
B_2&=-\int_{\R}e^{it\partial_x^4}\mathcal{L}v_{2,y}(x)dy
+\int_{\R}(P_{<L/2^{50}}\ee_{(-\infty, 0]})(x)(P_{+}e^{it\partial_x^4}v_{2,y})(x)dy\\
&\ \ \ \ -\int_{\R}(P_{<L/2^{50}}\ee_{[0,\infty)})(x)(P_{-}e^{it\partial_x^4}v_{2,y})(x)dy
+\int_{\R}h_{2,y}(t,x)dy,
\end{split}
\]
where $v_{2,y}=\F_{\xi}^{-1}[\psi_N(\xi)\F_t[F_2(t,y)](\xi^4)]$, 
$\mathcal{L}v_{2,y}=\F_{\xi}^{-1}[\psi_N(\xi)\F_t[\ee_{(-\infty ,0]}(t)F_2(t,y)](\xi^4)]$ 
and $h_{2,y}$ satisfies
\begin{equation}\label{hy_est_L}
\|h_{2,y}\|_{L^q_xL^p_t}\lesssim L^{-\frac{1}{2}-\frac{1}{p}}N_2^{-\frac{1}{2}-\frac{1}{q}-\frac{3}{p}}\|F_2(t,y)\|_{L^2_t}. 
\end{equation}
We note that
\begin{equation}\label{vy_est_L}
\|v_{2,y}(x)\|_{L^2_x}\lesssim N_2^{-\frac{3}{2}}\|F_2(t,y)\|_{L^2_t},\ 
\|\mathcal{L}v_{1,y}(x)\|_{L^2_x}\lesssim N_2^{-\frac{3}{2}}\|F_2(t,y)\|_{L^2_t}.
\end{equation}
Furthermore, it holds that
\begin{equation}\label{RL_P_est}
\|R_{L}^{-}(g_1(t,x), (P_{<L/2^{50}}\ee_{(-\infty, 0]})(x)g_2(t,x))\|_{L^2_{tx}}
\lesssim \|R_{L}^{-}(g_1,g_2)\|_{L^2_{tx}}. 
\end{equation}
Now, we prove (\ref{RL_P_est}). 
Because $|\xi_1-(\xi-\xi_1)|\sim L$ and $|\xi_2|\ll L$ imply
$|\xi_1-(\xi-\xi_2-\xi_1)|\sim L$, 
we have
\[
\begin{split}
&\F_x[R_{L}^{-}(g_1(t,x), (P_{<L/2^{50}}\ee_{(-\infty, 0]})(x)g_2(t,x))](\xi)\\
&=\int\psi_{L}(\xi_1-(\xi-\xi_1))\widehat{g_1}(t,\xi_1)
\left(\F_x[P_{<L/2^{50}}\ee_{(-\infty, 0]}]*\widehat{g_2}(t)\right)(\xi-\xi_1)d\xi_1\\
&\sim \int \psi_{L}(\xi_1-(\xi-\xi_2-\xi_1))\widehat{g_1}(t,\xi_1)\F_x[P_{<L/2^{50}}\ee_{(-\infty, 0]}](\xi_2)
\widehat{g_2}(\xi-\xi_2-\xi_1)d\xi_1d\xi_2\\
&=\int \F_x[P_{<L/2^{50}}\ee_{(-\infty, 0]}](\xi_2)\F_x[R_L^-(g_1(t,x),g_2(t,x))](\xi-\xi_2)d\xi_2\\
&=\F_x[P_{<L/2^{50}}\ee_{(-\infty, 0]}(x)R_L^-(g_1(t,x),g_2(t,x))](\xi ).
\end{split}
\]
Therefore, if $\chi_{L}$ is defined by $P_{<L/2^{50}}f=\chi_{L}*f$, then we have
\[
\begin{split}
\|R_{L}^{-}(g_1(t,x), (P_{<L/2^{50}}\ee_{(-\infty, 0]})(x)g_2(t,x))\|_{L^2_{tx}}
&=\|P_{<L/2^{50}}\ee_{(-\infty, 0]}(x)R_L^-(g_1(t,x),g_2(t,x))\|_{L^2_{tx}}\\
&=\|(\chi_{N_j}*\ee_{(-\infty,0]})(x)R_L^-(g_1(t,x),g_2(t,x))\|_{L^2_{tx}}\\
&\lesssim \int_{\R}|\chi_{N_j}(z)|\|\ee_{(-\infty,0]}(x-z)R_L^-(g_1(t,x),g_2(t,x))\|_{L^2_{tx}}dz\\
&\lesssim \|R_L^-(g_1,g_2)\|_{L^2_{tx}}
\end{split}
\]
because $\chi_{N_j}(x)=\F^{-1}_{\xi}[\varphi (2^{50}N_j^{-1}\xi)](x)$. 
We also obtain
\begin{equation}\label{RL_P_est_mi}
\|R_{L}^{-}(g_1(t,x), (P_{<L/2^{50}}\ee_{[0,\infty)})(x)g_2(t,x))\|_{L^2_{tx}}
\lesssim \|R_{L}^{-}(g_1,g_2)\|_{L^2_{tx}}. 
\end{equation}
by the same way. 
By using (\ref{RL_P_est}) and (\ref{RL_P_est_mi}), we have
\[
\begin{split}
\|R_{L}^{-}(e^{it\partial_x^4}u_{1,N_1}(0),B_2)\|_{L^2_{tx}}
&\lesssim \int_{\R}\|R_{L}^{-}(e^{it\partial_x^4}u_{1,N_1}(0), e^{it\partial_x^4}\mathcal{L}v_{2,y})\|_{L^2_{tx}}dy
+\int_{\R}\|R_L^-(e^{it\partial_x^4}u_{1,N_1}(0),e^{it\partial_x^4}P_{+}v_{2,y})\|_{L^2_{tx}}dy\\
&\ \ \ \ 
+\int_{\R}\|R_L^-(e^{it\partial_x^4}u_{1,N_1}(0),e^{it\partial_x^4}P_{-}v_{2,y})\|_{L^2_{tx}}dy
+\int_{\R}\|R_L^-(e^{it\partial_x^4}u_{1,N_1}(0),h_{2,y})\|_{L^2_{tx}}dy\\
&=:I+II+III+IV.
\end{split}
\]
By (\ref{5-3}) and (\ref{vy_est_L}), we obtain
\[
\begin{split}
I+II+III
&\lesssim \int_{\R}N_1^{-1}L^{-\frac{1}{2}}\|u_{1,N_1}(0)\|_{L^2_{x}}\left(\|v_{2,y}\|_{L^2_x}+\|\mathcal{L}v_{2,y}\|_{L^2_x}\right)dy\\
&\lesssim \int_{\R}N_1^{-\frac{3}{2}}N_2^{-\frac{3}{2}}\|u_{1,N_1}(0)\|_{L^2_{x}}\|F_2(t,y)\|_{L^2_t}dy\\
&=N_1^{-1}N_2^{-\frac{3}{2}}L^{-\frac{1}{2}}\|u_{1,N_1}(0)\|_{L^2_{x}}\|F_2\|_{L^1_xL^2_t}. 
\end{split}
\]
While, by the H\"older inequality, Lemma~\ref{kato}, and (\ref{hy_est_L}) with 
$(q,p)=(2,\infty)$, we obtain
\[
\begin{split}
IV&\lesssim \int_{\R}\|e^{it\partial_x^4}u_{1,N_1}(0)\|_{L^{\infty}_xL^2_t}\|h_{2,y}\|_{L^{2}_xL^{\infty}_t}dy\\
&\lesssim \int_{\R}N_1^{-\frac{3}{2}}\|u_{1,N_1}(0)\|_{L^2_x}L^{-\frac{1}{2}}N_2^{-1}\|F(t,y)\|_{L^2_t}dy\\
&\lesssim N_1^{-1}N_2^{-\frac{3}{2}}L^{-\frac{1}{2}}\|u_{1,N_1}(0)\|_{L^2_{x}}\|F_2\|_{L^1_xL^2_t}
\end{split}
\]
since $N_1\ge N_2$. 
Therefore, we get (\ref{AB_est_L_2}). 
\end{proof}
\begin{corollary}\label{bilin_T_cor2}
Let $T>0$, $L,N_1,N_2\in 2^{\Z}$, and $u_1\in X_{N_1}, u_2\in X_{N_2}$. 
If $N_1\ge N_2\gtrsim L$, then the estimates
\begin{align}
\label{bistri_TT_2}
     \left\|R_L^+\left(P_{N_1}u_1,\overline{P_{N_2}u_2}\right)\right\|_{L^2_{T}L^2_x}
     &\lesssim
     T^{\frac{\theta}{4}}N_1^{-1+\frac{\theta}{2}}L^{-\frac{1-\theta}{2}}\|P_{N_1}u_1\|_{X_{N_1}}\|P_{N_2}u_2\|_{X_{N_2}},\\
\label{bistri_TT_3}
     \|R_L^-(P_{N_1}u_1,P_{N_2}u_2)\|_{L^2_{T}L^2_x}
     &\lesssim
     T^{\frac{\theta}{4}}N_1^{-1+\frac{\theta}{2}}L^{-\frac{1-\theta}{2}}N_1^{-1}L^{-\frac{1}{2}}\|P_{N_1}u_1\|_{X_{N_1}}\|P_{N_2}u_2\|_{X_{N_2}}
\end{align}
hold, where the implicit constants are independnet of $T,L,N_1,N_2,u_1,u_2$.
\end{corollary}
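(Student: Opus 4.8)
The plan is to follow the scheme of Corollary~\ref{bilin_T_cor1}: derive a second, \emph{crude} bound for the left-hand side of (\ref{bistri_TT_2})--(\ref{bistri_TT_3}) that carries a positive power of $T$ but worse powers of $N_1$ and $L$, and then take a geometric mean of it with the sharp bound from Theorem~\ref{bsere}.

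First I would record that, for each fixed $L\in 2^{\Z}$, the bilinear operators $R_L^{\pm}$ map $L^{\infty}_x\times L^{2}_x$ into $L^{2}_x$ with a constant independent of $L$. For $R_L^{+}$ this is immediate from the definition (\ref{defR}): since $\xi_1+(\xi-\xi_1)=\xi$, one has $\widehat{R_L^{+}(f,g)}(\xi)=\sqrt{2\pi}\,\psi_L(\xi)\widehat{fg}(\xi)$, so $R_L^{+}(f,g)$ is a fixed multiple of the Littlewood--Paley piece $P_L(fg)$, and $\|P_L(fg)\|_{L^2_x}\lesssim\|fg\|_{L^2_x}\le\|f\|_{L^{\infty}_x}\|g\|_{L^2_x}$. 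For $R_L^{-}$ one writes $\psi_L\big(\xi_1-(\xi-\xi_1)\big)=\int m_L(z)e^{-iz(\xi_1-(\xi-\xi_1))}\,dz$ with $m_L:=\mathcal{F}^{-1}[\psi_L]$, obtaining $R_L^{-}(f,g)(x)=c\int m_L(z)f(x-z)g(x+z)\,dz$; since $\|m_L\|_{L^1_x}=\|m_1\|_{L^1_x}$ by scaling, Minkowski's and Hölder's inequalities give $\|R_L^{-}(f,g)\|_{L^2_x}\lesssim\|f\|_{L^{\infty}_x}\|g\|_{L^2_x}$. Conjugating the second entry affects nothing, since $\|\overline{g}\|_{L^2_x}=\|g\|_{L^2_x}$.

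Applying this bound pointwise in $t$ and then Hölder's inequality in time on $[0,T]$ (writing $\tfrac12=\tfrac14+\tfrac14$ twice) yields
\[
\big\|R_L^{-}(P_{N_1}u_1,P_{N_2}u_2)\big\|_{L^2_TL^2_x}
\lesssim\|P_{N_1}u_1\|_{L^4_TL^{\infty}_x}\,\|P_{N_2}u_2\|_{L^4_TL^2_x}
\le T^{\frac14}\|P_{N_1}u_1\|_{L^4_TL^{\infty}_x}\,\|P_{N_2}u_2\|_{L^{\infty}_TL^2_x},
\]
and identically for $R_L^{+}(P_{N_1}u_1,\overline{P_{N_2}u_2})$. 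By Proposition~\ref{lin_XN} the right-hand side is $\lesssim T^{1/4}N_1^{-1/2}\|P_{N_1}u_1\|_{X_{N_1}}\|P_{N_2}u_2\|_{X_{N_2}}$. On the other hand, restricting the space-time norms in (\ref{bistri_2})--(\ref{bistri_3}) of Theorem~\ref{bsere} to $t\in[0,T]$ only decreases them, so under the hypothesis $N_1\ge N_2\gtrsim L$ we also have $\|R_L^{\pm}(\cdots)\|_{L^2_TL^2_x}\lesssim N_1^{-1}L^{-1/2}\|P_{N_1}u_1\|_{X_{N_1}}\|P_{N_2}u_2\|_{X_{N_2}}$. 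Taking the geometric mean of this bound with weight $1-\theta$ and the crude bound with weight $\theta$ gives, for every $\theta\in[0,1]$, the prefactor $T^{\theta/4}N_1^{-(1-\theta)-\theta/2}L^{-(1-\theta)/2}=T^{\theta/4}N_1^{-1+\theta/2}L^{-(1-\theta)/2}$ in front of $\|P_{N_1}u_1\|_{X_{N_1}}\|P_{N_2}u_2\|_{X_{N_2}}$, which is exactly (\ref{bistri_TT_2})--(\ref{bistri_TT_3}).

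I do not anticipate a genuine obstacle: the argument is a routine combination of Hölder's inequality, the $L$-uniform boundedness of $R_L^{\pm}$, Proposition~\ref{lin_XN}, and elementary interpolation between two valid upper bounds, in direct parallel with Corollary~\ref{bilin_T_cor1}. The only points needing a little care are verifying that the implicit constant in the bilinear bound for $R_L^{-}$ is scale-invariant in $L$, and bookkeeping the exponents so that the geometric mean reproduces the stated powers of $N_1$ and $L$.
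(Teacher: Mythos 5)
Your proof is correct and takes the same approach as the paper, which writes only ``Proof is the same as Corollary~\ref{bilin_T_cor1}'' -- i.e.\ a crude H\"older-in-time bound combined with interpolation against the sharp estimate of Theorem~\ref{bsere}. The one detail the paper leaves implicit and which you correctly supply is the $L$-uniform boundedness of $R_L^{\pm}$ from $L^{\infty}_x\times L^{2}_x$ to $L^{2}_x$ (trivially from $R_L^{+}(f,g)\propto P_L(fg)$, and from the kernel representation $R_L^{-}(f,g)(x)\propto\int m_L(z)f(x-z)g(x+z)\,dz$ with $\|m_L\|_{L^1}=\|m_1\|_{L^1}$); note also that the printed right-hand side of (\ref{bistri_TT_3}) carries a spurious extra factor $N_1^{-1}L^{-1/2}$ -- a typographical error, since at $\theta=0$ the bound must recover (\ref{bistri_3}) -- and your interpolation yields the exponent in (\ref{bistri_TT_2}) for both signs, which is the intended statement.
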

Proof is the same as Corollary~\ref{bilin_T_cor1}.
\subsection{Multilinear estimates}
\begin{theorem}[Multilinear estimates]\label{mest_5_cri}
{\rm (i)}\ For any $u_1$, $\cdots$, $u_5\in \dot{X}^{\frac{1}{4}}$, 
it holds that
\begin{equation}\label{multilin_5_hom}
\left\|\partial_x^{3}
\prod_{i=1}^{5}\widetilde{u_i}\right\|_{\dot{Y}^{\frac{1}{4}}}
\lesssim \prod_{i=1}^5\|u_i\|_{\dot{X}^{\frac{1}{4}}},
\end{equation}
where $\widetilde{u_i}\in \{u_i,\overline{u_i}\}$. 
\\
{\rm (ii)}\ If $s\ge \frac{1}{4}$,
then for any $u_1$, $\cdots$, $u_5\in X^s$, 
it holds that
\begin{equation}\label{multilin_5_inhom}
\left\|\partial_x^{3}
\prod_{i=1}^{5}\widetilde{u_i}\right\|_{Y^s}
\lesssim \prod_{i=1}^5\|u_i\|_{X^s},
\end{equation}
where $\widetilde{u_i}\in \{u_i,\overline{u_i}\}$. 
The implicit constant depends only on $s$.
\end{theorem}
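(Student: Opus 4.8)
The plan is to mimic the dyadic bookkeeping used in the proof of Theorem~\ref{multi_est_3m}, but to replace the crude H\"older splitting of the five factors --- which at $s_c(3,5)=\frac14$ is exactly critical and leaves no surplus (the gains $N_i^{s_c-\frac14}$ of the $m\ge 6$ argument degenerate to $N_i^0$) --- by the refined bilinear Strichartz estimate of Theorem~\ref{bsere}. It suffices to prove the homogeneous estimate (\ref{multilin_5_hom}); the inhomogeneous estimate (\ref{multilin_5_inhom}) then follows by combining the homogeneous bound at regularities $s$ and $0$, exactly as in Theorem~\ref{multi_est_3m}~(ii). For (\ref{multilin_5_hom}) I would Littlewood--Paley decompose each factor, use the multilinearity to assume $N_1\ge\cdots\ge N_5$, let $N\lesssim N_1$ be the output frequency, set $c_{i,N_i}:=N_i^{s_c}\|P_{N_i}u_i\|_{X_{N_i}}$, and reduce, via the same Cauchy--Schwarz and Young summation over dyadic pieces as in Theorem~\ref{multi_est_3m}, to a frequency-localized bound of the shape
\[
N^{s_c+3}\left\|\prod_{i=1}^5 P_{N_i}\widetilde{u_i}\right\|_{Y_N}\lesssim N_{j}^{-\epsilon}\prod_{i=1}^5 c_{i,N_i}
\]
for some $\epsilon>0$ and a suitably chosen index $j\in\{2,\dots,5\}$. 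Throughout, the passage from $X_N$-functions to free solutions needed for the bilinear and modulation estimates is furnished by the extension Lemma~\ref{BSE_Lemm}, after which Theorem~\ref{bsere}, Corollary~\ref{bilin_T_cor2}, Proposition~\ref{lin_XN}, Proposition~\ref{dual_besov} and the Bernstein inequality are all available.

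In the frequency-separated regimes --- $N_1\gg N_2$, or more generally whenever the output frequency $N$ is much smaller than $N_1$ --- I would use the $Z_N=N^{-\frac32}L^1_xL^2_t$ component of the $Y_N$-norm, pair the two highest-frequency factors into a refined bilinear estimate via Theorem~\ref{bsere} (or Corollary~\ref{bilin_T_cor2}), and place the remaining three factors in $L^4_xL^\infty_t$ and $L^\infty_{t,x}$ through Proposition~\ref{lin_XN} and Bernstein. The decisive gain over the argument of Theorem~\ref{multi_est_3m} is that the bilinear estimate $\|R^\pm_L(P_{N_1}u_1,\widetilde{P_{N_2}u_2})\|_{L^2_{t,x}}\lesssim N_1^{-1}L^{-\frac12}\|P_{N_1}u_1\|_{X_{N_1}}\|P_{N_2}u_2\|_{X_{N_2}}$ supplies an extra factor $N_2^{-\frac14}$ (in the weights $c_{i,N_i}$) compared with treating $P_{N_2}u_2$ by a linear estimate, which is precisely what replaces the missing $N_i^{s_c-\frac14}$; when $N_1\gg N_2$ the gap parameter is pinned to $L\sim N_1$ by Remark~\ref{BSE_rema}, so no summation over $L$ occurs, and the remaining powers of $N$ close with room to spare, while when $N\ll N_1$ the factor $N^{s_c+3}\ll N_1^{s_c+3}$ absorbs both the $L$-summation and the $\epsilon$-loss.

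The crux is the fully resonant regime $N_1\sim N_2\sim\cdots\sim N_5\sim N$, where all H\"older gains and the gain from a single bilinear pairing are exhausted. Here I would work with the $\dot{X}^{0,-\frac12,1}$ component of the $Y_N$-norm and decompose all factors and the output in modulation. On the interaction hypersurface one has $\tau-\xi^4=\sum_{i=1}^5(\pm\xi_i^4)-\bigl(\sum_{i=1}^5(\pm\xi_i)\bigr)^4+O(\text{largest input modulation})$, and since among five factors at least three are of the same type one can always exhibit a non-conjugated pair $\widetilde{u_i}=u_i$, $\widetilde{u_j}=u_j$ whose pairwise resonance function $\xi_i^4+\xi_j^4-(\xi_i+\xi_j)^4=-2\xi_i\xi_j(2\xi_i^2+3\xi_i\xi_j+2\xi_j^2)$ is $\sim N^4$; this forces either one of the input modulations or the output modulation to be $\gtrsim N^4$ (up to lower-order configurations), and such a gain is converted into a summable factor through Proposition~\ref{dual_besov}, i.e.\ the embedding $X_N\hookrightarrow\dot{X}^{0,\frac12,\infty}$ together with $\|(i\partial_t+\partial_x^4)u\|_{\dot{X}^{0,-\frac12,1}}=\|u\|_{\dot{X}^{0,\frac12,1}}$. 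In the residual low-modulation configuration one again decomposes a same-sign pair by $R^\pm_L$, which is now effectively restricted to $L$ bounded below by a positive power of $N$, so the $L$-sum in Theorem~\ref{bsere} converges; balancing this against the derivative count $\partial_x^3\mapsto N^3$ and the $L^4_xL^\infty_t$, $L^\infty_{t,x}$-bounds for the remaining factors closes the estimate. I expect this last bookkeeping --- making the modulation/gap summation converge at the scaling-critical regularity, where (\ref{multilin_5_hom}) carries no time factor $T^\delta$ to absorb logarithmic losses --- to be the main obstacle; everything else is a routine variant of the computations already carried out for Theorems~\ref{multi_est_3m} and \ref{bsere}.
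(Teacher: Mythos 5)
Your high-level architecture is correct and matches the paper's: Littlewood--Paley in frequency, reduce to localized dyadic estimates, use the refined bilinear estimate of Theorem~\ref{bsere} to recover the gain that degenerates at $s_c=\tfrac14$, and handle the residual resonant regime by modulation analysis via Proposition~\ref{dual_besov}. But there are two places where the details go wrong, and the second is a genuine gap.

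First, you have misidentified the critical regime. The case $N_1\sim N_2\sim\cdots\sim N_5\sim N$ (the paper's $I_5$) is not the crux: there the plain H\"older estimate $N^{-\frac32}\|\cdot\|_{L^1_xL^2_t}\le N^{-\frac32}\|P_{N_1}u_1\|_{L^\infty_xL^2_t}\prod_{i=2}^5\|P_{N_i}u_i\|_{L^4_xL^\infty_t}$ already closes at $s=\tfrac14$ because the five comparable frequencies share the derivative weight $N^{s+3}$, and the resulting bound $\bigl(\tfrac{N}{N_1}\bigr)^{s+\frac32}\prod c_{i,N_i}$ is summable. The genuinely delicate case is $\Phi_4$, i.e.\ $N_1\sim N_2\sim N_3\sim N_4\gg N_5$ with output frequency $N\gg N_5$: here one cannot pair $u_1$ against $u_5$ and win, because the bilinear gain $N_1^{-\frac32}$ is exactly what H\"older already gives, and the small factor $N_5$ contributes no surplus.

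Second, the modulation argument you propose in that regime does not close as stated. You claim that among five factors one can extract a non-conjugated pair $u_i, u_j$ with pairwise resonance $\xi_i^4+\xi_j^4-(\xi_i+\xi_j)^4\sim N^4$, and that this forces a large input or output modulation. But the modulation identity is governed by the \emph{full} resonance function of all five inputs, not by a pairwise sub-resonance, so a large pairwise resonance of two factors does not on its own bound the total modulation budget. The paper instead decomposes each of the four comparable factors by $P^\pm_{N_i}$ and works with the sign structure of the full four-factor resonance, showing that the absence of an oppositely-signed pair forces
\[
\bigl|(\xi_1+\xi_2+\xi_3+\xi_4\pm\xi_5)^4-(\xi_1^4+\xi_2^4+\xi_3^4+\xi_4^4\pm\xi_5^4)\bigr|\sim N_1^4
\]
via the strict inequality $(C_1+C_2+C_3+C_4)^4>C_1^4+\cdots+C_4^4$ (and its variants depending on how many factors are conjugated). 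The proof is then a case split on the number $K\in\{0,1,2\}$ of conjugated factors among the top four: for $K=2$ one can always pair two same-sign factors and win with the bilinear estimate at gap $L\sim N$ (or $L\sim N_1$), while for $K=0,1$ the modulation gain from the sign identity is converted through Proposition~\ref{dual_besov}. No $L$-summation ever occurs --- $L$ is always pinned to $N$ or $N_1$ --- so your remark about needing the $L$-sum of Theorem~\ref{bsere} to converge is a non-issue, but the replacement of the full-resonance sign argument by a pairwise resonance argument is a real gap that would need a substantially different (and, I suspect, more involved) justification to fill.
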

\begin{proof}
We define
\[
\begin{split}
I_1&:=\left\{\sum_{N\in 2^{\Z}} N^{2s+6}\left(\sum_{(N_1,\cdots , N_5)\in \Phi_1}\left\|
   \prod_{i=1}^{5}P_{N_i}u_i\right\|_{Z_N}\right)^2\right\}^{\frac{1}{2}},\\
I_k&:=\sum_{N\in 2^{\Z}} N^{s+3}\sum_{(N_1,\cdots , N_5)\in \Phi_k}\left\|
   \prod_{i=1}^{5}P_{N_i}u_i\right\|_{Z_N}\ \ (k=2,3,5),\\
I_4&:=\sum_{N\in 2^{\Z}} N^{s+3}\sum_{(N_1,\cdots , N_5)\in \Phi_4}\left\|
   P_N\left(\prod_{i=1}^{5}P_{N_i}\widetilde{u_i}\right)\right\|_{Y_N},
\end{split}
\]
where
\[
\begin{split}
\Phi_1&:=\{(N_1,\cdots,N_5)|\ N_1\ge \cdots \ge N_5,\ N\sim N_1\gg N_{2}\},\\
\Phi_k&:=\{(N_1,\cdots,N_5)|\ N_1\ge \cdots \ge N_5,\ N_1\gtrsim N,\ N_1\sim \cdots \sim N_k\gg N_{k+1}\}\ (k=2,3,4),\\
\Phi_5&:=\{(N_1,\cdots,N_5)|\ N_1\ge \cdots \ge N_5,\ N\lesssim N_1\sim \cdots \sim N_5\}.
\end{split}
\]
We note that
\[
\left\|\partial_x^{3}
\prod_{i=1}^{5}\widetilde{u_i}\right\|_{\dot{Y}^{s}}
\lesssim \sum_{k=1}^5I_k
\]
holds. 
Let $s\ge 0$. 
For $i=1,\cdots, m$ and $N_i\in 2^{\Z}$, we set
$c_{1,N_1}:=N_1^s\|P_{N_1}u_1\|_{X_{N_1}}$,\ 
$c_{i,N_i}:=N_i^{\frac{1}{4}}\|P_{N_i}u_i\|_{X_{N_i}}$ $(i=2,\cdots ,5)$. 

Now, we consider $I_k$ for $k\ne 4$. 
We assume $u_i\in \dot{X}^s\cap \dot{X}^{\frac{1}{4}}$ and 
$\|u_i\|_{\dot{X}^{\frac{1}{4}}}\lesssim 1$ ($i=1,\cdots, 5$). 
Then, it holds that
\begin{equation}\label{ci_X_norm}
\sum_{N_i}c_{i,N_i}^2=\|u_i\|_{\dot{X}^{\frac{1}{4}}}^2\lesssim 1
\end{equation}
for $i=2,\cdots 5$. 
We prove
\begin{equation}\label{multilin_5_2}
I_k
\lesssim \sum_{i=1}^5\|u_i\|_{\dot{X}^s}.
\end{equation}
This implies 
\begin{equation}\label{multilin_5_3}
I_k
\lesssim \sum_{i=1}^5\|u_i\|_{\dot{X}^s}\prod_{\substack{1\le k\le 5\\ k\ne i}}\|u_k\|_{\dot{X}^{\frac{1}{4}}}
\end{equation}
for any $u_i\in \dot{X}^s\cap \dot{X}^{\frac{1}{4}}$ 
because $\overline{u_i}$ $(i=1,\cdots m)$ do not appear in 
the definition of $I_k$ when $k\ne 4$. 
To obtain (\ref{multilin_5_2}), it suffices to show 
\begin{equation}\label{m5_multiest_desired}
    I_k\lesssim \left(\sum_{N_1}c_{1,N_1}^2\right)^{\frac{1}{2}}.
\end{equation}

We first prove (\ref{m5_multiest_desired}) for $k=5$. 
By the H\"older inequality and  Proposition~\ref{lin_XN}, we have
\[
\begin{split}
\left\|\prod_{i=1}^5P_{N_i}u_i\right\|_{Z_N}
&= 
N^{-\frac{3}{2}}\left\|\prod_{i=1}^5P_{N_i}u_i\right\|_{L^1_xL^2_t}\\
&\le N^{-\frac{3}{2}}\|P_{N_1}u_1\|_{L^{\infty}_{x}L^2_t}\|P_{N_2}u_2\|_{L^4_xL^{\infty}_t}
\|P_{N_3}u_3\|_{L^4_xL^{\infty}_t}\|P_{N_4}u_4\|_{L^4_xL^{\infty}_t}\|P_{N_5}u_5\|_{L^4_xL^{\infty}_t}\\
&\lesssim N^{-\frac{3}{2}}N_1^{-\frac{3}{2}}N_2^{\frac{1}{4}}N_3^{\frac{1}{4}}N_4^{\frac{1}{4}}N_5^{\frac{1}{4}}
\prod_{i=1}^5\|P_{N_i}u_i\|_{X_{N_i}}\\
&\sim N^{-\frac{3}{2}}N_1^{-s-\frac{3}{2}}\prod_{i=1}^5c_{i,N_i}.
\end{split}
\]
Therefore, by the Cauchy-Schwarz inequality for dyadic summation, we obtain
\[
\begin{split}
I_5&\lesssim \sum_N\sum_{(N_1,\cdots , N_5)\in \Phi_5}N^{s+3}
\left\|\prod_{i=1}^5P_{N_i}u_i\right\|_{Z_N}\\
&\lesssim \sum_{N_1}\sum_{N\lesssim N_1}\sum_{N_2\sim N_1}\sum_{N_3\sim N_1}
\sum_{N_4\sim N_1}\sum_{N_5\sim N_1}
N^{s+3}N^{-\frac{3}{2}}N_1^{-s-\frac{3}{2}}\prod_{i=1}^5c_{i,N_i}\\
&\lesssim \sum_{N_1}\sum_{N_2\sim N_1}\sum_{N_3\sim N_1}
\sum_{N_4\sim N_1}\sum_{N_5\sim N_1}\prod_{i=1}^5c_{i,N_i}\\
&\lesssim \prod_{i=1}^5\left(\sum_{N_i}c_{i,N_i}^2\right)^{\frac{1}{2}}.
\end{split}
\]
This implies (\ref{m5_multiest_desired}) for $k=5$ by (\ref{ci_X_norm}).

Next, we consider $I_k$ for $k=1,2,3$. 
By the same argument as in the proof of Theorem~\ref{multi_est_3m}, 
to obtain (\ref{m5_multiest_desired}), it suffices to show that
\begin{equation}\label{m5_multiest_desired_red}
\left\|\prod_{i=1}^5P_{N_i}u_i\right\|_{L^1_xL^2_t}
\lesssim N_1^{-s-\frac{3}{2}}
\left(\frac{N_5}{N_{k+1}}\right)^{\frac{1}{4}}\prod_{i=1}^5c_{i,N_i}
\end{equation}
for $(N_1,\cdots,N_5)\in \Phi_k$. 
By the H\"older inequality, (\ref{bistri_3}) with $L=N_1$, 
Bernstein inequality, and Proposition~\ref{lin_XN}, we have
\[
\begin{split}
\left\|\prod_{i=1}^5P_{N_i}u_i\right\|_{L^1_xL^2_t}
&\le \|P_{N_1}u_1P_{N_{k+1}}u_{k+1}\|_{L^2_{x,t}}\left(\prod_{\substack{2\le i\le 4\\ i\ne k+1}}
\|P_{N_i}u_i\|_{L^4_xL^{\infty}_t}\right)\|P_{N_5}u_5\|_{L^{\infty}_{t,x}}\\
&\lesssim N_1^{-\frac{3}{2}}\left(\prod_{\substack{2\le i\le 4\\ i\ne k+1}}
N_i^{\frac{1}{4}}\right)N_5^{\frac{1}{2}}
\prod_{i=1}^5\|P_{N_i}u_i\|_{X_{N_i}}\\
&\sim N_1^{-s-\frac{3}{2}}N_{k+1}^{-\frac{1}{4}}N_5^{\frac{1}{4}}\prod_{i=1}^5c_{i,N_i}.
\end{split}
\]
Therefore, we obtain (\ref{m5_multiest_desired_red}). 

Now, we consider $I_4$. We prove
\begin{equation}\label{m5_multiest_desired_k4}
    I_4\lesssim \prod_{i=1}^5\left(\sum_{N_i}c_{i,N_i}^2\right)^{\frac{1}{2}}.
\end{equation}
We put 
\[
\begin{split}
\Phi_{4,1}&:=\{(N_1,\cdots, N_5)\in \Phi_4|N\lesssim N_5\},\\
\Phi_{4,2}&:=\{(N_1,\cdots, N_5)\in \Phi_4|N\gg N_5\}, 
\end{split}
\]
and
\[
I_{4,l}:=\sum_{N\in 2^{\Z}} N^{s+3}\sum_{(N_1,\cdots , N_5)\in \Phi_{4,l}}\left\|
  P_N\left(\prod_{i=1}^{5}P_{N_i}\widetilde{u_i}\right)\right\|_{Y_N}\ \ (l=1,2).
\]
\\

\noindent \underline{(i)\ For $l=1$\ ($N\lesssim N_5$)}

By the definition of the norm $\|\cdot\|_{Y_N}$, the H\"older inequality, (\ref{bistri_3}) with $L=N_1$, 
Bernstein inequality, and Proposition~\ref{lin_XN}, we have
\[
\begin{split}
\left\|P_N\left(\prod_{i=1}^5P_{N_i}\widetilde{u_i}\right)\right\|_{Y_N}
&\le 
N^{-\frac{3}{2}}\left\|\prod_{i=1}^5P_{N_i}u_i\right\|_{L^1_xL^2_t}\\
&\le N^{-\frac{3}{2}}\|P_{N_1}u_1P_{N_5}u_5\|_{L^2_{x,t}}\|P_{N_2}u_2\|_{L^4_xL^{\infty}_t}
\|P_{N_3}u_3\|_{L^4_xL^{\infty}_t}\|P_{N_4}u_4\|_{L^{\infty}_{x,t}}\\
&\lesssim N^{-\frac{3}{2}}N_1^{-\frac{3}{2}}N_2^{\frac{1}{4}}N_3^{\frac{1}{4}}N_4^{\frac{1}{2}}
\prod_{i=1}^5\|P_{N_i}u_i\|_{X_{N_i}}\\
&\sim N^{-\frac{3}{2}}N_1^{-s-\frac{3}{2}}N_4^{\frac{1}{4}}N_5^{-\frac{1}{4}}\prod_{i=1}^5c_{i,N_i}.
\end{split}
\]
Therefore, we obtain
\[
\begin{split}
I_{4,1}
&\lesssim \sum_{N_1}\sum_{N\lesssim N_5}\sum_{N_2\sim N_1}\sum_{N_3\sim N_1}
\sum_{N_4 \sim N_1}\sum_{N_5\ll N_1}N^{s+\frac{3}{2}}N_1^{-s-\frac{3}{2}}N_4^{\frac{1}{4}}N_5^{-\frac{1}{4}}
\prod_{i=1}^5c_{i,N_i}\\
&\lesssim \sum_{N_1}\sum_{N_2\sim N_1}\sum_{N_3\sim N_1}\sum_{N_4\sim N_1}
c_{1,N_1}c_{2,N_2}c_{3,N_3}c_{4,N_4}
\sum_{N_5 \ll N_1}N_1^{-s-\frac{5}{4}}N_5^{s+\frac{5}{4}}c_{5,N_5}\\\
&\lesssim \prod_{i=1}^5\left(\sum_{N_i}c_{i,N_i}^2\right)^{\frac{1}{2}} 
\end{split}
\]
by the Cauchy-Schwarz inequality for dyadic summation. 
\\

\noindent \underline{(ii)\ For $l=1$\ ($N\gg N_5$)}

We put $P_{N_i}^{\pm}:=P_{\pm}P_{N_i}$, 
where $P_+$ and $P_{-}$ are 
defined in (\ref{proj_plus_minus}). 
Then we have
\[
\prod_{i=1}^5P_{N_i}\widetilde{u_i}=
\left\{\prod_{i=1}^4(P_{N_i}^+\widetilde{u_i}+P_{N_i}^-\widetilde{u_i})\right\}
P_{N_5}\widetilde{u_5}. 
\]
We define
\[
K:=\#\{i\in \{1,2,3,4\}|\ \widetilde{u_i}=\overline{u_i}\}. 
\]
We only have to consider the cases $K=0,1,2$ 
because the case $K=3,4$ can be treated by the same way 
of the case $K=1,0$, respectively.  
\\

\noindent \underline{{\bf Case\ 1.}\ $K=0$}

By the symmetry, we only have to prove the estimates for
\[
\begin{split}
  J_{+-}&:= P_{N_1}^+u_1P_{N_2}^-u_2P_{N_3}u_3P_{N_4}u_4P_{N_5}\widetilde{u_5},\\
  J_{++}&:= P_{N_1}^+u_1P_{N_2}^+u_2P_{N_3}^+u_3P_{N_4}^+u_4P_{N_5}\widetilde{u_5},\\
  J_{--}&:= P_{N_1}^-u_1P_{N_2}^-u_2P_{N_3}^-u_3P_{N_4}^-u_4P_{N_5}\widetilde{u_5}.
\end{split}
\]
We first prove the estimate for $J_{+-}$. 
Because $\xi_1$ and $\xi_2$ are opposite sign 
for $\xi_1\in {\rm supp}\F_x[P_{N_1}^+u_1]$ and $\xi_2\in {\rm supp}\F_x[P_{N_2}^-u_2]$, 
it holds that
\[
|\xi_1-\xi_2|=\xi_1+(-\xi_2)\ge \max\{|\xi_1|, |\xi_2|\}\sim N_1.
\]
This implies that
\[
\|P_{N_1}^+u_1P_{N_2}^-u_2\|_{L^2_{t,x}}
\lesssim N_1^{-\frac{3}{2}}\|P_{N_1}^+u_1\|_{X_{N_1}}\|P_{N_2}^-u_2\|_{X_{N_2}}
\]
by (\ref{bistri_3}) with $L=N_1$. 
Therefore, by the definition of the norm $\|\cdot\|_{Y_N}$, the H\"older inequality, the Bernstein inequality and Proposition~\ref{lin_XN}, we obtain
\[
\begin{split}
\left\|P_NJ_{+-}\right\|_{Y_N}
&\le 
N^{-\frac{3}{2}}\left\|J_{+-}\right\|_{L^1_xL^2_t}\\
&\le N^{-\frac{3}{2}}\|P_{N_1}^+u_1P_{N_2}^-u_2\|_{L^2_{t,x}}\|P_{N_3}u_3\|_{L^4_xL^{\infty}_t}
\|P_{N_4}u_4\|_{L^4_xL^{\infty}_t}\|P_{N_5}u_5\|_{L^{\infty}_{x,t}}\\
&\lesssim N^{-\frac{3}{2}}N_1^{-\frac{3}{2}}N_3^{\frac{1}{4}}N_4^{\frac{1}{4}}N_5^{\frac{1}{2}}
\prod_{i=1}^5\|P_{N_i}u_i\|_{X_{N_i}}\\
&\sim N^{-\frac{3}{2}}N_1^{-s-\frac{3}{2}}N_2^{-\frac{1}{4}}N_5^{\frac{1}{4}}\prod_{i=1}^5c_{i,N_i}.
\end{split}
\]
Because $N_1\sim N_2$, 
we have
\begin{equation}\label{Jpm_est}
N^{s+3}\left\|P_NJ_{+-}\right\|_{Y_N}
\lesssim \left(\frac{N}{N_1}\right)^{s+\frac{3}{2}}
\left(\frac{N_5}{N_1}\right)^{\frac{1}{4}}\prod_{i=1}^5c_{i,N_i}.
\end{equation}

Next, we prove the estimates for $J_{++}$. 
We put
\[
J_{++}^{\rm high}:=\sum_{A\gtrsim N_1^4}Q_AJ_{++},\ \ 
J_{++}^{\rm low}:=\sum_{A\ll N_1^4}Q_AJ_{++}. 
\]
We first consider $J_{++}^{\rm high}$. 
Because
\[
\|J_{++}^{\rm high}\|_{\dot{X}^{0,-\frac{1}{2},1}}
\sim \sum_{A\gtrsim N_1^4}A^{-\frac{1}{2}}\|Q_AJ_{++}^{\rm high}\|_{L^2_{tx}}
\lesssim N_1^{-2}\|J_{++}\|_{L^2_{tx}},
\]
by the definition of the norm $\|\cdot\|_{Y_N}$, the H\"older inequality, the Bernstein inequality and Proposition~\ref{lin_XN}, we obtain
\begin{equation}\label{Jpph_est}
\begin{split}
\|P_NJ_{++}^{\rm high}\|_{Y_N}
&\lesssim \|J_{++}^{\rm high}\|_{\dot{X}^{0,-\frac{1}{2},1}}\\
&\lesssim N_1^{-2}\|P_{N_1}^+u_1\|_{L^{\infty}_xL^{2}_t}
\|P_{N_2}^+u_2\|_{L^{4}_xL^{\infty}_t}\|P_{N_3}^+u_3\|_{L^{4}_xL^{\infty}_t}
\|P_{N_4}^+u_4\|_{L^{\infty}_{x,t}}\|P_{N_5}u_5\|_{L^{\infty}_{x,t}}\\
&\lesssim N_1^{-2}N_1^{-\frac{3}{2}}N_2^{\frac{1}{4}}N_3^{\frac{1}{4}}
N_4^{\frac{1}{2}}N_5^{\frac{1}{2}}\prod_{i=1}^5\|P_{N_i}u_i\|_{X_{N_i}}\\
&\sim N_1^{-\frac{7}{2}-s}N_4^{\frac{1}{4}}N_5^{\frac{1}{4}}\prod_{i=1}^5c_{i,N_i}.
\end{split}
\end{equation}

Finally, we consider $J_{++}^{\rm low}$. 
Let $(\tau_i,\xi_i)\in {\rm supp}\F_{t,x}[P_{N_i}^+u_i]$ $(i=1,2,3,4)$, 
$(\tau_5,\xi_5)\in {\rm supp}\F_{t,x}[P_{N_5}u_5]$. 
Then,
\[
|\tau_1+\tau_2+\tau_3+\tau_4\pm \tau_5
-(\xi_1+\xi_2+\xi_3+\xi_4\pm \xi_5)^4|
\ll N_1^4
\]
since ${\rm supp}\F_{t,x}[J_{++}^{\rm low}]\subset \{(\tau,\xi)|\ |\tau-\xi^4|\ll N_1^4\}$. 
Because $N_1\sim N_2\sim N_3\sim N_4\gg N_5$, 
there exist $C_i>0$ and $r_i\in \R$ satisfying $|r_i|\ll N_1$, 
such that
\[
\xi_i=C_iN_1+r_i\ \ (i=1,2,3,4),\ \ \xi_5=r_5.
\]
This implies
\begin{equation}\label{modulation_bd}
\begin{split}
&|(\xi_1+\xi_2+\xi_3+\xi_4\pm \xi_5)^4-(\xi_1^4+\xi_2^4+\xi_3^4+\xi_4^4\pm \xi_5^4)|\\
&\sim |(C_1+C_2+C_3+C_4)^4N_1^4-(C_1^4+C_2^4+C_3^4+C_4^4)N_1^4|\\
&\sim  N_1^4
\end{split}
\end{equation}
since $(C_1+C_2+C_3+C_4)^4>C_1^4+C_2^4+C_3^4+C_4^4$. 
Therefore, 
at least one of $\tau_i-\xi_i^4$ $(i=1,2,3,4)$ and $\tau_5 \mp \xi_5^4$ is larger than $N_1^4$. 
By the symmetry, we can assume 
${\rm supp}\F_{t,x}[P_{N_1}^+u_1]\subset \{(\tau,\xi)|\ |\tau-\xi^4|\gtrsim N_1^4\}$ 
or ${\rm supp}\F_{t,x}[P_{N_5}u_5]\subset \{(\tau,\xi)|\ |\tau\mp\xi^4|\gtrsim N_1^4\}$. 
For the former case, we have
\[
\|P_{N_1}^+u_1\|_{L^2_{tx}}
\lesssim \sum_{A\gtrsim N_1^4}\|Q_AP_{N_1}^+u_1\|_{L^2_{tx}}
\lesssim N_1^{-2}\sup_{A\in 2^\Z}A^{\frac{1}{2}}\|Q_AP_{N_1}^+u_1\|_{L^2_{tx}}
=N_1^{-2}\|P_{N_1}^+u_1\|_{\dot{X}^{0,\frac{1}{2},\infty}}
\lesssim N_1^{-2}\|P_{N_1}^+u_1\|_{X_{N_1}}
\]
by Proposition~\ref{dual_besov}. 
Therefore, by the definition of the norm $\|\cdot\|_{Y_N}$, the H\"older inequality, 
Bernstein inequality, and Proposition~\ref{lin_XN}, we obtain
\begin{equation}\label{Jppl_est_1}
\begin{split}
\left\|P_NJ_{++}^{\rm low}\right\|_{Y_N}
&\le 
N^{-\frac{3}{2}}\left\|J_{++}^{\rm low}\right\|_{L^1_xL^2_t}\\
&\le N^{-\frac{3}{2}}\|P_{N_1}^+u_1\|_{L^2_{x,t}}
\|P_{N_2}^+u_2\|_{L^4_xL^{\infty}_t}\|P_{N_3}^+u_3\|_{L^4_xL^{\infty}_t}
\|P_{N_4}^+u_4\|_{L^{\infty}_{x,t}}\|P_{N_5}u_5\|_{L^{\infty}_{x,t}}\\
&\lesssim N^{-\frac{3}{2}}N_1^{-2}N_2^{\frac{1}{4}}N_3^{\frac{1}{4}}N_4^{\frac{1}{2}}
N_5^{\frac{1}{2}}
\prod_{i=1}^5\|P_{N_i}u_i\|_{X_{N_i}}\\
&\sim N^{-\frac{3}{2}}N_1^{-s-2}N_4^{\frac{1}{4}}N_5^{\frac{1}{4}}\prod_{i=1}^5c_{i,N_i}.
\end{split}
\end{equation}
For the later case, we have
\[
\|P_{N_5}\widetilde{u_5}\|_{L^2_{t,x}}
\lesssim \sum_{A\gtrsim N_1^4}\|Q_AP_{N_5}u_5\|_{L^2_{t,x}}
\lesssim N_1^{-2}\sup_{A\in 2^\Z}A^{\frac{1}{2}}\|Q_AP_{N_5}u_5\|_{L^2_{t,x}}
=N_1^{-2}\|P_{N_5}u_5\|_{\dot{X}^{0,\frac{1}{2},\infty}}
\lesssim N_1^{-2}\|P_{N_5}u_5\|_{X_{N_5}}
\]
by Proposition~\ref{dual_besov}. 
Now, we used
\[
\|\psi_{A}(\tau +\xi^4)\F_{tx}[P_{N_5}\overline{u_5}]\|_{L^2_{t,x}}
\sim \|\psi_A(\tau -\xi^4)\F_{tx}[P_{N_5}u_5]\|_{L^2_{\tau,\xi}}
\]
when $\widetilde{u_5}=\overline{u_5}$ and $|\tau_5+\xi_5^4|\gtrsim N_1^4$. 
Therefore, by the H\"older inequality, 
the Sobolev inequality, and the Bernstein inequality, we have
\[
\|P_{N_3}^+u_3P_{N_4}^+u_4P_{N_5}\widetilde{u_5}\|_{L^2_{t,x}}
\lesssim \|P_{N_3}^+u_3\|_{L^{\infty}_tL^4_x}\|P_{N_4}^+u_4\|_{L^{\infty}_tL^4_x}
\|P_{N_5}\widetilde{u_5}\|_{L^{2}_tL^{\infty}_x}
\lesssim N_3^{\frac{1}{4}}N_4^{\frac{1}{4}}N_5^{\frac{1}{2}}N_1^{-2}
\prod_{i=3}^5\|P_{N_i}u_i\|_{X_{N_i}}. 
\]
Therefore, by the definition of the norm $\|\cdot\|_{Y_N}$, the H\"older inequality, 
and Proposition~\ref{lin_XN}, we obtain
\begin{equation}\label{Jppl_est_2}
\begin{split}
\left\|P_NJ_{++}^{\rm low}\right\|_{Y_N}
&\le 
N^{-\frac{3}{2}}\left\|J_{++}^{\rm low}\right\|_{L^1_xL^2_t}\\
&\le N^{-\frac{3}{2}}\|P_{N_1}^+u_1\|_{L^4_{x}L^{\infty}_t}
\|P_{N_2}^+u_2\|_{L^4_xL^{\infty}_t}\|P_{N_3}^+u_3P_{N_4}^+u_4P_{N_5}\widetilde{u_5}\|_{L^2_{x,t}}\\
&\lesssim N^{-\frac{3}{2}}N_1^{\frac{1}{4}}N_2^{\frac{1}{4}}
N_3^{\frac{1}{4}}N_4^{\frac{1}{4}}N_5^{\frac{1}{2}}N_1^{-2}
\prod_{i=1}^5\|P_{N_i}u_i\|_{X_{N_i}}\\
&\sim N^{-\frac{3}{2}}N_1^{-s-\frac{7}{4}}N_5^{\frac{1}{4}}\prod_{i=1}^5c_{i,N_i}.
\end{split}
\end{equation}
Because $N_1\sim N_4$ and $N\lesssim N_1$, 
We have
\begin{equation}\label{Jpp_est}
N^{s+3}\left\|P_NJ_{++}\right\|_{Y_N}
\lesssim \left\{\left(\frac{N}{N_1}\right)^{s+3}
+
\left(\frac{N}{N_1}\right)^{s+\frac{3}{2}}\right\}
\left(\frac{N_5}{N_1}\right)^{\frac{1}{4}}\prod_{i=1}^5c_{i,N_i}
\lesssim \left(\frac{N}{N_1}\right)^{s+\frac{3}{2}}
\left(\frac{N_5}{N_1}\right)^{\frac{1}{4}}\prod_{i=1}^5c_{i,N_i}.
\end{equation}
by (\ref{Jpph_est}), (\ref{Jppl_est_1}), and (\ref{Jppl_est_2}).
By the same argument, we obtain 
\begin{equation}\label{Jmm_est}
\left\|P_NJ_{--}\right\|_{Y_N}
\lesssim \left(\frac{N}{N_1}\right)^{s+\frac{3}{2}}
\left(\frac{N_5}{N_1}\right)^{\frac{1}{4}}\prod_{i=1}^5c_{i,N_i}.
\end{equation}
As a result, by (\ref{Jpm_est}), (\ref{Jpp_est}), and (\ref{Jmm_est}), we have
\[
\begin{split}
I_{4,2}&\lesssim \sum_N\sum_{(N_1,\cdots , N_5)\in \Phi_{4,2}}N^{s+3}
\left(\left\|P_NJ_{+-}\right\|_{Y_N}+\left\|P_NJ_{++}\right\|_{Y_N}+\left\|P_NJ_{--}\right\|_{Y_N}
\right)\\
&\lesssim \sum_{N_1}\sum_{N\lesssim N_1}\sum_{N_2\sim N_1}\sum_{N_3\sim N_1}
\sum_{N_4 \sim N_1}\sum_{N_5\ll N_1}
 \left(\frac{N}{N_1}\right)^{s+\frac{3}{2}}
\left(\frac{N_5}{N_1}\right)^{\frac{1}{4}}
\prod_{i=1}^5c_{i,N_i}\\
&\lesssim \prod_{i=1}^5\left(\sum_{N_i}c_{i,N_i}^2\right)^{\frac{1}{2}}.
\end{split}
\]
\\

\noindent \underline{{\bf Case\ 2.}\ $K=1$}

By the symmetry, we can assume 
$\widetilde{u_i}=u_i$ $(i=1,2,3)$, $\widetilde{u_4}=\overline{u_4}$, 
and we only have to prove the estimates for
\[
\begin{split}
  J_{1\pm}&:= P_{N_1}^{\pm}u_1P_{N_2}u_2P_{N_3}u_3P_{N_4}^{\pm}\overline{u_4}P_{N_5}\widetilde{u_5},\\
  J_{2\pm}&:= P_{N_1}^{\pm}u_1P_{N_2}^{\pm}u_2P_{N_3}^{\pm}u_3P_{N_4}^{\mp}\overline{u_4}P_{N_5}\widetilde{u_5}.
\end{split}
\]
We first prove the estimate for $J_{1\pm}$. 
Because $\xi_1$ and $\xi_4$ are same sign 
for $\xi_1\in {\rm supp}\F_x[P_{N_1}^{\pm}u_1]$ 
and $\xi_4\in {\rm supp}\F_x[P_{N_4}^{\pm}\overline{u_4}]$, it holds that 
\[
|\xi_1+\xi_4|=|\xi_1|+|\xi_4|\ge \max\{|\xi_1|, |\xi_4|\}\sim N_1.
\]
It implies that
\[
\|P_{N_1}^{\pm}u_1P_{N_4}^{\pm}\overline{u_4}\|_{L^2_{t,x}}
\lesssim N_1^{-\frac{3}{2}}\|P_{N_1}^{\pm}u_1\|_{X_{N_1}}\|P_{N_4}^{\pm}u_4\|_{X_{N_4}}
\]
by (\ref{bistri_2}) with $L=N_1$. 
Therefore, we can treat $J_{1\pm}$ by the same way 
for $J_{+-}$ in Case 1. 

Next, we prove the estimates for $J_{2+}$. 
We put
\[
J_{2+}^{\rm high}:=\sum_{A\gtrsim N_1^4}Q_AJ_{2+},\ \ 
J_{+2}^{\rm low}:=\sum_{A\ll N_1^4}Q_AJ_{2+}. 
\]
We have to only consider $J_{2+}^{\rm low}$
because
we can treat $J_{2+}^{\rm high}$ by the same way 
for $J_{++}^{\rm high}$ in Case 1. 
Let $(\tau_i,\xi_i)\in {\rm supp}\F_{t,x}[P_{N_i}^+u_i]$ $(i=1,2,3)$, 
$(\tau_4,\xi_4)\in {\rm supp}\F_{t,x}[P_{N_4}^-u_4]$, 
$(\tau_5,\xi_5)\in {\rm supp}\F_{t,x}[P_{N_5}u_5]$. 
Then,
\[
|\tau_1+\tau_2+\tau_3-\tau_4\pm \tau_5
-(\xi_1+\xi_2+\xi_3-\xi_4\pm \xi_5)^4|
\ll N_1^4
\]
since ${\rm supp}\F_{t,x}[J_{2+}^{\rm low}]\subset \{(\tau,\xi)|\ |\tau-\xi^4|\ll N_1^4\}$. 
Because $N_1\sim N_2\sim N_3\sim N_4\gg N_5$, 
there exist $C_i>0$ and $r_i\in \R$ satisfying $|r_i|\ll N_1$, 
such that
\[
\xi_i=C_iN_1+r_i\ \ (i=1,2,3),\ \ \xi_4=-C_4N_1+r_4,\ \ \xi_5=r_5.
\]
It implies
\[
\begin{split}
&|(\xi_1+\xi_2+\xi_3- \xi_4\pm \xi_5)^4-(\xi_1^4+\xi_2^4+\xi_3^4-\xi_4^4\pm \xi_5^4)|\\
&\sim |(C_1+C_2+C_3+C_4)^4N_1^4-(C_1^4+C_2^4+C_3^4-C_4^4)N_1^4|\\
&\sim  N_1^4
\end{split}
\]
since $(C_1+C_2+C_3+C_4)^4>C_1^4+C_2^4+C_3^4-C_4^4$. 
Therefore, we can treat $J_{2}^{\rm low}$ by the same way 
for $J_{++}^{\rm low}$ in Case 1. 
$J_4$ also can be treated by the same way. \\

\noindent \underline{{\bf Case\ 3.}\ $K=2$}

By the symmetry, we can assume 
$\widetilde{u_i}=u_i$ $(i=1,3)$ and $\widetilde{u_i}=\overline{u_i}$ $(i=2,4)$. 
Because $N\gg N_5$, it holds that
\[
|\xi_1+\xi_2+\xi_3+\xi_4|\sim N
\]
for $\xi_i\in {\rm supp}\F_x[P_{N_i}u_i]$ $(i=1,3)$  
and $\xi_i\in {\rm supp}\F_x[P_{N_i}\overline{u_i}]$ $(i=2,4)$. 
Therefore, at least one of $|\xi_1+\xi_2|$ and $|\xi_3+\xi_4|$ 
is larger than $N$. 
By the symmetry, we can assume $|\xi_1+\xi_2|\gtrsim N$. 
Then, we have
\[
\|P_{N_1}u_1P_{N_2}\overline{u_2}\|_{L^2_{tx}}
\lesssim N_1^{-1}N^{-\frac{1}{2}}\|P_{N_1}u_1\|_{X_{N_1}}\|P_{N_2}u_2\|_{X_{N_2}}
\]
by (\ref{bistri_2}) with $L=N$. 
Therefore, we can treat $I_{4,2}$ by the same way 
for $J_{+-}$ in Case 1. 
Indeed, by the definition of the norm $\|\cdot\|_{Y_N}$, the H\"older inequality, 
and Proposition~\ref{lin_XN}, we obtain
\[
\begin{split}
\left\|P_N\left(\prod_{i=1}^5P_{N_i}\widetilde{u_i}\right)\right\|_{Y_N}
&\le 
N^{-\frac{3}{2}}
\left\|P_N(P_{N_1}u_1P_{N_2}\overline{u_2}P_{N_3}u_3P_{N_4}\overline{u_4}P_{N_5}\widetilde{u_5})
\right\|_{L^1_xL^2_t}\\
&\le N^{-\frac{3}{2}}\|P_{N_1}u_1P_{N_2}\overline{u_2}\|_{L^2_{t,x}}\|P_{N_3}u_3\|_{L^4_xL^{\infty}_t}
\|P_{N_4}u_4\|_{L^4_xL^{\infty}_t}\|P_{N_5}u_5\|_{L^{\infty}_{x,t}}\\
&\lesssim N^{-\frac{3}{2}}N_1^{-1}N^{-\frac{1}{2}}N_3^{\frac{1}{4}}N_4^{\frac{1}{4}}N_5^{\frac{1}{2}}
\prod_{i=1}^5\|P_{N_i}u_i\|_{X_{N_i}}\\
&\sim N^{-2}N_1^{-s-1}N_2^{-\frac{1}{4}}N_5^{\frac{1}{4}}\prod_{i=1}^5c_{i,N_i}.
\end{split}
\]
It implies
\[
I_{4,2}\lesssim \prod_{i=1}^5\left(\sum_{N_i}c_{i,N_i}^2\right)^{\frac{1}{2}}. 
\]
\\

As a result, we obtain (\ref{multilin_5_3}) for $k=4$.
Therefore, we get
\begin{equation}\label{multilin_5_3_fin}
\left\|\partial_x^{3}
\prod_{i=1}^{5}\widetilde{u_i}\right\|_{\dot{Y}^{s}}
\lesssim \sum_{i=1}^5\|u_i\|_{\dot{X}^s}\prod_{\substack{1\le k\le 5\\ k\ne i}}\|u_k\|_{\dot{X}^{\frac{1}{4}}}
\end{equation}
for $s\ge 0$. 
The estimate (\ref{multilin_5_hom}) follows from (\ref{multilin_5_3_fin}) with $s=\frac{1}{4}$. 
The estimate (\ref{multilin_5_inhom}) follows from (\ref{multilin_5_3_fin}) with $s=0$ and $s\ge \frac{1}{4}$. 
\end{proof}
\begin{theorem}[Multilinear estimates]\label{mest_4_cri}
{\rm (i)}\ For any $u_1$, $\cdots$, $u_4\in \dot{X}^{-\frac{1}{6}}$, 
it holds that
\begin{equation}\label{multilin_4_hom}
\left\|\partial_x^{2}
\prod_{i=1}^{4}\widetilde{u_i}\right\|_{\dot{Y}^{-\frac{1}{6}}}
\lesssim \prod_{i=1}^4\|u_i\|_{\dot{X}^{-\frac{1}{6}}},
\end{equation}
where $\widetilde{u_i}\in \{u_i,\overline{u_i}\}$. 
\\
{\rm (ii)}\ If $s\ge -\frac{1}{6}$,
then for any $u_1$, $\cdots$, $u_4\in X^s$, 
it holds that
\begin{equation}\label{multilin_4_inhom}
\left\|\partial_x^{2}
\prod_{i=1}^{4}\widetilde{u_i}\right\|_{Y^s}
\lesssim \prod_{i=1}^4\|u_i\|_{X^s},
\end{equation}
where $\widetilde{u_i}\in \{u_i,\overline{u_i}\}$. 
The implicit constant depends only on $s$.
\end{theorem}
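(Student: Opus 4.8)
The plan is to mirror the proof of Theorem~\ref{mest_5_cri} almost verbatim, replacing the exponent $\tfrac14$ by the critical index $s_c=-\tfrac16$ and the number of factors from five to four. After a Littlewood--Paley decomposition we may assume $N_1\ge N_2\ge N_3\ge N_4$, and, writing $c_{1,N_1}:=N_1^{s}\|P_{N_1}u_1\|_{X_{N_1}}$ and $c_{i,N_i}:=N_i^{s_c}\|P_{N_i}u_i\|_{X_{N_i}}$ for $i=2,3,4$, it suffices (for $s\ge0$) to prove the weighted dyadic bound $N^{s+2}\|P_N(\prod_{i=1}^4 P_{N_i}\widetilde{u_i})\|_{Y_N}\lesssim G\cdot c_{1,N_1}c_{2,N_2}c_{3,N_3}c_{4,N_4}$ with a gain $G$ that is summable over all dyadic parameters after Cauchy--Schwarz; the renormalization $\|u_i\|_{\dot X^{s_c}}=\|\overline{u_i}\|_{\dot X^{s_c}}\lesssim1$ then yields (\ref{multilin_4_hom}), and combining the $s=0$ and $s\ge s_c$ versions gives (\ref{multilin_4_inhom}) exactly as in Theorem~\ref{mest_5_cri}. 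I would split the output frequency $N\lesssim N_1$ according to the usual partition $\Phi_1$ ($N\sim N_1\gg N_2$), $\Phi_2$ ($N_1\sim N_2\gg N_3$), $\Phi_3$ ($N_1\sim N_2\sim N_3\gg N_4$), $\Phi_4$ ($N_1\sim N_2\sim N_3\sim N_4$).

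For the frequency-separated regimes $\Phi_1,\Phi_2,\Phi_3$ the scheme is: bound $\|P_N(\prod P_{N_i}\widetilde{u_i})\|_{Y_N}$ by the $Z_N$-component of the $Y_N$-norm in (\ref{fsb}), i.e.\ by $N^{-3/2}\|\prod P_{N_i}u_i\|_{L^1_xL^2_t}$, apply Hölder in the appropriate mixed norm so that one $\gg$-separated bilinear pair is grouped together, and then use the bilinear Strichartz estimate (Theorem~\ref{bilisol}, or Theorem~\ref{bsere} with $L\sim N_1$), the Bernstein inequality (Lemma~\ref{lemBe}), and Proposition~\ref{lin_XN} for the remaining factors. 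Since $s_c=-\tfrac16$ is precisely the index at which the powers balance, this produces, as in Cases (i)--(iii) of the proof of Theorem~\ref{mest_5_cri}, an extra factor of the form $(N_{j}/N_1)^{\theta}$ with $\theta>0$ together with inputs that are $\ell^2$-bounded after renormalization, which is summable in all dyadic indices.

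The genuinely delicate regime is $\Phi_4$. When $N\sim N_1$ the same Hölder/bilinear-Strichartz scheme closes. When $N\ll N_1$, I would decompose each factor by the sign of its spatial frequency, $P_{N_i}^{\pm}:=P_{\pm}P_{N_i}$ with $P_{\pm}$ as in (\ref{proj_plus_minus}), set $K:=\#\{i:\widetilde{u_i}=\overline{u_i}\}$, and reduce by symmetry to $K\in\{0,1,2\}$. If some sign configuration produces a transversal pair---opposite signs among two $u_i$'s, or equal signs among a $u_i$ and an $\overline{u_j}$, so that the relevant frequency sum/difference has size $\sim N_1$---then the refined bilinear Strichartz estimates (\ref{bistri_3}) or (\ref{bistri_2}) with $L\sim N_1$ give the $L^2_{t,x}$ bound with an $N_1^{-3/2}$ gain, and one proceeds as for $J_{+-}$ in Case~1 of Theorem~\ref{mest_5_cri}. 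For the remaining, fully constructive configurations one exploits that on the space-time Fourier support of $P_N(\prod P_{N_i}\widetilde{u_i})$ one has $|\tau-\xi^4|\ll N_1^4$ while $|(\sum_i\pm\xi_i)^4-\sum_i\pm\xi_i^4|\sim N_1^4$, so that at least one input (or, in the conjugated case, the output measured in $\dot X^{0,\frac12,\infty}$) carries modulation $\gtrsim N_1^4$; this gain is harvested through Proposition~\ref{dual_besov} together with the $\dot X^{0,-\frac12,1}$-component of the $Y_N$-norm, exactly as in the treatment of $J_{++}^{\mathrm{high}}$ and $J_{++}^{\mathrm{low}}$.

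I expect the main obstacle to be the bookkeeping of this last step: one must verify, for every sign choice arising in $K=0,1,2$, the elementary but case-heavy dispersion lower bound $(C_1+C_2+C_3+C_4)^4\neq\pm C_1^4\pm C_2^4\pm C_3^4\pm C_4^4$ for the relevant sign patterns (the $C_i>0$ being of comparable size), and to check that every configuration falls into either the transversal-pair branch or the high-modulation branch. A secondary point of care is that $s_c=-\tfrac16<0$, so Bernstein factors must be placed only on the controlled high-frequency inputs and the exponent count checked directly rather than absorbed into a margin; since with four factors and two derivatives the numerology coincides with Tao's quartic gKdV estimate at $\dot H^{-1/6}$ (see the norms introduced in Section~\ref{mesc_1}), the balance does close. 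Finally, the passage to (\ref{multilin_4_inhom}) and the low-frequency piece $P_{\le1}$ are handled precisely as in Section~\ref{mesc_1} and in the proof of Theorem~\ref{mest_5_cri}.
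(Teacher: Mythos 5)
Your proposal matches the paper's proof in all essentials: the partition of output frequency $N$ into $\Phi_1,\dots,\Phi_4$, the use of the $Z_N$-component of $Y_N$ together with refined bilinear Strichartz, H\"older, and Bernstein for the separated regimes, and---for the fully comparable regime $\Phi_4$---the sign decomposition $P_{N_i}^{\pm}$, the classification by $K=\#\{i:\widetilde{u_i}=\overline{u_i}\}$, the transversal-pair branch via $R_L^{\pm}$ with $L\sim N_1$, and the modulation branch via the resonance lower bound together with Proposition~\ref{dual_besov} and the $\dot X^{0,-\frac12,1}$ component of $Y_N$. One point worth making explicit (the paper does so in the remark immediately following Theorem~\ref{mest_4_cri}): in the $K=2$ branch of $\Phi_4$ you must keep $L\sim N_1$ for the matched-sign pair rather than localizing to the output frequency $L\sim N$, as is done in the analogous $K=2$ case of Theorem~\ref{mest_5_cri}; with $s_c=-\tfrac16<0$ the $L\sim N$ choice leaves an unsummable negative power of $N$ over $N\ll N_1$, so your choice of $L\sim N_1$ is exactly the needed modification, and your remark about checking exponents directly because $s_c<0$ is the right warning.
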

\begin{proof}
Let $s\ge -\frac{1}{6}$. 
For $i=1,\cdots, m$ and $N_i\in 2^{\Z}$, we set
$c_{1,N_1}:=N_1^s\|P_{N_1}u_1\|_{X_{N_1}}$,\ 
$c_{i,N_i}:=N_i^{-\frac{1}{6}}\|P_{N_i}u_i\|_{X_{N_i}}$ $(i=2,3,4)$. 
We define
\[
\begin{split}
I_1&:=\left\{\sum_{N\in 2^{\Z}} N^{2s+4}\left(\sum_{(N_1,\cdots , N_5)\in \Phi_1}\left\|
   \prod_{i=1}^{4}P_{N_i}u_i\right\|_{Y_N}\right)^2\right\}^{\frac{1}{2}},\\
I_k&:=\sum_{N\in 2^{\Z}} N^{s+2}\sum_{(N_1,\cdots , N_5)\in \Phi_k}\left\|
   \prod_{i=1}^{4}P_{N_i}u_i\right\|_{Y_N}\ \ (k=2,3,4),
\end{split}
\]
where
\[
\begin{split}
\Phi_1&:=\{(N_1,\cdots,N_4)|\ N_1\ge \cdots \ge N_4,\ N\sim N_1\gg N_{2}\},\\
\Phi_k&:=\{(N_1,\cdots,N_4)|\ N_1\ge \cdots \ge N_4,\ N_1\gtrsim N,\ N_1\sim \cdots N_k\gg N_{k+1}\}\ (k=2,3),\\
\Phi_4&:=\{(N_1,\cdots,N_4)|\ N_1\ge \cdots \ge N_4,\ N\lesssim N_1\sim \cdots \sim N_4\}.
\end{split}
\]
We prove
\begin{equation}\label{2m4_multiest_desired}
    I_k\lesssim \prod_{i=1}^4\left(\sum_{N_i}c_{i,N_i}^2\right)^{\frac{1}{2}}.
\end{equation}

First, we assume $(N_1,N_2,N_3,N_4)\in \Phi_1\cup \Phi_2\cup \Phi_3$. 
Then, $N_1\gg N_4$. 
By the definition of the norm $\|\cdot\|_{Y_N}$, the H\"older inequality, (\ref{bistri_3}) with $L=N_1$, 
Bernstein inequality, and Proposition~\ref{lin_XN}, we have
\[
\begin{split}
N^{s+2}\left\|\prod_{i=1}^4P_{N_i}u_i\right\|_{Y_N}
&\le 
N^{s+\frac{1}{2}}\left\|\prod_{i=1}^4P_{N_i}u_i\right\|_{L^1_xL^2_t}\\
&\le N^{s+\frac{1}{2}}\|P_{N_1}u_1P_{N_4}u_4\|_{L^2_{x,t}}\|P_{N_2}u_2\|_{L^4_xL^{\infty}_t}
\|P_{N_3}u_3\|_{L^4_xL^{\infty}_t}\\
&\lesssim N^{s+\frac{1}{2}}N_1^{-\frac{3}{2}}N_2^{\frac{1}{4}}N_3^{\frac{1}{4}}
\prod_{i=1}^4\|P_{N_i}u_i\|_{X_{N_i}}\\
&\sim N^{s+\frac{1}{2}}N_1^{-s-\frac{3}{2}}N_2^{\frac{5}{12}}N_3^{\frac{5}{12}}
N_4^{\frac{1}{6}}\prod_{i=1}^4c_{i,N_i}\\
&= \left(\frac{N}{N_1}\right)^{s+\frac{1}{2}}\frac{N_2^{\frac{5}{12}}N_3^{\frac{5}{12}}
N_4^{\frac{1}{6}}}{N_1}\prod_{i=1}^4c_{i,N_i}. 
\end{split}
\]
We note that $s+\frac{1}{2}>0$ for $s\ge -\frac{1}{6}$. 
Therefore, we obtain (\ref{2m4_multiest_desired}) for $k=1,2,3$
by the same argument as in the proof of Theorem~\ref{multi_est_3mg2}. 

Next, we consider $I_4$. 
We put $P_{N_i}^{\pm}:=P_{\pm}P_{N_i}$, 
where $P_+$ and $P_{-}$ are 
defined in (\ref{proj_plus_minus}). 
Then we have
\[
\prod_{i=1}^4P_{N_i}\widetilde{u_i}=
\prod_{i=1}^4(P_{N_i}^+\widetilde{u_i}+P_{N_i}^-\widetilde{u_i}). 
\]
We define
\[
K:=\#\{i\in \{1,2,3,4\}|\ \widetilde{u_i}=\overline{u_i}\}. 
\]
For the cases $K=0,1,3,4$, 
we can obtain (\ref{2m4_multiest_desired}) for $I_4$  
by almost same way to the proof of Theorem~\ref{mest_5_cri}.  
Therefore, we only give the proof for the case $K=2$. 

By the symmetry, we can assume 
$\widetilde{u_i}=u_i$ $(i=1,3)$ and $\widetilde{u_i}=\overline{u_i}$ $(i=2,4)$. 
We only have to prove the estimates for
\[
\begin{split}
  J_{1\pm}&:= P_{N_1}^{\pm}u_1P_{N_2}^{\pm}\overline{u_2}P_{N_3}u_3P_{N_4}\overline{u_4},\\
  J_{2\pm}&:= P_{N_1}^{\pm}u_1P_{N_2}^{\mp}\overline{u_2}P_{N_3}^{\pm}u_3P_{N_4}^{\mp}\overline{u_4}.
\end{split}
\]
We first prove the estimate for $J_{1\pm}$. 
Because $\xi_1$ and $\xi_2$ are same sign 
for $\xi_1\in {\rm supp}\F_x[P_{N_1}^{\pm}u_1]$ 
and $\xi_2\in {\rm supp}\F_x[P_{N_2}^{\pm}\overline{u_2}]$, 
it holds that
\[
|\xi_1+\xi_2|=|\xi_1|+|\xi_2|\ge \max\{|\xi_1|, |\xi_2|\}\sim N_1.
\]
It implies that
\[
\|P_{N_1}^{\pm}u_1P_{N_2}^{\pm}\overline{u_2}\|_{L^2_{t,x}}
\lesssim N_1^{-\frac{3}{2}}\|P_{N_1}^{\pm}u_1\|_{X_{N_1}}\|P_{N_2}^{\pm}u_2\|_{X_{N_2}}
\]
by (\ref{bistri_2}) with $L=N_1$. 
Therefore, by the definition of the norm $\|\cdot\|_{Y_N}$, the H\"older inequality, 
Bernstein inequality, and Proposition~\ref{lin_XN}, we obtain
\[
\begin{split}
N^{s+2}\left\|P_NJ_{1\pm}\right\|_{Y_N}
&\le 
N^{s+\frac{1}{2}}\left\|J_{1\pm}\right\|_{L^1_xL^2_t}\\
&\le N^{s+\frac{1}{2}}\|P_{N_1}^{\pm}u_1P_{N_2}^{\pm}\overline{u_2}\|_{L^2_{x,t}}\|P_{N_3}u_3\|_{L^4_xL^{\infty}_t}
\|P_{N_4}u_4\|_{L^4_xL^{\infty}_t}\\
&\lesssim N^{s+\frac{1}{2}}N_1^{-\frac{3}{2}}N_3^{\frac{1}{4}}N_4^{\frac{1}{4}}
\prod_{i=1}^4\|P_{N_i}u_i\|_{X_{N_i}}\\
&\sim N^{s+\frac{1}{2}}N_1^{-s-\frac{3}{2}}N_2^{\frac{1}{6}}N_3^{\frac{5}{12}}N_4^{\frac{5}{12}}
\prod_{i=1}^4c_{i,N_i}.
\end{split}
\]
Because $N_1\sim N_2\sim N_3\sim N_4$, 
we have
\begin{equation}\label{Jpm_est_24}
N^{s+2}\left\|P_NJ_{1\pm}\right\|_{Y_N}
\lesssim \left(\frac{N}{N_1}\right)^{s+\frac{1}{2}}\prod_{i=1}^4c_{i,N_i}.
\end{equation}

Next, we prove the estimates for $J_{2+}$. 
We put
\[
J_{2+}^{\rm high}:=\sum_{A\gtrsim N_1^4}Q_AJ_{2+},\ \ 
J_{2+}^{\rm low}:=\sum_{A\ll N_1^4}Q_AJ_{2+}. 
\]
We first consider $J_{2+}^{\rm high}$. 
Because
\[
\|J_{2+}^{\rm high}\|_{\dot{X}^{0,-\frac{1}{2},1}}
\sim \sum_{A\gtrsim N_1^4}A^{-\frac{1}{2}}\|Q_AJ_{2+}^{\rm high}\|_{L^2_{t,x}}
\lesssim N_1^{-2}\|J_{2+}\|_{L^2_{t,x}},
\]
by the definition of the norm $\|\cdot\|_{Y_N}$, the H\"older inequality, 
Bernstein inequality, and Proposition~\ref{lin_XN}, we obtain
\[
\begin{split}
N^{s+2}\|P_NJ_{2+}^{\rm high}\|_{Y_N}
&\lesssim \|J_{2+}^{\rm high}\|_{\dot{X}^{0,-\frac{1}{2},1}}\\
&\lesssim N^{s+2}N_1^{-2}\|P_{N_1}^+u_1\|_{L^{\infty}_xL^{2}_t}
\|P_{N_2}^-u_2\|_{L^{4}_xL^{\infty}_t}\|P_{N_3}^+u_3\|_{L^{4}_xL^{\infty}_t}
\|P_{N_4}^-u_4\|_{L^{\infty}_{x,t}}\\
&\lesssim N^{s+2}N_1^{-2}N_1^{-\frac{3}{2}}N_2^{\frac{1}{4}}N_3^{\frac{1}{4}}
N_4^{\frac{1}{2}}\prod_{i=1}^4\|P_{N_i}u_i\|_{X_{N_i}}\\
&\lesssim N^{s+2}N_1^{-\frac{7}{2}-s}N_2^{\frac{5}{12}}N_3^{\frac{5}{12}}N_4^{\frac{2}{3}}\prod_{i=1}^4c_{i,N_i}.
\end{split}
\]
Because $N_1\sim N_2\sim N_3\sim N_4$, we have
\begin{equation}\label{Jpph_est_24}
N^{s+2}\|P_NJ_{2+}^{\rm high}\|_{Y_N}\lesssim 
\left(\frac{N}{N_1}\right)^{s+2}\prod_{i=1}^4c_{i,N_i}.
\end{equation}

Finally, we consider $J_{2+}^{\rm low}$. 
Let $(\tau_i,\xi_i)\in {\rm supp}\F_{t,x}[P_{N_i}^{+}u_i]$ $(i=1,3)$, 
$(\tau_i,\xi_i)\in {\rm supp}\F_{t,x}[P_{N_i}^{-}u_i]$ $(i=2,4)$. 
Then,
\[
|\tau_1-\tau_2+\tau_3-\tau_4
-(\xi_1-\xi_2+\xi_3-\xi_4)^4|
\ll N_1^4
\]
since ${\rm supp}\F_{t,x}[J_{2+}^{\rm low}]\subset \{(\tau,\xi)|\ |\tau-\xi^4|\ll N_1^4\}$. 
Because $N_1\sim N_2\sim N_3\sim N_4$, 
there exist $C_i>0$ and $r_i\in \R$ satisfying $|r_i|\ll N_1$, 
such that
\[
\xi_i=C_iN_1+r_i\ \ (i=1,3),\ \ 
\xi_i=-C_iN_1+r_i\ \ (i=2,4).
\]
This implies
\[
\begin{split}
&|(\xi_1-\xi_2+\xi_3+\-\xi_4)^4-(\xi_1^4-\xi_2^4+\xi_3^4-\xi_4^4)|\\
&\sim |(C_1+C_2+C_3+C_4)^4N_1^4-(C_1^4-C_2^4+C_3^4-C_4^4)N_1^4|\\
&\sim  N_1^4
\end{split}
\]
since $(C_1+C_2+C_3+C_4)^4>C_1^4-C_2^4+C_3^4-C_4^4$. 
Therefore, 
at least one of $\tau_i-\xi_i^4$ $(i=1,3)$ and  
$\tau_i+\xi_i^4$ $(i=2,4)$ is larger than $N_1^4$. 
If 
${\rm supp}\F_{t,x}[P_{N_i}^+u_i]\subset \{(\tau,\xi)|\ |\tau-\xi^4|\gtrsim N_1^4\}$ $(i=1,3)$, 
then, we have
\[
\|P_{N_i}^+u_i\|_{L^2_{t,x}}
\lesssim \sum_{A\gtrsim N_1^4}\|Q_AP_{N_i}u_i\|_{L^2_{t,x}}
\lesssim N_1^{-2}\sup_{A\in 2^\Z}A^{\frac{1}{2}}\|Q_AP_{N_i}u_i\|_{L^2_{t,x}}
=N_1^{-2}\|P_{N_i}u_i\|_{\dot{X}^{0,\frac{1}{2},\infty}}
\lesssim N_1^{-2}\|P_{N_i}u_i\|_{X_{N_1}}
\]
by Proposition~\ref{dual_besov}. 
If 
${\rm supp}\F_{t,x}[P_{N_i}^-u_i]\subset \{(\tau,\xi)|\ |\tau+\xi^4|\gtrsim N_1^4\}$ $(i=2,4)$, 
then, we have
\[
\|P_{N_i}^-\overline{u_i}\|_{L^2_{tx}}
\lesssim \sum_{A\gtrsim N_1^4}\|Q_AP_{N_i}u_i\|_{L^2_{tx}}
\lesssim N_1^{-2}\sup_{A\in 2^\Z}A^{\frac{1}{2}}\|Q_AP_{N_i}u_i\|_{L^2_{tx}}
=N_1^{-2}\|P_{N_i}u_i\|_{\dot{X}^{0,\frac{1}{2},\infty}}
\lesssim N_1^{-2}\|P_{N_i}u_i\|_{X_{N_1}}
\]
by Proposition~\ref{dual_besov}. Now, we used
\[
\|\psi_{A}(\tau +\xi^4)\F_{tx}[P_{N_i}^-\overline{u_i}]\|_{L^2_{t,x}}
\sim \|\psi_A(\tau -\xi^4)\F_{tx}[P_{N_i}^-u_i]\|_{L^2_{\tau,\xi}}.
\]
We only assume the case 
${\rm supp}\F_{t,x}[P_{N_1}^+u_1]\subset \{(\tau,\xi)|\ |\tau-\xi^4|\gtrsim N_1^4\}$ 
because the other cases are same. 
By the definition of the norm $\|\cdot\|_{Y_N}$, the H\"older inequality, the Bernstein inequality and Proposition~\ref{lin_XN}, we obtain
\[
\begin{split}
N^{s+2}\left\|P_NJ_{2+}^{\rm low}\right\|_{Y_N}
&\le 
N^{s+2}N^{-\frac{3}{2}}\left\|J_{2+}^{\rm low}\right\|_{L^1_xL^2_t}\\
&\le N^{s+\frac{1}{2}}\|P_{N_1}^+u_1\|_{L^2_{x,t}}
\|P_{N_2}^-u_2\|_{L^4_xL^{\infty}_t}\|P_{N_3}^+u_3\|_{L^4_xL^{\infty}_t}
\|P_{N_4}^-u_4\|_{L^{\infty}_{x,t}}\\
&\lesssim N^{s+\frac{1}{2}}N_1^{-2}N_2^{\frac{1}{4}}N_3^{\frac{1}{4}}N_4^{\frac{1}{2}}
\prod_{i=1}^4\|P_{N_i}u_i\|_{X_{N_i}}\\
&\sim N^{s+\frac{1}{2}}N_1^{-s-2}N_2^{\frac{5}{12}}N_3^{\frac{5}{12}}
N_4^{\frac{2}{3}}\prod_{i=1}^4c_{i,N_i}.
\end{split}
\]
Because $N_1\sim N_2\sim N_3\sim N_4$, 
we have
\begin{equation}\label{Jpp_est_24}
\left\|P_NJ_{2+}\right\|_{Y_N}
\lesssim \left(\frac{N}{N_1}\right)^{s+2}\prod_{i=1}^4c_{i,N_i}.
\end{equation}
By the same argument, we obtain 
\begin{equation}\label{Jmm_est_24}
\left\|P_NJ_{2-}\right\|_{Y_N}
\lesssim \left(\frac{N}{N_1}\right)^{s+2}\prod_{i=1}^4c_{i,N_i}.
\end{equation}
As a result, by (\ref{Jpm_est_24}), (\ref{Jpp_est_24}), and (\ref{Jmm_est_24}), we have
\[
\begin{split}
I_{4}&\lesssim \sum_N\sum_{(N_1,\cdots , N_4)\in \Phi_{4}}N^{s+2}
\left(\left\|P_NJ_{1+}\right\|_{Y_N}+\left\|P_NJ_{1-}\right\|_{Y_N}
+\left\|P_NJ_{2+}\right\|_{Y_N}+\left\|P_NJ_{2-}\right\|_{Y_N}\right)\\
&\lesssim \sum_{N_1}\sum_{N\lesssim N_1}\sum_{N_2\sim N_1}\sum_{N_3\sim N_1}
\sum_{N_4 \sim N_1}
\left(\frac{N}{N_1}\right)^{s+2}
\prod_{i=1}^4c_{i,N_i}\\
&\lesssim \prod_{i=1}^4\left(\sum_{N_i}c_{i,N_i}^2\right)^{\frac{1}{2}}
\end{split}
\]
because $s+2>0$ for $s\ge -\frac{1}{6}$. 
\end{proof}
\begin{remark}
We cannot obtain the same estimate for $I_4$ 
when $K=2$ by using (\ref{bistri_2}) with $L=N$
like as the proof of Theorem~\ref{mest_5_cri}. 
Indeed, if we use (\ref{bistri_2}), 
then the power of $N$ becomes negative for $s<0$. 
Therefore, we cannot sum up with respect to $N$. 
\end{remark}
\begin{remark}\label{multi_est_time}
We can also obtain 
\[
\left\|\partial_x^{2}
\prod_{i=1}^{4}\widetilde{u_i}\right\|_{\dot{Y}^{s}(T)}
\lesssim T^{\delta}
\sum_{k=1}^4\|u_k\|_{\dot{X}^{s}(T)}
\prod_{\substack{1\le i\le 4\\ i\ne k}}\|u_i\|_{\dot{X}^{-\frac{1}{6}+\rho}(T)}
\]
for any $0<T<1$, $s>-\frac{1}{6}$, $0<\rho <\frac{1}{6}$, and some 
$\delta =\delta (\rho )>0$ 
by using Corollary~\ref{bilin_T_cor2} 
with $L=N_1$, $\theta =3\rho$ for $I_1$, $I_2$, $I_3$, 
and $J_{1\pm}$, 
\begin{equation}\label{interp_T_1}
\|P_{N_1}u_1\|_{L^{\infty}_xL^2_T}
\lesssim T^{\theta}
\|P_{N_1}u_1\|_{L^{\infty}_xL^\frac{2}{1-2\theta}_T}
\lesssim T^{\theta} N_1^{-\frac{3}{2}+4\theta}\|P_{N_1}u_1\|_{X_{N_1}(T)}
\end{equation}
with $\theta =\frac{3\rho}{4}$ for $J_{2+}^{\rm high}$, 
and
\begin{equation}\label{interp_T_2}
\|P_{N_1}^+u_1\|_{L^{2}_{T}L^2_{x}}
\lesssim T^{\frac{\theta}{2}} N_1^{-2+\theta}\|P_{N_1}u_1\|_{X_{N_1}(T)}
\end{equation}
with $\theta =3\rho$ for $J_{2+}^{\rm low}$ 
in the proof of Theorem~\ref{mest_4_cri}. 
The second estimate in (\ref{interp_T_1}) 
can be obtained by the interpolation between 
the following estimates
\[
\|P_{N_1}u_1\|_{L^{\infty}_xL^{\infty}_T}
\lesssim N_1^{\frac{1}{2}}\|P_{N_1}u_1\|_{L^{\infty}_TL^{2}_x}
\lesssim N_1^{\frac{1}{2}}\|P_{N_1}u_1\|_{X_{N_1}(T)}
,\ \ 
\|P_{N_1}u_1\|_{L^{\infty}_xL^{2}_T}
\lesssim N_1^{-\frac{3}{2}}\|P_{N_1}u_1\|_{X_{N_1}(T)}.
\]
The estimate (\ref{interp_T_2}) can be obtained by 
the interpolation between the following estimates. 
\[
\|P_{N_1}^+u_1\|_{L^{2}_{T}L^2_{x}}
\lesssim N_1^{-2}\|P_{N_1}u_1\|_{X_{N_1}(T)}
,\ \ 
\|P_{N_1}^+u_1\|_{L^{2}_{T}L^2_{x}}
\lesssim T^{\frac{1}{2}}\|P_{N_1}u_1\|_{X_{N_1}(T)}.
\]
\end{remark}
\begin{theorem}[Multilinear estimates]\label{mest_4_cri_TM}
{\rm (i)}\ For any $u_1$, $\cdots$, $u_4\in \dot{X}^{-\frac{1}{6}}$, 
it holds that
\begin{equation}\label{multilin_4_hom_TM}
\left\|\partial_x^{2}
\left(\prod_{i=1}^{4}\widetilde{u_i}
-\prod_{i=1}^{4}P_{\ge M}\widetilde{u_i}\right)\right\|_{\dot{Y}^{-\frac{1}{6}}}
\lesssim T^{\delta}M^{\kappa}\prod_{i=1}^4\|u_i\|_{\dot{X}^{-\frac{1}{6}}}
\end{equation}
for some $\delta >0$ and $\kappa >0$. 
\\
{\rm (ii)}\ 
For any $u_1$, $\cdots$, $u_4\in X^{-\frac{1}{6}}$, 
it holds that
\begin{equation}\label{multilin_4_inhom_TM}
\left\|\partial_x^{2}
\left(\prod_{i=1}^{4}\widetilde{u_i}
-\prod_{i=1}^{4}P_{\ge M}\widetilde{u_i}\right)\right\|_{Y^{-\frac{1}{6}}}
\lesssim T^{\delta}M^{\kappa}\prod_{i=1}^4\|u_i\|_{X^{-\frac{1}{6}}}
\end{equation}
for some $\delta >0$ and $\kappa >0$.
\end{theorem}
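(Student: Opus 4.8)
The plan is to run the proof of Theorem~\ref{mest_4_cri} once more, this time tracking the gain coming from the low-frequency cutoff together with a positive power of $T$. First I would expand the difference by the telescoping identity
\[
\prod_{i=1}^{4}\widetilde{u_i}-\prod_{i=1}^{4}P_{\ge M}\widetilde{u_i}
=\sum_{j=1}^{4}\left(\prod_{i<j}P_{\ge M}\widetilde{u_i}\right)\left((\Id-P_{\ge M})\widetilde{u_j}\right)\left(\prod_{i>j}\widetilde{u_i}\right),
\]
so that each of the four resulting terms carries at least one factor whose spatial frequencies are $<M$. Decomposing every factor by Littlewood--Paley and using the symmetry of the estimate, it suffices to bound $N^{s+2}\|P_N(\prod_{i=1}^4 P_{N_i}\widetilde{u_i})\|_{Y_N(T)}$, summed over the dyadic configurations exactly as in the proof of Theorem~\ref{mest_4_cri}, for orderings $N_1\ge N_2\ge N_3\ge N_4$ with the extra constraint $N_4<M$ (the smallest frequency is no larger than that of the low-pass factor). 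Here I take $c_{1,N_1}:=N_1^s\|P_{N_1}u_1\|_{X_{N_1}(T)}$ and $c_{i,N_i}:=N_i^{-1/6}\|P_{N_i}u_i\|_{X_{N_i}(T)}$ for $i=2,3,4$, with $s=-1/6$ for (i) and $s\ge-1/6$ for (ii).

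In the regimes $\Phi_1,\Phi_2,\Phi_3$ (where $N_1\gg N_4$) I would repeat the computation of the proof of Theorem~\ref{mest_4_cri}, but replace the bilinear Strichartz bound $\|P_{N_1}u_1P_{N_4}u_4\|_{L^2_{t,x}}\lesssim N_1^{-3/2}\|P_{N_1}u_1\|_{X_{N_1}}\|P_{N_4}u_4\|_{X_{N_4}}$ by its time-localized version from Corollary~\ref{bilin_T_cor2} (with the choice of $L$ used there, admissible via Remark~\ref{BSE_rema}), with a small parameter $\theta>0$; this costs a factor $N_1^{\theta}$ but gains $T^{\theta/4}$. Since $N_2,N_3\le N_1$ and $N_4<M$, the accumulated frequency weight $N_1^{-1+\theta}N_2^{5/12}N_3^{5/12}N_4^{1/6}$ is $\lesssim M^{1/6}N_1^{-1/6+\theta}$, so for $\theta<1/6$ the dyadic sums close exactly as in the original argument, giving $\lesssim T^{\theta/4}M^{1/6}\prod_{i=1}^4(\sum_{N_i}c_{i,N_i}^2)^{1/2}$ for these regimes. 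In the regime $\Phi_4$ (all $N_i\sim N_1$, hence also $N_1\lesssim M$) I would reproduce the case analysis of the proof of Theorem~\ref{mest_4_cri} according to the number $K$ of conjugated factors: the modulation splitting $J=J^{\mathrm{high}}+J^{\mathrm{low}}$, the low-modulation pigeonhole $|\sum\pm\xi_i^4-(\sum\pm\xi_i)^4|\sim N_1^4$, the use of Proposition~\ref{dual_besov}, and the refined bilinear estimates $R^{\pm}_{N_1}$ of Theorem~\ref{bsere} are all unchanged, and the final passage from $L^2_{t,x}$ to $L^2_{T,x}$ is made by Hölder in time, $\|\cdot\|_{L^2_T}\le T^{1/2-1/p}\|\cdot\|_{L^p_T}$ with $p>2$ close to $2$, as in the proof of Theorem~\ref{multi_est_3mg2_lowf}; the induced loss $N_1^{O(1/p-1/2)}$ is absorbed into $M^{O(1/p-1/2)}$ because $N_1\lesssim M$ there. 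Collecting all contributions proves (i) with some $\delta>0$, $\kappa>0$, and (ii) follows by combining the $s=-1/6$ and the $s=0$ estimates and summing the $\dot{X}^0$/$\dot{Y}^0$ and $\dot{X}^s$/$\dot{Y}^s$ pieces, exactly as in Theorem~\ref{mest_4_cri}(ii).

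The main obstacle is the interaction between the sharp time restriction to $[0,T]$ and the Fourier-restriction (modulation) structure that is essential in the $\Phi_4$, $K=2$ case: the projections $Q_A$ are genuine space-time localizations, so one cannot simply insert $\ee_{[0,T]}$ and invoke a truncated bilinear estimate without destroying the modulation bound. Following the device already indicated in Remark~\ref{multi_est_time} (estimates (\ref{interp_T_1})--(\ref{interp_T_2})), the resolution is to never truncate a modulation-localized factor directly: one interpolates the sharp untruncated linear and bilinear bounds — the Kato smoothing bound $\|P_{N_1}u_1\|_{L^\infty_xL^2_t}\lesssim N_1^{-3/2}\|P_{N_1}u_1\|_{X_{N_1}}$ and $\|P_{N_1}^{+}u_1\|_{L^2_{t,x}}\lesssim N_1^{-2}\|P_{N_1}u_1\|_{X_{N_1}}$ (the latter via Proposition~\ref{dual_besov}) — against the trivial Hölder bounds $\|\cdot\|_{L^2_T}\lesssim T^{1/2}\|\cdot\|_{L^\infty_T}$, thereby moving the $T$-gain onto the high-regularity, non-modulation-localized factors. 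Checking that, after all these interpolations, the powers of $N_1,\dots,N_4,M$ still balance and that the exponent $\delta>0$ of $T$ can be chosen uniform over all frequency configurations is the only genuinely technical point; the rest is a verbatim rerun of the proofs of Theorems~\ref{mest_4_cri} and \ref{multi_est_3mg2_lowf}.
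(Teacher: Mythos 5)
Your treatment of the regimes $\Phi_1,\Phi_2,\Phi_3$ (where $N_1\gg N_4$) coincides with the paper's: both replace the full-time bilinear Strichartz bound by the time-localized version of Corollary~\ref{bilin_T_cor2} with a small interpolation parameter $\theta$, trading a factor $N_1^{\theta}$ for a gain $T^{\theta/4}$, and then use $N_4<M$ to convert the residual $N_4^{1/6}$ into $M^{\kappa}$. The genuine divergence is in the high--high--high--high regime $\Phi_4$ ($N_1\sim\cdots\sim N_4$). You correctly observe that $N_1\lesssim M$ there, and you then propose to rerun the full modulation and refined-bilinear analysis of Theorem~\ref{mest_4_cri}, interpolating against crude H\"older-in-time bounds in the spirit of Remark~\ref{multi_est_time} (estimates~(\ref{interp_T_1})--(\ref{interp_T_2})) so as to manufacture a $T^{\delta}$ gain while absorbing the incurred positive powers of $N_1$ into powers of $M$. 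That route is feasible, but it is considerably heavier than what the constraint $N_1<M$ actually buys. The paper's proof drops the modulation and refined-bilinear machinery entirely in $\Phi_4$: applying $\|\cdot\|_{L^2_T}\le T^{1/2}\|\cdot\|_{L^{\infty}_T}$ and then H\"older plus four copies of the Kenig--Ruiz bound $\|P_{N_i}u_i\|_{L^4_xL^\infty_T}\lesssim N_i^{1/4}\|P_{N_i}u_i\|_{X_{N_i}(T)}$ gives
$N^{s+\frac{1}{2}}\|\prod_iP_{N_i}u_i\|_{L^1_xL^2_T}\lesssim T^{1/2}(N/N_1)^{s+\frac{1}{2}}N_1^{2}\prod_ic_{i,N_i}\lesssim T^{1/2}M^{2}\prod_ic_{i,N_i}$. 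The divergent power of $N_1$ that made this crude bound useless in Theorem~\ref{mest_4_cri} is rendered harmless by $N_1<M$, so the ``main obstacle'' you single out — the tension between the sharp time truncation and the space-time Fourier localizations $Q_A$ — simply never arises here: the only regime where the modulation pigeonholing was ever needed is exactly the regime where the $M$-cutoff makes the elementary estimate sufficient. Both proofs then deduce~(ii) from the homogeneous bound at $s=-1/6$ and $s=0$ in the same way.
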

\begin{proof}
By the symmetry, we can assume $N_1\ge \cdots \ge N_4$ and $N_4<M$. 
Let $s\ge -\frac{1}{6}$, $c_{1,N_1}:=N_1^s\|P_{N_1}u_1\|_{X_{N_1}(T)}$, 
$c_{i,N_i}:=N_i^{s_c}\|P_{N_i}u_i\|_{X_{N_i}(T)}$ $(i=2,3)$, 
and $c_{4,N_4}:=N_4^{s_c}\|P_{<M}P_{N_4}u_4\|_{X_{N_4}(T)}$. 
By the H\"older inequality and 
(\ref{bistri_TT_3}) with $L=N_1$, $\theta =\frac{1}{6}$, 
we have
\[
\begin{split}
N^{s+2-\frac{3}{2}}\left\|\prod_{i=1}^4P_{N_i}u_i\right\|_{L^1_xL^2_T}
&\le N^{s+\frac{1}{2}}\|P_{N_1}u_1P_{<M}P_{N_4}u_{4}\|_{L^{2}_{x}L^{2}_{T}}
\|P_{N_2}u_2\|_{L^4_xL^{\infty}_T}\|P_{N_3}u_3\|_{L^4_xL^{\infty}_T}\\
&\lesssim T^{\frac{1}{24}}\left(\frac{N}{N_1}\right)^{s+1}
\frac{N_2^{\frac{5}{12}}N_3^{\frac{5}{12}}}{N_1^{\frac{5}{6}}}N_4^{\frac{1}{6}}
\prod_{i=1}^4c_{i,N_i}
\end{split}
\]
if $(N_1,\cdots N_m)\in \Phi_{k}$ with $k=1,2,3$. 
On the other hand, if $(N_1,\cdots N_4)\in \Phi_{4}$, 
then $N_1\sim \cdots \sim N_4<M$. 
Therefore, by the H\"older inequality, we have
\[
\begin{split}
N^{s+2-\frac{3}{2}}\left\|\prod_{i=1}^4P_{N_i}u_i\right\|_{L^1_xL^2_T}
&\le T^{\frac{1}{2}}N^{s+\frac{1}{2}}
\prod_{i=1}^4\|P_{<M}P_{N_i}u_i\|_{L^{4}_xL^{\infty}_T}\\
&\lesssim T^{\frac{1}{2}}\left(\frac{N}{N_1}\right)^{s+\frac{1}{2}}
N_1^{\frac{3}{4}}N_2^{\frac{5}{12}}N_3^{\frac{5}{12}}N_4^{\frac{5}{12}}
\prod_{i=1}^4c_{i,N_i}.
\end{split}
\]

As a result, we obtain
\[
\left\|\partial_x^2\left(
\prod_{i=1}^{4}\widetilde{u_i}
-\prod_{i=1}^{4}P_{\ge M}\widetilde{u_i}\right)\right\|_{\dot{Y}^{s}(T)}
\lesssim 
T^{\frac{1}{24}}M^{2}\sum_{i=1}^m\|u_i\|_{\dot{X}^s(T)}\prod_{\substack{1\le k\le m\\ k\ne i}}\|u_k\|_{\dot{X}^{s_c}(T)}
\]
by the same argument as in the proof of Theorem~\ref{multi_est_3mg2_lowf}. 
\end{proof}
\section{Multilinear estimates at the scaling critical regularity in the case of $m\ge 4$ with $\gamma=1$}\label{mesc_2}
In this section, we use the solution space $X_N$ and its auxiliary space $Y_N$ with the norms 
\begin{equation}
\label{fs14}
\begin{split}
\|u\|_{X_N}&:=\|u(0)\|_{L^2_x}+\left\|\left(i\partial_t+\partial_x^4\right)u\right\|_{Y_N},\\
\|u\|_{Y_N}&:=\inf\left\{\left.\|u_1\|_{L^1_tL^2_x}+\|u_2\|_{\dot{X}^{0,-\frac{1}{2},1}}
\right|u=u_1+u_2\right\}, 
\end{split}
\end{equation}
instead of Definition~\ref{def4-1}, where $N\in 2^{\Z}$. Furthermore, we introduce the function spaces $\dot{X}^s$, $X^s$, $\dot{Y}^s$, and $Y^s$ with the norms
\[
\begin{split}
\|u\|_{\dot{X}^s}
&:=\left(\sum_{N\in 2^{\Z}}N^{2s}\|P_Nu\|_{X_N}^2\right)^{\frac{1}{2}},\ \ 
\|u\|_{X^s}:=\|u\|_{\dot{X}^0}+\|u\|_{\dot{X}^s},\\
\|F\|_{\dot{Y}^s}
&:=\left(\sum_{N\in 2^{\Z}}N^{2s}\|P_NF\|_{Y_N}^2\right)^{\frac{1}{2}},\ \ 
\|F\|_{Y^s}:=\|F\|_{\dot{Y}^0}+\|F\|_{\dot{Y}^s},
\end{split}
\]
where $s\in \R$. We can easily see that the estimates
\begin{equation}\label{XN_Linest_g1}
\left\|e^{it\partial_x^4}f\right\|_{\dot{X}^s}\lesssim \|f\|_{\dot{H}^s},\ \ 
\left\|\int_0^te^{i(t-t')\partial_x^4}F(t')dt'\right\|_{\dot{X}^s}\lesssim \|F\|_{\dot{Y}^s}
\end{equation}
hold for any $f\in \dot{H}^s(\R)$ and $F\in Y^s$, where the implicit constants are independent of $f$. %
Furthermore, by the same argument as the proof of Proposition~\ref{lin_XN} and ~\ref{dual_besov}, the estimates
\begin{equation}\label{lin_XN_g1}
N^{\frac{1}{2}}\|P_Nu\|_{L^4_tL^{\infty}_x}\lesssim \|P_Nu\|_{X_N}
\end{equation}
and
\[
\left\|P_Nu\right\|_{\dot{X}^{0,\frac{1}{2},\infty}}\lesssim \|P_Nu\|_{X_N}
\]
hold for any function $u$ satisfying $P_Nu\in X_N$.
\begin{theorem}[Refined bilinear Strichartz estimates on $X_{N_1}\times X_{N_2}$]\label{bist_g1_thm}
Let $L,N_1,N_2\in 2^{\Z}$ and $P_{N_1}u_1\in X_{N_1}, P_{N_2}u_2\in X_{N_2}$. 
If $N_1\ge N_2\gtrsim L$, then the estimates
\begin{align}
\label{bistri_2_g1}
     \left\|R_L^+\left(P_{N_1}u_1,\overline{P_{N_2}u_2}\right)\right\|_{L^2_{t,x}(\R\times\R)}
     &\lesssim
     N_1^{-1}L^{-\frac{1}{2}}\|P_{N_1}u_1\|_{X_{N_1}}\|P_{N_2}u_2\|_{X_{N_2}},\\
\label{bistri_3_g1}
     \left\|R_L^-\left(P_{N_1}u_1,P_{N_2}u_2\right)\right\|_{L^2_{t,x}(\R\times\R)}
     &\lesssim
     N_1^{-1}L^{-\frac{1}{2}}\|P_{N_1}u_1\|_{X_{N_1}}\|P_{N_2}u_2\|_{X_{N_2}}
\end{align}
hold, where the implicit constants are independent of $L,N_1,N_2,u_1,u_2$. Here the bilinear operators $R_L^{\pm}$ are defined by (\ref{defR}).
\end{theorem}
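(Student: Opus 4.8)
The plan is to follow the same strategy used for Theorem~\ref{bsere}, since the only difference here is that the auxiliary space $Y_N$ in (\ref{fs14}) uses the component $\|u_1\|_{L^1_tL^2_x}$ in place of the component $\|F_1\|_{Z_N}=N^{-3/2}\|F_1\|_{L^1_xL^2_t}$ appearing in (\ref{fsb}). First I would set $u_{j,N_j}:=P_{N_j}u_j$ and $F_j:=(i\partial_t+\partial_x^4)u_j$, decompose $F_j=F_{j,1}+F_{j,2}$ with $F_{j,1}\in L^1_tL^2_x$ and $F_{j,2}\in\dot{X}^{0,-\frac12,1}$, and reduce (\ref{bistri_3_g1}) to the four bilinear estimates obtained by replacing, in (\ref{bi_est_L_1})--(\ref{bi_est_L_4}), each occurrence of $N_j^{-3/2}\|F_j\|_{L^1_xL^2_t}$ by $\|F_j\|_{L^1_tL^2_x}$. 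The Duhamel term $B_j=-i\int_0^t e^{i(t-t')\partial_x^4}P_{N_j}F_{j,1}(t')\,dt'$ is now handled not via the decomposition of Proposition~\ref{duam_decom} but simply via Minkowski's inequality in $t'$: $\|R_L^-(\cdot,B_2)\|_{L^2_{t,x}}\lesssim \int_{\R}\|R_L^-(e^{it\partial_x^4}u_{1,N_1}(0),e^{i(t-t')\partial_x^4}P_{N_2}F_{2,1}(t'))\|_{L^2_{t,x}}\,dt'$, and then each integrand is a genuine bilinear estimate on two free solutions, to which Lemma~\ref{BS2} (i.e. (\ref{5-3})) applies directly, giving the factor $N_1^{-1}L^{-1/2}\|u_{1,N_1}(0)\|_{L^2_x}\|P_{N_2}F_{2,1}(t')\|_{L^2_x}$; integrating in $t'$ produces $\|F_{2,1}\|_{L^1_tL^2_x}$. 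The $F_{j,2}$ piece is dealt with through Lemma~\ref{BSE_Lemm} (the extension lemma), exactly as in the proof of Theorem~\ref{bsere}: one verifies the hypotheses of Lemma~\ref{BSE_Lemm} for the spatial bilinear operator $\mathcal{T}=R_L^{\pm}$ with target space $S=L^2_{t,x}$, using that $R_L^{\pm}$ on pairs of free solutions satisfies the desired bound by (\ref{5-2})--(\ref{5-3}), and that $L^2_{t,x}$ is stable under multiplication by $L^\infty_t$ functions. This immediately yields the estimates with $\|F_j\|_{\dot{X}^{0,-1/2,1}}$ on the right.

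Next I would combine the two halves. For the mixed terms where one factor is the free evolution of the datum and the other is the Duhamel term, Minkowski's inequality in $t'$ splits off the datum as a free solution and reduces to a bilinear estimate on two free evolutions with an extra $L^1_{t'}$ integration of the $L^2_x$-norm of $F_{j,1}$; the $F_{j,2}$-part is again absorbed by Lemma~\ref{BSE_Lemm}. For the term where both factors are Duhamel terms one applies Minkowski twice, once in each $t_j'$. Throughout, the hypothesis $N_1\ge N_2\gtrsim L$ guarantees that the localization $\psi_L(\xi_1-(\xi-\xi_1))$ (or $\psi_L(\xi_1+(\xi-\xi_1))$ for $R_L^+$) is compatible with the frequency supports, so that the change of variables $\xi_1\mapsto\tau=-\xi_1^4-(\xi-\xi_1)^4$ in the proof of Lemma~\ref{BS2} has Jacobian of size $\sim L N_1^2$, which is precisely what produces the gain $N_1^{-1}L^{-1/2}$. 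The estimate (\ref{bistri_2_g1}) for $R_L^+$ is proved in the same way, using (\ref{5-2}) in place of (\ref{5-3}) and noting that the relevant resonance function $-\xi_1^4-(\xi-\xi_1)^4$ (with the appropriate sign from the complex conjugate) still has derivative of size $\sim L N_1^2$ on the support set.

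The main obstacle I anticipate is the bookkeeping around the low-frequency cut-off operators that arise when one uses the decomposition of the Duhamel term (should one prefer that route over Minkowski), namely showing that convolution by the kernel of $P_{<L/2^{50}}\mathbf{1}_{(-\infty,0]}$ commutes (up to the bilinear structure) with $R_L^{\pm}$ — this is the content of estimates (\ref{RL_P_est}) and (\ref{RL_P_est_mi}), and it is the only genuinely new algebraic point: one must check that on the Fourier side, $|\xi_1-(\xi-\xi_1)|\sim L$ together with $|\xi_2|\ll L$ forces $|\xi_1-(\xi-\xi_2-\xi_1)|\sim L$, so that the $L$-localization passes through the convolution. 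However, if one adopts the Minkowski-in-$t'$ approach (which is available precisely because the first component of $Y_N$ here is $L^1_tL^2_x$ rather than $L^1_xL^2_t$), this subtlety disappears entirely and the proof becomes a routine reduction to Lemma~\ref{BS2} and Lemma~\ref{BSE_Lemm}; this is the cleaner path and the one I would write up. I expect no regularity loss and no dependence of the implicit constants on $L,N_1,N_2$ beyond what is displayed.
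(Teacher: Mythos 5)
Your proposal is correct and follows essentially the same route as the paper: decompose $F_j$ according to the two components of $Y_N$, treat the $\dot X^{0,-\frac12,1}$ part via Lemma~\ref{BSE_Lemm}, and treat the $L^1_tL^2_x$ part by writing the Duhamel integral as $e^{it\partial_x^4}\int_0^t e^{-it'\partial_x^4}F_j(t')\,dt'$ and applying Minkowski in $t'$ so that Lemma~\ref{BS2} applies directly to each free-evolution integrand. You correctly identify that the $L^1_tL^2_x$ structure of $Y_N$ in (\ref{fs14}) is exactly what permits the Minkowski shortcut and makes Proposition~\ref{duam_decom} (and the auxiliary estimates (\ref{RL_P_est}), (\ref{RL_P_est_mi})) unnecessary here, which is the point the paper makes by presenting a single-line reduction to (\ref{5-3}) and bilinearity.
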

\begin{proof}
We prove only (\ref{bistri_3_g1}) since (\ref{bistri_2_g1}) can be proved in the similar way. 
We set $u_{j,N_j}:=P_{N_j}u_j$ and $F_j:=\left(i\partial_t+\partial_x^4\right)u_{j}$ for $j=1,2$.
It suffices to show that the estimate
\[
\left\|R_{L}^{-}(u_{1,N_1},u_{2,N_2})\right\|_{L^2_{t,x}}\lesssim 
N_1^{-1}L^{-\frac{1}{2}}\left(\|u_{1,N_1}(0)\|_{L^2_x}+\|F_1\|_{Y_{N_1}}\right)
\left(\|u_{2,N_2}(0)\|_{L^2_x}+\|F_2\|_{Y_{N_2}}\right)
\]
holds. This follows from the following estimates:
\begin{align}
\left\|R_{L}^{-}(u_{1,N_1},u_{2,N_2})\right\|_{L^2_{t,x}}&\lesssim 
N_1^{-1}L^{-\frac{1}{2}}\left(\|u_{1,N_1}(0)\|_{L^2_x}+\|F_1\|_{\dot{X}^{0,-\frac{1}{2},1}}\right) 
\left(\|u_{2,N_2}(0)\|_{L^2_x}+\|F_2\|_{\dot{X}^{0,-\frac{1}{2},1}}\right),\label{bi_est_L_1g1}\\ 
\left\|R_{L}^{-}(u_{1,N_1},u_{2,N_2})\right\|_{L^2_{t,x}}&\lesssim 
N_1^{-1}L^{-\frac{1}{2}}\left(\|u_{1,N_1}(0)\|_{L^2_x}+\|F_1\|_{\dot{X}^{0,-\frac{1}{2},1}}\right) 
\left(\|u_{2,N_2}(0)\|_{L^2_x}+\|F_2\|_{L^1_tL^2_x}\right),\label{bi_est_L_2g1}\\ 
\left\|R_{L}^{-}(u_{1,N_1},u_{2,N_2})\right\|_{L^2_{t,x}}&\lesssim 
N_1^{-1}L^{-\frac{1}{2}}\left(\|u_{1,N_1}(0)\|_{L^2_x}+\|F_1\|_{L^1_tL^2_x}\right) 
\left(\|u_{2,N_2}(0)\|_{L^2_x}+\|F_2\|_{\dot{X}^{0,-\frac{1}{2},1}}\right),\label{bi_est_L_3g1}\\ 
\left\|R_{L}^{-}(u_{1,N_1},u_{2,N_2})\right\|_{L^2_{t,x}}&\lesssim 
N_1^{-1}L^{-\frac{1}{2}}\left(\|u_{1,N_1}(0)\|_{L^2_x}+\|F_1\|_{L^1_tL^2_x}\right) 
\left(\|u_{2,N_2}(0)\|_{L^2_x}+\|F_2\|_{L^1_tL^2_x}\right). \label{bi_est_L_4g1}
\end{align}
To obtain (\ref{bi_est_L_1g1}), (\ref{bi_est_L_2g1}), and (\ref{bi_est_L_3g1}), 
we use Lemma~\ref{BSE_Lemm}. Then, we have to prove only
\begin{align}
\left\|R_{L}^{-}(e^{it\partial_x^4}u_{1,N_1}(0),e^{it\partial_x^4}u_{2,N_2}(0))\right\|_{L^2_{t,x}}&\lesssim 
N_1^{-1}L^{-\frac{1}{2}}\|u_{1,N_1}(0)\|_{L^2_x}\|u_{2,N_2}(0)\|_{L^2_x},\label{bi_est_L_5g1}\\ 
\left\|R_{L}^{-}(e^{it\partial_x^4}u_{1,N_1}(0),u_{2,N_2})\right\|_{L^2_{t,x}}&\lesssim 
N_1^{-1}L^{-\frac{1}{2}}\|u_{1,N_1}(0)\|_{L^2_x} 
\left(\|u_{2,N_2}(0)\|_{L^2_x}+\|F_2\|_{L^1_tL^2_x}\right),\label{bi_est_L_6g1}\\ 
\left\|R_{L}^{-}(u_{1,N_1},e^{it\partial_x^4}u_{2,N_2}(0))\right\|_{L^2_{t,x}}&\lesssim 
N_1^{-1}L^{-\frac{1}{2}}\left(\|u_{1,N_1}(0)\|_{L^2_x}+\|F_1\|_{L^1_tL^2_x}\right) 
\|u_{2,N_2}(0)\|_{L^2_x}. \label{bi_est_L_7g1}
\end{align}
Since the identity
\[
u_{j,N_j}(t)=u_{j,N_j}(0)-i\int_0^te^{it\partial_x^4}(e^{-it'\partial_x^4}F_j(t'))dt', 
\]
holds, we can obtain (\ref{bi_est_L_5g1}), (\ref{bi_est_L_6g1}), (\ref{bi_est_L_7g1}), and 
(\ref{bi_est_L_4g1})  by using (\ref{5-3}) and bilinearity of the operator $R_L^-$. 
\end{proof}
\begin{theorem}[Multilinear estimates]\label{multi_est_3mg1}
Let $m\ge 4$. Set
\[
     s_c=s_c(m):=
     \frac{1}{2}-\frac{3}{m-1}.
\]
{\rm (i)}\ For any $u_1$, $\cdots$, $u_m\in \dot{X}^{s_c}$, 
it holds that
\begin{equation}\label{multilin_mhigh_hom_g1}
\left\|\partial_x
\prod_{i=1}^{m}\widetilde{u_i}\right\|_{\dot{Y}^{s_c}}
\lesssim \prod_{i=1}^m\|u_i\|_{\dot{X}^{s_c}},
\end{equation}
where $\widetilde{u_i}\in \{u_i,\overline{u_i}\}$ and the implicit constant depends only on $m$.\\
{\rm (ii)}\ If $s\ge s_c$,
then for any $u_1$, $\cdots$, $u_m\in X^s$, 
it holds that
\begin{equation}\label{multilin_mhigh_inhom_g1}
\left\|\partial_x
\prod_{i=1}^{m}\widetilde{u_i}\right\|_{Y^s}
\lesssim \prod_{i=1}^m\|u_i\|_{X^s},
\end{equation}
where $\widetilde{u_i}\in \{u_i,\overline{u_i}\}$ and the implicit constant depends only on $m$ and $s$.
\end{theorem}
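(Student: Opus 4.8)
The plan is to mimic the proof of Theorem~\ref{multi_est_3mg2} (the $\gamma=2$, $m\ge 5$ case), but now with only one derivative $\partial_x$ outside the product, one fewer derivative to absorb, and with the refined bilinear Strichartz estimate of Theorem~\ref{bist_g1_thm} replacing Theorem~\ref{bsere}. As in the earlier arguments, by symmetry we may assume the output frequencies satisfy $N_1\ge N_2\ge\cdots\ge N_m$, and we split the dyadic sum according to how many of the top frequencies are comparable; that is, we introduce the regions $\Phi_k=\{N_1\sim\cdots\sim N_k\gg N_{k+1}\}$ and write the $\dot Y^s$-norm (or $Y^s$-norm) of $\partial_x\prod_i\widetilde{u_i}$ as a sum $\sum_k I_k$. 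For the high-output pieces ($N\sim N_1$) we use the $Z_N$ (i.e.\ $N^{-3/2}\|\cdot\|_{L^1_xL^2_t}$) part of the $Y_N$-norm, and for the low-output pieces ($N\ll N_1$, only possible when $k\ge 2$) we use the same together with the modulation/$\dot X^{0,-1/2,1}$ part when resonance forces large modulation, exactly as in Theorems~\ref{mest_5_cri} and \ref{mest_4_cri}.

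First I would set $c_{1,N_1}:=N_1^s\|P_{N_1}u_1\|_{X_{N_1}}$ and $c_{i,N_i}:=N_i^{s_c}\|P_{N_i}u_i\|_{X_{N_i}}$ for $i\ge 2$, and reduce to proving a pointwise (in the frequencies) bound of the shape
\[
N^{s+1}\Bigl\|P_N\prod_{i=1}^m P_{N_i}u_i\Bigr\|_{Y_N}\lesssim \Bigl(\tfrac{N}{N_1}\Bigr)^{\theta}N_{k+1}^{-\epsilon}\prod_{i=1}^m c_{i,N_i}
\]
for some $\theta,\epsilon>0$, after which the dyadic summation is handled by Cauchy--Schwarz and Young exactly as before. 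To get this bound one distributes the frequencies among the norms available from Proposition~\ref{lin_XN}-type estimates: put the two largest frequencies into an $L^2_{t,x}$ bilinear factor estimated by Theorem~\ref{bist_g1_thm} with $L\sim N_1$ (gaining $N_1^{-3/2}$), put $P_{N_2}u_2,\dots$ into $L^4_xL^\infty_t$ Kenig--Ruiz factors (each costing $N_i^{1/4}$), and put the remaining small frequencies into $L^\infty_{t,x}$ Bernstein factors (each costing $N_i^{1/2-s_c}$, which is $>0$ since $s_c<1/2$); count powers and check that the total exponent of $N_1$ is negative and that the leftover factors $N_i^{1/4-s_c}$ (for $i=2,\dots$) and $N_i^{1/2-s_c}$ (for the tail) sum to exactly the one derivative to be absorbed. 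The decisive arithmetic identity, analogous to $1=4(\tfrac14-s_c)+(m-5)(\tfrac12-s_c)$ in Theorem~\ref{multi_est_3mg2}, will here read $1=3(\tfrac14-s_c)+(m-4)(\tfrac12-s_c)$ at $s=s_c=\tfrac12-\tfrac3{m-1}$, which is what forces the threshold $m\ge 4$ and the exponent $3$ in $s_c(m)$; for $m=4$ the $L^4_xL^\infty_t$ factors are used without any leftover, and for $m\ge 5$ there is genuine room, giving extra negative powers for the summation.

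The harder part is the region $\Phi_k$ with $k\ge 3$ combined with a \emph{low} output frequency $N\ll N_k$, i.e.\ the case where the top $k$ input frequencies nearly cancel. Here the bilinear Strichartz estimate alone is not enough, and one must, as in Theorems~\ref{mest_5_cri} and \ref{mest_4_cri}, decompose each high-frequency input into $P_\pm$ pieces, organize according to the number $K$ of conjugated factors among the top ones, and observe the resonance identity: when four (or more) top frequencies add to something of size $\ll N_1$, the quartic phase $(\sum\pm\xi_i)^4-\sum(\pm\xi_i^4)$ is forced to be of size $\sim N_1^4$ because $(C_1+\cdots)^4>\pm C_1^4\pm\cdots$ once the signs are fixed; hence at least one factor (or the output) has modulation $\gtrsim N_1^4$, which by Proposition~\ref{dual_besov}-type control (the $\dot X^{0,1/2,\infty}$ embedding into $X_N$, here in the form of \eqref{lin_XN_g1} and its companion) gives an extra $N_1^{-2}$ gain, more than enough. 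One also needs the analogue of \eqref{RL_P_est}/\eqref{RL_P_est_mi}, namely that multiplying by the truncated Heaviside $P_{<L/2^{50}}\ee_{(-\infty,0]}$ does not destroy the $R_L^\pm$ structure, which follows verbatim from the frequency-support bookkeeping since $|\xi_2|\ll L$ does not move $|\xi_1-(\xi-\xi_1)|\sim L$. With these ingredients, \eqref{multilin_mhigh_hom_g1} follows by taking $s=s_c$, and \eqref{multilin_mhigh_inhom_g1} follows by combining the $s=0$ and $s=s_c$ versions of the pointwise bound and renormalizing the $\dot X^{s_c}$-norms of $u_2,\dots,u_m$ (using $\|u\|_{\dot X^{s_c}}=\|\bar u\|_{\dot X^{s_c}}$), exactly as in the proofs of the earlier multilinear theorems.
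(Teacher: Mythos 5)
Your proposal has a fundamental mismatch with the function space the paper actually uses for $\gamma=1$, and the arithmetic consequently does not close.

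Look back at the definition (\ref{fs14}): for $\gamma=1$ the auxiliary norm is
\[
\|F\|_{Y_N}=\inf\bigl\{\|F_1\|_{L^1_tL^2_x}+\|F_2\|_{\dot X^{0,-\frac12,1}}\ :\ F=F_1+F_2\bigr\},
\]
i.e.\ a \emph{Strichartz-dual} piece $L^1_tL^2_x$ with \emph{no} $N^{-3/2}$ weight, together with the modulation piece. There is no "$Z_N$" part here. You imported $Z_N=N^{-3/2}\|\cdot\|_{L^1_xL^2_t}$ (Kato smoothing dual) from the $\gamma=2,3$ spaces (\ref{fsea}), (\ref{fsb}), and correspondingly paired it with $L^4_xL^\infty_t$ Kenig--Ruiz factors. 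That counting does not apply. The only linear norm the paper records in this setting is (\ref{lin_XN_g1}), the Schr\"odinger Strichartz pair $L^4_tL^\infty_x$, and indeed the paper's proof of Theorem~\ref{multi_est_3mg1} uses $\|\prod_iP_{N_i}u_i\|_{L^1_tL^2_x}$ together with $L^4_tL^\infty_x$ factors, not $L^1_xL^2_t$ with $L^4_xL^\infty_t$.

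This is not merely cosmetic: your route cannot give the required dyadic summability. If you carried the $N^{-3/2}\|\cdot\|_{L^1_xL^2_t}$ weight, the derivative $\partial_x$ plus the $\dot Y^s$ weight would produce $N^{s+1-3/2}=N^{s-1/2}$ as the power of the \emph{output} frequency. For the low-output region $N\ll N_1$ one must sum over all dyadic $N\lesssim N_1$, and with $s_c(m)=\frac12-\frac3{m-1}<\frac12$ the exponent $s_c-\frac12$ is strictly negative, so $\sum_{N\lesssim N_1}N^{s_c-\frac12}$ diverges. Concretely, for $m=4$ ($s_c=-\frac12$) and $N_1\sim N_2\sim N_3\sim N_4$, your H\"older count gives a factor $\sim N^{-1}N_1^{+1}$, which blows up as $N\to 0$. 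The $L^1_tL^2_x$ approach avoids this: the output power is $N^{s+1}$ with $s+1>0$, and the same $m=4$ count produces $(N/N_1)^{1/2}$, which is summable. Correspondingly your "decisive identity" $1=3(\frac14-s_c)+(m-4)(\frac12-s_c)$ is not satisfied at $s_c=\frac12-\frac3{m-1}$ (the left side evaluates to $\frac94$, not $1$); the paper's counting instead uses $\frac12=-1-3s_c+(m-4)(\frac12-s_c)$ in the $m\ge5$ case, which does hold.

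Your description of the resonant regime $\Phi_m$ with the $P_\pm$ decomposition, the modulation lower bound from $(C_1+\cdots+C_4)^4>\pm C_1^4\pm\cdots\pm C_4^4$, and the extraction of the $N_1^{-2}$ gain via $\dot X^{0,\frac12,\infty}$ is on target and matches the paper's Case $\Phi_4$ treatment for $m=4$; but since you arrived there via the wrong auxiliary norm, the surrounding exponent bookkeeping needs to be redone entirely in the $L^1_tL^2_x$/$L^4_tL^\infty_x$ framework before this piece can be spliced in.
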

\begin{proof}
Let $s\ge \min \{s_c, 0\}$. We assume $u_i\in \dot{X}^s\cap \dot{X}^{s_c}$ ($i=1,\cdots, m$). 
For $i=1,\cdots, m$ and $N_i\in 2^{\Z}$, we set
$c_{1,N_1}:=N_1^s\|P_{N_1}u_1\|_{X_{N_1}}$,\ 
$c_{i,N_i}:=N_i^{s_c}\|P_{N_i}u_i\|_{X_{N_i}}$ $(i=2,\cdots ,m)$. 
We put
\[
\begin{split}
\Phi_1&:=\{(N_1,\cdots,N_m)|\ N_1\ge \cdots \ge N_m,\ N\sim N_1\gg N_{2}\},\\
\Phi_k&:=\{(N_1,\cdots,N_m)|\ N_1\ge \cdots \ge N_m,\ N_1\gtrsim N,\ N_1\sim \cdots \sim N_k\gg N_{k+1}\}\ (k=2,3,\cdots, m-1),\\
\Phi_m&:=\{(N_1,\cdots,N_m)|\ N_1\ge \cdots \ge N_m,\ N\lesssim N_1\sim \cdots \sim N_m\}.
\end{split}
\]

We first consider the case $m\ge 5$. 
By the H\"older inequality, Theorem~\ref{bist_g1_thm} with $L=N_1$, the Bernstein inequality, and (\ref{lin_XN_g1}), 
we have
\begin{equation}\label{gamma1_mhigh_holder_1}
\begin{split}
N^{s+1}\left\|\prod_{i=1}^mP_{N_i}u_i\right\|_{L^1_tL^2_x}
&\le N^{s+1}\|P_{N_1}u_1P_{N_4}u_{4}\|_{L^{2}_{t,x}}
\|P_{N_2}u_2\|_{L^4_tL^{\infty}_x}\|P_{N_3}u_3\|_{L^4_tL^{\infty}_x}
\prod_{i=5}^{m}\|P_{N_i}u_i\|_{L^{\infty}_{t,x}}\\
&\lesssim \left(\frac{N}{N_1}\right)^{s+1}
\left(\frac{N_{2}^{-\frac{1}{2}-s_c}N_3^{-\frac{1}{2}-s_c}N_{4}^{-s_c}
\prod_{i=5}^{m}N_i^{\frac{1}{2}-s_c}}
{N_1^{\frac{1}{2}}}\right)
\prod_{i=1}^mc_{i,N_i}
\end{split}
\end{equation}
if $(N_1,\cdots N_m)\in \Phi_{k}$ with $k=1,2,3$. 
We note that $\frac{1}{2}=-1-3s_c+(m-4)(\frac{1}{2}-s_c)>0$. 
On the other hand, by the H\"older inequality, the Bernstein inequality, and (\ref{lin_XN_g1}), 
we have
\begin{equation}\label{gamma1_mhigh_holder_2}
\begin{split}
N^{s+1}\left\|\prod_{i=1}^mP_{N_i}u_i\right\|_{L^1_tL^2_x}
&\le N^{s+1}
\left(\prod_{i=1}^4\|P_{N_i}u_i\|_{L^4_tL^{\infty}_x}\right)\|P_{N_5}u_5\|_{L^{\infty}_tL^2_x}
\prod_{i=6}^m\|P_{N_i}u_i\|_{L^{\infty}_{t,x}}\\
&\lesssim \left(\frac{N^{s+1}}{N_1^{s+\frac{1}{2}}N_2^{\frac{1}{2}}}\right)
\left(\frac{N_5^{-s_c}\prod_{i=6}^mN_i^{\frac{1}{2}-s_c}}{N_2^{s_c}N_3^{\frac{1}{2}+s_c}N_4^{\frac{1}{2}+s_c}}\right)
\prod_{i=1}^mc_{i,N_i}\\
&\sim \left(\frac{N}{N_1}\right)^{s+1}
\left(\frac{N_5^{-s_c}\prod_{i=6}^mN_i^{\frac{1}{2}-s_c}}{N_1^{1+3s_c}}\right)
\prod_{i=1}^mc_{i,N_i}
\end{split}
\end{equation}
if $(N_1,\cdots N_m)\in \Phi_k$ with $k=4,\cdots,m$. 
We assume $\prod_{i=6}^mN_i^{\frac{1}{2}-s_c}=1$ if $m=5$. 
We note that $1+3s_c=-s_c+(m-5)(\frac{1}{2}-s_c)>0$. 
Therefore, we obtain 
\begin{equation}\label{multilin_mhigh_3g1}
\left\|\partial_x
\prod_{i=1}^{m}\widetilde{u_i}\right\|_{\dot{Y}^{s}}
\lesssim \sum_{i=1}^m\|u_i\|_{\dot{X}^s}\prod_{\substack{1\le k\le m\\ k\ne i}}\|u_k\|_{\dot{X}^{s_c}}
\end{equation}
for any $u_i\in \dot{X}^s\cap \dot{X}^{s_c}$
by the same argument as in the proof of Theorem~\ref{multi_est_3mg2}.

Next, we consider $m=4$. 
Then, $s_c=-\frac{1}{2}$. 
By the same argument as above, we obtain
\[
\begin{split}
N^{s+1}\left\|\prod_{i=1}^mP_{N_i}u_i\right\|_{L^1_tL^2_x}
&\lesssim \left(\frac{N}{N_1}\right)^{s+1}
\left(\frac{N_4}{N_1}\right)^{\frac{1}{2}}
\prod_{i=1}^4c_{i,N_i}
\end{split}
\]
if $(N_1,N_2,N_3, N_4)\in \Phi_{k}$ with $k=1,2,3$. 
Therefore, we only have to consider the case $(N_1,N_2,N_3,N_4)\in \Phi_4$. 
Namely, $N_1\sim N_2\sim N_3\sim N_4$. 
We put $P_{N_i}^{\pm}:=P_{\pm}P_{N_i}$, 
where $P_+$ and $P_{-}$ are 
defined in (\ref{proj_plus_minus}). 
Then we have
\[
\prod_{i=1}^4P_{N_i}\widetilde{u_i}=
\prod_{i=1}^4(P_{N_i}^+\widetilde{u_i}+P_{N_i}^-\widetilde{u_i}). 
\]
We define
\[
K:=\#\{i\in \{1,2,3,4\}|\ \widetilde{u_i}=\overline{u_i}\}. 
\]
By the same argument as in the proof of Theorem~\ref{mest_5_cri} 
for $K=0,1$ and Theorem~\ref{mest_4_cri} for $K=2$, 
we can use the bilinear estimates (\ref{bistri_2}), (\ref{bistri_3}) with $L=N_1$ or 
the modulation bound such as (\ref{modulation_bd}). 
Namely, 
\[
\|P_{N_i}\widetilde{u_i}P_{N_j}\widetilde{u_j}\|_{L^2_{t,x}}\lesssim 
N_1^{-\frac{3}{2}}\|P_{N_i}u_i\|_{X_{N_i}}\|P_{N_j}u_j\|_{X_{N_j}}
\]
for some $(i,j)$ or
\[
\|P_{N_i}\widetilde{u_i}\|_{L^2_{t,x}}\lesssim N_1^{-2}\|P_{N_i}u_i\|_{X_{N_i}}
\]
for some $i$ holds. 
For the former case with $(i,j)=(1,2)$, 
by the H\"older inequality, Theorem~\ref{bist_g1_thm} with $L=N_1$, and (\ref{lin_XN_g1}), 
we have
\[
\begin{split}
N^{s+1}\left\|\prod_{i=1}^4P_{N_i}\widetilde{u_i}\right\|_{L^1_tL^2_x}
&\le N^{s+1}
\|P_{N_1}\widetilde{u_1}P_{N_2}\widetilde{u_2}\|_{L^2_{t,x}}\|P_{N_3}u_3\|_{L^{4}_tL^{\infty}_x}\|P_{N_4}u_4\|_{L^{4}_tL^{\infty}_x}
\\
&\lesssim N^{s+1}N_1^{-\frac{3}{2}}N_3^{-\frac{1}{2}}N_4^{-\frac{1}{2}}
\prod_{i=1}^4\|P_{N_i}u_i\|_{X_{N_i}}\\
&\lesssim \left(\frac{N}{N_1}\right)^{s+1}\prod_{i=1}^4c_{i,N_i}.
\end{split}
\]
For the later case with $i=1$, 
by the H\"older inequality, the Bernstein inequality, 
we have
\[
\begin{split}
N^{s+1}\left\|\prod_{i=1}^4P_{N_i}\widetilde{u_i}\right\|_{L^1_tL^2_x}
&\le N^{s+1}
\|P_{N_1}\widetilde{u_1}\|_{L^2_{tx}}\|P_{N_2}u_2\|_{L^{\infty}_{tx}}\|P_{N_3}u_3\|_{L^{4}_tL^{\infty}_x}\|P_{N_4}u_4\|_{L^{4}_tL^{\infty}_x}
\\
&\lesssim N^{s+1}N_1^{-2}N_2^{\frac{1}{2}}N_3^{-\frac{1}{2}}N_4^{-\frac{1}{2}}
\prod_{i=1}^4\|P_{N_i}u_i\|_{X_{N_i}}\\
&\lesssim \left(\frac{N}{N_1}\right)^{s+1}\prod_{i=1}^4c_{i,N_i}.
\end{split}
\]
Therefore, we obtain (\ref{multilin_mhigh_3g1}) for $m=4$. 
\end{proof}
\begin{remark}
If $m\in \{4,5,6\}$ (then, $s_c<0$), we can also obtain 
\[
\left\|\partial_x
\prod_{i=1}^{m}\widetilde{u_i}\right\|_{\dot{Y}^{s}(T)}
\lesssim T^{\delta}
\sum_{k=1}^4\|u_k\|_{\dot{X}^{s}(T)}
\prod_{\substack{1\le i\le 4\\ i\ne k}}\|u_i\|_{\dot{X}^{s_c+\rho}(T)}
\]
for any $0<T<1$, $s>s_c$, $0<\rho <-s_c$, and some 
$\delta =\delta (\rho )>0$ by the same reason as in Remark~\ref{multi_est_time}.
\end{remark}
\begin{theorem}[Multilinear estimates]\label{multi_est_gamma1_lowf}
Let $m\ge 4$, $0<T<1$, and $M\in 2^{\N}$. Set
\[
     s_c=s_c(m):=
     \frac{1}{2}-\frac{3}{m-1}.
\]
{\rm (i)}\ For any $u_1$, $\cdots$, $u_m\in \dot{X}^{s_c}$, 
it holds that
\begin{equation}\label{multilin_mhigh_hom_12_lowf3}
\left\|\partial_x\left(
\prod_{i=1}^{m}\widetilde{u_i}-\prod_{i=1}^{m}P_{\ge M}\widetilde{u_i}\right)\right\|_{\dot{Y}^{s_c}(T)}
\lesssim T^{\delta}M^{\kappa}\prod_{i=1}^m\|u_i\|_{\dot{X}^{s_c}(T)}
\end{equation}
for some $\delta >0$ and $\kappa >0$ depending only on $m$.\\
{\rm (ii)}\ For any $u_1$, $\cdots$, $u_m\in X^{s_c}$, 
it holds that
\begin{equation}\label{multilin_mhigh_hom_12_lowf4}
\left\|\partial_x\left(
\prod_{i=1}^{m}\widetilde{u_i}-\prod_{i=1}^{m}P_{\ge M}\widetilde{u_i}\right)\right\|_{Y^{s_c}(T)}
\lesssim T^{\delta}M^{\kappa}\prod_{i=1}^m\|u_i\|_{X^{s_c}(T)}
\end{equation}
for some $\delta >0$ and $\kappa >0$ depending only on $m$.
\end{theorem}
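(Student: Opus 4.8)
The plan is to mirror the structure of the proof of Theorem~\ref{multi_est_3mg2_lowf}, using the multilinear estimates of Theorem~\ref{multi_est_3mg1} as the starting point and tracking the extra low-frequency factor produced by removing $\prod_{i=1}^{m}P_{\ge M}\widetilde{u_i}$. By symmetry we may assume $N_1\ge \cdots \ge N_m$, and since the difference $\prod_{i=1}^{m}\widetilde{u_i}-\prod_{i=1}^{m}P_{\ge M}\widetilde{u_i}$ vanishes unless at least one factor is frequency-localized below $M$, we may further assume $N_m<M$. We set $c_{1,N_1}:=N_1^s\|P_{N_1}u_1\|_{X_{N_1}(T)}$, $c_{i,N_i}:=N_i^{s_c}\|P_{N_i}u_i\|_{X_{N_i}(T)}$ for $2\le i\le m-1$, and $c_{m,N_m}:=N_m^{s_c}\|P_{<M}P_{N_m}u_m\|_{X_{N_m}(T)}$, exactly as in Theorem~\ref{mest_4_cri_TM} and Theorem~\ref{multi_est_3mg2_lowf}.

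First I would redo the frequency-region decomposition $\Phi_1,\dots,\Phi_m$ from the proof of Theorem~\ref{multi_est_3mg1}. For the regions $\Phi_k$ with $k\le m-1$ (so that $N_1\gg N_m$ and a genuine high-low bilinear gap is available), I would insert a small power of $T$ by using the time-localized refined bilinear Strichartz estimate (Corollary~\ref{bilin_T_cor2}) with $L=N_1$ and a small parameter $\theta>0$ in place of Theorem~\ref{bist_g1_thm}, together with the time-localized Strichartz estimate $\|P_{N_i}u_i\|_{L^4_xL^\infty_T}\lesssim T^{1/4}\cdots$; the point is that the extra $T^{\theta/4}$ and the loss of $N_1^{\theta/2}$ are harmless because in Theorem~\ref{multi_est_3mg1} the summable factor appeared with a strictly positive exponent ($\tfrac12$ in the $m=4$ case, and $\tfrac12=-1-3s_c+(m-4)(\tfrac12-s_c)$ for $m\ge5$), so shrinking it slightly still leaves it positive. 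In each such region the factor $N_m^{\frac12-s_c}$ that the low-frequency tail produces is bounded by $M^{\frac12-s_c}$ since $N_m<M$ and $\tfrac12-s_c>0$; this gives the claimed $M^\kappa$ with $\kappa=\tfrac12-s_c$.

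The region $\Phi_m$, where $N_1\sim\cdots\sim N_m<M$, is handled by brute force: there is no frequency gap and no bilinear smoothing, but now all frequencies are $\lesssim M$, so a crude H\"older estimate $N^{s+1}\|\prod P_{N_i}u_i\|_{L^1_TL^2_x}\lesssim T^{1/2}N^{s+1}\prod\|P_{<M}P_{N_i}u_i\|_{L^4_xL^\infty_T}$ followed by the Bernstein inequality yields a bound of the form $T^{1/2}(N/N_1)^{s+1}M^{\kappa'}\prod c_{i,N_i}$ with $s+1>0$, which sums. Thus the nonhomogeneous statement (ii) follows from the homogeneous bound at $s=s_c$ and at $s\ge s_c$ combined with the definition of the $X^s$, $Y^s$ norms, exactly as in the passage from (\ref{multilin_mhigh_3g1}) to (\ref{multilin_mhigh_hom_g1})-(\ref{multilin_mhigh_inhom_g1}), and the $K=2$ same-sign subcases for $m=4$ are disposed of by the modulation bound (\ref{modulation_bd}) with a $T$-power inserted via the interpolation estimates of Remark~\ref{multi_est_time} (the analogue is already flagged in the remark following Theorem~\ref{multi_est_3mg1}).

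The main obstacle I anticipate is purely bookkeeping rather than conceptual: one must verify that inserting the parameter $\theta$ into \emph{every} bilinear and linear factor, and simultaneously extracting the factor $M^{\frac12-s_c}$ from whichever $N_i<M$ factor one chooses, still leaves each dyadic exponent strictly negative so that the summations over $N_1$ and over the remaining $N_i$ converge and fit into the Cauchy--Schwarz / Young scheme of Theorem~\ref{multi_est_3mg2_lowf}. In particular for $m=4$ (where $s_c=-\tfrac12$ and $s$ can be negative), one has to be careful in the same-sign $K=2$ case that the $J_{2+}^{\rm low}$ term --- treated via Proposition~\ref{dual_besov} and the modulation gap --- retains a positive power of $N/N_1$ after the $T^\delta$ extraction, which is exactly the content of estimate (\ref{interp_T_2}) in Remark~\ref{multi_est_time}. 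Once these exponent inequalities are checked, the summation argument is identical to the ones already carried out, and the values of $\delta>0$ and $\kappa>0$ come out depending only on $m$.
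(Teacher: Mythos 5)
Your overall structure matches the paper's: the same dyadic reduction to $N_1\ge\cdots\ge N_m$ with $N_m<M$, the same weighted sequences $c_{i,N_i}$, the same strategy of extracting $M^{\kappa}$ from the low-frequency factor and a small power of $T$ from the high-frequency estimates, and the same brute-force treatment of the all-comparable region $\Phi_m$, which is now harmless because $N_1\sim\cdots\sim N_m<M$. So this is not a genuinely different route. However, three places in your plan deviate from what the paper's toolbox actually provides.

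First, you propose inserting the $T$-power via ``Corollary~\ref{bilin_T_cor2}'', but that corollary is stated for the $\gamma=2$ solution space (\ref{fsb}), whose auxiliary norm is $Z_N=N^{-3/2}\|\cdot\|_{L^1_xL^2_t}$; the $\gamma=1$ space (\ref{fs14}) has $L^1_tL^2_x$ instead, and the paper never establishes a time-localized analogue of Theorem~\ref{bist_g1_thm}. The actual mechanism is to keep Theorem~\ref{bist_g1_thm} global in time (this is harmless, $L^2_{T,x}\le L^2_{t,x}$) and gain the factor $T^{\theta/4}$ by replacing one of the remaining \emph{linear} $L^4_TL^\infty_x$ factors with the interpolated estimate (\ref{l4stri_interp}); for $m\ge5$ one takes $\theta=\tfrac{3}{m-1}$ in (\ref{gamma1_mhigh_holder_1})--(\ref{gamma1_mhigh_holder_2}), and for $m=4$ one takes $\theta=\tfrac12$. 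Second, your discussion of the $K=2$ same-sign modulation subcases is superfluous for this theorem: the modulation analysis was needed in the proof of Theorem~\ref{multi_est_3mg1} precisely because there the $\Phi_4$ region could have arbitrarily large comparable frequencies, whereas here $\Phi_m$ is cut off at $M$ and the crude H\"older/Bernstein bound works for every sign combination. Third, the H\"older step you wrote for $\Phi_m$, namely $\|\prod P_{N_i}u_i\|_{L^1_TL^2_x}\lesssim T^{1/2}\prod\|P_{<M}P_{N_i}u_i\|_{L^4_xL^\infty_T}$, does not produce $L^2_x$ on the left (four $L^4_x$ factors give $L^1_x$); the correct split is $T\,\|P_{N_1}u_1\|_{L^\infty_TL^2_x}\prod_{i\ge2}\|P_{N_i}u_i\|_{L^\infty_{T,x}}$ followed by Bernstein, yielding $T\,(N/N_1)^{s+1}N_1N_2N_3N_4\prod c_{i,N_i}\lesssim T\,(N/N_1)^{s+1}M^4\prod c_{i,N_i}$ for $m=4$. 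Once these three points are corrected, the bookkeeping closes exactly as in Theorem~\ref{multi_est_3mg2_lowf}.
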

\begin{proof}
By the symmetry, we can assume $N_1\ge \cdots \ge N_m$ and $N_m<M$. 
Let $s\ge \min\{0,s_c\}$, $c_{1,N_1}:=N_1^s\|P_{N_1}u_1\|_{X_{N_1}(T)}$, 
$c_{i,N_i}:=N_i^{s_c}\|P_{N_i}u_i\|_{X_{N_i}(T)}$ $(i=1,\cdots ,m-1)$, 
and $c_{m,N_m}:=N_m^{s_c}\|P_{<M}P_{N_m}u_m\|_{X_{N_m}(T)}$. 
We first assume the case $m\ge 5$. 
By the interpolation between the two estimates
\[
\|P_{N_3}u_3\|_{L^{4}_TL^{\infty}_x}\lesssim N_3^{-\frac{1}{2}}\|P_{N_3}u_3\|_{X_{N_3}(T)},\ \ 
\|P_{N_3}u_3\|_{L^{4}_TL^{\infty}_x}\lesssim T^{\frac{1}{2}}N_3^{\frac{1}{2}}\|P_{N_3}u_3\|_{X_{N_3}(T)},
\]
we obtain
\begin{equation}\label{l4stri_interp}
\|P_{N_3}u_3\|_{L^{4}_TL^{\infty}_x}\lesssim T^{\frac{\theta}{4}}N_3^{-\frac{1}{2}+\theta}\|P_{N_3}u_3\|_{X_{N_3}(T)}
\end{equation}
for $0<\theta <1$. 
We use this estimate with $\theta =\frac{3}{m-1}$ instead of
\[
\|P_{N_3}u_3\|_{L^{4}_TL^{\infty}_x}\lesssim N_3^{-\frac{1}{2}}\|P_{N_3}u_3\|_{X_{N_3}(T)}
\]
in (\ref{gamma1_mhigh_holder_1}) and (\ref{gamma1_mhigh_holder_2}). 
Then, we obtain 
\[
\left\|\partial_x\left(
\prod_{i=1}^{m}\widetilde{u_i}
-\prod_{i=1}^{m}P_{\ge M}\widetilde{u_i}\right)\right\|_{\dot{Y}^{s}(T)}
\lesssim 
T^{\frac{\theta}{4}}M^{\frac{1}{2}-s_c}\sum_{i=1}^m\|u_i\|_{\dot{X}^s(T)}\prod_{\substack{1\le k\le m\\ k\ne i}}\|u_k\|_{\dot{X}^{s_c}(T)}
\]
by the same argument as in the proof of Theorem~\ref{multi_est_3mg2_lowf}. 

Next, we assume the case $m=4$. Then, $s_c=-\frac{1}{2}$. 
By the H\"older inequality, Theorem~\ref{bist_g1_thm} with $L=N_1$, and 
(\ref{l4stri_interp}) with $\theta =\frac{1}{2}$, 
we have
\[
\begin{split}
N^{s+1}\left\|\prod_{i=1}^4P_{N_i}u_i\right\|_{L^1_TL^2_x}
&\le N^{s+1}\|P_{N_1}u_1P_{<M}P_{N_4}u_{4}\|_{L^{2}_{T}L^{2}_{x}}
\|P_{N_2}u_2\|_{L^4_TL^{\infty}_x}\|P_{N_3}u_3\|_{L^4_TL^{\infty}_x}\\
&\lesssim T^{\frac{1}{8}}\left(\frac{N}{N_1}\right)^{s+1}
\left(\frac{N_3}{N_1}\right)^{\frac{1}{2}}N_4^{\frac{1}{2}}
\prod_{i=1}^4c_{i,N_i}. 
\end{split}
\]
if $(N_1,\cdots N_m)\in \Phi_{k}$ with $k=1,2,3$. 
On the other hand, if $(N_1,\cdots N_4)\in \Phi_{4}$, 
then $N_1\sim \cdots \sim N_4<M$. 
Therefore, by the H\"older inequality and the Bernstein inequality, we have
\[
\begin{split}
N^{s+1}\left\|\prod_{i=1}^4P_{N_i}u_i\right\|_{L^1_TL^2_x}
&\le TN^{s+1}\|P_{<M}P_{N_1}u_1\|_{L^{\infty}_{T}L^2_x}
\prod_{i=2}^4\|P_{<M}P_{N_i}u_i\|_{L^{\infty}_TL^{\infty}_x}\\
&\lesssim T\left(\frac{N}{N_1}\right)^{s+1}N_1N_2N_3N_4
\prod_{i=1}^4c_{i,N_i}. 
\end{split}
\]

As a result, we obtain
\[
\left\|\partial_x\left(
\prod_{i=1}^{4}\widetilde{u_i}
-\prod_{i=1}^{4}P_{\ge M}\widetilde{u_i}\right)\right\|_{\dot{Y}^{s}(T)}
\lesssim 
T^{\frac{1}{8}}M^{4}\sum_{i=1}^m\|u_i\|_{\dot{X}^s(T)}\prod_{\substack{1\le k\le m\\ k\ne i}}\|u_k\|_{\dot{X}^{s_c}(T)}
\]
by the same argument as in the proof of Theorem~\ref{multi_est_3mg2_lowf}. 
\end{proof}
%
%



\section{Proof of well-posedness}
\label{well-po}

\ \ In this section, we give proofs of Theorem \ref{lwp1} and Theorem \ref{lwp2-3}. For $s\in \R$ and $r>0$, we define the closed ball $B_r\left(H^s(\R)\right)$ in $H^s(\R)$ centered at the origin with the radius $r$ as
\[
     B_r(H^s(\R)):=\left\{\phi\in H^s(\R):\ \|\phi\|_{H^s(\R)}\le r\right\}.
\]
For 
$T>0$, $m\in \N$ with $m\ge 3$ and $\rho>0$, we introduce a closed ball $X^s(T;\rho)$ in $X^s(T)$ centered at the origin with a radious $\rho$ as
\[
    X^s(T;\rho):=\left\{u\in X^s(T):\|u\|_{X^s(T)}\le \rho\right\}.
\]
Here the function space $X^s(T)$ will be chosen as Definition \ref{def3-4} for Theorem \ref{lwp1} or (\ref{fsb}) for Theorem \ref{lwp2-3} below. 
For $u_0\in B_r(H^s(\R))$, we introduce a nonlinear mapping $\Psi$ given by
\begin{equation}
\label{8-1-7}
    \Psi[u](t):=e^{it\partial_x^4}u_0-i\mathcal{I}\left[G\left(\left\{\partial_x^{k}u\right\}_{k\le \gamma},\left\{\partial_x^{k}\bar{u}\right\}_{k\le \gamma}\right)\right]
\end{equation}
for $t\in (-T,T)$, where $u\in X^s(T;\rho)$. In the following subsections, we consider whether the nonlinear mapping $\Psi$ is a contraction mapping.
\subsection{Proof of Theorem \ref{lwp1}}
\ \ In this subsection, we give a proof of Theorem \ref{lwp1}. In the proof, we choose the solution space $X^s(T)$ as given in Definition \ref{def3-4}. The main tool for the proof is the multilinear estimate (Theorem \ref{sc_inv_multi_e}). If the nonlinearity $G=G_{\gamma}^{m,m}$ is the form of (\ref{nonl_sc}), the Duhamel term is written as
\begin{align}
\label{8-2-4}
    &\mathcal{I}\left[G_{\gamma}^{m,m}\left(\left\{\partial_x^{k}u\right\}_{k\le \gamma},\left\{\partial_x^{k}\bar{u}\right\}_{k\le \gamma}\right)\right]=\sum_{\mathbf{k}+\mathbf{l}=m}\sum_{|\alpha|+|\beta|=\gamma}C_{\alpha,\beta}^{\mathbf{k},\mathbf{l}}\mathcal{I}\left[(\partial_x^{\alpha_1}u)\cdots(\partial_x^{\alpha_{\mathbf{k}}}u)\left(\partial_x^{\beta_1}\overline{u}\right)\cdots\left(\partial_x^{\beta_{\mathbf{l}}}\overline{u}\right)\right]\notag\\
    &=\sum_{|\alpha|=\gamma}C_{\alpha}^{m}\mathcal{I}\left[(\partial_x^{\alpha_1}\widetilde{u})\cdots(\partial_x^{\alpha_{\mathbf{k}}}\widetilde{u})\left(\partial_x^{\alpha_{\mathbf{k}+1}}\widetilde{u}\right)\cdots\left(\partial_x^{\alpha_{m}}\widetilde{u}\right)\right]
    =\sum_{|\alpha|=\gamma}C_{\alpha}^{m}\mathcal{I}\left[\prod_{i=1}^m\partial_x^{\alpha_{i}}\widetilde{u}\right].
\end{align}
Here to obtain the second equality of (\ref{8-2-4}), we write $u$ and $\overline{u}$ as $\widetilde{u}$ and $\beta_j$ as $\alpha_{\mathbf{k}+j}$ for $j\in \{1,\dots,\mathbf{l}\}$. 
The precise statement of Theorem \ref{lwp1} with $\gamma=3$ is as follows:

\begin{theorem}
\label{lwpcomp}
Let $\gamma=3$, $m\ge3$, $s\ge s_0$, where $s_0$ is given by (\ref{minir2}), and $T\in (0,1)$. We assume that the nonlinearity $G=G_{\gamma}^{m,m}$ is the form of (\ref{nonl_sc}).
Then the following statements hold: 
\begin{itemize}
\item (Existence): There exists a positive constant $\varepsilon=\varepsilon(m,s)>0$ such that for any $u_0\in B_{\varepsilon}(H^s(\R))$, there exists a unique solution $u\in X^s\left(T;2C_0\varepsilon\right)$ to the problem (\ref{1-1})-(\ref{nonl_sc}) on the time interval $I_T=(-T,T)$, where $C_0$ is a positive constant given in Proposition \ref{prop3-8}.
\item (Uniqueness): Let $u\in X^s(T;2C_0\varepsilon)$ be the solution obtained in the Existence part. Let $T_1\in (0,T]$ and $w\in X^s\left(T_1;2C_0\varepsilon\right)$ be another solution to (\ref{1-1})-(\ref{nonl_sc}). If the identity $u_0=w(0)$ holds, then the identity $u=w$ holds on $[-T_1,T_1]$.
\item (Continuity of the flow map): The flow map $\Xi:B_{\varepsilon}(H^s(\R))\mapsto X^s\left(T;2C_0\varepsilon\right)$,\ $u_0\mapsto u$ is Lipschitz continuous.
\end{itemize}
\end{theorem}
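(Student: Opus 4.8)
The plan is to prove Theorem~\ref{lwpcomp} by a standard contraction-mapping argument for the map $\Psi$ defined in~(\ref{8-1-7}), carried out in the closed ball $X^s(T;2C_0\varepsilon)\subset X^s(T)$, using as the two key ingredients the linear estimates of Proposition~\ref{prop3-8} and Corollary~\ref{propdeco}, and the multilinear estimate of Theorem~\ref{sc_inv_multi_e} (together with its no-derivative companion~(\ref{multilin_noderiv})). First I would record, via~(\ref{8-2-4}), that the Duhamel term decomposes as a finite sum of terms $C_\alpha^m\,\mathcal{I}[\prod_{i=1}^m\partial_x^{\alpha_i}\widetilde u]$ with $|\alpha|=\gamma=3$; by Corollary~\ref{propdeco} and Theorem~\ref{sc_inv_multi_e} each such term is bounded in $X^s(T)$ by $C\prod_{i=1}^m\|u\|_{X^s(T)}=C\|u\|_{X^s(T)}^m$, with $\delta=0$ since $\gamma=3$ (this is exactly the reason smallness cannot be dropped here, per the Remark after the proof of Theorem~\ref{sc_inv_multi_e}). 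Hence for $u\in X^s(T;2C_0\varepsilon)$,
\[
\|\Psi[u]\|_{X^s(T)}\le C_0\|u_0\|_{H^s}+C_1(2C_0\varepsilon)^m\le C_0\varepsilon+C_1(2C_0)^m\varepsilon^m,
\]
and choosing $\varepsilon=\varepsilon(m,s)$ so small that $C_1(2C_0)^m\varepsilon^{m-1}\le 1/2$ (using $m\ge 3$) gives $\|\Psi[u]\|_{X^s(T)}\le 2C_0\varepsilon$, so $\Psi$ maps the ball into itself.

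Next I would establish the contraction estimate. For $u,v\in X^s(T;2C_0\varepsilon)$ the difference $\Psi[u]-\Psi[v]$ is, after using~(\ref{8-2-4}) and telescoping each monomial $\prod\partial_x^{\alpha_i}\widetilde u-\prod\partial_x^{\alpha_i}\widetilde v$ into a sum of $m$ terms each containing one factor $\partial_x^{\alpha_j}(\widetilde u-\widetilde v)$ and the remaining factors from $\{\widetilde u,\widetilde v\}$, a finite sum to which Corollary~\ref{propdeco} and Theorem~\ref{sc_inv_multi_e} again apply (note $\|\widetilde u-\widetilde v\|_{X^s}$ and $\|\overline{u-v}\|_{X^s}$ are controlled by $\|u-v\|_{X^s}$ up to the reflection noted in the remark after Definition~\ref{def4-1}, which only changes $i\partial_t+\partial_x^4$ to $i\partial_t-\partial_x^4$ and is harmless for these estimates). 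This yields
\[
\|\Psi[u]-\Psi[v]\|_{X^s(T)}\le C_2 m (2C_0\varepsilon)^{m-1}\|u-v\|_{X^s(T)},
\]
and shrinking $\varepsilon$ further so that $C_2 m(2C_0\varepsilon)^{m-1}\le 1/2$ makes $\Psi$ a contraction on the complete metric space $X^s(T;2C_0\varepsilon)$. Banach's fixed point theorem then produces a unique fixed point $u\in X^s(T;2C_0\varepsilon)$, which by the embedding $X^s(T)\hookrightarrow C([-T,T];H^s)$ (from the $L^\infty_tL^2_x$ component of the $X_N$-norm and Definition~\ref{def3-4}) is an $H^s$-solution on $I_T=(-T,T)$ in the sense of the Duhamel formula.

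For the uniqueness statement beyond the ball of the fixed-point argument, I would take $T_1\in(0,T]$ and a second solution $w\in X^s(T_1;2C_0\varepsilon)$ with $w(0)=u_0$; restricting $u$ to $[-T_1,T_1]$ and applying the same difference estimate on $X^s(T_1)$ gives $\|u-w\|_{X^s(T_1)}\le \frac12\|u-w\|_{X^s(T_1)}$, forcing $u=w$ on $[-T_1,T_1]$. For Lipschitz continuity of the flow map $\Xi:B_\varepsilon(H^s)\to X^s(T;2C_0\varepsilon)$, given $u_0,v_0\in B_\varepsilon(H^s)$ with solutions $u,v$, I would write $u-v=e^{it\partial_x^4}(u_0-v_0)-i(\mathcal{I}[G(u)]-\mathcal{I}[G(v)])$, estimate the linear part by $C_0\|u_0-v_0\|_{H^s}$ via Proposition~\ref{prop3-8} and the nonlinear difference by $\frac12\|u-v\|_{X^s(T)}$ as above, and absorb to obtain $\|u-v\|_{X^s(T)}\le 2C_0\|u_0-v_0\|_{H^s}$. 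The main obstacle, and the only genuinely delicate point, is that all of this rests on Theorem~\ref{sc_inv_multi_e} being applicable for the specific pairing of $\gamma=3$ with the minimal regularity $s_0$ of~(\ref{minir2}) (i.e. $s\ge 1$ when $m=3$, $s\ge\frac12$ when $m\ge4$)—in particular the $(\gamma,m)=(3,3)$ case at $s=1$, which is where the bilinear Strichartz estimate Theorem~\ref{bilisol} is essential; granting that theorem, the contraction argument itself is routine.
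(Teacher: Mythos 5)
Your proposal follows the same route as the paper's proof: the identity~(\ref{8-2-4}) to reduce to monomials, Proposition~\ref{prop3-8} and Corollary~\ref{propdeco} for the linear and Duhamel bounds, Theorem~\ref{sc_inv_multi_e} (with $\gamma=3$, $\delta=0$) for the multilinear bound, the telescoping identity for the contraction estimate, and Banach's fixed point theorem on $X^s(T;2C_0\varepsilon)$; uniqueness and Lipschitz continuity of the flow map are handled exactly as in the paper. The argument is correct.
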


\begin{proof}[Proof of Theorem \ref{lwpcomp}]
(Existence): Let $\varepsilon>0$, which will be chosen later. Let $u\in X^s(T;2C_0\varepsilon)$.
By the identity (\ref{8-2-4}), Proposition \ref{prop3-8}, Corollary \ref{propdeco} and Theorem \ref{sc_inv_multi_e} with $\gamma=3$, we see that there exists a positive constant $C_1=C_1(m,s)>0$ the estimates
\begin{align}
    \left\|\Psi[u]\right\|_{X^s(T)}&\le \left\|e^{it\partial_x^4}u_0\right\|_{X^s(T)}+\sum_{|\alpha|=3}\left|C_{\alpha}^{m}\right|\left\|\mathcal{I}\left[\prod_{i=1}^m\partial_x^{\alpha_{i}}\widetilde{u}\right]\right\|_{X^s(T)}\notag\\
    &\le C_0\|u_0\|_{H^s(\R)}+C_s\sum_{|\alpha|=3}\left|C_{\alpha}^{m}\right|\left\|\prod_{i=1}^m\partial_x^{\alpha_{i}}\widetilde{u}\right\|_{Y^s(T)}\notag\\
    &\le C_0\varepsilon +C_1\left\|u\right\|_{X^s(T)}^m\le C_0\varepsilon+C_1(2C_0\varepsilon)^m\le 2C_0\varepsilon\label{8-3-5}
\end{align}
hold. Here we take $\varepsilon>0$ such as $\varepsilon\le  \left(\frac{1}{C_1C_0^{m-1}2^{m}}\right)^{\frac{1}{m-1}}$ to obtain the last inequality of (\ref{8-3-5}). This implies that the nonlinear mapping $\Psi$ is well defined from $X^s(T,2C_0\varepsilon)$ to itself. Let $u,w\in X^s(T;2C_0\varepsilon)$. By a simple computation, the identity
\begin{align}
\label{8-4-4}
    \prod_{i=1}^m u_i-    \prod_{i=1}^m w_i=\sum_{i=1}^m\prod_{\mathbf{l}=1}^{i-1} w_{\mathbf{l}}(u_i-w_i)\prod_{\mathbf{k}=i+1}^mu_{\mathbf{k}}
\end{align}
holds for any $u_1,\cdots, u_m\in \C$ and $w_1,\cdots, w_m\in \C$, 
where we assumed $\prod_{\mathbf{l}=1}^{i-1} w_{\mathbf{l}}=1$ when $i=1$ 
and $\prod_{\mathbf{k}=i+1}^mu_{\mathbf{k}}=1$ when $i=m$. 
By the identity (\ref{8-4-4}), in the similar manner as the proof of the estimate (\ref{8-3-5}), the estimates
\begin{align}
    \left\|\Psi[u]-\Psi[w]\right\|_{X^s(T)}&\le \sum_{|\alpha|=3}\left|C_{\alpha}^{m}\right|\left\|\mathcal{I}\left[\prod_{i=1}^m\partial_x^{\alpha_{i}}\widetilde{u}-\prod_{i=1}^m\partial_x^{\alpha_{i}}\widetilde{w}\right]\right\|_{X^s(T)}\notag\\
    &\le C_s\sum_{|\alpha|=3}\left|C_{\alpha}^{m}\right|\left\|\left[\prod_{i=1}^m\partial_x^{\alpha_{i}}\widetilde{u}-\prod_{i=1}^m\partial_x^{\alpha_{i}}\widetilde{w}\right]\right\|_{Y^s(T)}\notag\\
    &\le C_s\sum_{|\alpha|=3}\left|C_{\alpha}^{m}\right|\sum_{i=1}^m\left\|\prod_{\mathbf{l}=1}^{i-1} \left(\partial_x^{\alpha_{\mathbf{l}}}w_{\mathbf{l}}\right)\left\{\partial_x^{\alpha_i}(u_i-w_i)\right\}\prod_{\mathbf{k}=i+1}^m\partial_x^{\alpha_{\mathbf{k}}}u_{\mathbf{k}}\right\|_{Y^s(T)}\notag\\
    &\le C_s\sum_{|\alpha|=3}\left|C_{\alpha}^{m}\right|\sum_{i=1}^m\|w\|_{X^s(T)}^{i-1}\|u\|_{X^s(T)}^{m-i}\|u-w\|_{X^s(T)}\notag\\
    &\le C_1(2C_0\varepsilon)^{m-1}\|u-w\|_{X^s(T)}\le \frac{1}{2}\|u-w\|_{X^s(T)}
    \label{8-5-6}
\end{align}
hold. Here we take $\varepsilon$ such as $\varepsilon \le \frac{1}{2C_0}\left(\frac{1}{2C_1}\right)^{\frac{1}{m-1}}$ to obtain the last inequality of (\ref{8-5-6}). This implies that the nonlinear mapping $\Psi$ is a contraction mapping. Thus by the contraction mapping principle, we see that there exists a unique solution $u\in X^s(T;2C_0\varepsilon)$ to (\ref{1-1})-(\ref{nonl_sc}). \\
(Uniqueness) and (Continuity of the flow map) can be proved in the similar manner as the proof of the estimate (\ref{8-5-6}), which completes the proof of the theorem.
\end{proof}

The precise statement of Theorem \ref{lwp1} with $\gamma\in \{1,2\}$ is as follows.
\begin{theorem}
\label{lwpcomp2}
Let $\gamma\in \{1,2\}$, $m\ge3$, and $s\ge \max\{s_0,0\}$, where $s_0$ is given by (\ref{minir}). We assume that the nonlinearity $G=G_{\gamma}^{m,m}$ is the form of (\ref{nonl_sc}).
Then the following statements hold: 
\begin{itemize}
\item (Existence): For any $r>0$, there exists a positive $T=T(r,m,s)>0$ such that for any $u_0\in B_{r}(H^s(\R))$, there exists a solution $u\in X^s\left(T;2r\right)$ to the problem (\ref{1-1})-(\ref{nonl_sc}) on the time interval $I_T=(-T,T)$.
\item (Uniqueness): Let $u\in X^s(T;2r)$ be the solution obtained in the Existence part. Let $T_1\in (0,T]$ and $w\in X^s\left(T_1\right)$ be another solution to (\ref{1-1})-(\ref{nonl_sc}). If the identity $u_0=w(0)$ holds, then the identity $u=w$ holds on $[-T_1,T_1]$.
\item (Continuity of the flow map): The flow map $\Xi:B_{r}(H^s(\R))\mapsto X^s(T;2r)$,\ $u_0\mapsto u$ is Lipschitz continuous.
\end{itemize}
Moreover, let $\left(-T_{\text{min}}, T_{\text{max}}\right)$ be the maximal existence time interval of the solution $u$. Then the blow-up alternative holds:
\[
  T_{\text{max}}<\infty\ \Longrightarrow \lim_{t\rightarrow T_{\text{max}}-0}\|u(t)\|_{H^s(\R)}=\infty.
\]
The similar statement also holds in the negative time direction.
\end{theorem}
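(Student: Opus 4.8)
The plan is to run the Banach fixed point scheme for the map $\Psi$ defined in (\ref{8-1-7}) on a closed ball of $X^s(T)$, exactly as in the proof of Theorem~\ref{lwpcomp}, but now exploiting the gain $T^{\delta}$ with $\delta>0$ furnished by Theorem~\ref{sc_inv_multi_e} in the range $\gamma\in\{1,2\}$; this gain is what permits arbitrary (large) data. Fix $r>0$, set $\rho:=2C_0r$ with $C_0$ the constant of Proposition~\ref{prop3-8}, and work on $X^s(T;\rho)$. Using the decomposition (\ref{8-2-4}) of the Duhamel term together with Proposition~\ref{prop3-8}, Corollary~\ref{propdeco} and Theorem~\ref{sc_inv_multi_e}, one gets a constant $C_1=C_1(m,s)>0$ with
\[
\left\|\Psi[u]\right\|_{X^s(T)}\le C_0r+C_1T^{\delta}\|u\|_{X^s(T)}^m
\]
for $u\in X^s(T;\rho)$ and, invoking in addition the algebraic identity (\ref{8-4-4}),
\[
\left\|\Psi[u]-\Psi[w]\right\|_{X^s(T)}\le C_1T^{\delta}\bigl(\|u\|_{X^s(T)}+\|w\|_{X^s(T)}\bigr)^{m-1}\|u-w\|_{X^s(T)}
\]
for $u,w\in X^s(T;\rho)$. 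Choosing $T=T(r,m,s)\in(0,1)$ so small that $C_1T^{\delta}\rho^{m-1}\le\tfrac12$ --- which also yields $C_0r+C_1T^{\delta}\rho^{m}\le\rho$ --- makes $\Psi$ a contraction of $X^s(T;\rho)$ into itself; its unique fixed point $u$ is the sought solution. Since the $L_t^{\infty}L_x^2$-component of the $X_N$-norm (Definition~\ref{def4-1}) together with the Duhamel formula forces $u\in C\bigl((-T,T);H^s(\R)\bigr)$, $u$ is an $H^s$-solution in the sense of the paper, which proves Existence (the radius $2r$ in the statement being $\rho$ up to the harmless factor $C_0$).

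For Uniqueness, given another solution $w\in X^s(T_1)$ with $w(0)=u_0$, restrict $u$ to $[-T_1,T_1]$; for $\tau\le T_1$ both $u$ and $w$ lie in $X^s(\tau)$ with norms bounded by $M:=\|u\|_{X^s(T_1)}+\|w\|_{X^s(T_1)}$, so for $\tau$ with $C_1\tau^{\delta}M^{m-1}\le\tfrac12$ the difference estimate gives $\|u-w\|_{X^s(\tau)}\le\tfrac12\|u-w\|_{X^s(\tau)}$, hence $u=w$ on $[-\tau,\tau]$; a standard connectedness argument then propagates $u=w$ over all of $[-T_1,T_1]$. Continuity of the flow map follows from the same contraction estimate applied to two data: with $\Xi[u_0]=u$, $\Xi[v_0]=v$ one has $\|u-v\|_{X^s(T)}\le C_0\|u_0-v_0\|_{H^s}+\tfrac12\|u-v\|_{X^s(T)}$, i.e. $\|u-v\|_{X^s(T)}\le 2C_0\|u_0-v_0\|_{H^s}$, which is Lipschitz continuity.

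Finally, the blow-up alternative is the standard gluing/continuation argument built on two facts already in hand: the existence time $T=T(r,m,s)$ produced above depends on the data only through the bound $r$ for $\|u_0\|_{H^s}$, and (\ref{1-1}) is invariant under time translation. If $T_{\mathrm{max}}<\infty$ and $\limsup_{t\to T_{\mathrm{max}}-0}\|u(t)\|_{H^s}=:\Lambda<\infty$, pick $t_1\in(0,T_{\mathrm{max}})$ with $\|u(t_1)\|_{H^s}\le\Lambda+1$ and $T_{\mathrm{max}}-t_1<\tfrac12 T(\Lambda+1,m,s)$; solving (\ref{1-1}) with initial time $t_1$ gives a solution on $\bigl(t_1-T(\Lambda+1),t_1+T(\Lambda+1)\bigr)$ which, by the Uniqueness statement, agrees with $u$ on the overlap and hence continues $u$ strictly beyond $T_{\mathrm{max}}$, contradicting maximality; the negative-time direction is identical. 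I expect this last step --- verifying that the ``$X^s$-solution'' notion is the one under which restarting from a later initial time is legitimate and the restarted piece matches the original on the overlap (which is exactly Uniqueness) --- to be the only point needing genuine care, since the contraction estimates themselves are immediate consequences of Proposition~\ref{prop3-8}, Corollary~\ref{propdeco} and Theorem~\ref{sc_inv_multi_e}.
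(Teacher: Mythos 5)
Your proposal is correct and follows essentially the same route as the paper's own argument: the contraction mapping scheme on $X^s(T;\rho)$ with $\rho=2C_0 r$ via Proposition~\ref{prop3-8}, Corollary~\ref{propdeco}, the identity (\ref{8-4-4}), and the $T^{\delta}$ gain from Theorem~\ref{sc_inv_multi_e}, then uniqueness on small subintervals plus a continuation/connectedness argument, then the standard gluing argument for the blow-up alternative. The only cosmetic remark is that the ball radius $2r$ in the theorem statement versus $2C_0 r$ in the construction is a small notational mismatch present in the paper itself, which you correctly flag as harmless.
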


\begin{proof}[Proof of Theorem \ref{lwpcomp2}]
(Existence): Let $T\in (0,1)$, which will be chosen later. Let $u\in X^s\left(T;2C_0r\right)$.
By the identity (\ref{8-2-4}), Proposition \ref{prop3-8}, Corollary \ref{propdeco} and Theorem \ref{sc_inv_multi_e} with $\gamma\in \{1,2\}$, we see that there exist positive constants $\delta=\delta(m,s)>0$ and $C_1=C_1(m,s)>0$ such that the estimates
\begin{align}
    \left\|\Psi[u]\right\|_{X^s(T)}&\le \left\|e^{it\partial_x^4}u_0\right\|_{X^s(T)}+\sum_{|\alpha|=\gamma}\left|C_{\alpha}^{m}\right|\left\|\mathcal{I}\left[\prod_{i=1}^m\partial_x^{\alpha_{i}}\widetilde{u}\right]\right\|_{X^s(T)}\notag\\
    &\le C_0\|u_0\|_{H^s(\R)}+C_s\sum_{|\alpha|=\gamma}\left|C_{\alpha}^{m}\right|\left\|\prod_{i=1}^m\partial_x^{\alpha_{i}}\widetilde{u}\right\|_{Y^s(T)}\notag\\
    &\le C_0r +C_1T^{\delta}\left\|u\right\|_{X^s(T)}^m\le C_0r+C_1T^{\delta}(2C_0r)^m\le 2C_0r\label{8-6-5}
\end{align}
hold. Here we take $T>0$ such as $T\le \left(\frac{1}{2C_1(2C_0r)^{m-1}}\right)^{\frac{1}{\delta}}$ to obtain the last inequality of (\ref{8-6-5}). This implies that the nonlinear mapping $\Psi$ is well defined from $X^s\left(T,2C_0r\right)$ to itself. Let $u,w\in X^s\left(T;2C_0r\right)$. By the identity (\ref{8-4-4}), in the similar manner as the proof of the estimate (\ref{8-6-5}), the estimates
\begin{align}
    \left\|\Psi[u]-\Psi[w]\right\|_{X^s(T)}&\le \sum_{|\alpha|=\gamma}\left|C_{\alpha}^{m}\right|\left\|\mathcal{I}\left[\prod_{i=1}^m\partial_x^{\alpha_{i}}\widetilde{u}-\prod_{i=1}^m\partial_x^{\alpha_{i}}\widetilde{w}\right]\right\|_{X^s(T)}\notag\\
    &\le C_s\sum_{|\alpha|=\gamma}\left|C_{\alpha}^{m}\right|\left\|\left[\prod_{i=1}^m\partial_x^{\alpha_{i}}\widetilde{u}-\prod_{i=1}^m\partial_x^{\alpha_{i}}\widetilde{w}\right]\right\|_{Y^s(T)}\notag\\
    &\le C_s\sum_{|\alpha|=\gamma}\left|C_{\alpha}^{m}\right|\sum_{i=1}^m\left\|\prod_{\mathbf{l}=1}^{i-1} \left(\partial_x^{\alpha_{\mathbf{l}}}w_{\mathbf{l}}\right)\left\{\partial_x^{\alpha_i}(u_i-w_i)\right\}\prod_{\mathbf{k}=i+1}^m\partial_x^{\alpha_{\mathbf{k}}}u_{\mathbf{k}}\right\|_{Y^s(T)}\notag\\
    &\le C_sT^{\delta}\sum_{|\alpha|=\gamma}\left|C_{\alpha}^{m}\right|\sum_{i=1}^m\|w\|_{X^s(T)}^{i-1}\|u\|_{X^s(T)}^{m-i}\|u-w\|_{X^s(T)}\notag\\
    &\le C_1T^{\delta}(2C_0r)^{m-1}\|u-w\|_{X^s(T)}\le \frac{1}{2}\|u-w\|_{X^s(T)}
    \label{8-7-6}
\end{align}
hold. This implies that the nonlinear mapping $\Psi$ is a contraction mapping. Thus by the contraction mapping principle, we see that there exists a unique solution $u\in X^s\left(T;2C_0r\right)$ to (\ref{1-1})-(\ref{nonl_sc}).\\
(Uniqueness): We only consider the positive time direction, since the negative time diraction can be treated in the similar manner. On the contrary, we assume that there exists $t\in (0,T_1]$ such that the relation $u(t)\ne w(t)$ holds. Then we can define $t_0:=\inf\{t\in [0,T_1):u(t)\ne w(t)\}$. Since $u,w\in X^s(T_1)\hookrightarrow C([0,T_1);H^s(\R))$, the identity $u(t_0)=w(t_0)$ holds. By the time translation, we may assume that $t_0=0$. In the similar manner as the proof of the estimate (\ref{8-7-6}), there exists $\tau\in (0,T_1)$ such that the estimates
\begin{align*}
    \|u-w\|_{X^s(\tau)}\le C\tau^{\delta}\sum_{|\alpha|=\gamma}\left|C_{\alpha}^{m}\right|\sum_{i=1}^m\|w\|_{X^s(T_1)}^{i-1}\|u\|_{X^s(T_1)}^{m-i}\|u-w\|_{X^s(\tau)}\le \frac{1}{2}\|u-w\|_{X^s(\tau)}
\end{align*}
hold, which implies that the identity $u(t)=w(t)$ holds on $[0,\tau)$. This contradicts the definition of $t_0$.
\\
(Continuity of the flow map): Let $u,w\in X^s(T;\rho)$ be the solutions to the problem (\ref{1-1})-(\ref{nonl_sc}) on the time interval $I_T$ with the initial data $u_0,w_0\in B_r(H^s(\R))$, repectively. In the similar manner as the proof of the estimate (\ref{8-7-6}), the estimates
\begin{align*}
    \|u-w\|_{X^s(T)}&\le C_0\|u_0-w_0\|_{H^s(\R)}+ C_1T^{\delta}(2C_0r)^{m-1}\|u-w\|_{X^s(T)}\\
    &\le C_0\|u_0-w_0\|_{H^s(\R)}+\frac{1}{2}\|u-w\|_{X^s(T)}
\end{align*}
hold, which implies that the flow map $\Xi$ is Lipschitz continuous.\\
The blow-up alternative can be proved in the standard manner.
\end{proof}

\begin{remark}\label{lwpremark1}
\begin{enumerate}
\item Theorem \ref{corlwp} and the local well-posedness part of Theorem~\ref{lwp2-3} for $s>s_c$ 
can be proved in the similar manner as above.
\item To show Theorem \ref{cor2-4}, we introduce a new unknown function $v$ given by $v:=\langle \partial_x\rangle^{\gamma}u$. By applying the contraction mapping principle, we can construct the new function $v$. Especially, the nonlinear terms can be handled in the similar argument to treat the form of (\ref{nonli3}).
\end{enumerate}
\end{remark}

\subsection{Proof of Theorem \ref{lwp2-3}}


\ \ In this subsection, we give a proof of Theorem \ref{lwp2-3}. In the proof, when (i) $\gamma=1$, $m\ge 4$ and $s\ge s_c$, (ii) $\gamma=2$, $m=4$ and $s\ge s_c$, or (iii) $\gamma=2$, $m\ge 5$ and $s\ge s_c$, we choose the solution space $X_N$ as (i) (\ref{fs14}), (ii) (\ref{fsb}) or (iii) (\ref{fsea}), and we use the multilinear estimates of (i) Theorem \ref{multi_est_3mg1}, (ii) Theorem \ref{mest_4_cri}, or (iii) Theorem \ref{multi_est_3mg2}.

The precise statement of the global well-posedness part of 
Theorem \ref{lwp2-3} is as follows.
\begin{theorem}
\label{thm8-3}
Let $\gamma \in \{1,2\}$, $m\ge 4$, and $s\ge s_c$.
We assume that the nonlinearity $G=G_{\gamma}^{m,m}$ is the form of (\ref{nonl_sc_2}). 
Then there exists a positive constant $\varepsilon=\varepsilon(m,s)>0$ such that for any $u_0\in B_{\varepsilon}\left(H^s(\R)\right)$, there exists a unique small global solution $u\in X^s(\R)$ to the problem (\ref{1-1})-(\ref{nonl_sc_2}) on $\R$. Moreover, there exist scattering states $u_\pm\in H^s(\R)$ such that the identity
\begin{equation}
\label{scatte}
     \lim_{t\rightarrow\pm\infty}\left\|u(t)-e^{it\partial_x^4}u_{\pm}\right\|_{H^s}=0
\end{equation}
holds, where the double-sign corresponds.
\end{theorem}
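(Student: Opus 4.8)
The plan is to solve the Duhamel equation $u=\Psi[u]$, with $\Psi$ as in \eqref{8-1-7}, by the contraction mapping principle on the ball $X^s(\R;\rho)$ with $\rho=2C_0\varepsilon$, where $X^s=X^s(\R)$ is the time-global solution space attached to the case under consideration: the space \eqref{fs14} for $\gamma=1$, $m\ge4$; the space \eqref{fsb} for $(\gamma,m)=(2,4)$; and the space \eqref{fsea} for $\gamma=2$, $m\ge5$. In every case the linear estimates \eqref{XN_Linest_g1}, \eqref{XN_Linest_g2}, \eqref{XN_Linest} supply $\left\|e^{it\partial_x^4}u_0\right\|_{X^s}\lesssim\|u_0\|_{H^s}$ and $\|\mathcal{I}[F]\|_{X^s}\lesssim\|F\|_{Y^s}$ (with $Y^s$ replaced by $Z^s$ in the last case), with constants $C_0$ independent of time, since these norms — unlike the Besov space of Definition~\ref{def3-4} — do not involve the maximal function term and hence carry no restriction $T<1$ (cf.\ Remark~\ref{Tcondi}). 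The nonlinearity $G=\partial_x^{\gamma}\mathcal{P}_m(u,\overline u)$ is, after expanding $\mathcal{P}_m$ via \eqref{poly}, a finite linear combination of terms $\partial_x^{\gamma}\prod_{i=1}^m\widetilde{u_i}$ with $\widetilde{u_i}\in\{u,\overline u\}$, so the time-global multilinear estimates — Theorem~\ref{multi_est_3mg1} for $\gamma=1$, Theorem~\ref{mest_4_cri} for $(\gamma,m)=(2,4)$, and Theorem~\ref{multi_est_3mg2} for $\gamma=2$ and $m\ge5$ — give a constant $C_1=C_1(m,s)$ with $\|G(u)\|_{Y^s}\le C_1\|u\|_{X^s}^m$ whenever $s\ge s_c$. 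Hence $\|\Psi[u]\|_{X^s}\le C_0\varepsilon+C_1\rho^m\le\rho$ once $\varepsilon$ is so small that $C_1(2C_0\varepsilon)^{m-1}\le\frac12$, so $\Psi$ maps $X^s(\R;\rho)$ into itself.

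For the difference estimate I would use the telescoping identity \eqref{8-4-4} to write, for each monomial, $\prod_{i=1}^m\widetilde{u_i}-\prod_{i=1}^m\widetilde{w_i}$ as a sum of $m$ multilinear expressions, each having one factor of the form $\widetilde{u_i}-\widetilde{w_i}$ and the remaining factors among $u$'s and $w$'s; applying the same multilinear estimates yields $\|\Psi[u]-\Psi[w]\|_{X^s}\le C_1 m\rho^{m-1}\|u-w\|_{X^s}\le\frac12\|u-w\|_{X^s}$ on the ball. The Banach fixed point theorem then produces a unique $u\in X^s(\R;\rho)$ with $u=\Psi[u]$; since each $X_N$-norm controls the $L^\infty_tL^2_x$-norm of the corresponding frequency block (Proposition~\ref{lin_XN} in the cases \eqref{fsb}, \eqref{fs14}, and directly from the definition in the case \eqref{fsea}), the weighted $\ell^2$-sum over dyadic blocks gives the embedding $X^s(\R)\hookrightarrow C_b(\R;H^s(\R))$ through Lemma~\ref{LP}, so $u$ is a genuine global $H^s$-solution in the sense of the definition. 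Uniqueness in $X^s(\R)$ among small-norm solutions and Lipschitz continuity of the flow map $u_0\mapsto u$ follow from the same difference estimate, exactly as in the proof of Theorem~\ref{lwpcomp}.

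To obtain scattering I would exploit the Duhamel formula together with the time-global finiteness of the auxiliary-space norm of $G(u)$. Define
\[
u_{\pm}:=u_0-i\int_0^{\pm\infty}e^{-it'\partial_x^4}G(u)(t')\,dt'.
\]
The integral converges in $H^s(\R)$: applying the inhomogeneous linear estimate to $G(u)\mathbf{1}_{[t_1,t_2]}$ bounds $\left\|\int_{t_1}^{t_2}e^{-it'\partial_x^4}G(u)(t')\,dt'\right\|_{H^s}$ by the auxiliary-space norm of the restriction of $G(u)$ to $[t_1,t_2]$, which tends to $0$ as $t_1,t_2\to\pm\infty$ because the full norm is finite; hence the Cauchy criterion holds and the limit lies in $H^s(\R)$. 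A short computation from the Duhamel formula then gives
\[
u(t)-e^{it\partial_x^4}u_{\pm}=i\int_t^{\pm\infty}e^{i(t-t')\partial_x^4}G(u)(t')\,dt',
\]
and the same linear estimate applied on $[t,\infty)$ (resp.\ $(-\infty,t]$) bounds the right-hand side in $H^s$ by the auxiliary-space norm of $G(u)$ restricted to $[t,\pm\infty)$, which goes to $0$ as $t\to\pm\infty$; this is precisely \eqref{scatte}.

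The essential work is not in this proof but in the multilinear estimates it invokes: the scaling-critical endpoints $(\gamma,m)=(2,4)$ and $(1,4)$, handled by Theorems~\ref{mest_4_cri} and \ref{multi_est_3mg1}, already absorb the refined bilinear Strichartz estimates of Sections~\ref{mesc_1}--\ref{mesc_2} and the modulation analysis, and it is there that the difficulty lies. The one remaining point that requires care is the bookkeeping for half-line restrictions of the global solution-space norm, so that the Duhamel tail genuinely vanishes; granting that, the present statement is a routine contraction-plus-scattering argument.
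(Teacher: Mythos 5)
Your proposal is correct and follows essentially the same route as the paper: contraction on the time-global ball $X^s(\R;2C_0\varepsilon)$ in the appropriate space among \eqref{fs14}, \eqref{fsb}, \eqref{fsea}, using the linear estimates \eqref{XN_Linest_g1}, \eqref{XN_Linest_g2}, \eqref{XN_Linest} and the multilinear estimates of Theorems~\ref{multi_est_3mg1}, \ref{mest_4_cri}, \ref{multi_est_3mg2}, with the telescoping identity for the difference bound, and scattering extracted from the vanishing of the auxiliary-space norm of $G(u)$ on tail intervals. Your explicit case split for $(\gamma,m)=(2,4)$ vs.\ $\gamma=2,\ m\ge5$ and the remark about the $C_b(\R;H^s)$ embedding are minor elaborations of what the paper leaves implicit, but the argument is the same.
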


\begin{proof}[Proof of Theorem \ref{thm8-3}]
We only consider the case of $\gamma=1$, since the case of $\gamma=2$ can be proved in the similar manner. Let $\varepsilon>0$, which will be chosen later. Let $u\in X^s(\R;2C_0\varepsilon)$.
By the linear estimates ((\ref{XN_Linest_g1}) for $\gamma=1$ or (\ref{XN_Linest_g2}) for $\gamma=2$) and the multilinear estimates (Theorem \ref{multi_est_3mg1} for $\gamma=1$ or Theorem \ref{mest_4_cri} for $\gamma=2$), we see that there exists a positive constant $C_1=C_1(m,s)>0$ the estimates
\begin{align}
    \left\|\Psi[u]\right\|_{X^s}&\le \left\|e^{it\partial_x^4}u_0\right\|_{X^s}+\sum_{k=0}^m\left|C_{k}\right|\left\|\mathcal{I}\left[\partial_x^{\gamma}\left(u^k\overline{u}^{m-k}\right)\right]\right\|_{X^s}\notag\\
    &\le C_0\|u_0\|_{H^s(\R)}+C_s\sum_{k=0}^m\left|C_{k}\right|\left\|\partial_x^{\gamma}\left(u^k\overline{u}^{m-k}\right)\right\|_{Y^s}\notag\\
    &\le C_0\varepsilon +C_1\left\|u\right\|_{X^s}^m\le C_0\varepsilon+C_1(2C_0\varepsilon)^m\le 2C_0\varepsilon\label{8-8-5}
\end{align}
hold. Here we take $\varepsilon>0$ such as $\varepsilon\le  \left(\frac{1}{C_1C_0^{m-1}2^{m}}\right)^{\frac{1}{m-1}}$ to obtain the last inequality of (\ref{8-8-5}). This implies that the nonlinear mapping $\Psi$ is well defined from $X^s(\R,2C_0\varepsilon)$ to itself. Let $u,w\in X^s(\R;2C_0\varepsilon)$. By a simple computation, the identity
\begin{align}
\label{8-9-4}
u^k\overline{u}^{m-k}-w^k\overline{w}^{m-k}=(u-w)\overline{u}^{m-k}\sum_{i=1}^ku^{k-i}w^{i-1}+(\overline{u-w})w^k\sum_{i=1}^{m-k}\overline{u}^{m-k-i}\overline{w}^{i-1}
\end{align}
holds for any $u,w\in \C$.
By the identity (\ref{8-9-4}), in the similar manner as the proof of the estimate (\ref{8-8-5}), the estimates
\begin{align}
    &\left\|\Psi[u]-\Psi[w]\right\|_{X^s}\notag\\
    &\le \sum_{k=0}^m\left|C_{k}\right|\left\|\mathcal{I}\left[\partial_x^{\gamma}\left(u^k\overline{u}^{m-k}-w^k\overline{w}^{m-k}\right)\right]\right\|_{X^s}\notag\\
    &\le C_s \sum_{k=0}^m\left|C_{k}\right|\left\|\partial_x^{\gamma}\left(u^k\overline{u}^{m-k}-w^k\overline{w}^{m-k}\right)\right\|_{Y^s}\notag\\
    &\le
    C_s\sum_{k=0}^m\left|C_k\right|\left[\sum_{i=1}^k\left\|\partial_x^{\gamma}\left\{(u-w)\overline{u}^{m-k}u^{k-i}w^{i-1}\right\}\right\|_{Y^s}+\sum_{i=1}^{m-k}\left\|\partial_x^{\gamma}\left\{(\overline{u-w})\overline{u}^{m-k-i}w^k\overline{w}^{i-1}\right\}\right\|_{Y^s}\right]
\notag\\
    &\le C_s\sum_{k=0}^m\left|C_{k}\right|\sum_{i=1}^m\|u\|_{X^s}^{m-i}\|w\|_{X^s}^{i-1}\|u-w\|_{X^s}\notag\\
    &\le C_1(2C_0\varepsilon)^{m-1}\|u-w\|_{X^s}\le \frac{1}{2}\|u-w\|_{X^s}
    \label{8-10-6}
\end{align}
hold. Here we take $\varepsilon$ such as $\varepsilon \le \frac{1}{2C_0}\left(\frac{1}{2C_1}\right)^{\frac{1}{m-1}}$ to obtain the last inequality of (\ref{8-10-6}). This implies that the nonlinear mapping $\Psi$ is a contraction mapping. Thus by the contraction mapping principle, we see that there exists a unique solution $u\in X^s(\R;2C_0\varepsilon)$ to (\ref{1-1})-(\ref{nonl_sc_2}). 

Next we prove that the global solution $u\in X^s(\R)$ scatters in $H^s(\R)$ as $t\rightarrow\pm\infty$. We only consider the positive time direction, since the negative time direction can be treated in the similar manner. Let $t_2>t_1>0$. We claim that if $F\in Y^s$, then the relation 
\begin{equation}
\label{inhomo}
\|F\|_{Y^s(t_1,t_2)}\rightarrow0.
\end{equation}
holds as $t_2>t_1\rightarrow\infty$. Indeed, we note that for any $N\in 2^{\Z}$, the relation 
\begin{equation}
\label{inhomoge}
\|P_NF\|_{Y_N(t_1,t_2)}\rightarrow 0
\end{equation}
holds as $t_2>t_1\rightarrow\infty$ due to the definition of the $Y_N$-norm. By the embedding $Y^s\hookrightarrow Y^s(t_1,t_2)$ and the relation (\ref{inhomoge}), the relation (\ref{inhomo}) holds. We note that the nonlinear term $G$ belongs to $Y^s$ due to the estimates (\ref{8-8-5}). By the linear estimates ((\ref{XN_Linest_g1}) for $\gamma=1$ and (\ref{XN_Linest_g2}) for $\gamma=2$) and the relation (\ref{inhomo}), the relations
\begin{align*}
    \left\|e^{-it_2\partial_x^4}u(t_2)-e^{-it_1\partial_x^4}u(t_1)\right\|_{H^s(\R)}&=\left\|\int_{t_1}^{t_2}e^{-it'\partial_x^4}G(t')dt'\right\|_{H^s(\R)}
    \le \sup_{t\in [t_1,t_2]}\left\|\int_{t_1}^{t}e^{-i(t-t')\partial_x^4}G(t')dt'\right\|_{H^s(\R)}\\
    &\le C_s\|G\|_{Y^s(t_1,t_2)}\rightarrow 0
\end{align*}
hold as $t_2>t_1\rightarrow\infty$, which implies that $\left\{e^{-it\partial_x^4}u(t)\right\}_{t>0}$ satisfies the Cauchy condition in $H^s(\R)$. Since $H^s(\R)$ is complete and the operator $e^{it\partial_x^4}$ is unitary, there exists $u_+\in H^s(\R)$ such that the identity (\ref{scatte}) holds, which completes the proof of the theorem.
\end{proof}
\begin{remark}
Theorem~\ref{lwp2-2} 
can be proved in the similar manner as above.
\end{remark}
Next we give a proof of large data local well-posedness at the scaling critical regularity $s=s_c$ in Theorem \ref{lwp2-3}. For $R\ge \epsilon>0$ and $\phi_1\in B_R(H^{s_c}(\R))$, we introduce a closed ball $\phi_1+B_{\epsilon}(H^{s_c}(\R))$ in $H^{s_c}(\R)$ given by
\[
\phi_1+B_{\epsilon}(H^{s_c}(\R))
:=\left\{\phi \in H^{s_c}(\R)|\ 
\phi =\phi_1+\phi_2,\ 
\|\phi_2\|_{H^{s_c}(\R)}\le \epsilon\right\}. 
\]
For any $\phi\in H^{s_c}(\R)$, the identity $\lim_{M\rightarrow\infty}\|P_{\ge M}\phi\|_{H^{s_c}(\R)}=0$ holds, which enables us to define a map $M:H^{s_c}(\R)\rightarrow 2^{\N}$ given by
\[
   M(\phi)=M\left(\phi,\epsilon\right):=\min\left\{M\in 2^{\Z}:\ \|P_{\ge M}\phi\|_{H^{s_c}(\R)}\le \epsilon\ \right\}.
\]
For $T\in (0,1)$, $\rho>0$ and $a>0$, we introduce a closed ball $X^{s_c}\left(T;\rho,a\right)$ in $X^{s_c}(T)$ defined by
\[
   X^{s_c}\left(T;\rho,a\right):=\left\{u\in X^{s_c}(T):\ \|u\|_{X^{s_c}(T)}\le \rho,\ \|P_{\ge M}u\|_{X^{s_c}(T)}\le a\right\}.
\]
In the following, we prove that the nonlinear mapping $\Psi$ given by (\ref{8-1-7}) is a contraction mapping on the complete metric space $X^{s_c}(T;\rho,a)$. The main tools for the proof are the multilinear estimates (\ref{multilin_mhigh_inhom_g1})-(\ref{multilin_mhigh_hom_12_lowf4}) if $\gamma=1$ or (\ref{multilin_mhigh_inhom_12})-(\ref{multilin_mhigh_hom_12_lowf}) if $\gamma=2$. 

The precise statement of the large data local well-posedness part of Theorem \ref{lwp2-3} at the critical regularity case $s=s_c$ is as follows.
\begin{theorem}[Large data local well-posedness at the critical regularity $s=s_c$]
\label{thm8-4}
Let $\gamma \in \{1,2\}$ and $m\ge 4$.
We assume that the nonlinearity $G=G_{\gamma}^{m,m}$ is the form of (\ref{nonl_sc_2}). Then the following statements hold: 
\begin{itemize}
\item (Existence): There exist a constant $\epsilon=\epsilon(m)>0$ dependent only on $m$ and a map $M:H^{s_c}(\R)\rightarrow 2^{\N}$ such that the following holds: For any $R>0$ and $u_0^*\in B_{R}(H^{s_c}(\R))$, there exists $T=T\left(R,M(u_0^*)\right)>0$ such that for any $u_0\in u_0^*+B_{\epsilon}(H^{s_c}(\R))$, there exists a solution $u\in X^{s_c}\left(T\right)$ to the problem (\ref{1-1})-(\ref{nonl_sc}) on the time interval $I_T=(-T,T)$.
\item (Uniqueness): Let $u\in X^{s_c}(T)$ be the solution obtained in the Existence part. Let $T_1\in (0,T]$ and $w\in X^{s_c}\left(T_1\right)$ be another solution to (\ref{1-1})-(\ref{nonl_sc}). If the identity $u_0=w(0)$ holds, then the identity $u=w$ holds on $[-T_1,T_1]$.
\item (Continuity of the flow map): For any $R>0$, $u_0^*\in B_R(H^{s_c}(\R))$ and $T>0$ given above, the flow map $u_0\mapsto u\in X^{s_c}(T)$ is Lipschitz continuous.
\end{itemize}
\end{theorem}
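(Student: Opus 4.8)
The plan is to run the contraction argument for the map $\Psi$ from (\ref{8-1-7}) on the complete metric space $X^{s_c}(T;\rho,a)$, where $\rho$ is comparable to the size $R$ of the data (hence possibly large) while $a$ is a small parameter comparable to $\epsilon$; here $X_N$ and $Y_N$ are taken to be (\ref{fs14}) when $\gamma=1$, (\ref{fsb}) when $\gamma=2$ and $m=4$, and (\ref{fsea}) when $\gamma=2$ and $m\ge5$. The essential new feature at the critical regularity is that, although the full $X^{s_c}$-norm of the Duhamel term need not be small, every nonlinear interaction falls into one of two types, each contributing a small quantity: interactions in which some input has frequency below $M$ gain a positive power $T^{\delta}M^{\kappa}$ after time localization, which is exactly the content of Theorem~\ref{multi_est_gamma1_lowf} ($\gamma=1$), Theorem~\ref{mest_4_cri_TM} ($\gamma=2$, $m=4$) and Theorem~\ref{multi_est_3mg2_lowf} ($\gamma=2$, $m\ge5$); while the purely high-frequency interaction $\prod_iP_{\ge M}\widetilde{u_i}$ is bounded, via the homogeneous/inhomogeneous multilinear estimates (Theorem~\ref{multi_est_3mg1}, Theorem~\ref{mest_4_cri}, Theorem~\ref{multi_est_3mg2}), by $\prod_i\|P_{\ge M}u_i\|_{X^{s_c}(T)}\le a^m$, which is small once $a$ is.

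First I would fix a threshold $\epsilon=\epsilon(m)>0$ and, with the map $M:H^{s_c}(\R)\to2^{\N}$ as in the statement, set $M^*:=M(u_0^*)$. For $u_0\in u_0^*+B_{\epsilon}(H^{s_c}(\R))$ one has $\|u_0\|_{H^{s_c}}\lesssim R+\epsilon$ and $\|P_{\ge M^*}u_0\|_{H^{s_c}}\le\|P_{\ge M^*}u_0^*\|_{H^{s_c}}+\|P_{\ge M^*}(u_0-u_0^*)\|_{H^{s_c}}\le2\epsilon$, so by the linear estimates ((\ref{XN_Linest_g1}) for $\gamma=1$, (\ref{XN_Linest_g2}) for $\gamma=2$) the free evolution $e^{it\partial_x^4}u_0$ has $X^{s_c}(T)$-norm $\lesssim R+\epsilon$ and its $P_{\ge M^*}$-component has $X^{s_c}(T)$-norm $\lesssim\epsilon$. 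Setting $a\asymp\epsilon$ and $\rho\asymp R+\epsilon$ with suitable absolute constants, for $u\in X^{s_c}(T;\rho,a)$ I would decompose each monomial of $G(u)$ as $\prod_i\widetilde{u_i}=\big(\prod_i\widetilde{u_i}-\prod_iP_{\ge M^*}\widetilde{u_i}\big)+\prod_iP_{\ge M^*}\widetilde{u_i}$ and apply the $T^{\delta}M^{\kappa}$-estimate to the first summand and the homogeneous estimate to the second, obtaining, since $P_{\ge M^*}$ is bounded on $X^{s_c}(T)$,
\[
\|\Psi[u]\|_{X^{s_c}(T)}\lesssim R+\epsilon+T^{\delta}(M^*)^{\kappa}\rho^{m}+a^{m},\qquad
\|P_{\ge M^*}\Psi[u]\|_{X^{s_c}(T)}\lesssim\epsilon+T^{\delta}(M^*)^{\kappa}\rho^{m}+a^{m}.
\]
Choosing first $\epsilon=\epsilon(m)$ so small that $a^{m}\le a/4$ (equivalently $a^{m-1}\ll1$) and then $T=T(R,M^*)$ so small that $T^{\delta}(M^*)^{\kappa}\rho^{m}\le\min\{\rho/4,a/4\}$, the map $\Psi$ sends $X^{s_c}(T;\rho,a)$ into itself.

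For the contraction estimate I would use the telescoping identities (\ref{8-4-4})--(\ref{8-9-4}) together with the fact that all the multilinear estimates invoked above are genuinely multilinear, so they also control the associated multilinear differences (by polarization, or equivalently by rereading the block-by-block proofs). Each monomial of $G(u)-G(w)$ contains exactly one difference factor; splitting it once more into a ``lowest frequency $<M^*$'' part and a ``lowest frequency $\ge M^*$'' part, and in the latter bounding the difference factor by $\|P_{\ge M^*}(u-w)\|_{X^{s_c}(T)}\le\|u-w\|_{X^{s_c}(T)}$ and the remaining $m-1$ factors by $a$, gives
\[
\|\Psi[u]-\Psi[w]\|_{X^{s_c}(T)}\lesssim\big(T^{\delta}(M^*)^{\kappa}\rho^{m-1}+a^{m-1}\big)\|u-w\|_{X^{s_c}(T)},
\]
and shrinking $\epsilon$, then $T$, further makes the prefactor $\le1/2$. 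Then the contraction mapping principle yields the solution $u\in X^{s_c}(T;\rho,a)\subset X^{s_c}(T)$, and uniqueness in all of $X^{s_c}(T_1)$ together with Lipschitz dependence on $u_0$ follow by the standard continuation-in-time argument and the difference estimate, as in the proof of Theorem~\ref{lwpcomp2}. The main obstacle is the consistency of the bookkeeping: a single contraction must simultaneously close the (possibly large) bound $\|u\|_{X^{s_c}(T)}\le\rho\asymp R$ and the small bound $\|P_{\ge M^*}u\|_{X^{s_c}(T)}\le a\asymp\epsilon$, which forces the order of choices $\epsilon=\epsilon(m)$, then $M^*=M(u_0^*)$, then $\rho\asymp R+\epsilon$, then $T=T(R,M^*)$ to be respected throughout --- including in the difference estimates --- and requires checking that every interaction is indeed caught either by the frequency-and-time gain of Theorems~\ref{multi_est_gamma1_lowf}, \ref{mest_4_cri_TM}, \ref{multi_est_3mg2_lowf} or by the $a$-smallness of $P_{\ge M^*}u$.
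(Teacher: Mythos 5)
Your proposal is correct and follows essentially the same route as the paper: the same contraction on $X^{s_c}(T;\rho,a)$ with the same frequency-threshold decomposition $\prod_i\widetilde{u_i}=(\prod_i\widetilde{u_i}-\prod_iP_{\ge M}\widetilde{u_i})+\prod_iP_{\ge M}\widetilde{u_i}$, bounding the first summand via Theorems~\ref{multi_est_gamma1_lowf}/\ref{mest_4_cri_TM}/\ref{multi_est_3mg2_lowf} and the second via Theorems~\ref{multi_est_3mg1}/\ref{mest_4_cri}/\ref{multi_est_3mg2}, with exactly the order of parameter choices ($\epsilon$, then $M^*$, then $\rho$, then $T$) and the same telescoping difference estimate, uniqueness-by-continuation, and Lipschitz continuity. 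The only stylistic difference is that you appeal to ``polarization'' to upgrade the multilinear estimates to differences, whereas the paper simply uses that those theorems are already stated for general $m$-tuples $u_1,\dots,u_m$, so they apply directly to each telescoping term.
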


\begin{proof}[Proof of Theorem \ref{thm8-4}]
We only consider the case of $\gamma=1$, since the case of $\gamma=2$ can be treated in the similar manner.\\
(Existence): The proof is based on the argument of the proof of \cite[Theorem 6.2]{IKO16}. Let $a\in \left(0, \left(\frac{1}{8C_1}\right)^{\frac{1}{m-1}}\right]$ and $\epsilon\in \left(0,\frac{a}{4C_0}\right]$ be positive numbers, where $C_0$ is given by (\ref{XN_Linest_g1}) if $\gamma=1$ and by (\ref{XN_Linest_g2}) if $\gamma=2$, and $C_1$ is defined by (\ref{8-8-5}).
Let $R>0$ be an arbitrary positive number and $u_0^{*}\in B_R\left(H^{s_c}(\R)\right)$. We set $M\in 2^{\Z}$ such as $M:=M\left(u_0^*,\epsilon\right)$. We define $\rho$ and $T$ as
\[
   \rho=\rho(R):=\max\left(4C_0R,a\right),\ \ \ T=T(R,M):=\min\left(1, \left(\frac{a}{8C_2M^{\kappa}\rho^m}\right)^{\frac{1}{\delta}}\right),
\]
where $\kappa$ is given by (\ref{multilin_mhigh_hom_12_lowf4}) if $\gamma=1$ or by (\ref{multilin_mhigh_hom_12_lowf2}) if $\gamma=2$, and $C_2=C_2(m)>0$ is a positive constant depending only on $m$, which is given by (\ref{8-14-5}) below. Then we note that the estimates
\[
   C_0R\le \frac{\rho}{4},\ \ C_0\epsilon\le \frac{a}{4}\le \frac{\rho}{4},\ \ C_1a^m\le \frac{a}{8}\le \frac{\rho}{8},\ \ C_2T^{\delta}M^{\kappa}\rho^m\le \frac{a}{8}\le \frac{\rho}{8}
\]
hold. Let $u_0\in u_0^*+B_{\epsilon}(H^{s_c}(\R))$ be an arbitrary initial data. Then the estimates 
\[
   \|u_0\|_{H^{s_c}(\R)}\le R+\epsilon,\ \ \ \|P_{\ge M}u_0\|_{H^{s_c}(\R)}\le \|P_{\ge M}u_0\|_{H^{s_c}(\R)}+\|P_{\ge M}(u_0-u_0^*)\|_{H^{s_c}(\R)}\le 2\epsilon
\]
hold. We apply the multilinear estimates ((\ref{multilin_mhigh_inhom_g1})-(\ref{multilin_mhigh_hom_12_lowf4}) if $\gamma=1$ or (\ref{multilin_mhigh_inhom_12})-(\ref{multilin_mhigh_hom_12_lowf}) if $\gamma=2$). Thus by the linear estimates ((\ref{XN_Linest_g1}) for $\gamma=1$ or (\ref{XN_Linest_g2}) for $\gamma=2$), the estimates
\begin{align}
    \left\|\Psi[u]\right\|_{X^{s_c}(T)}&\le \left\|e^{it\partial_x^4}u_0\right\|_{X^{s_c}(T)}
    +\sum_{k=0}^m\left|C_{k}\right|\left\|\mathcal{I}\left[\partial_x^{\gamma}\left(u^k\overline{u}^{m-k}\right)\right]\right\|_{X^{s_c}(T)}\notag\\
    &\le C_0\|u_0\|_{H^{s_c}(\R)}+C_{s_c}\sum_{k=0}^m\left|C_{k}\right|\left\|\partial_x^{\gamma}\left(u^k\overline{u}^{m-k}\right)\right\|_{Y^{s_c}(T)}\notag\\
    &\le C_0\|u_0\|_{H^{s_c}(\R)}+C_{s_c}\sum_{k=0}^m\left|C_{k}\right|\left\|\partial_x^{\gamma}\left\{(P_{\ge M}u)^k(P_{\ge M}\overline{u})^{m-k}\right\}\right\|_{Y^{s_c}(T)}\notag\\
    &\ \ \ \ \ \ \ \ \ \ \ \ \ \ \ \ \ \ \ \ \ \ \ \  +C_{s_c}\sum_{k=0}^m\left|C_{k}\right|\left\|\partial_x^{\gamma}\left\{u^k\overline{u}^{m-k}-\left(P_{\ge M}u\right)^k\left(P_{\ge M}\overline{u}\right)^{m-k}\right\}\right\|_{Y^{s_c}(T)}\notag\\
    &\le C_0\left(R+\epsilon\right)
    +C_1\left\|P_{\ge M}u\right\|_{X^{s_c}(T)}^m+C_2 T^{\delta}M^{\kappa}\left\|u\right\|_{X^{s_c}(T)}^m\notag\\
    &\le C_0\left(R+\epsilon\right)+C_1a^m+C_2 T^{\delta}M^{\kappa}\rho^m\le \frac{3}{4}\rho
    \label{8-14-5}
\end{align}
hold. In the similar manner as the proof of the above estimates (\ref{8-14-5}), the inequalities
\begin{equation}
\label{8-15-5}
   \left\|P_{\ge M}\Psi[u]\right\|_{X^{s_c}(T)}
   \le 2C_0\epsilon+C_1a^m+C_2 T^{\delta}M^{\kappa}\rho^m\le \frac{3}{4}a
\end{equation}
hold. The estimates (\ref{8-14-5})-(\ref{8-15-5}) imply that the nonlinear mapping $\Psi$ is well defined from $X^{s_c}(T;\rho,a)$ to itself. By the identity (\ref{8-9-4}), in the similar manner as the proof of the estimate (\ref{8-14-5}), the inequalities
\begin{align}
     &\left\|\Psi[u]-\Psi[w]\right\|_{X^{s_c}(T)}\notag\\
    &\le \sum_{k=0}^m\left|C_{k}\right|\left\|\mathcal{I}\left[\partial_x^{\gamma}\left(u^k\overline{u}^{m-k}-w^k\overline{w}^{m-k}\right)\right]\right\|_{X^{s_c}(T)}\notag\\
    &\le C_s \sum_{k=0}^m\left|C_{k}\right|\left\|\partial_x^{\gamma}\left(u^k\overline{u}^{m-k}-w^k\overline{w}^{m-k}\right)\right\|_{Y^{s_c}(T)}\notag\\
    &\le
    C_s\sum_{k=0}^m\left|C_k\right|\left[\sum_{i=1}^k\left\|\partial_x^{\gamma}\left\{(u-w)\overline{u}^{m-k}u^{k-i}w^{i-1}\right\}\right\|_{Y^{s_c}(T)}+\sum_{i=1}^{m-k}\left\|\partial_x^{\gamma}\left\{(\overline{u-w})\overline{u}^{m-k-i}w^k\overline{w}^{i-1}\right\}\right\|_{Y^{s_c}(T)}\right]
\notag\\
    &\le C_s\sum_{k=0}^m\left|C_{k}\right|\sum_{i=1}^m\left(T^{\delta}M^{\kappa}\|u\|_{X^{s_c}(T)}^{m-i}\|w\|_{X^{s_c}(T)}^{i-1}+\|P_{\ge M}u\|_{X^{s_c}(T)}^{m-i}\|P_{\ge M}w\|_{X^{s_c}(T)}^{i-1}\right)\|u-w\|_{X^{s_c}(T)}\notag\\
    &\le \left(C_1a^{m-1}+C_2T^{\delta}M^{\kappa}\rho^{m-1}\right)\|u-w\|_{X^{s_c}(T)}\le \frac{1}{2}\|u-w\|_{X^{s_c}(T)}
    \label{8-16-9}
\end{align}
hold. This implies that the nonlinear mapping $\Psi$ is a contraction mapping. Thus by the contraction mapping principle, we see that there exists a unique solution $u\in X^{s_c}\left(T;\rho,a\right)$ to (\ref{1-1})-(\ref{nonl_sc_2}).\\
(Uniqueness): We note that for any $w\in X^{s_c}(T)$, the identity $\lim_{M\rightarrow \infty}\|P_{\ge M}w\|_{X^{s_c}(T)}=0$ holds. Thus by the similar argument as the proof of the uniqueness part of Theorem \ref{lwpcomp2} and the estimate (\ref{8-16-9}), we can prove the uniqueness part.
\\
(Continuity of the flow map): Let $u_0, w_0\in u_0^*+B_{\epsilon}(H^{s_c}(\R))$ and $u,w\in X^{s_c}(T)$ be the corresponding solutions given by the Existence part. In the similar manner as the proof of the estimate (\ref{8-16-9}), the estimate
\[
   \|u-w\|_{X^{s_c}(T)}\le C_0\|u_0-w_0\|_{H^{s_c}(\R)}+\frac{1}{2}\|u-w\|_{X^{s_c}(T)}
\]
hold, which implies that the flow map $u_0^*+B_{\epsilon}(H^{s_c}(\R))\mapsto X^{s_c}(T)$,\ $u_0\mapsto u$ is Lipschitz continuous.
\end{proof}

\appendix
\section{Derivation of an important 4NLS model with third order derivative nonlinearities}

\label{Aderi}
In this appendix, we derive the important 4NLS model with third order derivative nonlinearities ($\gamma=3$), that is,  (\ref{1-1})-(\ref{nonli}), from the second ($n=2$) of the derivative nonlinear Schr\"odinger (DNLS) hierarchy (\ref{1-6-a}). To describe the DNLS hierarchy (\ref{1-6-a}) more precisely, we give the definitions of several notations. For a complex-valued function $u=u(x)$ on $\R$, we introduce a $\C^2$-valued function $U=U(x)$ on $\R$ defined by $U:=(u,\overline{u}){}^{\text{T}}$. Let $\sigma_3$ be the third Pauli matrix given by
\[
   \sigma_3:=\begin{pmatrix}
1 & 0 \\
0 & -1 \\
\end{pmatrix}.
\]
For a $\C^2$-valued smooth function $(v,w){}^{\text{T}}$ on $\R$, we introduce a first order differential operator $\mathfrak{D}_1$ defined by
\[
     \mathfrak{D}_1\begin{pmatrix}
v \\
w \\
\end{pmatrix}(x)
:=\sigma_3\frac{d}{dx}\begin{pmatrix}
v \\
w \\
\end{pmatrix}(x)
=\begin{pmatrix}
v_x(x) \\
-w_x(x)\\
\end{pmatrix}.
\]
Moreover, for a $\C^2$-valued smooth function $(v,w){}^{\text{T}}$ decaying $0$ as $|x|\rightarrow\infty$, we introduce a linear operator $\mathfrak{D}_2$ defined by
\[
     \mathfrak{D}_2\begin{pmatrix}
v \\
w \\
\end{pmatrix}(x)
:=-U(x)\int_x^{\infty}U(y)^*\frac{d}{dy}\begin{pmatrix}
v(y) \\
w(y) \\
\end{pmatrix}dy
=-\begin{pmatrix}
u(x) \\
\overline{u}(x)\\
\end{pmatrix}
\int_x^{\infty}\left\{\overline{u(y)}v_y(y)+u(y)w_y(y)\right\}dy.
\]
We note that for a smooth function $u=u(x)$ decaying $0$ as $|x|\rightarrow\infty$, the identity
\[
    \mathfrak{D}_2U(x)
=\begin{pmatrix}
|u(x)|^2u(x) \\
|u(x)|^2\overline{u(x)}\\
\end{pmatrix}
\]
holds for any $x\in\R$. Indeed, this identity follows from the following identities
\[
   -\int_x^{\infty}\left\{\overline{u(y)}u_y(y)+u(y)\overline{u_y(y)}\right\}=-\int_x^{\infty}\frac{d}{dy}|u(y)|^2dy=|u(x)|^2.
\]
For a smooth $\C^2$-valued function $(v,w){}^{\text{T}}$ decaying $0$ as $|x|\rightarrow\infty$, we define the recursion operator $\Lambda$ given by
\begin{equation}
\label{A-1}
    \Lambda\begin{pmatrix}
v \\
w\\
\end{pmatrix}
    :=\frac{i}{2}\left(\mathfrak{D}_1+i\mathfrak{D}_2\right)\begin{pmatrix}
v \\
w\\
\end{pmatrix}.
\end{equation}
By using this operator, we can write the $n$-th of the derivative nonlinear Schr\"odinger hierarchy as
\begin{equation}
\label{dnlsh}
    i\partial_t U(t,x)+\partial_x\left\{(-2i\Lambda)^{2n-1}U\right\}(t,x)=0,\ \ \ (t,x)\in \R\times\R,
\end{equation}
where $U=U(t,x)=\left(u(t,x),\overline{u(t,x)}\right){}^{\text{T}}$ is a smooth solution decaying $0$ as $|x|\rightarrow\infty$ and $n\in \N$.

In the following, we only consider the case of $n=2$. By a simple calculation, the identity
\begin{align*}
    (-2i\Lambda)^3=(\mathfrak{D}_1+i\mathfrak{D}_2)^3=\mathfrak{D}_1^3-\mathfrak{D}_1\mathfrak{D}_2^2-\mathfrak{D}_2(\mathfrak{D}_1\mathfrak{D}_2+\mathfrak{D}_2\mathfrak{D}_1)+i\left\{\mathfrak{D}_1(\mathfrak{D}_1\mathfrak{D}_2+\mathfrak{D}_2\mathfrak{D}_1)+\mathfrak{D}_2\mathfrak{D}_1^2-\mathfrak{D}_2^3\right\}
\end{align*}
holds. By a simple calculation and taking the first component of the equation (\ref{dnlsh}), we can derive the equation (\ref{1-1}) with (\ref{nonli}).

\subsection*{Acknowedgements}
{\rm
\ \ The authors express deep gratitude to Professor Hervert Koch for many useful suggestions and comments. They also deeply grateful to Professor Kenji Nakanishi and Dr. Yohei Yamazaki for pointing out the completely integrable structure for the fourth order Schr\"odinger equation with third-order derivative nonlinearities.
The first author is supported by Grant-in-Aid for Young Scientists Research (B) No.17K14220 and Program to Disseminate Tenure Tracking System from the Ministry of Education, Culture, Sports, Science and Technology. The second author is supported by JST CREST Grant Number JPMJCR1913, Japan and Grant-in-Aid for Young Scientists Research (B) No.15K17571 and Young Scientists Research (No.19K14581), Japan Society for the Promotion of Science. The third author was supported by RIKEN Junior Research Associate Program.
}



\end{document}